\theoremstyle{plain}
\newtheorem{theorem}{Theorem}[section]
\newtheorem{corollary}[theorem]{Corollary}
\newtheorem{lemma}[theorem]{Lemma}
\newtheorem{proposition}[theorem]{Proposition}
\theoremstyle{definition}
\newtheorem{definition}[theorem]{Definition}
\newtheorem{remark}[theorem]{Remark}
\newtheorem*{acknowledgements}{Acknowledgements}
\numberwithin{equation}{section}
\renewcommand{\paragraph}{%
\@startsection{paragraph}{4}%
{\z@}{2ex \@plus 1ex \@minus .2ex}{-1em}%
{\normalfont\normalsize\bfseries}%
}
\def\R{\mathbb{R}}
\def\P{\mathbb{P}}
\def\F{\mathcal{F}}
\def\B{\mathcal{B}}
\def\Z{\mathcal{Z}}
\def\H{\mathcal H}
\def\E{\mathbb{E}}
\def\eps{\varepsilon}
\def\EE{\mathrm{E}}
\def\UU{\mathrm{U}}
\def\BB{\mathrm{B}}
\renewcommand{\d}{{\mathrm d}}
\DeclareMathOperator\dom{dom}
\DeclareMathOperator\dist{dist}
\DeclareMathOperator\diam{diam}
\definecolor{colorLink}{RGB}{0,100,162}
\definecolor{colorCite}{RGB}{8,124,100}
\title{Multifractal analysis of Gaussian multiplicative chaos and applications}
\author{Federico Bertacco\footnote{Imperial College London, United Kingdom. E-mail: \textrm{f.bertacco20@imperial.ac.uk}}}
\date{ }
\begin{document}	

\maketitle

\begin{abstract}
Let $M_{\gamma}$ be a subcritical Gaussian multiplicative chaos measure associated with a general log-correlated Gaussian field defined on a bounded domain $D \subset \R^d$, $d \geq 1$. We find an explicit formula for its singularity spectrum by showing that $M_{\gamma}$ satisfies almost surely the multifractal formalism, i.e., we prove that its singularity spectrum is almost surely equal to the Legendre--Fenchel transform of its $L^{q}$-spectrum. Then, applying this result, we compute the lower singularity spectrum of the multifractal random walk and of the Liouville Brownian motion.
\end{abstract}

\section{Introduction}
\label{sec:intro}
The main goal of this paper is to prove that subcritical \emph{Gaussian Multiplicative Chaos} (GMC) measures associated with a large class of log-correlated Gaussian fields in all dimensions satisfy the multifractal formalism. This has been formally stated in \cite[Section~4]{RV_Review}, but to the best of our knowledge, it has never been proved rigorously. In order to achieve this result, we perform a careful analysis of the local mass concentration of GMC measures around thick points of the corresponding underlying field. Moreover, using this result, we provide an explicit expression for the lower singularity spectrum of the \emph{Multifractal Random Walk} (MRW) and of the \emph{Liouville Brownian Motion} (LBM). Before entering into the details of the main results, we briefly review the theory of GMC and of multifractal analysis of measures.

\paragraph{Gaussian multiplicative chaos}
Given a domain $D \subset \R^d$, $d \geq 1$, the theory of GMC, originally developed by Kahane \cite{Kahane}, aims to define rigorously random measures of the form 
\begin{equation}
\label{def:GMC}
M_{\gamma} (\d x) = e^{\gamma X(x)-\frac{1}{2} \gamma^2 \E[X(x)^2]} \d x \,,
\end{equation}
where $\d x$ denotes the Lebesgue measure, $\gamma$ is a real parameter, and $X$ is a log-correlated Gaussian field on $D$, i.e.\ a centred Gaussian field whose covariance kernel can be formally written as
\begin{equation*}
\E[X(x)X(y)] = -\log |x-y| + g(x,y)\,, \quad  x, y \in D \,,
\end{equation*}
where $g : D \times D \to \mathbb{R}$ is say bounded and continuous. Since the covariance kernel of $X$ has a logarithmic divergence along the diagonal, we cannot define the field $X$ pointwise. However, we can make rigorous sense of $X$ by viewing it as a random Schwartz distribution. Therefore, the definition of \eqref{def:GMC} is non-trivial as, a priori, we cannot exponentiate a random generalized function. In order to interpret \eqref{def:GMC} rigorously, we need to approximate $X$ via a regularizing procedure which involves a suitable collection of regularized random fields $(X_{\eps})_{\eps \in (0, 1]}$. The GMC measure associated with $X$ is then given by the limit of the sequence of approximating measures
\begin{equation*}
M_{\gamma}^{\eps} (\d x) = e^{\gamma X_{\eps}(x)-\frac{1}{2} \gamma^2 \E[X_{\eps}(x)^2]} \d x \,.
\end{equation*}
As long as the parameter $\gamma^2$ is strictly less than the critical value $2d$, which is usually called subcritical regime, it is well-known \cite{Berestycki_Elementary, Kahane, Revisited} that the sequence $(M_{\gamma}^{\eps})_{\eps \in (0, 1]}$ converges weakly in probability towards a non-degenerate measure $M_{\gamma}$. Moreover, it is known that $M_{\gamma}$ is almost surely non-atomic, but singular with respect to the Lebesgue measure. Many further properties of such measures concerning, among others, moments and multifractal behaviour are known. We refer to Section~\ref{sec:setup} for more details.

The original interest in defining GMC measures stemmed from the need of making rigorous Mandelbrot's model for energy dissipation in fully developed turbulence \cite{Mandelbrot1972}, but it has since been found applications in a wide range of fields: from mathematical finance \cite{RV_Finance} to mathematical physics \cite{DS_Inventiones}, but also random matrices \cite{Random_Matrices} as well as number theory \cite{Riemann_Zeta}. For a review on the theory of GMC and for further references and applications we refer to \cite{RV_Review}.

\paragraph{Multifractal analysis} 
The purpose of multifractal analysis is to finely describe the heterogeneity in distribution of measures whose mass is concentrated in a highly irregular way. For $\mu$ a non-negative finite measure supported on a domain $D \subset \R^d$, $d \geq 1$, we introduce the local H\"older exponent (or local dimension) of $\mu$ at $x \in D$ by letting
\begin{equation*}
\dim_{\mu}(x) :=  \lim_{r \searrow 0} \frac{\log \mu(B(x, r))}{\log r} \,,
\end{equation*}
provided the limit exists, where $B(x, r)$ denotes the closed ball centred at $x$ with radius $r$. Then the irregularity on the mass concentration of $\mu$ can be described via the dimension of the sets
\begin{equation*}
\EE_{\mu}(\alpha) := \left\{x \in D \, : \, \dim_{\mu}(x) = \alpha\right\} \,, \quad  \alpha \geq 0 \,.
\end{equation*}
In particular, we say that $\mu$ is a multifractal measure if the sets $\EE_{\mu}(\alpha)$ have positive Hausdorff dimensions for different values of $\alpha \geq 0$ belonging to an interval with non-empty interior. Hence, if $\mu$ is a multifractal measure, then the collection of sets $(\EE_{\mu}(\alpha))_{\alpha \geq 0}$ produces a decomposition of $D$ into a family of subfractals. 

The main objective of multifractal analysis is to compute the size of $\EE_{\mu}(\alpha)$, i.e.\ to find an expression for the \emph{singularity spectrum} of $\mu$, which is the function $\d_{\mu} : [0, \infty) \to [0, \infty)$ defined by
\begin{equation*}
\d_{\mu}(\alpha) := \dim_{\mathcal{H}}(\EE_{\mu}(\alpha)) \,, \quad \alpha \in [0, \infty) \,,
\end{equation*}
where $\dim_{\mathcal{H}}$ denotes the Hausdorff dimension. To this end, Frisch and Parisi \cite{Parisi} introduced the notion of \emph{multifractal formalism}, which is a heuristic principle used to establish an explicit connection between the singularity spectrum $\d_{\mu}$ and the $L^q$-spectrum of the measure $\mu$. We define the $L^q$-spectrum of $\mu$ by
\begin{equation*}
\tau_{\mu}(q) := \limsup_{r \searrow 0}\frac{\log \sup\left\{\sum_{i \in I} \mu(B(x_i, r))^q\right\}}{-\log r} \,, \quad q \in \R \,,
\end{equation*}
where $(B(x_i , r))_{i \in I}$ is a countable family of disjoint closed balls with radius $r$ centred at $x_i \in D$, and the supremum is taken over all such families. We say that $\mu$ satisfies the multifractal formalism if the following equality holds 
\begin{equation}
\label{eq_MultifractalFormalism}
\d_{\mu}(\alpha) = \tau_{\mu}^{*}(\alpha) \,, \quad  \forall \alpha \geq 0 \,.
\end{equation}
Here, $\tau_{\mu}^{*}$ refers to the Legendre--Fenchel transform of $\tau_{\mu}$ which is defined by setting
\begin{equation*}
\tau_{\mu}^{*}(\alpha) := \inf_{q \in \R}\left\{\alpha q + \tau_{\mu}(q)\right\} \wedge 0\,, \quad \alpha \geq 0\,.
\end{equation*}
Let us mention that a rigorous mathematical version of multifractal formalism was initially developed in \cite{Olsen} and we refer to it for further details.

In order to investigate the local regularity of functions, or in our case of paths of stochastic processes, we adopt a similar approach. More precisely, if $I \subset \R$ is an interval and $f: I \to \R^d$, $d \geq 1$, is a given function, we define the lower local H\"older exponent (or lower local dimension) of $f$ at $x \in I$ by setting
\begin{equation*}
\underline{\dim}_{f}(x) := \liminf_{r \searrow 0} \frac{\log |f(x+r) - f(x-r)|}{\log r}\,,
\end{equation*}
and we define the sets in which $f$ has lower local H\"older exponent $\alpha \geq 0$ as 
\begin{equation*}
\underline{\EE}_{f}(\alpha) := \left\{x \in I \, : \, \underline{\dim}_{f}(x)  = \alpha \right\} \,.
\end{equation*}
As in the case of measures, we are interested in finding an explicit expression for the lower singularity spectrum of $f$, which is the function $\underline{\d}_{f} : [0, \infty) \to [0, \infty)$ defined by
\begin{equation*}
\underline{\d}_{f}(\alpha) := \dim_{\H}(\underline{\EE}_{f}(\alpha)) \,,  \quad \alpha \in [0, \infty) \,.
\end{equation*}

\subsection{Main results}
We state here the main results of this paper, and we refer to Section~\ref{sec:main}, \ref{sec:MRW} and \ref{sec:LBM} for the precise statements and for further details.

\paragraph{Multifractal analysis of GMC}
For a bounded domain $D \subset \R^d$, $d \geq 1$, we consider a log-correlated Gaussian field on $D$, and we show that the associated subcritical GMC measure $M_{\gamma}$ satisfies the multifractal formalism, in the sense of \eqref{eq_MultifractalFormalism}. This result has been heuristically discussed in \cite[Section~4]{RV_Review}, but to the best of our knowledge it has never been proved rigorously. To be precise, in Theorem~\ref{th_MainTheorem} we show that for $\gamma^2 < 2d$ and $\alpha \geq 0$, it holds almost surely that
\begin{equation*}
\d_{M_{\gamma}}(\alpha) = \tau_{M_{\gamma}}^{*}(\alpha)
= \begin{cases}
d-\frac{1}{2}\left(\frac{d-\alpha}{\gamma} + \frac{\gamma}{2}\right)^2 \,, & \quad \text{ if } \alpha \in \left[\left(\sqrt{d} - \frac{|\gamma|}{\sqrt{2}}\right)^2,  \left(\sqrt{d} + \frac{|\gamma|}{\sqrt{2}}\right)^2 \right] \,, \\
0 \,, & \quad \text{ otherwise} \,,
\end{cases}
\end{equation*}
hence recovering precisely the prediction made in \cite{RV_Review}. Furthermore, as a by-product, we also obtain an explicit expression for the $L^q$-spectrum of $M_{\gamma}$.

Let us briefly flesh out the intuition behind the proof of our main result. The strategy of the proof is inspired by the multifractal analysis of multiplicative cascades which has been performed in \cite{Barral99}. The upper bound for $\d_{M_{\gamma}}$, i.e.\ the inequality $\d_{M_{\gamma}} \leq \tau_{M_{\gamma}}^{*}$, follows from the general theory of the multifractal analysis of measures. On the other hand, the lower bound is more involved to prove and it is based on the following heuristic. For $\gamma^2 < 2d$, it is known that the GMC measure $M_{\gamma}$ is carried by the set of $\gamma$-thick points of the underlying field $X$ which is defined by 
\begin{equation}
\label{eq_GammaThick}
\mathcal{T}_{\gamma} := \left\{x \in D \, : \, \lim_{\eps \searrow 0} \frac{X_{\eps}(x)}{-\log \eps} = \gamma\right\} \,,
\end{equation}
where $(X_{\eps})_{\eps \in (0, 1]}$ is a suitable regularization of $X$. The key to our proof is to study the local mass concentration of $M_{\gamma}$ around thick points of $X$. More precisely, following the idea of \cite[Theorem~4.1]{RV_Review}, we show that it exists a non-random exponent $\alpha_q \geq 0$ such that $M_{\gamma}$ has local dimension $\alpha_q$ at points in $\mathcal{T}_{q \gamma}$, for all $q^2 < 2d/\gamma^2$. Consequently, this implies that $M_{\gamma}$ has local H\"older exponent $\alpha_q$ on a set of full $M_{q \gamma}$-measure. This fact, together with some known properties of GMC measures, is enough to prove the lower bound for $\d_{M_{\gamma}}$, i.e.\ the inequality $\d_{M_{\gamma}} \geq \tau_{M_{\gamma}}^{*}$. 

\paragraph{Multifractal analysis of MRW and LBM}
The MRW has been first introduced in \cite{MRW} as a stochastic volatility model, and it can be simply defined as follows. Fix a time $T > 0$ and let $d \geq 1$, then the $d$-dimensional MRW $\Z^{\gamma}$ is defined for $\gamma^2 < 2$ as 
\begin{equation*}
\Z^{\gamma}_t := B_{M_{\gamma}([0, t])} \,, \quad t \in [0, T]\,,
\end{equation*}
where $M_{\gamma}$ is a GMC measure on $[0, T]$ and $B$ is an independent $d$-dimensional Brownian motion. In Theorem \ref{th_MainTheoremBMMT}, we find the relation between the lower singularity spectrum of the paths of $\Z^{\gamma}$ and the singularity spectrum of $M_{\gamma}$. More precisely, for $\gamma^2 < 2$ and $\alpha \geq 0$, we show that
\begin{equation*}
\underline{\d}_{\Z^{\gamma}}(\alpha) = \d_{M_{\gamma}}(2 \alpha) = \begin{cases}
d-\frac{1}{2}\left(\frac{1-2 \alpha}{\gamma} + \frac{\gamma}{2}\right)^2 \,, & \quad \text{ if } \alpha \in \left[\left(\frac{1}{\sqrt{2}} - \frac{|\gamma|}{2}\right)^2,  \left(\frac{1}{\sqrt{2}} + \frac{|\gamma|}{2}\right)^2 \right] \,, \\
0 \,, & \quad \text{ otherwise} \,,
\end{cases}
\end{equation*}
almost surely. 

The LBM has been simultaneously defined in \cite{Ber_LBM, RVG_LBM} as the canonical planar diffusion associated with the Liouville quantum gravity metric tensor. In this article, we will consider the $d$-dimensional LBM, for $d \geq 2$. More precisely, let $D \subset \R^d$ be a bounded domain, let $B$ be a $d$-dimensional Brownian motion started inside $D$, and let $X$ be an independent log-correlated Gaussian field on $D$. Then the LBM $\B^{\gamma}$ on $D$ can be formally defined for $\gamma^2 < 4$ as 
\begin{equation*}
\B^{\gamma}_t := B_{F_{\gamma}^{-1}(t)} \,, \qquad F_{\gamma}(t) := \int_0^{t \wedge T} e^{\gamma X(B_s) - \frac{1}{2} \gamma^2 \E[X(B_s)^2]} \d s \,, \quad t \geq 0 \,,
\end{equation*}
where $T$ is the first exit time of $B$ from $D$. As we will see in Section~\ref{sec:LBM}, adapting the original definition of planar LBM to higher dimensions is a straightforward task. Then, we focus on computing the lower singularity spectrum of the paths of $\B_{\gamma}$, and in Theorem~\ref{th_MainTheoremLBM}, we show that for $\gamma^2 < 4$ and $\alpha \geq 0$, it holds that
\begin{equation*}
\underline{\d}_{\B^{\gamma}}(\alpha) = \begin{cases}
2 \alpha-2\alpha\left(\frac{2\alpha-1}{2 \alpha \gamma} + \frac{1}{4} \gamma \right)^2 \,, & \quad \text{ if } \alpha \in \left[\left(\sqrt{2}+\frac{|\gamma|}{\sqrt{2}}\right)^{-2},  \left(\sqrt{2}-\frac{|\gamma|}{\sqrt{2}}\right)^{-2}\right] \,, \\
0 \,, & \quad \text{ otherwise} \,,
\end{cases}
\end{equation*}
almost surely. Along the proof of this result, we show that the measure $\mu_{\gamma}$ defined by $\mu_{\gamma}([s, t]) := F_{\gamma}(t) - F_{\gamma}(s)$, for $s \leq t \in [0, T]$, satisfies the multifractal formalism and we compute its singularity spectrum. 

\subsection{Structure of the paper} 
The reminder of this article is structured as follows. In Section~\ref{sec:setup}, we collect some definitions and results that are used in the rest of the article. More precisely, we start by recalling the definition of log-correlated Gaussian field, and we prove a lemma concerning the fluctuations of its convolution approximation. Then we recall some known properties of GMC measures, we collect some facts about the Hausdorff dimension, and finally we state some general results on multifractal analysis of measures. In Section~\ref{sec:main}, we state precisely our main result and we perform its proof. Sections~\ref{sec:MRW} and \ref{sec:LBM} are devoted to computing the lower singularity spectrum of the MRW and of the LBM, respectively. In Appendix~\ref{sec:ProofProp}, we prove Proposition~\ref{pr_RVCarrier}, which is the main step of the proof of our main result. Appendix~\ref{sec:finiteMoments} contains the proof of the finiteness of positive moments of the measure involved in the definition of the LBM. Finally, Appendix~\ref{sec:GaussianTool} collects some general results on Gaussian fields that are used throughout the paper.

\begin{acknowledgements}
The author would like to thank Prof.\ M.~Hairer for his constant support and guidance. We thank an anonymous referee for many helpful comments on an earlier version of this article. The author is very grateful to the Royal Society for financial support through Prof.\ M.~Hairer's research professorship grant RP\textbackslash R1\textbackslash 191065.
\end{acknowledgements}

\section{Preliminaries}
\label{sec:setup}
In this section, after introducing the basic notation, we collect some definitions and results on log-correlated Gaussian fields, on the theory of GMC, on the Hausdorff dimension, and on multifractal analysis of measures. 

\subsection*{Basic notation}
We let $\mathbb{N} := \{1, 2, 3, \dots\}$. For $d \geq 1$, we use $\R^d$ to indicate the $d$-dimensional Euclidean space. If $a$ and $b$ are two quantities, we use $a \lesssim b$ to denote the statement $a \leq C b$ for some constant $C > 0$ independent from the parameters of interest. 
Given a subset $D \subset \R^d$, we denote by $\bar{D}$ its closure and by $\partial D$ its boundary.

\subsection{Log-correlated Gaussian fields}
Given a bounded domain $D \subset \R^d$, $d \geq 1$, a log-correlated Gaussian field $X$ on $D$ is a Gaussian field whose covariance kernel takes the form
\begin{equation}
\label{eq_Covariance}
K(x, y) = -\log|x-y| + g(x, y) \,, \quad x, y \in D \,,
\end{equation}
where $g \in C(\bar{D} \times \bar{D})$. We adopt the convention to extend the covariance kernel $K$ to $\R^d \times \R^d$ by setting $K(x, y) = 0$ whenever $(x, y) \not \in D \times D$. Moreover, for \eqref{eq_Covariance} to be a covariance kernel, we need to require that it is symmetric and non-negative semi-definite. Since the covariance kernel $K$ has a singularity on the diagonal, the field $X$ does not make literal sense as a pointwise defined Gaussian field, but it can be rigorously interpreted as a random Schwartz distribution. Such a random generalized function can be characterized by the property that, for any compactly supported test function $\phi \in C_c^{\infty}(\R^d)$, the pairing $(X, \phi)$ produces a centred Gaussian random variable with variance
\begin{equation*}
\E[(X, \phi) (X, \phi)] = \int_{\R^d \times \R^d} \phi(x) K(x, y) \phi(y) \d x \d y \,.
\end{equation*}
The existence of such a stochastic process follows from a direct construction. Indeed, it can be easily verified that $K$, as defined in \eqref{eq_Covariance}, is the kernel of a self-adjoint Hilbert--Schmidt operator on $L^2(\R^d)$. In particular, such an operator is symmetric and compact, so by the spectral theorem there exist a non-increasing sequence $(\lambda_{n})_{n \in \mathbb{N}}$ of strictly positive eigenvalues and corresponding eigenfunctions $(\phi_n)_{n \in \mathbb{N}}$ that form an orthonormal basis of $\text{Ker}(K)^{\perp}$ in $L^2(\R^d)$. Then the field $X$ can be defined via its Karhunen--Lo\`{e}ve expansion
\begin{equation*}
X(x) = \sum_{n \in \mathbb{N}} Z_n \sqrt{\lambda_n} \phi_n \,,
\end{equation*}
where $(Z_n)_{n \in \mathbb{N}}$ is a collection of i.i.d.\ standard normal random variables defined on a common probability space $(\Omega, \F, \P)$. Note that the functions $\phi_n$ are supported on the domain $D$. It can be proved that the convergence of the above series takes place in the space $H^{- \alpha}(\R^d)$, for any $\alpha > 0$. We refer to \cite[Section~2]{Junnila_Imaginary} for more details.

In order to give a meaning to GMC measures, we need to define the exponential of a random Schwartz distribution. This is done through a regularization and limiting procedure. The most general and natural way to obtain a regularization of $X$ is through the so-called convolution approximation. If we let $\psi \in C^{\infty}_{c}(\R^d)$ be non-negative, radially symmetric, with compact support and unit mass, then the $\eps$-convolution approximation of the field $X$ is defined by 
\begin{equation}
\label{eq_ConvApprox}
X_{\eps}(x) : = X * \psi_{\eps}(x) = \int_{\R^d} X(x') \psi_{\eps}(x-x') \d x' \,, \quad  x \in \R^d \,, \quad  \eps \in (0, 1] \,,
\end{equation}
where $\psi_{\eps}(x) = \eps^{-d} \psi(\eps^{-1} x)$ for $x \in \R^d$. As we observed above, $X \in H^{-\alpha}(\R^d)$ for any $\alpha > 0$, and so the convolution $X * \psi_{\eps}$ is actually well-defined. Moreover, it is easy to check that the regularized field $X_{\eps}$ is a centred Gaussian field with covariance kernel given by
\begin{equation*}
K_{\eps}(x, y) = \int_{\R^d \times \R^d} \psi_{\eps}(x-x')K(x', y') \psi_{\eps}(y - y') \d x' \d y' \,, \quad x, y \in \R^d \,.
\end{equation*}
Furthermore, the following properties of the convolution approximation $(X_{\eps})_{\eps \in (0, 1]}$ are well known (see e.g.\ \cite[Lemma~2.8]{Junnila_Imaginary}):
\begin{enumerate}[label=(P\arabic*)]
\itemsep0em 
\item\label{pp_P1} for Lebesgue-almost every $(x, y) \in D \times D$,
\begin{equation*}
\lim_{\eps \searrow 0} K_{\eps}(x, y)= K(x,y) \,;
\end{equation*}
\item\label{pp_P3} there exists a finite constant $K > 0$ such that,
\begin{equation*}
\sup_{0 < \eps' < \eps \leq 1} \sup_{x, y \in D} \left| \E[X_{\eps}(x) X_{\eps'}(y)] + \log\left(|x-y| + \eps\right) \right| < K \,;
\end{equation*}
\item\label{pp_P4} for all $\eps \in (0, 1]$, the map $D \ni x \mapsto X_{\eps}(x)$ is almost surely continuous. 
\end{enumerate}

Let us mention that for particular types of log-correlated Gaussian fields there are some other natural approximations having properties \ref{pp_P1}, \ref{pp_P3}, and \ref{pp_P4}. For example, if $X$ is a two-dimensional \emph{Gaussian Free Field} (GFF), then one can use the regularization obtained through the circle average around each point in the domain (cf.\ \cite{DS_Inventiones}).

Before proceeding, we state and prove a lemma that allows to control the exponential moment of the fluctuations of the convolution approximation of log-correlated Gaussian fields. Such lemma is crucial in the proof of our main result, more specifically in the proof of Proposition~\ref{pr_RVCarrier}.
\begin{lemma}
\label{lm_supremumGaussian}
For $d \geq 1$, let $D \subset \R^d$ be a bounded domain. Consider a log-correlated Gaussian field $X$ on $D$ with covariance kernel \eqref{eq_Covariance} and let $(X_{\eps})_{\eps \in (0, 1]}$ be its convolution approximation, as defined in \eqref{eq_ConvApprox}. Then there exists a finite constant $C > 0$ such that, 
\begin{equation*}
\sup_{r \in (0, 1]} \sup_{x \in D_r} \E \left[e^{\sup_{u \in B(x,r)} X_r(u)-X_{r}(x)}\right] \leq C \,,
\end{equation*}
where $D_r := \{x \in D \, : \, \dist(x, \partial D) < r\}$. 
\end{lemma}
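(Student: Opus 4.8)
The plan is to reduce the statement to a uniform Gaussian supremum bound via Dudley's entropy inequality (or a Borell--TIS / Fernique-type estimate) after controlling the canonical metric of the centered field $u \mapsto X_r(u) - X_r(x)$ on the ball $B(x,r)$. First I would fix $r \in (0,1]$ and $x \in D_r$, and set $Y(u) := X_r(u) - X_r(x)$ for $u \in B(x,r)$, a centered Gaussian field vanishing at $u = x$. The key quantity is the canonical (pseudo)metric $d_Y(u,v)^2 = \E[(Y(u) - Y(v))^2] = \E[(X_r(u) - X_r(v))^2]$. Using property~\ref{pp_P3}, $\E[X_r(u) X_r(v)] = -\log(|u-v| + r) + O(1)$ uniformly in $u, v \in D$ and $r \in (0,1]$, so expanding gives $d_Y(u,v)^2 = 2\log\frac{|u-v|+r}{r} + O(1) \leq 2\log\bigl(1 + |u-v|/r\bigr) + C_0 \leq 2|u-v|/r + C_0$ for $u, v \in B(x,r)$ (since $|u-v| \le 2r$ keeps the log argument bounded). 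In particular $\diam_{d_Y}(B(x,r)) \le C_1$ uniformly in $r$ and $x$, and the Euclidean ball of radius $r$ embeds into the metric space $(B(x,r), d_Y)$ as a set of $d_Y$-diameter $O(1)$ with $d_Y(u,v) \lesssim \sqrt{|u-v|/r}$.

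Next I would estimate the metric entropy. A $d_Y$-ball of radius $\delta$ contains a Euclidean ball of radius $\gtrsim r\delta^2$ (from $d_Y(u,v) \lesssim \sqrt{|u-v|/r}$), so the covering number $N(\delta)$ of $(B(x,r), d_Y)$ by $d_Y$-balls of radius $\delta$ is at most $\bigl(C_2 r / (r \delta^2)\bigr)^d = (C_2/\delta^2)^d$ for $\delta \le C_1$, and $N(\delta) = 1$ for $\delta > C_1$ — crucially with constants independent of $r$ and $x$. Hence Dudley's bound yields
\begin{equation*}
\E\Bigl[\sup_{u \in B(x,r)} Y(u)\Bigr] \le C_3 \int_0^{C_1} \sqrt{\log N(\delta)}\, \d \delta \le C_3 \int_0^{C_1} \sqrt{2d \log(C_2/\delta^2)}\, \d \delta =: C_4 < \infty \,,
\end{equation*}
uniformly over $r \in (0,1]$ and $x \in D_r$, since the integrand is integrable near $0$. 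This handles the expectation of the supremum.

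Finally, to pass from control of $\E[\sup Y]$ to control of $\E[e^{\sup Y}]$, I would invoke the Borell--TIS inequality: writing $S := \sup_{u \in B(x,r)} Y(u)$ and $\sigma^2 := \sup_{u} \Var(Y(u)) = \sup_u d_Y(u,x)^2 \le C_1^2$ (uniformly bounded by the diameter computation above), one has $\P(S - \E[S] > t) \le e^{-t^2/(2\sigma^2)}$, and integrating $\E[e^S] = e^{\E[S]} \E[e^{S - \E[S]}] \le e^{C_4} \int_0^\infty e^t\, \d\bigl(-\P(S - \E[S] > t)\bigr)$ together with the Gaussian tail gives a finite bound depending only on $C_4$ and $C_1$, hence only on $d$, $D$, $g$, $\psi$. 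One technical point to verify is that $S$ is a well-defined (finite, measurable) random variable and that $u \mapsto Y(u)$ is a.s.\ continuous on the compact ball, which follows from property~\ref{pp_P4}; separability of the sup is then automatic. The main obstacle is making every constant in the entropy/Dudley step genuinely uniform in $r$ — this is exactly where property~\ref{pp_P3} (the $+\log(|x-y|+\eps)$ regularization of the covariance, with the precise $\eps = r$ matching the scale of the ball) does the essential work, turning the would-be logarithmic blow-up of $d_Y$ into an $r$-independent bound.
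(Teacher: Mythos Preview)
Your strategy is essentially the paper's: bound $\sup_u \operatorname{Var}(X_r(u)-X_r(x))$ uniformly via \ref{pp_P3}, control $\E[\sup_u (X_r(u)-X_r(x))]$ by Dudley's entropy bound, apply Borell--TIS for concentration, and integrate the Gaussian tail to get the exponential moment.

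There is one genuine slip in your entropy step. From \ref{pp_P3} alone you correctly obtain $d_Y(u,v)^2 \le 2\log\bigl(1+|u-v|/r\bigr)+C_0$ with an additive constant $C_0>0$, but you then silently upgrade this to $d_Y(u,v)\lesssim\sqrt{|u-v|/r}$ with no additive constant in order to deduce that a $d_Y$-ball of radius $\delta$ contains a Euclidean ball of radius $\gtrsim r\delta^2$. With the additive $C_0$, a Euclidean ball of radius $r\delta^2$ only has $d_Y$-radius $\le\sqrt{2\delta^2+C_0}$, which is not $\le\delta$ once $\delta^2<C_0$; so your covering-number bound $N(\delta)\le (C_2/\delta^2)^d$ breaks down near $\delta=0$, and the Dudley integral is not controlled uniformly in $r$ as written. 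Property~\ref{pp_P3} by itself does not rule this out: the error terms $g_u,g_v,g_{uv}$ in $K_r(\cdot,\cdot)=-\log(|\cdot-\cdot|+r)+g_{\cdot\cdot}$ are each only bounded by $K$ and need not cancel as $u\to v$.

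The paper faces the same issue and closes it by invoking, in addition to \ref{pp_P3}, the extra regularity specific to the \emph{convolution} approximation (citing \cite[Lemma~2.8]{Junnila_Imaginary}), which yields the genuine increment bound $\E[(X_r(u)-X_r(v))^2]\le A\,|u-v|/r$ uniformly in $r$, with no additive constant. This comes from differentiating the mollifier, not from \ref{pp_P3}. Once you insert that Lipschitz-type bound, your entropy estimate $N(\delta)\lesssim \delta^{-2d}$ is valid for all $\delta\in(0,C_1]$, the Dudley integral is finite uniformly, and the remainder of your argument (Borell--TIS plus tail integration) matches the paper's verbatim.
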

\begin{proof}
Fix $r \in (0,1]$ and $x \in D_r$. Consider the centred Gaussian field $(X_r^x(u))_{u \in B(x, r)}$ defined by
\begin{equation*}
X_r^x(u) := X_r(u) - X_r(x)\,,  \quad u \in B(x, r) \,,
\end{equation*}
and the random variable $\Omega_r^x$ given by
\begin{equation*}
\Omega_r^x := \sup_{u \in B(x, r)} X_r^x(u) \,. 
\end{equation*}
The variance of the field $(X_r^x(u))_{u \in B(x, r)}$ can be uniformly bounded in $r \in (0, 1]$ and $x \in D_r$. Indeed, for $u$, $v \in B(x, r)$, thanks to property \ref{pp_P3}, it exists a finite constant $K>0$ such that 
\begin{align}
\label{eq_UnifBoundCov}
\E[X_r^x(u)X_r^x(v))] & \leq -\log(|u-v|+r) + \log(|u-x|+r) + \log(|v-x|+r) - \log r + 4K \nonumber \\
& \leq -2\log r + 2\log 2r + 4K \nonumber \\
& = 2 \log 2 + 4K \,,
\end{align}
where the last inequality is due to the fact that $|u-x| \leq r$ and $|v-x| \leq r$. Thanks to property \ref{pp_P4}, we know that the the field $X_r^x$ is almost surely continuous and so we can apply Borell-TIS inequality (cf.\ Lemma~\ref{lm_Borell}) to obtain that
\begin{equation}
\label{eq_stepProofBoundLog}
\P\left(\left|\Omega_r^x- \E[\Omega_r^x ] \right| > t \right) \leq 2 e^{-\frac{t^2}{2 \sigma_{x, r}^2}} \,,
\end{equation}
for all $t \geq 0$, where $\sigma_{x, r}^2 := \sup_{u \in B(x, r)} \E \left[X_r^x(u)^2\right]$. We claim that both $\sigma_{x, r}^2$ and $\E[\Omega_r^x ]$ can be uniformly bounded in $x$ and $r$. For $\sigma_{x, r}^2$, this follows easily from \eqref{eq_UnifBoundCov}. For $\E[\Omega_r^x ]$, we can apply Dudley's entropy bound (cf.\ Lemma~\ref{lm_Dudley}). Indeed, doing computation similar to the one in \eqref{eq_UnifBoundCov}, and using the properties of the convolution approximation (cf.\ also \cite[Lemma~2.8]{Junnila_Imaginary}), one can easily verify that there exists a finite constant $A>0$, that does not depend on $x$ and $r$, such that  
\begin{equation*}
\E[(X_r^x(u) - X_r^x(v))^2] = \E[(X_r(u) - X_r(v))^2]  \leq A \frac{|u-v|}{r} \,, \quad \forall u, v \in B(x, r) \,. 
\end{equation*}
Therefore, applying Lemma~\ref{lm_Dudley} to the field $(X_r^x(u))_{u \in B(x,r)}$, one can readily obtain the desired uniform upper bound. Hence, from \eqref{eq_stepProofBoundLog}, it follows that there exist finite constants $a_0$, $a_1 > 0$, independent of $x$ and $r$, such that 
\begin{equation*}
\P\left(\Omega_r^x > t\right) \leq 2 e^{-\frac{(t-a_0)^2}{2 a_1}} \,.
\end{equation*}
Finally, remembering that for every non-negative random variable $X$ it holds that $\E[X] = \int_0^{\infty} \P(X > t) \d t$, we have that
\begin{equation*}
\E\left[e^{\Omega_r^x}\right] =  \int_0^{\infty} \P(e^{\Omega_r^x} > t) \d t = \int_0^{\infty} \P\left(\Omega_r^x > \log t \right) \d t \leq \int_0^{\infty} 2 e^{-\frac{(\log(t)-a_0)^2}{2 a_1}}\d t < \infty \,,
\end{equation*}
uniformly in $x$ and $r$, which concludes the proof.
\end{proof}

\subsection{Properties of Gaussian multiplicative chaos}
We recall here the main properties of GMC measures that we need in the following sections. For $d \geq 1$, let $D \subset \R^d$ be a bounded domain. Consider a log-correlated Gaussian field $X$ defined on $D$ with covariance kernel \eqref{eq_Covariance} and let $(X_{\eps})_{\eps \in (0, 1]}$ be its convolution approximation as defined in \eqref{eq_ConvApprox}. 

We start by recalling the following standard result concerning the existence and the non-degeneracy of GMC measures.
\begin{proposition}[{\cite[Theorem~1.1]{Berestycki_Elementary}}]
\label{pr_consGausChaos}
If $\gamma^2 < 2d$, then the sequence of approximating measures
\begin{equation*}
M^{\eps}_{\gamma} (\d x) := e^{\gamma X_{\eps}(x)-\frac{1}{2} \E[X_{\eps}(x)^2]} \d x \,, \quad \eps \in (0, 1] \,,
\end{equation*}
converges in probability and in $L^1(\P)$ to some non-degenerate limit $M_{\gamma}$, called GMC measure, in the space of Radon measures with respect to the topology of weak convergence. Furthermore, $M_{\gamma}$ does not depend on the choice of the mollifier $\psi$ used in \eqref{eq_ConvApprox}. 
\end{proposition}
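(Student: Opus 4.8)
The plan is to follow the elementary Cameron--Martin (Girsanov) approach of \cite{Berestycki_Elementary}. Assume $\gamma\neq 0$ (the case $\gamma=0$ giving the Lebesgue measure) and, using the symmetry $X\mapsto -X$, that $\gamma>0$. Since $\E[M_\gamma^\eps(f)]=\int f\,\d x$ for every non-negative $f\in C_c(\R^d)$ and every $\eps\in(0,1]$, the family $(M_\gamma^\eps)_{\eps}$ is tight in the space of Radon measures, so it is enough to prove that $M_\gamma^\eps(f)\to M_\gamma(f)$ in $L^1(\P)$ for each such $f$; a routine argument (linearity, positivity and the Riesz representation theorem along a countable dense family of test functions) then produces a random Radon measure $M_\gamma$ with $M_\gamma^\eps\to M_\gamma$ weakly in probability. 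The key device is a truncated measure: for a fixed $\alpha\in(\gamma,\sqrt{2d})$ and $\eta\in(0,1]$, let $G_\eps^{\alpha,\eta}(x)$ be the barrier event on which $X_\delta(x)\le \alpha\log(1/\delta)+K_0$ for every dyadic scale $\delta\in[\eps,\eta]$, where $K_0$ is the constant of \ref{pp_P3}, and put $M_{\gamma,\alpha,\eta}^\eps(f):=\int f(x)\,\mathbbm{1}_{G_\eps^{\alpha,\eta}(x)}\,e^{\gamma X_\eps(x)-\frac12\gamma^2\E[X_\eps(x)^2]}\,\d x\le M_\gamma^\eps(f)$. Everything rests on two bounds: a uniform-in-$\eps$ second moment bound for $M_{\gamma,\alpha,\eta}^\eps(f)$, and a uniform-in-$\eps$ bound for the truncation error $\E[M_\gamma^\eps(f)-M_{\gamma,\alpha,\eta}^\eps(f)]$ that tends to $0$ as $\eta\to 0$.

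For the second moment bound I would expand $\E[M_{\gamma,\alpha,\eta}^\eps(f)^2]$ and apply Girsanov simultaneously to the weights at $x$ and at $y$: by \ref{pp_P3} the exponential of the cross term becomes a factor comparable to $(|x-y|\vee\eps)^{-\gamma^2}$, while the two barrier indicators survive as a probability under the doubly tilted law. For $|x-y|\ge\eta$ this probability is at most $1$ and the corresponding contribution is $\lesssim\eta^{-\gamma^2}\|f\|_1^2$. For $|x-y|<\eta$ it suffices to keep the barrier at the single scale $\delta=|x-y|$: under the doubly tilted law $X_{|x-y|}(x)$ is Gaussian with mean $\approx 2\gamma\log(1/|x-y|)$ (the factor $2$ because both tilts push it up at scales above $|x-y|$) and variance $\approx\log(1/|x-y|)$, whereas the barrier forces it below $\alpha\log(1/|x-y|)$; since $\alpha<2\gamma$, this is a lower Gaussian deviation of probability $\lesssim|x-y|^{(2\gamma-\alpha)^2/2}$, so the near-diagonal contribution is $\lesssim\int\!\!\int_{|x-y|<\eta}|x-y|^{(2\gamma-\alpha)^2/2-\gamma^2}\,\d x\,\d y$, which is finite precisely when $(2\gamma-\alpha)^2>2(\gamma^2-d)$ --- an inequality that holds for every $\alpha\in(\gamma,\sqrt{2d})$ as soon as $\gamma^2<2d$. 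Hence $\sup_\eps\E[M_{\gamma,\alpha,\eta}^\eps(f)^2]<\infty$; running the same computation with two values of the regularization parameter, together with \ref{pp_P1} and dominated convergence, shows that $(M_{\gamma,\alpha,\eta}^\eps(f))_\eps$ is Cauchy, hence convergent, in $L^2(\P)$.

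For the truncation error, Girsanov at $x$ gives $\E[M_\gamma^\eps(f)-M_{\gamma,\alpha,\eta}^\eps(f)]=\int f(x)\,\P_x^\gamma\big((G_\eps^{\alpha,\eta}(x))^c\big)\,\d x$, where under $\P_x^\gamma$ the variable $X_\delta(x)$ is Gaussian with mean $\gamma\log(1/\delta)+O(1)$ and variance $\log(1/\delta)+O(1)$; a union bound over the dyadic scales in $[\eps,\eta]$ with one-dimensional Gaussian tail estimates bounds the crossing probability by $\lesssim\sum_{\delta\in[\eps,\eta]}\delta^{(\alpha-\gamma)^2/2}\lesssim\eta^{(\alpha-\gamma)^2/2}=:c(\eta)$, uniformly in $\eps$. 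Then $\E|M_\gamma^{\eps}(f)-M_\gamma^{\eps'}(f)|\le 2c(\eta)\|f\|_1+\E|M_{\gamma,\alpha,\eta}^\eps(f)-M_{\gamma,\alpha,\eta}^{\eps'}(f)|$, and choosing $\eta$ small and then $\eps,\eps'$ small shows $(M_\gamma^\eps(f))_\eps$ is Cauchy in $L^1(\P)$, giving the desired convergence. Non-degeneracy is then immediate, since $\E[M_\gamma(f)]=\lim_\eps\E[M_\gamma^\eps(f)]=\int f\,\d x>0$ whenever $f\ge 0$ and $f\not\equiv 0$. Finally, independence of the mollifier is obtained by re-running the same truncation estimates for the mixed approximations built from two admissible mollifiers --- whose covariances also satisfy \ref{pp_P1}, \ref{pp_P3}, \ref{pp_P4} --- to get $\E|M_\gamma^{(1),\eps}(f)-M_\gamma^{(2),\eps}(f)|\to 0$; alternatively one may appeal to Kahane's convexity inequality and the uniqueness theorem for GMC.

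The main obstacle is the uniform second moment bound in the regime $\gamma^2\in[d,2d)$, where the untruncated second moment $\E[M_\gamma^\eps(f)^2]\sim\int\!\!\int(|x-y|\vee\eps)^{-\gamma^2}\,\d x\,\d y$ diverges as $\eps\to 0$: here the barrier truncation is essential, and the Gaussian estimate producing the compensating power $|x-y|^{(2\gamma-\alpha)^2/2}$ must be carried out carefully --- this is precisely the step where subcriticality $\gamma^2<2d$ is used. The second delicate point is the order of the limits: one must let $\eps\to 0$ with $\eta$ fixed and only afterwards let $\eta\to 0$, which is legitimate only because the truncation-error bound $c(\eta)$ is uniform in $\eps$.
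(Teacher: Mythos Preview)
The paper does not prove this proposition at all: it is stated as a quotation of \cite[Theorem~1.1]{Berestycki_Elementary} and used as a black box throughout. Your sketch is a faithful outline of Berestycki's elementary barrier argument from that reference --- rooted-measure (Girsanov) tilt, truncation on the event that $X_\delta(x)$ stays below a sub-critical slope, uniform $L^2$ bound for the truncated mass via the near-diagonal Gaussian deviation, and a uniform-in-$\eps$ truncation error controlled by $\eta$ --- so there is nothing to compare on the paper's side.

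One small remark on your second-moment step: the claim that ``$\alpha<2\gamma$'' so that the barrier at scale $|x-y|$ is a genuine lower deviation under the double tilt is only automatic in the regime $\gamma^2\ge d$ (there $2\gamma\ge 2\sqrt d>\sqrt{2d}>\alpha$); in the $L^2$ phase $\gamma^2<d$ the barrier contributes nothing, but you do not need it since $\int|x-y|^{-\gamma^2}\,\d x\,\d y$ already converges. It would be cleaner to say this explicitly rather than asserting the inequality ``for every $\alpha\in(\gamma,\sqrt{2d})$''. Otherwise the sketch is sound.
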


For $\gamma^2 < 2d$, it can be easily verified that the random measure $M_{\gamma}$ is almost surely supported on the whole domain $D$ (cf.\ \cite{RV_Review}). On the other hand, $M_{\gamma}$ gives full measure to the set of $\gamma$-thick points $\mathcal{T}_{\gamma}$, defined in \eqref{eq_GammaThick}. It is known that the Hausdorff dimension of $\mathcal{T}_{\gamma}$ is equal to $d-\gamma^2/2$ (cf.\ \cite[Theorem~4.2]{RV_Review}). Immediate consequences of this fact are that $M_{\gamma}$ is almost surely atomless, and that $M_{\gamma}$ is singular with respect to $M_{\gamma'}$ for any $\gamma \neq \gamma'$. In particular, for $\gamma \neq 0$, $M_{\gamma}$ is singular with respect to the Lebesgue measure. Let us also observe that when $\gamma^2$ gets closer to $2d$, the measure $M_{\gamma}$ is carried by a set whose Hausdorff dimension gets closer to $0$. In particular, this means that $M_{\gamma}$ tends to cluster as $\gamma^2$ increases, until it degenerates at $\gamma ^2 = 2d$. Let us emphasize that there is a rich literature on critical GMC measures, i.e.\ when $\gamma^2 = 2d$, and we refer to \cite{Powell_Critical} for a review.

We have the following standard result concerning the existence of uniform bounds on positive and negative moments of GMC measures.
\begin{proposition}[{\cite[Theorems~2.11 and 2.12]{RV_Review}}]
\label{pr_UnifBound}
Let $\gamma^2 < 2d$ and $q < 2d/\gamma^2$. Then for any non-empty compact set $A \subset D$ it holds that
\begin{equation*}
\sup_{\eps \in (0,1]} \E\left[\left(\int_{A} e^{\gamma X_{\eps}(x)-\frac{1}{2} \gamma^2 \E[X_{\eps}(x)^2]}\d x \right)^{q}\right] < \infty \,.
\end{equation*}
\end{proposition}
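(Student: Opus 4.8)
The plan is to dispatch the easy ranges of $q$ by elementary means and then handle the genuinely delicate range $q \in (1, 2d/\gamma^2)$ by a Gaussian comparison followed by a multiscale recursion, with a dual argument for negative $q$. For $q \in [0,1]$ the function $x \mapsto x^q$ is concave, so Jensen's inequality and the normalisation $\E[e^{\gamma X_\eps(x) - \frac12\gamma^2\E[X_\eps(x)^2]}] = 1$ give $\E[M_\gamma^\eps(A)^q] \le (\E[M_\gamma^\eps(A)])^q = |A|^q$ uniformly in $\eps$. For $q > 1$ one may cover $A$ by finitely many closed cubes contained in $D$ and use $(\sum_{j=1}^N x_j)^q \le N^{q-1}\sum_j x_j^q$, reducing matters to a uniform bound on $\E[M_\gamma^\eps(Q)^q]$ for a fixed cube $Q \subset D$. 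When $\gamma^2 < d$ the case $q = 2$ is then immediate: by Fubini and property~\ref{pp_P3}, $\E[M_\gamma^\eps(Q)^2] = \int_Q\int_Q e^{\gamma^2 K_\eps(x,y)}\,\d x\,\d y \le e^{\gamma^2 K}\int_Q\int_Q|x-y|^{-\gamma^2}\,\d x\,\d y < \infty$, the finiteness being exactly the condition $\gamma^2 < d$; for a general $q$ there is no such closed-form bound and a scaling argument is required.

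For $q \in (1, 2d/\gamma^2)$ I would argue as follows. By property~\ref{pp_P3}, $K_\eps(x,y) \le -\log(|x-y| \vee \eps) + K$ for $x, y \in D$, so the covariance of $X_\eps$ is dominated, up to an additive constant, by that of the $\eps$-cutoff $Y_\eps$ of an \emph{exactly} scale-invariant log-correlated field $Y$ (whose cutoff covariances grow like $-\log(|x-y|\vee\eps)$). Since $x \mapsto x^q$ is convex for $q \ge 1$, Kahane's convexity inequality (a Gaussian comparison of the type collected in Appendix~\ref{sec:GaussianTool}) reduces the claim, at the cost of a harmless multiplicative constant absorbing the additive term, to a uniform bound on $m_q(\eps) := \E[(\int_Q e^{\gamma Y_\eps - \frac12\gamma^2\E[Y_\eps^2]}\,\d x)^q]$. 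Now I would exploit the exact scaling of $Y$: subdividing $Q$ into $b^d$ sub-cubes of side $1/b$ (with $b \in \mathbb{N}$, $b \ge 2$, fixed), the contribution of the $i$-th sub-cube is, in law, $b^{-d}e^{\gamma\Omega^{(i)} - \frac12\gamma^2\log b}\,\widehat M^{(i)}_{b\eps}(Q)$, where the $\widehat M^{(i)}$ are copies of the mass at the finer cutoff $b\eps$, independent across $i$ (genuinely so after a bounded colouring of the sub-cubes) and independent of the coarse Gaussians $\Omega^{(i)} \sim N(0,\log b)$. Making the $\Omega^{(i)}$ perfectly correlated (one further application of Kahane's inequality, which only increases a convex functional), expanding $(\sum_i a_i)^q$ into its diagonal part $\sum_i a_i^q$ and mixed terms of strictly smaller homogeneity in each factor, and using $\E[e^{q\gamma\Omega^{(i)}}] = b^{\gamma^2 q^2/2}$ together with independence, one finds that the diagonal part contributes $b^{d - \xi_\gamma(q)}\,m_q(b\eps)$ with $\xi_\gamma(q) := (d + \tfrac{\gamma^2}{2})q - \tfrac{\gamma^2}{2}q^2$, while the mixed terms are controlled by products of already-established lower-order moments times $\eps$-uniform constants. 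This yields a recursion of the form $m_q(\eps) \le b^{d - \xi_\gamma(q)}\,m_q(b\eps) + C$.

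The recursion closes precisely on the stated range because $\xi_\gamma(q) - d = (q-1)\big(d - \tfrac{\gamma^2}{2}q\big)$, which is strictly positive if and only if $1 < q < 2d/\gamma^2$; hence $b^{d - \xi_\gamma(q)} < 1$, and iterating $\eps \mapsto b\eps \mapsto b^2\eps \mapsto \cdots$ until the cutoff reaches order one gives $\sup_{\eps \in (0,1]} m_q(\eps) < \infty$. Reversing the Kahane comparison then yields the bound for $M_\gamma^\eps$. (The induction on $q$ is organised by intervals $(1,2], (2,3], \dots$, the $q \le 1$ case serving as base and the mixed terms at each level involving only moments of order $\le q$.)

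Finally, for negative $q$ the function $x \mapsto x^q$ is again convex on $(0,\infty)$, so Kahane's inequality applies once more: using the same domination $K_\eps(x,y) \le -\log(|x-y|\vee\eps) + K$ and the monotonicity of $x^q$, one bounds $\sup_\eps \E[M_\gamma^\eps(A)^q]$ by a constant times $\sup_\eps \E[(\int_Q e^{\gamma Y_\eps - \frac12\gamma^2\E[Y_\eps^2]}\,\d x)^q]$, and the finiteness of all negative moments of the reference measure, uniformly in $\eps$, follows from a Paley--Zygmund/multiscale estimate exploiting the exact scaling of $Y$ (the reference mass on $Q$ is, with overwhelming probability, bounded below by a sum of nearly independent sub-cube contributions). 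The main obstacle throughout is the passage from the merely approximate self-similarity of a general log-correlated field to an exact one — which is exactly what the Kahane comparison with a scale-invariant reference field resolves — after which everything reduces to the contractive recursion above; a secondary nuisance is the bookkeeping of the mixed terms when $q$ is not an integer, but these only ever involve moments of order strictly less than $q$, already handled at earlier stages.
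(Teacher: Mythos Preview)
The paper does not prove this proposition: it is simply quoted from \cite[Theorems~2.11 and 2.12]{RV_Review} and used as a black box, so there is no ``paper's own proof'' to compare against. Your outline is in fact a faithful sketch of the argument one finds in that reference (and in Kahane's original work): treat $q\in[0,1]$ by Jensen, use Kahane's convexity inequality to transfer the question from a general log-correlated field to an exactly scale-invariant one, then run a multiscale recursion whose contraction factor is $b^{\,d-\xi_{M_\gamma}(q)}<1$ precisely for $1<q<2d/\gamma^2$, and handle $q<0$ by a separate lower-bound argument on the reference measure.

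One point deserves sharpening. The ``diagonal plus mixed terms'' expansion of $(\sum_i a_i)^q$ you invoke is not literally available for non-integer $q$; what is actually used in the references is the elementary inequality, valid for $1<q\le 2$ and $a_i\ge 0$,
\[
\Bigl(\sum_i a_i\Bigr)^q \;\le\; \sum_i a_i^q \;+\; q\sum_{i\neq j} a_i^{\,q-1}a_j\,,
\]
which, after taking expectations and exploiting independence of the sub-cube contributions (and of the coarse Gaussians $\Omega^{(i)}$), bounds the off-diagonal part by products of moments of order $q-1\le 1$ and order $1$ --- both already under control. For $q>2$ one then inducts on $\lfloor q\rfloor$. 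Your parenthetical remark about ``bookkeeping of the mixed terms when $q$ is not an integer'' is therefore correct in spirit, but the concrete device is this inequality rather than a literal multinomial expansion. With that substitution your sketch is a complete and standard proof plan.
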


An important feature of GMC measures is their multifractal behaviour. More precisely, it can be shown that the moments of $M_{\gamma}$ have a power law behaviour in which the exponents can be expressed through a non-linear function $\xi_{M_{\gamma}}$, called the power law spectrum, which is defined by 
\begin{equation}
\label{eq_powerSpec}
\xi_{M_{\gamma}}(q) : = \left(d+ \frac{1}{2} \gamma^2\right) q - \frac{1}{2} \gamma^2 q^2 \,, \quad q \in \R \,.
\end{equation}
More formally, we have the following proposition.
\begin{proposition}[{\cite[Theorem~2.14]{RV_Review}}]
\label{pr_PoweLawSpec}
Let $\gamma^2 < 2d$ and $q < 2d / \gamma^2$. Then there exists a finite constant $C > 0$ such that for all $x \in D$ and $r \in (0, 1]$ it holds that
\begin{equation*}
\E[M_{\gamma}(B(x, r))^q] \leq C r^{\xi_{M_{\gamma}}(q)} \,.
\end{equation*}
\end{proposition}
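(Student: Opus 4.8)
The plan is to reduce the statement to the uniform moment bound of Proposition~\ref{pr_UnifBound} by exploiting the approximate scaling (or, more precisely, the exact scaling of an \emph{exactly scale-invariant} comparison field combined with Kahane's convexity inequality). First I would recall that, by property~\ref{pp_P3}, the covariance $K_\eps(x,y)$ of the convolution approximation is comparable to $-\log(|x-y|+\eps)$ up to an additive constant, uniformly over $x,y\in D$ and $0<\eps\le 1$. Consequently $K(x,y)=-\log|x-y|+g(x,y)$ with $g$ bounded, say $|g|\le K$ on $\bar D\times\bar D$. Fix $x\in D$ and $r\in(0,1]$ and consider the field restricted to $B(x,r)$. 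The key observation is the rescaling $u\mapsto x+ru$, which maps $B(x,r)$ onto $B(0,1)$ and transforms the covariance: for $u,v\in B(0,1)$,
\begin{equation*}
K(x+ru,x+rv) = -\log|u-v| - \log r + g(x+ru,x+rv),
\end{equation*}
so the field $\widetilde X(u):=X(x+ru)$ has covariance $-\log|u-v|+\widetilde g(u,v)$ with $\widetilde g$ bounded by $K+\log(1/r)$; the constant shift $-\log r$ in the covariance is exactly the variance of an independent Gaussian $\Omega\sim\mathcal N(0,\log(1/r))$ added to the field, which contributes the multiplicative factor $\E[e^{\gamma\Omega q-\frac12\gamma^2 q^2\log(1/r)}] = r^{-\gamma q+\frac12\gamma^2 q^2}$ after normalization — wait, more carefully, the normalization in $M_\gamma$ already subtracts $\frac12\gamma^2\E[X(x)^2]$, so one tracks the net power of $r$ and it comes out to $r^{d}\cdot r^{-\gamma^2 q^2/2 + (\gamma^2/2)q}\cdot(\text{something})$; collecting the Jacobian $r^d$ from $\d x = r^d\,\d u$ raised to the $q$-th power gives $r^{dq}$, and the covariance shift gives $r^{(\gamma^2/2)(q-q^2)}$, whose product is exactly $r^{\xi_{M_\gamma}(q)}$ with $\xi_{M_\gamma}(q)=(d+\gamma^2/2)q-(\gamma^2/2)q^2$.

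More concretely, the cleanest route avoids delicate exact identities: I would apply Kahane's convexity inequality to compare $M_\gamma(B(x,r))$ with a reference GMC built from an exactly scale-invariant (or $\star$-scale invariant) kernel $-\log|u-v|\vee 0$ plus a constant, for which the scaling relation $M_\gamma^{\text{ref}}(B(x,r)) \overset{d}{=} r^d e^{\gamma\Omega_r-\frac12\gamma^2\E[\Omega_r^2]} \widehat M_\gamma(B(0,1))$ holds with $\Omega_r\sim\mathcal N(0,-\log r)$ independent of a copy $\widehat M_\gamma$ of $M_\gamma$ on the unit ball. Taking $q$-th moments on both sides, using independence, $\E[e^{q\gamma\Omega_r - \frac{q}{2}\gamma^2(-\log r)}] = r^{-\frac{\gamma^2}{2}(q^2-q)}$, and $\E[\widehat M_\gamma(B(0,1))^q]<\infty$ by Proposition~\ref{pr_UnifBound} (this is where $q<2d/\gamma^2$ enters), yields $\E[M_\gamma^{\text{ref}}(B(x,r))^q]\le C r^{dq - \frac{\gamma^2}{2}(q^2-q)} = C r^{\xi_{M_\gamma}(q)}$. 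Kahane's inequality then transfers this bound to $M_\gamma$ itself up to a multiplicative constant depending only on the bound $K$ on $g$ (since the two kernels differ by a bounded function), and the constant is uniform in $x\in D$ because $g$ is bounded uniformly on $\bar D\times\bar D$; uniformity in $r$ is built into the scaling computation.

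The main obstacle I anticipate is handling the case $q<0$ (negative moments) and the boundary effects: for negative $q$ one needs a \emph{lower} bound on the reference measure, so the direction of Kahane's convexity inequality and the role of the bounded perturbation $g$ must be checked carefully (the function $t\mapsto t^q$ is convex for $q<0$ on $(0,\infty)$, so Kahane applies, but one must ensure the comparison field genuinely dominates/is dominated in the covariance sense after adding a suitable constant). Additionally, when $B(x,r)\not\subset D$, the field has been extended by setting $K\equiv 0$ outside $D\times D$, so the scaling argument must be localized to $B(x,r)\cap D$ and one should note that restricting the integration domain only decreases the measure (for $q>0$) while for $q<0$ one can simply bound $M_\gamma(B(x,r))\ge M_\gamma(B(x,r)\cap D')$ for a slightly shrunk ball fully inside $D$, or absorb the boundary into the constant $C$. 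Apart from these bookkeeping points, the argument is standard and short; I expect the write-up to be under a page, citing \cite{RV_Review} for the precise form of Kahane's convexity inequality and the scaling relation of $\star$-scale invariant fields.
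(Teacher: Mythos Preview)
The paper does not supply its own proof of this proposition: it is quoted verbatim as \cite[Theorem~2.14]{RV_Review} and used as a black box. Your outline is the standard argument behind that cited result --- Kahane comparison to an exactly scale-invariant field, exact scaling $M_\gamma^{\mathrm{ref}}(B(x,r))\overset{d}{=}r^d e^{\gamma\Omega_r-\frac12\gamma^2\log(1/r)}\widehat M_\gamma(B(0,1))$, and then the uniform moment bound for $q<2d/\gamma^2$ --- and the exponent bookkeeping you arrive at, $r^{dq}\cdot r^{\frac{\gamma^2}{2}(q-q^2)}=r^{\xi_{M_\gamma}(q)}$, is correct. The caveats you flag (direction of Kahane for $q<0$ versus $q\ge 1$ versus $q\in(0,1)$, and boundary balls) are the only genuine bookkeeping points, and they are handled exactly as you describe; so there is nothing to compare against in the present paper, and your proposal is fine.
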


Finally, we state below a result on the local modulus of continuity of GMC measures which is an easy consequence of Proposition~\ref{pr_PoweLawSpec}.
\begin{proposition}
\label{pr_HolderGMC}
Let $\gamma^2 < 2d$ and $\eps \in (0, 1]$. Then, almost surely, there exist finite constants $C_1$, $C_2>0$ such that for all $x \in D$ and $r \in (0, 1]$ it holds that  
\begin{equation*}
C_1 r^{\left(\sqrt{d} + |\gamma|/\sqrt{2}\right)^2 + \eps} \leq M_{\gamma}(B(x, r)) \leq C_2 r^{\left(\sqrt{d} - |\gamma|/\sqrt{2}\right)^2 - \eps}
\end{equation*}
\end{proposition}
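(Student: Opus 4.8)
The plan is to run the classical dyadic-scale (Kolmogorov--Chentsov / multifractal) estimate. For each $n\in\mathbb N$ I would fix a net $\Lambda_n\subset\bar D$ of mesh $2^{-n}$, so that $|\Lambda_n|\lesssim 2^{nd}$ since $D$ is bounded; use Proposition~\ref{pr_PoweLawSpec} to control the moments of $M_\gamma$ of balls centred at $\Lambda_n$; convert these into deviation probabilities via Markov's inequality; sum over $x\in\Lambda_n$ and over $n$; and conclude by Borel--Cantelli that for almost every realisation the desired inequality holds simultaneously at all net points and all large $n$. A routine chaining step then upgrades this to all $x\in D$ and all $r\in(0,1]$, since every ball $B(x,r)$ is sandwiched between two net balls of comparable dyadic radius. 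Throughout, recall $\xi_{M_\gamma}(q)=(d+\tfrac12\gamma^2)q-\tfrac12\gamma^2q^2$.

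For the upper bound, fix $0<\eps<(\sqrt d-|\gamma|/\sqrt2)^2$ (if $(\sqrt d-|\gamma|/\sqrt2)^2-\eps\le0$ the claim is trivial because $M_\gamma(\bar D)<\infty$ almost surely) and set $\beta:=(\sqrt d-|\gamma|/\sqrt2)^2-\eps$. For a positive $q<2d/\gamma^2$ still to be chosen, Proposition~\ref{pr_PoweLawSpec} and Markov give
\[
\P\big(\exists\,y\in\Lambda_n:\ M_\gamma(B(y,2^{-n+1}))>(2^{-n+1})^\beta\big)\ \lesssim\ |\Lambda_n|\,(2^{-n+1})^{\xi_{M_\gamma}(q)-\beta q}\ \lesssim\ 2^{-n(\xi_{M_\gamma}(q)-\beta q-d)},
\]
which is summable in $n$ as soon as $\xi_{M_\gamma}(q)-\beta q>d$ for some admissible $q$. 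Maximising the concave quadratic $q\mapsto\xi_{M_\gamma}(q)-\beta q-d$ one finds this is achievable for every $\beta<(\sqrt d-|\gamma|/\sqrt2)^2$, the optimiser being $q^\star=(d+\tfrac12\gamma^2-\beta)/\gamma^2$, which for $\beta$ in this range is positive and lies strictly below $2d/\gamma^2$ precisely because $\gamma^2<2d$. Borel--Cantelli then produces a random, almost surely finite $n_0$ beyond which the net-point bounds hold, and the chaining step (using $B(x,r)\subset B(y,2^{-n+1})$ for the nearest net point $y\in\Lambda_n$ when $2^{-(n+1)}<r\le2^{-n}$) turns them into $M_\gamma(B(x,r))\lesssim r^\beta=r^{(\sqrt d-|\gamma|/\sqrt2)^2-\eps}$ for all $x\in D$ and $r\in(0,2^{-n_0}]$; the finitely many remaining scales $r\in[2^{-n_0},1]$ are absorbed into $C_2$ via $M_\gamma(B(x,r))\le M_\gamma(\bar D)$.

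The lower bound is the dual argument with \emph{negative} moments. Fix $\eps>0$ and set $\beta:=(\sqrt d+|\gamma|/\sqrt2)^2+\eps$. For $q<0$ one still has $q<2d/\gamma^2$, so Proposition~\ref{pr_PoweLawSpec} applies, and raising to the negative power $q$ reverses the inequality:
\[
\P\big(M_\gamma(B(x,r))<r^\beta\big)=\P\big(M_\gamma(B(x,r))^q>r^{\beta q}\big)\ \le\ \E[M_\gamma(B(x,r))^q]\,r^{-\beta q}\ \lesssim\ r^{\xi_{M_\gamma}(q)-\beta q}.
\]
Summing over a net at scale $2^{-n}$ again requires $\xi_{M_\gamma}(q)-\beta q>d$, now for some $q<0$; maximising the same quadratic over $q<0$ shows this holds for every $\beta>(\sqrt d+|\gamma|/\sqrt2)^2$, with optimiser $q^\star=(d+\tfrac12\gamma^2-\beta)/\gamma^2<0$. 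The only change in the chaining is that one now needs a net ball of radius $\asymp r$ \emph{contained in} $B(x,r)$ rather than containing it, which is arranged by choosing the net mesh a small enough multiple of $2^{-n}$; and the few scales $r\in[2^{-n_0},1]$ are handled by noting that $\inf_{x\in\bar D}M_\gamma(\overline{B(x,2^{-n_0})})>0$ almost surely --- cover $\bar D$ by finitely many balls of radius $2^{-n_0-1}$ and use that, by Proposition~\ref{pr_consGausChaos}, $M_\gamma$ almost surely has full support $D$.

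I do not expect a genuine obstacle: this is the textbook multifractal estimate, which is exactly why the paper calls it an easy consequence of Proposition~\ref{pr_PoweLawSpec}. The only points that need some care are (i) verifying that the exponent $q^\star$ coming from the Legendre-type optimisation of $q\mapsto\xi_{M_\gamma}(q)-\beta q$ stays within the admissible range $q<2d/\gamma^2$, which is precisely where the subcriticality $\gamma^2<2d$ enters the upper bound; (ii) keeping the two chaining inclusions in the right direction --- a superset of $B(x,r)$ for the upper bound, a subset for the lower bound; and (iii) invoking the negative-$q$ case of Proposition~\ref{pr_PoweLawSpec} for the lower bound, whose finiteness is automatic since every $q<0$ satisfies $q<2d/\gamma^2$ (cf.\ also Proposition~\ref{pr_UnifBound}).
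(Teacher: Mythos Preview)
Your proposal is correct and is essentially the same argument as the paper's: a dyadic net, the moment bound from Proposition~\ref{pr_PoweLawSpec}, Markov plus a union bound, Borel--Cantelli, and a chaining inclusion (superset for the upper bound, subset for the lower). The paper only writes out the lower-bound half in detail and simply plugs in the optimal exponent $q=-\sqrt{2d}/|\gamma|$ rather than deriving it via the Legendre-type optimisation you describe, but your optimiser $q^\star=(d+\tfrac12\gamma^2-\beta)/\gamma^2$ reduces to exactly this value at $\beta=(\sqrt d+|\gamma|/\sqrt2)^2$, so the two computations coincide.
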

\begin{proof}
A proof of the upper bound is given in \cite[Theorem~2.2]{RVG_LBM} in the case $d=2$. However, the proof can be easily adapted to all dimensions. The lower bound can be proved similarly. Indeed, let us assume for simplicity that $D = (-1,1)^d$ and let $\alpha:= (\sqrt{d} + |\gamma|/\sqrt{2})^2$. For $n \in \mathbb{N}$, we write $\Sigma_{n} := 2^{-n} \mathbb{Z}^d \cap (-1,1)^d$ for the lattice $2^{-n} \mathbb{Z}^d$ restricted to the open box $(-1,1)^d$. Thanks to an union bound, Markov's inequality, and Proposition~\ref{pr_PoweLawSpec}, it holds for each $n \in \mathbb{N}$, $\eps \in (0, 1]$, and $q <0$ that
\begin{align*}
\P\left(\min_{z \in \Sigma_{n}} M_{\gamma}(B(z, 2^{-n})) \leq 2^{-n(\alpha + \eps)}\right) 
& = \P\left(\max_{z \in \Sigma_{n}} M_{\gamma}(B(z, 2^{-n}))^q \geq 2^{-nq(\alpha + \eps)}\right) \\
& \leq \sum_{z \in \Sigma_{n}} 2^{nq(\alpha+\epsilon)} \E[M_{\gamma}(B(z, 2^{-n})^q)]\\
& \lesssim 2^{n q \eps - n(\xi_{M_{\gamma}}(q) - d - \alpha q)} \,,
\end{align*}
where the implicit constant does not depend on $n$. If we chose $q = -\sqrt{2d}/|\gamma|$, then one can easily check that $\xi_{M_{\gamma}}(q) - d - \alpha q= 0$. Therefore, thanks to the Borel--Cantelli lemma, we have that it almost surely exists a finite constant $C >0$ such that 
\begin{equation*}
\min_{z \in \Sigma_{n}} M_{\gamma}(B(z, 2^{-n}) \geq C 2^{-n(\alpha+\eps)} \,, \quad \forall n \in \mathbb{N} \,.
\end{equation*}
To conclude, it is sufficient to notice that for all $x \in (-1,1)^d$ and $r \in (0, 1]$, there exist $n \in \mathbb{N}$ and $z \in \Sigma_{n+1}$ such that $2^{-n} < r \leq 2^{-n+1}$ and $B(z, 2^{-n-1}) \subset B(x,r)$.
\end{proof}

\subsection{Hausdorff dimension}
\label{subsec:Hausdorff}
We collect here the definition and some properties of the Hausdorff dimension that we need in the sequel. We refer to \cite{Falconer_Techniques} for further details.

\begin{definition}
Let $D \subset \R^d$, $d \geq 1$, and $s \geq 0$. We define the $s$-dimensional Hausdorff measure of $D$ by
\begin{equation*}
\mathcal{H}^{s}(D) := \lim_{\delta \searrow 0} \mathcal{H}^s_{\delta}(D) \,,
\end{equation*}
where 
\begin{equation*}
\mathcal{H}_{\delta}^{s}(D) := \inf\left\{\sum_{n \in \mathbb{N}} \diam(U_n)^s \, : \, (U_n)_{n \in \mathbb{N}} \text{ is a } \delta\text{-cover of } D\right\} \,, \quad \delta \in (0, 1] \,.
\end{equation*}
The Hausdorff dimension of $D$ is defined by the following equivalent formulas
\begin{equation*}
\dim_{\H}(D) := \inf\{s \geq 0 \, : \, \H^s(D) = 0 \} = \sup\{s \geq 0 \, : \, \H^s(D) = \infty\} \,,
\end{equation*}
with the convention that $\dim_{\H}(\emptyset) = 0$. 
\end{definition}
Note that $0 \leq \dim_{\H}(D) \leq d$ for any $D \subset \R^d$, $d \geq 1$. Moreover, from the definition of Hausdorff dimension, it immediately follows that if $\H^s(D) < \infty$, then $\dim_{\H}(D) \leq s$. Furthermore, the Hausdorff dimension enjoys monotonicity and countable stability, i.e.\ $\dim_{\H}(D) \leq \dim_{\H}(D')$ for any $D \subset D'$, and $\dim_{\H}(\cup_{n \in \mathbb{N}} D_n) = \sup_{n \in \mathbb{N}} \dim_{\H}(D_n)$ for any collection of subsets $(D_n)_{n \in \mathbb{N}}$. 

Other useful properties of the Hausdorff dimension are collected in the following proposition. 
\begin{proposition}[{\cite[Proposition~2.15]{Jackson}}]
\label{pr_HolderJackson}
Let $f: [0, \infty) \to \mathbb{R}$ be a continuous and increasing function. Consider a set $D \subset [0, \infty)$ and assume there exist finite constants $C$, $R$, $\alpha>0$ such that $|f(x+r) - f(x-r)| \leq C r^{\alpha}$ for all $r \in [0, R)$ and $x \in D$. Then it holds that 
\begin{equation*}
\dim_{\H}(f(D)) \leq \frac{1}{\alpha} \dim_{\H}(D) \,.
\end{equation*}
\end{proposition}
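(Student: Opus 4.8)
The plan is to recover the textbook fact that an $\alpha$-H\"older map increases Hausdorff dimension by at most the factor $1/\alpha$, while paying attention to two features of the hypothesis: the control $|f(x+r)-f(x-r)|\le Cr^{\alpha}$ is two-sided and is only assumed when the centre $x$ belongs to $D$, and when $x<r$ the point $x-r$ falls outside $\dom f=[0,\infty)$. Concretely, it suffices to prove that $\mathcal H^{t}(f(D))=0$ for every $t>\tfrac1\alpha\dim_{\mathcal H}(D)$, since then $\dim_{\mathcal H}(f(D))\le t$ for all such $t$ and we let $t\searrow\tfrac1\alpha\dim_{\mathcal H}(D)$.

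So fix $t>\tfrac1\alpha\dim_{\mathcal H}(D)$ and set $s:=\alpha t$, so that $s>\dim_{\mathcal H}(D)$ and hence $\mathcal H^{s}(D)=0$. First I would fix $\eta>0$ and, for an auxiliary parameter $\delta\in(0,R)$ that will ultimately be sent to $0$, choose a $\delta$-cover $(U_n)_{n\in\mathbb N}$ of $D$ with $\sum_n(\diam U_n)^{s}<\eta$; I discard the $U_n$ disjoint from $D$, pick $x_n\in U_n\cap D$, and put $r_n:=\diam U_n\in[0,\delta)$. Because $f$ is continuous and increasing and $U_n\subseteq[x_n-r_n,x_n+r_n]$, the set $f(U_n)$ sits inside an interval of diameter at most $f(x_n+r_n)-f\big((x_n-r_n)\vee 0\big)$.

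The heart of the argument is to bound these diameters, splitting on whether $x_n\ge r_n$. When $x_n\ge r_n$, the interval $[x_n-r_n,x_n+r_n]$ lies in $[0,\infty)$ and, since $x_n\in D$ and $r_n<R$, the hypothesis gives $\diam f(U_n)\le f(x_n+r_n)-f(x_n-r_n)\le Cr_n^{\alpha}$. When $x_n<r_n$, all such $U_n$ are contained in $[0,2\delta)$, so the corresponding pieces $f(U_n)$ are jointly covered by the single interval $V:=[f(0),f(2\delta)]$, whose diameter $\omega(\delta):=f(2\delta)-f(0)$ tends to $0$ as $\delta\searrow 0$ by continuity of $f$. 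Hence $V$ together with the intervals enclosing $f(U_n)$ for $x_n\ge r_n$ is a $\rho$-cover of $f(D)$ with $\rho:=\max\{\omega(\delta),C\delta^{\alpha}\}$, and, using $t=s/\alpha$ and $\sum_n r_n^{s}<\eta$,
\[
\mathcal H^{t}_{\rho}\big(f(D)\big)\ \le\ \omega(\delta)^{t}\ +\ C^{t}\sum_{n}r_n^{\,s}\ \le\ \omega(\delta)^{t}+C^{t}\eta .
\]
Letting $\delta\searrow 0$ forces $\rho\searrow0$ and $\omega(\delta)\searrow0$, so $\mathcal H^{t}(f(D))\le C^{t}\eta$; then letting $\eta\searrow0$ gives $\mathcal H^{t}(f(D))=0$, which is what we needed.

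I expect the genuinely delicate point to be exactly the bookkeeping near the origin: stripping away the two-sidedness and the restriction $x\in D$, this is the classical estimate, and the only new observation is that the covering elements for which the symmetric H\"older bound is unavailable are precisely those trapped in a shrinking neighbourhood of $0$, so their image is controlled by plain continuity of $f$ rather than by the exponent $\alpha$. Everything else — producing the covers from $\mathcal H^{s}(D)=0$, reassembling them, and running the $\mathcal H^{s}$ computation — is routine.
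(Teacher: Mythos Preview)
Your argument is correct. The paper itself does not prove this proposition; it merely quotes it from \cite[Proposition~2.15]{Jackson} and uses it as a black box, so there is no in-paper proof to compare against. What you have written is the standard proof that an $\alpha$-H\"older map scales Hausdorff dimension by at most $1/\alpha$, with the only nonstandard wrinkle being that the H\"older bound is two-sided and centred at points of $D$, so it may fail near $0$; your device of sweeping all covering sets with $x_n<r_n$ into the single interval $[f(0),f(2\delta)]$ and using continuity of $f$ to make its diameter vanish handles this cleanly. One cosmetic point: when you pass from $\mathcal H^{t}_{\rho}$ to $\mathcal H^{t}$ you are implicitly using that $\rho(\delta)=\max\{\omega(\delta),C\delta^{\alpha}\}\to 0$ as $\delta\searrow 0$, which follows from continuity of $f$ at $0$; this is clear but worth saying explicitly.
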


We conclude this subsection with the following result.
\begin{proposition}[{\cite[Proposition~2.3]{Falconer_Techniques}}]
\label{pr_FalconerHausdorffBound}
Let $\mu$ be a non-negative finite Radon measure on $\R^d$ supported on a bounded domain $D \subset \R^d$, $d \geq 1$. Let $E \subset D$ such that $\mu(E) > 0$. For each $x \in E$, we let $\dim_{\mu}(x)$ to be the local dimension of $\mu$ at $x$ as defined in \eqref{eq_localDimensionMeasure} below. If $\dim_{\mu}(x) = s$ for all $x \in E$, then $\dim_{\H}(E) = s$. 
\end{proposition}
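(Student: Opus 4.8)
The plan is to establish the two inequalities $\dim_{\H}(E) \ge s$ and $\dim_{\H}(E) \le s$ separately, in each case reducing the pointwise hypothesis on $\dim_{\mu}$ to a \emph{uniform} estimate on a Borel piece of $E$ and then invoking a classical covering argument --- the mass distribution principle for the lower bound, a Vitali-type covering lemma for the upper bound. The recurring preliminary step is that the hypothesis only supplies, for each $x \in E$, a bound valid for radii below an $x$-dependent threshold, and this must first be made uniform.

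For the lower bound I would assume $s > 0$ (the case $s = 0$ being trivial) and fix $0 < t < s$. Since $\dim_{\mu}(x) = s > t$, each $x \in E$ satisfies $\mu(B(x,r)) \le r^{t}$ for all sufficiently small $r$, so $E \subseteq \bigcup_{k \ge 1} F_k$ where $F_k := \{x \in \R^d : \mu(B(x,r)) \le r^{t} \text{ for all } r \in (0,1/k)\}$; each $F_k$ is Borel, since $x \mapsto \mu(B(x,r))$ is upper semicontinuous for closed balls and the intersection over $r$ may be restricted to rational radii by continuity of $\mu$ along shrinking closed balls. Using inner regularity of the Radon measure $\mu$, I would pick a compact $E' \subseteq E$ with $\mu(E') > 0$; then $E' = \bigcup_k (E' \cap F_k)$ is an increasing union of Borel sets, so $\mu(G) > 0$ for $G := E' \cap F_k$ and some $k$. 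The measure $\nu := \mu|_G$ then satisfies $\nu(U) \le (\diam U)^{t}$ for every set $U$ with $\diam U \le 1/k$ (if $U$ meets $G$ at a point $x_0$, then $U \subseteq B(x_0, \diam U)$). Consequently, for any $\delta$-cover $(U_n)_n$ of $G$ with $\delta < 1/k$, using that the $U_n$ cover $G$,
\[
\sum_n (\diam U_n)^{t} \;\ge\; \sum_n \nu(U_n) \;\ge\; \nu(G) \;=\; \mu(G) \;>\; 0 ,
\]
so $\H^{t}(G) > 0$ (this is the mass distribution principle), whence $\dim_{\H}(E) \ge \dim_{\H}(G) \ge t$; letting $t \uparrow s$ gives $\dim_{\H}(E) \ge s$.

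For the upper bound I would fix $t > s$. From $\dim_{\mu}(x) = s < t$ one gets $\mu(B(x,r)) \ge r^{t}$ for all small $r$ and every $x \in E$, so $E \subseteq \bigcup_k F'_k$ with $F'_k := \{x \in \R^d : \mu(B(x,r)) \ge r^{t} \text{ for all } r \in (0,1/k)\}$, again a Borel set. Fixing $k$ and $\delta \in (0,1/k)$ and applying the $5r$-covering lemma to the family $\{B(x,r) : x \in F'_k,\, 0 < r < \delta\}$ produces a countable disjoint subfamily $(B(x_i,r_i))_i$ with $F'_k \subseteq \bigcup_i B(x_i, 5r_i)$; disjointness together with $x_i \in F'_k$, $r_i < 1/k$ gives
\[
\sum_i r_i^{t} \;\le\; \sum_i \mu(B(x_i,r_i)) \;=\; \mu\Bigl( \bigcup_i B(x_i,r_i) \Bigr) \;\le\; \mu(\R^d) < \infty ,
\]
so $(B(x_i,5r_i))_i$ is a $10\delta$-cover of $F'_k$ with $\sum_i (\diam B(x_i,5r_i))^{t} = 10^{t}\sum_i r_i^{t} \le 10^{t}\mu(\R^d)$. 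Hence $\H^{t}(F'_k) \le 10^{t}\mu(\R^d) < \infty$, so $\dim_{\H}(F'_k) \le t$, and by countable stability $\dim_{\H}(E) \le \sup_k \dim_{\H}(F'_k) \le t$; letting $t \downarrow s$ gives $\dim_{\H}(E) \le s$.

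I do not expect a genuine obstacle here --- this is a classical fact, essentially Billingsley's lemma --- but the step that needs care is the uniformization: because the radius thresholds depend on the point, neither the mass distribution principle nor the covering lemma can be applied to $E$ directly, so one must first isolate the Borel level sets $F_k$ (resp.\ $F'_k$) and, for the lower bound, invoke inner regularity of $\mu$ to land on a compact subset of $E$ with positive mass. Once this reduction is carried out, both halves reduce to the standard covering arguments.
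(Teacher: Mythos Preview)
The paper does not give its own proof of this proposition: it is quoted verbatim as a known result from Falconer's book (Proposition~2.3 there), so there is nothing in the paper to compare against beyond the citation. Your argument is the standard one --- the mass distribution principle for the lower bound and a Vitali $5r$-covering for the upper bound, after stratifying $E$ into the Borel level sets $F_k$, $F'_k$ on which the local-dimension estimates hold uniformly --- and it is correct; this is essentially how the result is proved in Falconer. The only cosmetic point is that the hypothesis $\mu(E)>0$ is used solely in the lower bound (to extract a compact subset of positive mass via inner regularity), while your upper bound goes through for any $E$ on which $\dim_{\mu}(x)=s$.
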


\subsection{Multifractal analysis of measures}
We collect here some general facts about multifractal analysis of measures. Let $\mu$ be a non-negative finite Radon measure on $\R^d$, $d \geq 1$, which may be random or non-random, supported on a bounded domain $D \subset \R^d$.
\begin{definition}
The $L^q$-spectrum of $\mu$ is the function $\tau_{\mu} : \R \to \R$ defined by
\begin{equation}
\label{eq_LqSpectrum}
\tau_{\mu}(q) := \limsup_{r \searrow 0}\frac{\log \sup\sum_{i \in I} \mu(B(x_i, r))^q}{-\log r} \,, \quad q \in \R \,,
\end{equation}
where $(B(x_i , r))_{i \in I}$ is a countable family of disjoint closed balls with radius $r$ centred at $x_i \in D$, and the supremum is taken over all such families. Moreover, the Legendre--Fenchel transform of the $L^q$-spectrum $\tau_{\mu}$ is the function $\tau_{\mu}^{*}:[0, \infty) \to [0, \infty)$ defined by 
\begin{equation}
\label{eq_LegendreTransform}
\tau_{\mu}^{*}(\alpha) := \inf_{q \in \R}\left\{\alpha q + \tau_{\mu}(q)\right\} \wedge 0\,, \quad \alpha \in [0, \infty) \,.
\end{equation}
\end{definition}

We have the following classical result concerning the $L^q$-spectrum.
\begin{proposition}[{\cite[Section~3]{Ngai}}]
\label{pr_LauNgai}
The $L^{q}$-spectrum $\tau_{\mu}$ is a decreasing convex function with $\tau_{\mu}(1) = 0$. Moreover, $\dom(\tau_{\mu}) := \{q \in \R \, : \, \tau_{\mu}(q) < \infty\} = \R$ if and only if
\begin{equation*}
\limsup_{r \searrow 0} \frac{\log \inf_{x \in D}\{\mu(B(x, r))\}}{\log r} < \infty \,.
\end{equation*}
\end{proposition}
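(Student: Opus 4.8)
Throughout I would abbreviate by $S_\mu(q,r)$ the supremum inside \eqref{eq_LqSpectrum}, so that $\tau_\mu(q)=\limsup_{r\searrow 0}\log S_\mu(q,r)/(-\log r)$, with the convention $0^q=+\infty$ for $q<0$ and $0^q=0$ for $q\ge 0$. Two elementary reductions come first. Since $D$ is bounded, every family of disjoint closed balls of radius $r$ with centres in $D$ lies in a fixed bounded set, so such a family has at most $C_d\,r^{-d}$ members; the supremum in \eqref{eq_LqSpectrum} is thus effectively over finite families, and this packing bound is used below. Second, replacing $\mu$ by $\mu/\mu(D)$ only shifts $\log S_\mu(q,r)$ by the bounded quantity $-q\log\mu(D)$, hence leaves $\tau_\mu$ unchanged, so I may assume $\mu$ is a probability measure; in particular $\mu(B(x,r))\le 1$ for all $x\in D$ and $r>0$.

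Monotonicity is then immediate: for $q_1\le q_2$ one has $t^{q_2}\le t^{q_1}$ for every $t\in[0,1]$, so $\sum_{i}\mu(B(x_i,r))^{q_2}\le\sum_{i}\mu(B(x_i,r))^{q_1}$ for every admissible family, whence $S_\mu(q_2,r)\le S_\mu(q_1,r)$ and $\tau_\mu(q_2)\le\tau_\mu(q_1)$. For convexity I would fix $q_1,q_2$ and $\lambda\in(0,1)$, put $q=\lambda q_1+(1-\lambda)q_2$, write (abbreviating $B_i:=B(x_i,r)$) $\mu(B_i)^{q}=(\mu(B_i)^{q_1})^{\lambda}(\mu(B_i)^{q_2})^{1-\lambda}$, and apply H\"older's inequality with exponents $1/\lambda$ and $1/(1-\lambda)$; taking the supremum over admissible families gives $S_\mu(q,r)\le S_\mu(q_1,r)^{\lambda}S_\mu(q_2,r)^{1-\lambda}$, and then taking logarithms, dividing by $-\log r>0$ and using $\limsup(a_r+b_r)\le\limsup a_r+\limsup b_r$ yields $\tau_\mu(q)\le\lambda\tau_\mu(q_1)+(1-\lambda)\tau_\mu(q_2)$.

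The equality $\tau_\mu(1)=0$ has an easy half and a less easy one. For $\tau_\mu(1)\le 0$, disjointness and $\mu(D)=1$ give $\sum_i\mu(B(x_i,r))=\mu\bigl(\bigcup_i B(x_i,r)\bigr)\le 1$, so $S_\mu(1,r)\le 1$. For $\tau_\mu(1)\ge 0$ it is enough to produce a constant $c>0$ with $S_\mu(1,r)\ge c$ for all small $r$, and this is the step I expect to be the main obstacle — it is the only point where a genuine covering argument is needed. My plan is to tile $\R^d$ by half-open cubes of side $r$, keep the finitely many cubes $Q_1,\dots,Q_N$ meeting $\supp\mu$ (so $\sum_j\mu(Q_j)=1$), pick $x_j\in Q_j\cap\supp\mu$, and note $Q_j\subset B(x_j,\sqrt d\,r)$, so $\mu(B(x_j,\sqrt d\,r))\ge\mu(Q_j)$. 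Colouring the cubes according to the residues of their integer coordinate vector modulo a fixed dimensional constant $L$, chosen large enough that the balls $B(x_j,\sqrt d\,r)$ attached to distinct cubes of one colour are disjoint, one of the $L^{d}$ colour classes must carry balls of total $\mu$-mass at least $L^{-d}$; hence $S_\mu(1,\sqrt d\,r)\ge L^{-d}$, and since $\sqrt d$ is a fixed constant this gives $S_\mu(1,r)\ge L^{-d}$ for all small $r$. (That some chosen centres may lie on $\partial D$ rather than in $D$ is harmless, since $\tau_\mu$ is unchanged if admissible centres are allowed to range over $\bar D$.)

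Finally, for the characterisation of $\dom(\tau_\mu)=\R$, I would first reduce to a single regime: $\tau_\mu(q)\le\tau_\mu(1)=0$ for all $q\ge 1$ by monotonicity, and if $\tau_\mu$ is finite at some $q_0<0$ then convexity makes it finite on $[q_0,1]$; hence $\dom(\tau_\mu)=\R$ is equivalent to $\tau_\mu(q)<\infty$ for every $q<0$. Now suppose $\beta:=\limsup_{r\searrow 0}\frac{\log\inf_{x\in D}\mu(B(x,r))}{\log r}$ is finite; since $\inf_{x\in D}\mu(B(x,r))\le\mu(D)=1$ one has $\beta\ge 0$, and for each $\delta>0$ and all small $r$ it follows that $\mu(B(x,r))\ge r^{\beta+\delta}$ for every $x\in D$. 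Then for $q<0$ each term satisfies $\mu(B(x_i,r))^{q}\le r^{(\beta+\delta)q}$, so by the packing bound $S_\mu(q,r)\le C_d\,r^{(\beta+\delta)q-d}$, whence $\tau_\mu(q)\le d-(\beta+\delta)q<\infty$ and $\dom(\tau_\mu)=\R$. Conversely, if $\dom(\tau_\mu)=\R$ then $\tau_\mu(-1)<\infty$; testing the supremum in $S_\mu(-1,r)$ against the single ball $B(x,r)$ and then optimising over $x\in D$ gives $S_\mu(-1,r)\ge\bigl(\inf_{x\in D}\mu(B(x,r))\bigr)^{-1}$ (so $\inf_{x\in D}\mu(B(x,r))>0$ for small $r$, else $\tau_\mu(-1)=\infty$), while $\tau_\mu(-1)<\infty$ gives $S_\mu(-1,r)\le r^{-(\tau_\mu(-1)+\delta)}$ for each $\delta>0$ and all small $r$; comparing the two bounds yields $\inf_{x\in D}\mu(B(x,r))\ge r^{\tau_\mu(-1)+\delta}$, i.e. $\frac{\log\inf_{x\in D}\mu(B(x,r))}{\log r}\le\tau_\mu(-1)+\delta$, and letting $r\searrow 0$ shows $\beta\le\tau_\mu(-1)+\delta<\infty$.
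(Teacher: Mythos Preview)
The paper does not give its own proof of this proposition: it is stated with a citation to \cite[Section~3]{Ngai} and used as a black box. So there is nothing in the paper to compare your argument against, and the relevant question is simply whether your self-contained proof is correct. It is.

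Your normalisation to a probability measure, the monotonicity via $t^{q_2}\le t^{q_1}$ on $[0,1]$, convexity via H\"older with exponents $1/\lambda$ and $1/(1-\lambda)$, and both implications in the $\dom(\tau_\mu)=\R$ characterisation are all sound. The only step demanding real work is $\tau_\mu(1)\ge 0$, and your cube-tiling with a mod-$L$ colouring (choosing $L>2\sqrt d+1$ so that same-colour balls of radius $\sqrt d\,r$ are disjoint) is a clean way to produce an admissible family of total mass at least $L^{-d}$. One cosmetic improvement for the boundary worry you flag: rather than picking $x_j\in Q_j\cap\supp\mu$ and then arguing that centres on $\partial D$ are harmless, keep only those cubes $Q_j$ with $\mu(Q_j)>0$; since $\mu(D^c)=0$, each such cube meets the open set $D$, so you may choose $x_j\in Q_j\cap D$ directly and the family is admissible with no further comment.
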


Let us collect here some notation. For $\alpha \geq 0$, we introduce the sets $\EE_{\mu}(\alpha)$, $\underline{\UU}_{\mu}(\alpha)$ and $\overline{\BB}_{\mu}(\alpha)$ defined by
\begin{gather}
\EE_{\mu}(\alpha) := \left\{x \in D \, : \, \dim_{\mu}(x)= \alpha\right\} \,, \label{eq_LevelSets} \\
\underline{\UU}_{\mu}(\alpha) := \{x \in D \, : \, \underline{\dim}_{\mu}(x) \leq \alpha \} \,, \quad \overline{\BB}_{\mu}(\alpha) := \{x \in D \, : \, \overline{\dim}_{\mu}(x) \geq \alpha \} \,, \label{eq_LevelSetsExtended}
\end{gather}
where
\begin{gather}
\dim_{\mu}(x) :=  \lim_{r \searrow 0} \frac{\log \mu(B(x, r))}{\log r} \,, \label{eq_localDimensionMeasure}  \\
\underline{\dim}_{\mu}(x) :=  \liminf_{r \searrow 0} \frac{\log \mu(B(x, r))}{\log r} \,, \quad \overline{\dim}_{\mu}(x) :=  \limsup_{r \searrow 0} \frac{\log \mu(B(x, r))}{\log r} \nonumber \,, 
\end{gather}
which are called the local dimension, the lower local dimension and the upper local dimension of $\mu$ at $x$, respectively. 
\begin{definition}
The singularity spectrum of $\mu$ is the function $\d_{\mu} : [0, \infty) \to [0, \infty)$ defined by
\begin{equation}
\label{eq_SingularitySpectrum}
\d_{\mu}(\alpha) := \dim_{\mathcal{H}}(\EE_{\mu}(\alpha)) \,, \quad \alpha \in [0, \infty) \,.
\end{equation}
\end{definition}

We have the following fundamental result.
\begin{proposition}
\label{pr_unionSet}
For $\alpha \geq 0$, it holds that $\dim_{\mathcal{H}}(\underline{\UU}_{\mu}(\alpha) \cap \overline{\BB}_{\mu}(\alpha)) \leq \tau_{\mu}^{*}(\alpha)$. 
\end{proposition}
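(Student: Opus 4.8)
The statement is a classical upper bound in the multifractal analysis of measures (it underlies, for instance, the treatments in \cite{Olsen, Ngai}); the plan is to prove, for each fixed $q \in \R$, the one-sided estimate
\[
\dim_{\H}\bigl(\underline{\UU}_{\mu}(\alpha) \cap \overline{\BB}_{\mu}(\alpha)\bigr) \leq \alpha q + \tau_{\mu}(q),
\]
and then take the infimum over $q$; since the Hausdorff dimension is non-negative, this yields $\dim_{\H}(\underline{\UU}_{\mu}(\alpha) \cap \overline{\BB}_{\mu}(\alpha)) \leq \tau_{\mu}^{*}(\alpha)$. The case $q = 0$ is immediate: the intersection is contained in $D$ (more precisely in $\supp \mu$, since outside the support both local dimensions are $+\infty$), and since a maximal family of disjoint closed $r$-balls centred in $\supp \mu$ has its doubled balls covering $\supp \mu$, the upper box-counting dimension of $\supp \mu$ is bounded by $\tau_{\mu}(0) = \alpha \cdot 0 + \tau_{\mu}(0)$, and $\dim_{\H} \leq \overline{\dim}_{\mathrm{B}}$. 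For $q \neq 0$ it suffices to bound the larger set: for $q > 0$ I would bound $\dim_{\H}(\underline{\UU}_{\mu}(\alpha))$, and for $q < 0$, $\dim_{\H}(\overline{\BB}_{\mu}(\alpha))$; by Proposition~\ref{pr_LauNgai} the function $\tau_{\mu}$ is finite and convex on the range of $q$ relevant here, so these quantities make sense.

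For $q > 0$, fix $\delta > 0$ and enlarge the target set to $U_{\delta} := \{x \in D : \underline{\dim}_{\mu}(x) < \alpha + \delta\} \supseteq \underline{\UU}_{\mu}(\alpha)$. By definition of $\underline{\dim}_{\mu}$, every $x \in U_{\delta}$ is the centre of closed balls $B(x, r)$ of arbitrarily small radius satisfying $\mu(B(x, r)) > r^{\alpha + \delta}$. Fixing a small scale $\rho$, letting $\mathcal{V}$ be the family of all such balls of radius $< \rho/5$ (a fine cover of $U_{\delta}$ with uniformly bounded radii), the $5r$-covering lemma yields a countable pairwise disjoint subfamily $\{B(x_{j}, r_{j})\}_{j}$ with $U_{\delta} \subseteq \bigcup_{j} B(x_{j}, 5 r_{j})$. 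I would then sort the indices into dyadic blocks $J_{k} := \{j : t_{k+1} < r_{j} \leq t_{k}\}$ with $t_{k} := (\rho/5) 2^{-k}$, $k \geq 0$. Within a block, replacing $r_{j}$ by the larger common radius $t_{k}$ only increases $\mu$, so $\mu(B(x_{j}, t_{k})) \geq \mu(B(x_{j}, r_{j})) > r_{j}^{\alpha+\delta} \geq 2^{-(\alpha+\delta)} t_{k}^{\alpha+\delta}$; moreover the disjointness of the $B(x_{j}, r_{j})$ together with $r_{j} > t_{k}/2$ forces the centres in $J_{k}$ to be pairwise more than $t_{k}$ apart, so $J_{k}$ splits into at most $N_{d} = 5^{d}$ subfamilies on each of which the balls $B(\cdot, t_{k})$ are disjoint with centres in $D$. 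Applying the definition of $\tau_{\mu}(q)$: for any $s' > \tau_{\mu}(q)$ and all $t_{k}$ small, $\sum_{j \in J_{k}} \mu(B(x_{j}, t_{k}))^{q} \leq N_{d}\, t_{k}^{-s'}$, whence the lower bound per term gives $\#J_{k} \lesssim t_{k}^{-s' - (\alpha+\delta)q}$. Covering $U_{\delta}$ by $\{B(x_{j}, 5 r_{j})\}$, whose members have diameter $\leq 10 t_{k}$ on $J_{k}$, and computing the Hausdorff pre-measure at exponent $s := s' + (\alpha+\delta)q + \eps$ yields $\H^{s}_{2\rho}(U_{\delta}) \lesssim \sum_{k \geq 0} t_{k}^{-s'-(\alpha+\delta)q}\,(10 t_{k})^{s} \lesssim \sum_{k \geq 0} t_{k}^{\eps} \lesssim \rho^{\eps} \to 0$ as $\rho \to 0$. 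Hence $\H^{s}(U_{\delta}) = 0$, so $\dim_{\H}(\underline{\UU}_{\mu}(\alpha)) \leq \dim_{\H}(U_{\delta}) \leq s$, and letting $\eps \to 0$, then $s' \to \tau_{\mu}(q)$, then $\delta \to 0$ gives $\dim_{\H}(\underline{\UU}_{\mu}(\alpha)) \leq \alpha q + \tau_{\mu}(q)$.

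For $q < 0$ and $\alpha > 0$ (the case $\alpha = 0$ being already covered by the $q > 0$ bound), I would run the symmetric argument with $\overline{\BB}_{\mu}(\alpha) \subseteq V_{\delta} := \{x \in D : \overline{\dim}_{\mu}(x) > \alpha - \delta\}$ ($\delta < \alpha$) and with good balls now satisfying $\mu(B(x, r)) < r^{\alpha - \delta}$. The only structural twist is that, since $q < 0$, one needs an \emph{upper} bound on $\mu$ at the common scale, so inside a dyadic block one must pass to the \emph{smaller} common radius $t_{k+1} \leq r_{j}$ rather than $t_{k}$; conveniently, this makes the balls $B(\cdot, t_{k+1})$ automatically pairwise disjoint, so no splitting into subfamilies is needed, and the same dyadic summation gives $\dim_{\H}(\overline{\BB}_{\mu}(\alpha)) \leq \alpha q + \tau_{\mu}(q)$. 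The main obstacle throughout is precisely this scale-matching bookkeeping: the Vitali selection produces balls of inhomogeneous radii, whereas the definition of $\tau_{\mu}(q)$ only controls disjoint families at a single fixed radius, and when one homogenises the scale the quantity $\mu(B)^{q}$ must be controlled in the direction dictated by the sign of $q$ (enlarging the scale for $q > 0$, shrinking it for $q < 0$); one must also remember to inflate the Hausdorff exponent by an arbitrarily small $\eps$ so that the geometric series over the infinitely many dyadic scales converges and tends to $0$ as $\rho \to 0$. The remaining ingredients — non-negativity of $\dim_{\H}$, optimisation over $q$, and the elementary estimates on $\mu$ and on separated point configurations in $\R^{d}$ — are routine.
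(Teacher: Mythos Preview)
Your proof is correct and follows essentially the same route as the paper: reduce to $\dim_{\H} \leq \alpha q + \tau_{\mu}(q)$ for each fixed $q$, select for every point a small ball on which $\mu$ is controlled from the appropriate side (below for $q\geq 0$, above for $q<0$), sort these balls into dyadic annuli of radii, and invoke the definition of $\tau_{\mu}(q)$ scale by scale to bound the Hausdorff pre-measure by a convergent geometric series. The only notable difference is the covering lemma: the paper applies Besicovitch's covering theorem within each dyadic level to obtain $\sigma_d$ disjoint subfamilies, whereas you use the $5r$-Vitali lemma once and then split into $O(5^d)$ subfamilies at the common radius via a packing bound; as a by-product you actually bound the larger sets $\underline{\UU}_{\mu}(\alpha)$ (for $q>0$) and $\overline{\BB}_{\mu}(\alpha)$ (for $q<0$) separately, but this extra strength is cosmetic and the two arguments are interchangeable.
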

A proof of Proposition~\ref{pr_unionSet}, in a slightly different setting, can be found in \cite[Theorem~1]{BrownEtAl}. However, the proof we provide here is a simple generalization of \cite[Proposition~A.1]{BarralComplex}. The proof is based on Besicovitch's covering theorem (cf.\ \cite{Falconer_Techniques}).
\begin{lemma}[Besicovitch's covering theorem]
\label{lem:Besicovitch}
Consider a set $E \subset \R^d$ and for each $x \in E$ fix a number $r_{x} > 0$ such that $\sup_{x \in E} r_{x} < \infty$. Then there exists an integer $\sigma_d$, depending only on the dimension $d$, for which there exist countable subfamilies $\mathcal{B}_1, \dots, \mathcal{B}_{\sigma_d}$ of $\{B(x, r_{x}) \, : \, x \in E\}$ such that $E \subset \cup_{i \in \{1, \dots, \sigma_d\}} \cup_{B \in \mathcal{B}_i} B$ and $\mathcal{B}_i$ is a collection of disjoint sets for each $i \in \{1, \dots, \sigma_d\}$. 
\end{lemma}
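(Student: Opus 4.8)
The plan is to reproduce the classical Besicovitch argument: a greedy ball-selection, a dimension-only bound on how much the selected balls overlap, and a greedy colouring of the selected balls. Throughout, write $R := \sup_{x \in E} r_x < \infty$ and, for brevity, $B_x := B(x, r_x)$.

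\emph{Reduction to bounded $E$.} Partition $\R^d$ into the cubes of a grid of side length $3R$ and colour these cubes with $3^d$ colours so that any two distinct cubes of the same colour lie at distance greater than $2R$; then $B_x$ and $B_{x'}$ are disjoint whenever $x, x'$ belong to distinct cubes of a common colour. If the statement is already known for bounded subsets, with some constant $\sigma$ in place of $\sigma_d$, I apply it to each $E \cap Q$ and then, for every fixed colour, merge the $i$-th output subfamilies over all cubes $Q$ of that colour. This produces $3^d \sigma$ countable subfamilies of $\{B_x : x \in E\}$, each consisting of pairwise disjoint balls, whose union covers $E$. So it suffices to treat bounded $E$, which I assume from now on.

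\emph{Selection and covering.} Run the greedy procedure: pick $x_1 \in E$ with $r_{x_1} > \frac{1}{2} \sup_{x \in E} r_x$; having picked $x_1, \dots, x_{j-1}$, set $E_j := E \setminus \bigcup_{i < j} B_{x_i}$, stop if $E_j = \emptyset$, and otherwise pick $x_j \in E_j$ with $r_{x_j} > \frac{1}{2} \sup_{x \in E_j} r_x$. Write $B_j := B_{x_j}$ and $r_j := r_{x_j}$. Two elementary facts will be used repeatedly: if $i < j$ then $x_j \notin B_i$, hence $|x_i - x_j| > r_i$; and if $i < j$ then $x_j \in E_j \subseteq E_i$, hence $r_i > \frac{1}{2} r_j$. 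Combining these, the closed balls $B(x_j, r_j / 3)$ are pairwise disjoint, and since they all lie in a fixed bounded set they have finite total volume; thus either the procedure terminates or $r_j \to 0$, and in the latter case a point of $E$ not covered by $\bigcup_j B_j$ would lie in every $E_j$ and hence satisfy $r_j > \frac{1}{2} r_x > 0$ for all $j$, a contradiction. Therefore $E \subseteq \bigcup_j B_j$, and this cover is countable.

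\emph{Bounded overlap and colouring.} The heart of the matter is that there is an integer $N_d$ depending only on $d$ such that each $B_j$ meets at most $N_d$ of the balls $B_i$ with $i < j$. Fix $j$ and split the indices $i < j$ with $B_i \cap B_j \neq \emptyset$ into $I_1 = \{i : r_i \le 3 r_j\}$ and $I_2 = \{i : r_i > 3 r_j\}$. For $i \in I_1$ the two elementary facts give $r_i \in (\frac{1}{2} r_j, 3 r_j]$ and $|x_i - x_j| \le r_i + r_j \le 4 r_j$, so the pairwise disjoint balls $B(x_i, r_i / 3)$ have radii $\gtrsim r_j$ and lie in $B(x_j, 5 r_j)$; a volume comparison bounds $\# I_1$ by a constant depending only on $d$. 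For $i \in I_2$ we have $r_j < r_i / 3$, so the enlarged balls $B(x_i, r_i + r_j)$ all contain the common point $x_j$, have radii within a factor $\frac{4}{3}$ of $r_i$, and — using $|x_i - x_{i'}| > r_i$ and $r_{i'} < 2 r_i$ whenever $i < i'$ in $I_2$ — have centres pairwise separated by at least a fixed fraction of the larger of their two radii; rescaling the centres radially towards $x_j$, which preserves their mutual separations up to a dimension-free factor, onto a fixed ball centred at $x_j$ and then comparing volumes bounds $\# I_2$ by a constant depending only on $d$. With $N_d$ the sum of these two bounds, I conclude by processing $B_1, B_2, \dots$ in order and assigning to each $B_j$ the least colour in $\{1, \dots, N_d + 1\}$ not used by an earlier ball meeting $B_j$; the resulting $N_d + 1$ colour classes are pairwise-disjoint subfamilies whose union is $\bigcup_j B_j \supseteq E$, so $\sigma_d := 3^d (N_d + 1)$ works. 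The main obstacle is the overlap bound for $I_2$: the enlargement-and-rescaling estimate must be arranged so that the resulting constant depends on $d$ alone, as in \cite{Falconer_Techniques}; the remaining steps are routine.
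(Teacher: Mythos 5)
The paper does not actually prove this lemma: it is the classical Besicovitch covering theorem, stated with a pointer to \cite{Falconer_Techniques}. Your architecture (greedy selection by approximately maximal radius, a bounded-overlap count split into comparably-sized and much-larger balls, then a greedy colouring) is the standard one, and the reduction to bounded $E$, the countability/covering argument, the bound on $\# I_1$, and the final colouring step are all fine.

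The genuine gap is the bound on $\# I_2$, which is the heart of the theorem and which you yourself flag as ``the main obstacle''. The claim that rescaling the centres radially towards $x_j$ ``preserves their mutual separations up to a dimension-free factor'' is false for the configurations your $\tfrac12$-selection permits. Concretely, take $x_j=0$, $r_j=0.2$, and two earlier-selected centres $x_{i'}=1.05\,e_1$ with $r_{i'}=1$ (selected first) and $x_i=2.06\,e_1$ with $r_i=1.9$ (selected second): then $r_i<2r_{i'}$, $x_i\notin B_{i'}$ (since $1.01>1$), $x_j\notin B_{i'}\cup B_i$, both balls meet $B_j$, and $r_i,r_{i'}>3r_j$, so both indices lie in $I_2$ for $j$; yet the three centres are collinear on a single ray, so their radial projections onto the unit sphere about $x_j$ coincide. (One checks that $E=\{x_{i'},x_i,x_j\}$ with these radii is a legitimate run of your algorithm.) More structurally, the inequalities you actually extract --- $|x_i-x_{i'}|\gtrsim\max(r_i,r_{i'})$ together with $|x_i-x_j|\asymp r_i$ --- do not bound $\#I_2$ by a constant depending only on $d$: a family of points satisfying only these constraints can have cardinality of order $\log(\max_i r_i/r_j)$, one bounded cluster per dyadic scale. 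The correct argument is the Besicovitch angle estimate, which requires a finer case analysis (law of cosines, separating the cases $x_{i'}\in B_i$ and $x_{i'}\notin B_i$) and, in its usual form, a selection factor closer to $1$: with the Evans--Gariepy factor $\tfrac34$ one gets $r_i<\tfrac43 r_{i'}$, which contradicts the bound $r_i>\tfrac53 r_{i'}$ forced by the collinear configuration above, whereas your factor $\tfrac12$ only gives $r_i<2r_{i'}$ and excludes nothing. So the proof is incomplete precisely at the step where Besicovitch's theorem differs from a routine Vitali-type covering argument; either carry out the angle estimate in full (with an appropriately chosen selection constant), or simply cite \cite{Falconer_Techniques} as the paper does.
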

\begin{proof}[Proof of Proposition~\ref{pr_unionSet}]
Fix $\alpha \geq 0$. By definition \eqref{eq_LegendreTransform} of $\tau_{\mu}^{*}$, it is sufficient to prove that
\begin{equation}
\label{eq_mainBarral}
\dim_{\mathcal{H}}(\underline{\UU}_{\mu}(\alpha) \cap \overline{\BB}_{\mu}(\alpha)) \leq \alpha q + \tau_{\mu}(q) \,, \quad \forall q \in R \,.
\end{equation}
Without loss of generality, we can assume that $\tau_{\mu}(q) < \infty$, for all $q \in \R$. We split the proof into two parts. In the first part we prove inequality \eqref{eq_mainBarral} for $q \geq 0$, while in the second part we prove inequality \eqref{eq_mainBarral} for $q < 0$. 

\vspace*{2mm}
{\it Case $q \geq 0$.} Define $\overline{s}_{q, \eps} := (\alpha + \eps)q + \tau_{\mu}(q) + \eps$, for $\eps > 0$ fixed. Using the notation introduced in Subsection~\ref{subsec:Hausdorff}, it is sufficient to show that $\sup_{\delta \in (0, 1]}\mathcal{H}_{\delta}^{\overline{s}_{q, \eps}}(\underline{\UU}_{\mu}(\alpha) \cap \overline{\BB}_{\mu}(\alpha)) < \infty$. By definition, for every $x \in \underline{\UU}_{\mu}(\alpha) \cap \overline{\BB}_{\mu}(\alpha)$, it exists a decreasing sequence $(r_{x,n})_{n \in \mathbb{N}}$ converging to $0$ such that
\begin{equation*}
\mu(B(x, r_{x, n})) \geq r_{x, n}^{\alpha + \eps} \,, \quad \forall n \in \mathbb{N}\,.
\end{equation*} 
Fix $\delta \in (0, 1]$, then for each $x \in \underline{\UU}_{\mu}(\alpha) \cap \overline{\BB}_{\mu}(\alpha)$, we choose $n_x$ such that $r_{x, n_x} \in (0, \delta)$. For each $m \in \mathbb{N}$, we let 
\begin{equation*}
F_m := \left\{x \in \underline{\UU}_{\mu}(\alpha) \cap \overline{\BB}_{\mu}(\alpha) \, : \, 2^{-m} < r_{x, n_x} \leq 2^{-m+1}\right\} \,.
\end{equation*}
Thanks to the Besicovitch's covering theorem (cf.\ Lemma~\ref{lem:Besicovitch}), it exists a positive integer $\sigma_d$ such that for every $m \in \mathbb{N}$, we can find $\sigma_d$ disjoint subsets $F_{m,1}, \dots, F_{m, \sigma_d}$ of $F_m$ such that each $F_{m, j}$ is at most countable, the balls $B(x, r_{x, n_x})$ with centres at $x \in F_{m, j}$ are pairwise disjoint and 
\begin{equation*}
\left(\left(\left(B(x, r_{x, n_x})\right)_{x \in F_{m, j}}\right)_{j \in \{1, \dots, \sigma_d\}}\right)_{m \in \mathbb{N}}
\end{equation*}
is a $\delta$-cover of $\underline{\UU}_{\mu}(\alpha) \cap \overline{\BB}_{\mu}(\alpha)$. Then we have that
\begin{align*}
\mathcal{H}_{\delta}^{\overline{s}_{q, \eps}}(\underline{\UU}_{\mu}(\alpha) \cap \overline{\BB}_{\mu}(\alpha)) & \leq \sum_{m \in \mathbb{N}} \sum_{j = 1}^{\sigma_d} \sum_{x \in F_{m , j}} r_{x, n_x}^{(\alpha + \eps)q + \tau_{\mu}(q) + \eps} \\
& \leq \sum_{m \in \mathbb{N}} \sum_{j = 1}^{\sigma_d} \sum_{x \in F_{m , j}} \mu(B(x, r_{x, n_x}))^q r_{x, n_x}^{\tau_{\mu}(q) + \eps} \\
& \lesssim \sum_{m \in \mathbb{N}} \sum_{j = 1}^{\sigma_d} \sum_{x \in F_{m , j}} \mu(B(x, 2^{-m+1}))^q 2^{-m(\tau_{\mu}(q) + \eps)}  \,,
\end{align*}
where the implicit constant does not depend on $m$. We observe that, for all $j \in {1, \dots, \sigma_d}$, the family $(B(x, 2^{-m+1}))_{x \in F_{m, j}}$ can be divided into two countable families of disjoint closed balls with centres in $D$. Hence, by definition of $\tau_{\mu}$, for $m$ large enough, we get
\begin{equation*}
\sum_{x \in F_{m , j}} \mu(B(x, 2^{-m+1}))^q \lesssim 2^{m(\tau_{\mu}(q) + \eps/2)} \,,
\end{equation*}
which implies that $\sup_{\delta \in (0, 1]}\mathcal{H}_{\delta}^{\overline{s}_{q, \eps}}(\underline{\UU}_{\mu}(\alpha) \cap \overline{\BB}_{\mu}(\alpha)) < \infty$. 

\vspace*{2mm}
{\it Case $q < 0$.} Define $\underline{s}_{q, \eps} := (\alpha - \eps)q + \tau_{\mu}(q) + \eps$, for $\eps > 0$ fixed. As before, it is sufficient to show that $\sup_{\delta \in (0, 1]}\mathcal{H}_{\delta}^{\underline{s}_{q, \eps}}(\underline{\UU}_{\mu}(\alpha) \cap \overline{\BB}_{\mu}(\alpha)) < \infty$. By definition, for every $x \in \underline{\UU}_{\mu}(\alpha) \cap \overline{\BB}_{\mu}(\alpha)$, it exists a decreasing sequence $(u_{x,n})_{n \in \mathbb{N}}$ converging to $0$ such that
\begin{equation*}
\mu(B(x, u_{x, n})) \leq u_{x, n}^{\alpha - \eps} \,, \quad \forall n \in \mathbb{N} \,.
\end{equation*} 
Proceeding similarly to the previous case, we get
\begin{equation*}
\mathcal{H}_{\delta}^{\underline{s}_{q, \eps}}(\underline{\UU}_{\mu}(\alpha) \cap \overline{\BB}_{\mu}(\alpha)) \lesssim \sum_{m \in \mathbb{N}} \sum_{j = 1}^{\sigma_d} \sum_{x \in F_{m , j}} \mu(B(x, 2^{-m}))^q 2^{- m(\tau_{\mu}(q) + \eps)}  \,, 
\end{equation*}
where the implicit constant does not depend on $m$, and by definition of $\tau_{\mu}$, for $m$ large enough, we get
\begin{equation*}
\sum_{x \in F_{m , j}} \mu(B(x, 2^{-m}))^q \lesssim 2^{m(\tau_{\mu}(q) + \eps/2)} \,,
\end{equation*}
which implies that $\sup_{\delta \in (0, 1]}\mathcal{H}_{\delta}^{\underline{s}_{q, \eps}}(\underline{\UU}_{\mu}(\alpha) \cap \overline{\BB}_{\mu}(\alpha)) < \infty$. 
\end{proof}

Since $\EE_{\mu}(\alpha) \subset \underline{\UU}_{\mu}(\alpha) \cap \overline{\BB}_{\mu}(\alpha)$, an immediate consequence of Proposition~\ref{pr_unionSet} is the following corollary.
\begin{corollary}
\label{cr_BoundLqSpec}
For $\alpha \geq 0$, it holds that $\d_{\mu}(\alpha) \leq \tau_{\mu}^{*}(\alpha)$.
\end{corollary}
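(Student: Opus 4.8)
The plan is to deduce the corollary directly from Proposition~\ref{pr_unionSet} together with the monotonicity of the Hausdorff dimension, so the only thing that needs to be verified is the set inclusion $\EE_{\mu}(\alpha) \subset \underline{\UU}_{\mu}(\alpha) \cap \overline{\BB}_{\mu}(\alpha)$. First I would unwind the definitions: if $x \in \EE_{\mu}(\alpha)$, then by \eqref{eq_localDimensionMeasure} the limit $\lim_{r \searrow 0} \log \mu(B(x,r))/\log r$ exists and equals $\alpha$, so in particular $\underline{\dim}_{\mu}(x) = \overline{\dim}_{\mu}(x) = \alpha$. Thus $x$ satisfies both $\underline{\dim}_{\mu}(x) \leq \alpha$ and $\overline{\dim}_{\mu}(x) \geq \alpha$, which by \eqref{eq_LevelSetsExtended} means exactly that $x \in \underline{\UU}_{\mu}(\alpha)$ and $x \in \overline{\BB}_{\mu}(\alpha)$. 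This gives the claimed inclusion.

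Next I would invoke monotonicity of the Hausdorff dimension to obtain
\begin{equation*}
\d_{\mu}(\alpha) = \dim_{\H}(\EE_{\mu}(\alpha)) \leq \dim_{\H}\bigl(\underline{\UU}_{\mu}(\alpha) \cap \overline{\BB}_{\mu}(\alpha)\bigr) \,,
\end{equation*}
where the first equality is the definition \eqref{eq_SingularitySpectrum} of the singularity spectrum, and then conclude by applying Proposition~\ref{pr_unionSet}, which bounds the right-hand side above by $\tau_{\mu}^{*}(\alpha)$. Chaining these two steps yields $\d_{\mu}(\alpha) \leq \tau_{\mu}^{*}(\alpha)$, as desired.

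There is essentially no obstacle in this argument: it is a genuine corollary, and all the substantial work — the Besicovitch covering argument handling the cases $q \geq 0$ and $q < 0$ separately — has already been carried out in the proof of Proposition~\ref{pr_unionSet}. The only (routine) point to be careful about is that membership in $\EE_{\mu}(\alpha)$ presupposes existence of the limit defining $\dim_{\mu}(x)$, which is precisely what forces $\underline{\dim}_{\mu}$ and $\overline{\dim}_{\mu}$ to coincide and hence gives the inclusion into the intersection rather than into just one of the two sets.
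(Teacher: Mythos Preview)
Your proposal is correct and follows exactly the paper's approach: the paper simply notes that $\EE_{\mu}(\alpha) \subset \underline{\UU}_{\mu}(\alpha) \cap \overline{\BB}_{\mu}(\alpha)$ and declares the corollary an immediate consequence of Proposition~\ref{pr_unionSet}. Your write-up just makes the verification of this inclusion and the use of monotonicity explicit.
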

In particular, Corollary~\ref{cr_BoundLqSpec} implies that to prove the validity of the multifractal formalism for a non-negative Radon measure $\mu$, we only need to prove the bound $\d_{\mu}(\alpha) \geq \tau_{\mu}^{*}(\alpha)$ for all $\alpha \geq 0$.

\section{Multifractal analysis of Gaussian multiplicative chaos}
\label{sec:main}
Let us now turn to the multifractal analysis of GMC measures. For $d \geq 1$, let $D \subset \R^d$ be a bounded domain. Consider a log-correlated Gaussian field $X$ on $D$ with covariance kernel \eqref{eq_Covariance} and let $(X_{\eps})_{\eps \in (0, 1]}$ be its convolution approximation, as defined in \eqref{eq_ConvApprox}. For every $\gamma^2 < 2d$, consider the GMC measure $M_{\gamma}$ on $D$ associated with $X$, as defined in Proposition~\ref{pr_consGausChaos}. 

The main result of this article is contained in the following theorem. 
\begin{theorem}
\label{th_MainTheorem}
For $\gamma^2< 2d$, the GMC measure $M_{\gamma}$ satisfies the multifractal formalism. More precisely, for $\alpha \geq 0$ it holds that
\begin{equation}
\label{eq_MainMultiForm}
\d_{M_{\gamma}}(\alpha) = \tau^{*}_{M_{\gamma}}(\alpha) = \begin{cases}
d-\frac{1}{2}\left(\frac{d-\alpha}{\gamma} + \frac{\gamma}{2}\right)^2 \,, & \quad \text{ if } \alpha \in \left[\left(\sqrt{d} - \frac{|\gamma|}{\sqrt{2}}\right)^2,  \left(\sqrt{d} + \frac{|\gamma|}{\sqrt{2}}\right)^2 \right] \,, \\
0 \,, & \quad \text{ otherwise} \,,	
\end{cases}
\end{equation}
almost surely. Moreover, the $L^{q}$-spectrum of $M_{\gamma}$ is given by
\begin{equation}
\label{eq_MainFormLqSpec}
\tau_{M_{\gamma}}(q) =  \begin{cases}
- \xi'_{M_{\gamma}}(q_{-}) q \,, & \text{ if } q \in (-\infty, q_{-}]  \,, \\
d - \xi_{M_{\gamma}}(q)  \,, & \text{ if } q \in [q_{-}, q_{+}]  \,, \\
- \xi'_{M_{\gamma}}(q_{+}) q\,, & \text{ if } q \in [q_{+}, \infty) \,,
\end{cases} 
\end{equation}
almost surely, where $q_{\pm} := \pm \sqrt{2d}/|\gamma|$ and $\xi_{M_{\gamma}}$ is the power law spectrum of $M_{\gamma}$ as defined in \eqref{eq_powerSpec}.
\end{theorem}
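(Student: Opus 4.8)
The plan is to establish the two halves of the multifractal formalism separately, using Corollary~\ref{cr_BoundLqSpec} to dispense with the upper bound $\d_{M_{\gamma}} \leq \tau_{M_{\gamma}}^{*}$ once we know $\tau_{M_{\gamma}}$, and reserving the real work for the lower bound $\d_{M_{\gamma}} \geq \tau_{M_{\gamma}}^{*}$. Conceptually, the computation of $\tau_{M_{\gamma}}^{*}$ reduces to elementary convex analysis: one checks directly from \eqref{eq_powerSpec} that $d - \xi_{M_{\gamma}}$ is convex, that $d - \xi_{M_{\gamma}}(1) = 0$, and that it is finite exactly on $[q_{-}, q_{+}]$; the piecewise-linear extension in \eqref{eq_MainFormLqSpec} is then forced by Proposition~\ref{pr_LauNgai} together with the two-sided H\"older regularity of Proposition~\ref{pr_HolderGMC} (which gives that $\dom(\tau_{M_{\gamma}})$ is \emph{not} all of $\R$, since the minimal mass decays polynomially, hence $\tau_{M_{\gamma}}$ grows at most linearly at $\pm\infty$). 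Taking the Legendre--Fenchel transform of the right-hand side of \eqref{eq_MainFormLqSpec} then yields precisely the quadratic formula in \eqref{eq_MainMultiForm} on the stated interval and $0$ outside it — this is a direct but slightly tedious completion-of-the-square computation that I would not carry out in full.

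\textbf{The lower bound via thick points.}
For $\d_{M_{\gamma}} \geq \tau_{M_{\gamma}}^{*}$, the key input is Proposition~\ref{pr_RVCarrier} (proved in Appendix~\ref{sec:ProofProp}), which says that for every admissible $q$ (i.e.\ $q^2 < 2d/\gamma^2$) there is a deterministic exponent $\alpha_q$ such that, almost surely, $\dim_{M_{\gamma}}(x) = \alpha_q$ for $M_{q\gamma}$-almost every $x$; a short calculation identifies $\alpha_q = \xi_{M_{\gamma}}'(q)$, equivalently $\alpha_q = d + \tfrac12\gamma^2 - \gamma^2 q$. Since $M_{q\gamma}$ is carried by the $q\gamma$-thick points $\mathcal{T}_{q\gamma}$, and since the set $\EE_{M_{\gamma}}(\alpha_q) \cap \mathcal{T}_{q\gamma}$ has full $M_{q\gamma}$-mass, Proposition~\ref{pr_FalconerHausdorffBound} gives $\dim_{\mathcal H}(\EE_{M_{\gamma}}(\alpha_q)) \geq \dim_{M_{q\gamma}}(x)$ evaluated on that set — and here the relevant local dimension of $M_{q\gamma}$ itself is, again by Proposition~\ref{pr_RVCarrier} applied with the single field $X$ and mollified measure $M_{q\gamma}$, equal to $d - \tfrac12(q\gamma)^2/\,\cdots$; more precisely it equals $\tau_{M_{\gamma}}^{*}(\alpha_q)$ after the dust settles. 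Thus for each $\alpha$ in the open interval $\big((\sqrt d - |\gamma|/\sqrt2)^2,(\sqrt d + |\gamma|/\sqrt2)^2\big)$ we pick the unique $q = q(\alpha)$ with $\alpha_q = \alpha$ (solving the linear relation, which lies in $(q_{-},q_{+})$) and conclude $\d_{M_{\gamma}}(\alpha) \geq \tau_{M_{\gamma}}^{*}(\alpha)$; the endpoints follow by continuity of $\tau_{M_{\gamma}}^{*}$ and monotone/countable-stability properties of $\dim_{\mathcal H}$, and outside the interval the bound $\d_{M_{\gamma}} \geq 0$ is trivial.

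\textbf{Assembling the $L^q$-spectrum.}
It remains to justify \eqref{eq_MainFormLqSpec} itself rather than just its transform. On $[q_{-},q_{+}]$ the lower bound $\tau_{M_{\gamma}}(q) \geq d - \xi_{M_{\gamma}}(q)$ comes from testing the supremum in \eqref{eq_LqSpectrum} against a near-optimal packing concentrated on the relevant thick-point set (this is where the identification $\d_{M_{\gamma}} = \tau_{M_{\gamma}}^{*}$ together with the biduality $\tau_{M_{\gamma}}^{**} = \tau_{M_{\gamma}}$ for convex $\tau_{M_{\gamma}}$ closes the loop), while the matching upper bound $\tau_{M_{\gamma}}(q) \leq d - \xi_{M_{\gamma}}(q)$ for $q \in [q_{-},q_{+}]$ follows from Proposition~\ref{pr_PoweLawSpec}: bounding $\E[\sum_i M_{\gamma}(B(x_i,r))^q]$ by $C r^{\xi_{M_{\gamma}}(q)}$ times the number $O(r^{-d})$ of disjoint balls, then invoking Markov and Borel--Cantelli along $r = 2^{-n}$ to upgrade to an almost-sure statement uniform in the packing. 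Outside $[q_{-},q_{+}]$ the linear pieces come from convexity of $\tau_{M_{\gamma}}$ (Proposition~\ref{pr_LauNgai}) pinned against the Hölder bounds of Proposition~\ref{pr_HolderGMC}, which force $\tau_{M_{\gamma}}(q)/q \to \xi_{M_{\gamma}}'(q_{\pm})$. \textbf{The main obstacle} is the transfer step in the lower bound: showing that $M_{\gamma}$ genuinely has the \emph{exact} local dimension $\alpha_q$ (not merely $\liminf \geq$ or $\limsup \leq$) on a set of full $M_{q\gamma}$-measure, which requires the two-sided control on $M_{\gamma}(B(x,r))$ uniformly along thick points of $X$ — this is precisely the content of Proposition~\ref{pr_RVCarrier} and rests on the Gaussian fluctuation estimate of Lemma~\ref{lm_supremumGaussian}.
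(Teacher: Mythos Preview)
Your approach is essentially the paper's --- upper bound from Corollary~\ref{cr_BoundLqSpec}, lower bound by showing $M_{q\gamma}$ gives full mass to $\EE_{M_\gamma}(\alpha_q)$ via Proposition~\ref{pr_RVCarrier}, and a Borel--Cantelli argument for the $L^q$-spectrum upper bound --- but there are two issues worth fixing.

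First, a factual slip: you write that Proposition~\ref{pr_HolderGMC} ``gives that $\dom(\tau_{M_{\gamma}})$ is \emph{not} all of $\R$''. It gives exactly the opposite. The polynomial \emph{lower} bound on $M_\gamma(B(x,r))$ is precisely the hypothesis of Proposition~\ref{pr_LauNgai} ensuring $\dom(\tau_{M_\gamma})=\R$; this is Remark~\ref{rm_LqSpectrum} in the paper. Your subsequent reasoning (that $\tau_{M_\gamma}$ grows at most linearly) is correct and is in fact consistent with $\dom=\R$, so the error is cosmetic, but as written the sentence is wrong.

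Second, and more substantively, your exposition is circular in its present ordering. You want to compute $\tau_{M_\gamma}^{*}$ first and then show $\d_{M_\gamma}(\alpha_q)\ge\tau_{M_\gamma}^{*}(\alpha_q)$; but your computation of $\tau_{M_\gamma}$ on $[q_-,q_+]$ (the ``biduality closes the loop'' step) invokes $\d_{M_\gamma}=\tau_{M_\gamma}^{*}$, which is what you are trying to prove. The paper untangles this by introducing the auxiliary structure function $\phi_{M_\gamma}(q):=d-\xi_{M_\gamma}(q)$ and running the sandwich in one pass: Proposition~\ref{pr_RVCarrier} plus the fact that $M_{q\gamma}$ cannot charge sets of Hausdorff dimension below $d-\tfrac12 q^2\gamma^2$ (Lemma~\ref{lm_RVCarrier} applied to $M_{q\gamma}$) gives $\d_{M_\gamma}(\alpha_q)\ge\phi_{M_\gamma}^{*}(\alpha_q)$; the Borel--Cantelli argument (your paragraph, the paper's Lemma~\ref{lm_IneqLqSpec}) gives $\tau_{M_\gamma}\le\phi_{M_\gamma}$ and hence $\tau_{M_\gamma}^{*}\le\phi_{M_\gamma}^{*}$; combining with Corollary~\ref{cr_BoundLqSpec} yields
\[
\phi_{M_\gamma}^{*}(\alpha_q)\le \d_{M_\gamma}(\alpha_q)\le \tau_{M_\gamma}^{*}(\alpha_q)\le \phi_{M_\gamma}^{*}(\alpha_q),
\]
forcing all equalities simultaneously, with no forward reference. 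Your ingredients are all present; you just need to route them through $\phi_{M_\gamma}$ rather than through $\tau_{M_\gamma}^{*}$ itself.

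One genuine variation: for the linear pieces of $\tau_{M_\gamma}$ outside $[q_-,q_+]$ you argue via the H\"older bounds that $\tau_{M_\gamma}(q)/q\to -\xi_{M_\gamma}'(q_\pm)$, and then convexity plus the known value $\tau_{M_\gamma}(q_\pm)=\phi_{M_\gamma}(q_\pm)=-\xi_{M_\gamma}'(q_\pm)q_\pm$ forces linearity. This works and is a clean alternative to the paper's Lemma~\ref{lm_LqSpecFinal}, which instead gets the upper bound $\tau_{M_\gamma}(q)\le\tau_{M_\gamma}(q_+)q/q_+$ directly from the subadditivity of $x\mapsto x^{q_+/q}$ applied inside the packing sum.
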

\begin{remark}
We emphasize that the range of values of $\alpha \geq 0$ for which the set $\EE_{M_{\gamma}}(\alpha)$ is non-empty increases with $\gamma^2$. This is a consequence of the well known fact that when $\gamma^2$ gets closer to the critical value $2d$, the concentration of mass of $M_{\gamma}$ becomes more and more clustered and so it gives rise to a larger spectrum of singularities.
\end{remark}
\begin{remark}
\label{rm_LqSpectrum}
Since the GMC measure $M_{\gamma}$ is almost surely a non-negative finite Radon measure supported on a bounded domain, Proposition~\ref{pr_LauNgai} implies that the $L^{q}$-spectrum $\tau_{M_{\gamma}}$ is a decreasing convex function and $\tau_{M_{\gamma}}(1) = 0$. Moreover, thanks to the lower bound in Proposition~\ref{pr_HolderGMC}, one can easily verify that $\dom(\tau_{M_{\gamma}}) = \R$. 
\end{remark}

The rest of this section is devoted to the proof of Theorem~\ref{th_MainTheorem} which is based on the following result.
\begin{proposition} 
\label{pr_RVCarrier}
Let $\gamma^2 < 2d$ and $q^2 < 2d/\gamma^2$. For $M_{q \gamma}$-almost every $x \in D$, the GMC measure $M_{\gamma}$ satisfies
\begin{equation*}
\lim_{r \searrow 0} \frac{\log M_{\gamma}(B(x, r)) }{\log r} = d + \left(\frac{1}{2}-q\right)\gamma^2 \,,
\end{equation*}
almost surely.
\end{proposition}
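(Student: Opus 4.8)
The target exponent $d + (1/2 - q)\gamma^2$ is precisely $\xi'_{M_\gamma}(q) = d + \gamma^2/2 - q\gamma^2$ shifted, i.e. it is the local dimension one expects $M_\gamma$ to have at a $q\gamma$-thick point, so the plan is to combine (i) an \emph{upper bound} for $\log M_\gamma(B(x,r))/\log r$ via a first-moment/Markov estimate that exploits the fact that $M_{q\gamma}$ charges points where $X_\eps(x) \approx q\gamma \log(1/\eps)$, and (ii) a matching \emph{lower bound} via a Paley--Zygmund (second-moment) argument. The cleanest way to organize this is to tilt the measure: by the (generalized) Girsanov/Cameron--Martin relation for GMC, sampling $x$ from $M_{q\gamma}$ and then looking at $M_\gamma$ is, up to an explicit multiplicative reweighting by $e^{q\gamma^2 K_\eps(x,\cdot)}$-type factors, the same as studying $M_\gamma$ under a field shifted by $q\gamma$ times the covariance kernel based at $x$. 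Concretely, I would fix a dyadic scale $r = 2^{-n}$, write $M_\gamma(B(x,r))$ in terms of the approximations $M_\gamma^\eps$ at $\eps \sim r$, and use property \ref{pp_P3} to replace $\E[X_\eps(y)^2]$ and the cross-covariances by $-\log(|y-z|+\eps)$ up to bounded errors.

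The key steps, in order. \textbf{Step 1: reduce to a statement about the approximations and dyadic radii.} Using the known comparison between $M_\gamma(B(x,r))$ and $M_\gamma^r(B(x,r))$ together with Borel--Cantelli along $r = 2^{-n}$, it suffices to control $\log M_\gamma^{2^{-n}}(B(x,2^{-n}))/\log 2^{-n}$ for $M_{q\gamma}$-a.e.\ $x$. \textbf{Step 2: upper bound.} Compute $\E\big[\int_D M_{q\gamma}(\d x)\, M_\gamma(B(x, 2^{-n}))^{1+\delta}\big]$ for small $\delta>0$: by the Gaussian computation this factorizes into an exponential of covariances, and invoking Lemma~\ref{lm_supremumGaussian} to handle the fluctuation of $X_r$ over $B(x,r)$ one gets a bound of order $2^{-n(\xi'_{M_\gamma}(q)(1+\delta) - o(1))}$; Markov plus Borel--Cantelli then gives, for $M_{q\gamma}$-a.e.\ $x$, that $\liminf_n \log M_\gamma^{2^{-n}}(B(x,2^{-n}))/\log 2^{-n} \geq \xi'_{M_\gamma}(q) - \eps$ for every rational $\eps>0$. \textbf{Step 3: lower bound.} Here I use a second-moment argument: estimate $\E\big[\int_D M_{q\gamma}(\d x)\, M_\gamma(B(x,2^{-n}))\big]$ (a first moment, giving the same exponent) against $\E\big[\int_D M_{q\gamma}(\d x)\, M_\gamma(B(x,2^{-n}))^2\big]$, and a Paley--Zygmund/Cauchy--Schwarz inequality applied to the random measure $x \mapsto M_\gamma(B(x,2^{-n}))$ integrated against $M_{q\gamma}$ shows that on an event of $M_{q\gamma}$-mass bounded below, $M_\gamma(B(x,2^{-n})) \gtrsim 2^{-n(\xi'_{M_\gamma}(q)+\eps)}$; a further Borel--Cantelli / union over $\eps$ and over shifted dyadic grids upgrades this to the a.s.\ limsup bound for $M_{q\gamma}$-a.e.\ $x$. \textbf{Step 4:} combine Steps 2 and 3 to conclude the limit equals $\xi'_{M_\gamma}(q) = d + (1/2-q)\gamma^2$, and note $q^2 < 2d/\gamma^2$ is exactly what makes the relevant moments (of order up to $1+\delta$ and $2$ after the tilt, i.e.\ effective order $\sim q+1$ and $\sim q+2$ in the untilted picture, but see below) finite via Proposition~\ref{pr_UnifBound}.

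The main obstacle I anticipate is \textbf{Step 3}, the lower bound: controlling the second moment $\E[\int M_{q\gamma}(\d x) M_\gamma(B(x,r))^2]$ requires the existence of a moment of $M_\gamma$ of order roughly $q+2$ (after the Girsanov tilt by $q\gamma$), which is \emph{not} always finite under the sole hypothesis $q^2 < 2d/\gamma^2$ — indeed $(q+2)\gamma^2$ can exceed $2d$. The standard fix is to not integrate $M_\gamma(B(x,r))^2$ directly but to work with a truncated/localized version, or to run the Paley--Zygmund argument at the level of the field restricted to a good event (controlling $\sup_{B(x,r)} X_r(u) - X_r(x)$ via Lemma~\ref{lm_supremumGaussian}) so that only a first moment of $M_\gamma$ under the tilt — order $q+1 < 1 + \sqrt{2d}/|\gamma|$, which one checks is $< 2d/\gamma^2$ — is needed, and the "second moment" is really a variance of a near-deterministic quantity. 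Getting this localization right, and carefully tracking the bounded multiplicative errors coming from \ref{pp_P3} and Lemma~\ref{lm_supremumGaussian} uniformly in $x$ and $n$ so that they do not affect the logarithmic asymptotics, is where the real work lies; the detailed execution is deferred to Appendix~\ref{sec:ProofProp}.
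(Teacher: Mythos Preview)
Your overall architecture --- Girsanov tilt to pass from $M_{q\gamma}$-sampling to a field shifted by $q\gamma K(x,\cdot)$, dyadic Borel--Cantelli, and use of Lemma~\ref{lm_supremumGaussian} for the oscillation $\sup_{B(x,r)}X_r-X_r(x)$ --- does match the paper's. The issue is with the actual moment computations in Steps~2 and~3.

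\textbf{Step 2 (minor, but real).} After the tilt and the scaling $u=x+rv$, the tilted $p$-th moment of $M_\gamma(B(x,r))$ factors as $r^{\alpha p}\,\E[e^{p\gamma X_r(x)}]\,\E[I^p]$ with $I=\int_{B(0,1)}|v|^{-q\gamma^2}M_\gamma'(\d v)$, and since $X_r(x)$ has variance $-\log r+O(1)$ the middle factor contributes $r^{-\gamma^2 p^2/2}$. Markov then yields $\E[M_{q\gamma}(\{M_\gamma(B(x,r))>r^{\alpha-\beta}\})]\lesssim r^{p\beta-\gamma^2 p^2/2}$, which is summable only for $p<2\beta/\gamma^2$. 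Your choice $p=1+\delta$ therefore fails for small $\beta$; the paper takes $p=\bar a\in(0,1]$ small (Lemma~\ref{lm_ProofCarrierRV}), and moreover smallness of $\bar a$ is what keeps $\E[I^{\bar a}]$ finite when $q$ is close to $\sqrt{2d}/|\gamma|$. This is easily repaired.

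\textbf{Step 3 (genuine gap).} Paley--Zygmund, applied to the tilted $M_\gamma(B(x,r))$, gives only $\P(\text{tilted }M_\gamma(B(x,r))\geq c\,r^\alpha)\geq c'$ for a \emph{fixed} $c'\in(0,1)$, uniformly in $r$. The complementary probability does not decay, so Borel--Cantelli cannot conclude that the lower bound holds for \emph{all} small $r$ at $M_{q\gamma}$-a.e.\ $x$. Your proposed fix (restrict to a good event for $\sup_{B(x,r)}X_r-X_r(x)$) does not help: even after controlling the oscillation, the scale-$r$ Gaussian $X_r(x)$ and the scale-$1$ integral $I$ still have order-one fluctuations in law, so Paley--Zygmund still returns a bounded-away-from-$1$ probability. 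What the paper does instead is apply Markov to the \emph{reciprocal}: after the same factorisation one bounds $\P(I<r^{\beta/3})\leq r^{\beta/3}\,\E[I^{-1}]$, and the finiteness of $\E[I^{-1}]$ uniformly in the regularisation (Lemma~\ref{lm_ProofCarrierRVNegative}, ultimately relying on the fact that GMC has finite negative moments of all orders, Proposition~\ref{pr_UnifBound}) gives genuine polynomial decay. Together with the Gaussian tail of $X_r(x)$ and Lemma~\ref{lm_supremumGaussian} for the oscillation, each of the three probabilities in the split is $\lesssim r^{b'}$, which is what Borel--Cantelli needs. The negative-moment input is the missing idea.
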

\begin{proof}
The proof can be found in Appendix~\ref{sec:ProofProp}. 
\end{proof}

\begin{remark}
Roughly speaking, since for $q^2 < 2d/\gamma^2$ the measure $M_{q \gamma}$ is carried by the set of $q \gamma$-thick points of $X$, Proposition~\ref{pr_RVCarrier} implies that around such points the measure $M_{\gamma}$ has local dimension equal to 
\begin{equation}
\label{eq:alpha_q}
\alpha_q : = d + \left(\frac{1}{2}-q\right)\gamma^2 \,.
\end{equation}
\end{remark}

Thanks to Corollary~\ref{cr_BoundLqSpec}, we already know that, for $\alpha \geq 0$, it holds almost surely that $\d_{M_{\gamma}}(\alpha) \leq \tau_{M_{\gamma}}^{*}(\alpha)$. Therefore, to prove the validity of the multifractal formalism for $M_{\gamma}$, it suffices to show that for $\alpha \geq 0$ the lower bound $\d_{M_{\gamma}}(\alpha) \geq \tau_{M_{\gamma}}^{*}(\alpha)$ holds almost surely. Let us start with the following lemma which is an immediate consequence of Proposition~\ref{pr_RVCarrier}.
\begin{lemma}
\label{lm_RVCarrier}
For $\gamma^2 < 2d$, it holds almost surely that $\d_{M_{\gamma}}(\alpha_1) = \alpha_1$, where $\alpha_1$ is as in \eqref{eq:alpha_q} with $q=1$. Moreover, if $E \subset D$ is such that $\dim_{\mathcal{H}}(E) < \alpha_1$, then $M_{\gamma}(E) = 0$ almost surely.
\end{lemma}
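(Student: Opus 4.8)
The plan is to deduce both assertions directly from Proposition~\ref{pr_RVCarrier} applied with $q=1$. Note that $\alpha_1 = d + (1/2 - 1)\gamma^2 = d - \gamma^2/2$, and that $M_{1\cdot\gamma} = M_\gamma$. So Proposition~\ref{pr_RVCarrier} says that, almost surely, $\dim_{M_\gamma}(x) = \alpha_1$ for $M_\gamma$-almost every $x \in D$. Since $M_\gamma$ is almost surely a non-negative finite Radon measure supported on the bounded domain $D$ with $M_\gamma(D) > 0$ (by non-degeneracy, Proposition~\ref{pr_consGausChaos}), we may pick a Borel set $E_0 \subset D$ with $M_\gamma(E_0) > 0$ on which $\dim_{M_\gamma}(x) = \alpha_1$ for every $x \in E_0$. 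Applying Proposition~\ref{pr_FalconerHausdorffBound} to $\mu = M_\gamma$ and $E = E_0$ yields $\dim_{\mathcal H}(E_0) = \alpha_1$. Since $E_0 \subset \EE_{M_\gamma}(\alpha_1)$, monotonicity of Hausdorff dimension gives $\d_{M_\gamma}(\alpha_1) = \dim_{\mathcal H}(\EE_{M_\gamma}(\alpha_1)) \geq \alpha_1$. For the matching upper bound one can either invoke Corollary~\ref{cr_BoundLqSpec} once $\tau_{M_\gamma}^*(\alpha_1) = \alpha_1$ is known, or argue directly: every $x \in \EE_{M_\gamma}(\alpha_1)$ has $\dim_{M_\gamma}(x) = \alpha_1 > 0$, so the standard mass-distribution (Frostman-type) upper bound, e.g.\ \cite[Proposition~2.3]{Falconer_Techniques}, applied to the restriction of $M_\gamma$ to $\EE_{M_\gamma}(\alpha_1)$, gives $\dim_{\mathcal H}(\EE_{M_\gamma}(\alpha_1)) \leq \alpha_1$. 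Combining the two bounds yields $\d_{M_\gamma}(\alpha_1) = \alpha_1$ almost surely.

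For the second statement, suppose $E \subset D$ is Borel with $\dim_{\mathcal H}(E) < \alpha_1$, and suppose for contradiction that $M_\gamma(E) > 0$ on an event of positive probability. On that event, restricting to a Borel subset if needed, we may find $E' \subset E$ with $M_\gamma(E') > 0$ and $\dim_{M_\gamma}(x) = \alpha_1$ for every $x \in E'$ (this is possible because the set where the local dimension equals $\alpha_1$ carries full $M_\gamma$-mass by Proposition~\ref{pr_RVCarrier}). Then Proposition~\ref{pr_FalconerHausdorffBound} forces $\dim_{\mathcal H}(E') = \alpha_1$, while monotonicity gives $\dim_{\mathcal H}(E') \leq \dim_{\mathcal H}(E) < \alpha_1$, a contradiction. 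Hence $M_\gamma(E) = 0$ almost surely.

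The only subtlety — and the place where one must be slightly careful — is the order of quantifiers: Proposition~\ref{pr_RVCarrier} gives, for fixed $q$, an almost sure statement about the local dimension holding $M_{q\gamma}$-a.e., which is exactly what is needed here with $q = 1$ since no supremum over $q$ or $\alpha$ is involved. One also needs $E$ to be Borel (or at least $M_\gamma$-measurable) for $M_\gamma(E)$ to make sense and for the restriction argument to go through; this is harmless since in all applications of this lemma the set $E$ will be Borel. The bulk of the work has been offloaded to Proposition~\ref{pr_RVCarrier}, so the proof itself is short.
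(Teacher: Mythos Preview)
Your argument is correct and follows essentially the same route as the paper's proof: both parts are deduced from Proposition~\ref{pr_RVCarrier} with $q=1$ together with Proposition~\ref{pr_FalconerHausdorffBound}, and the second part is proved by exactly the same contradiction via $E \cap \EE_{M_\gamma}(\alpha_1)$. One small simplification the paper makes in the first part: since by definition every $x \in \EE_{M_\gamma}(\alpha_1)$ satisfies $\dim_{M_\gamma}(x) = \alpha_1$, and Proposition~\ref{pr_RVCarrier} gives $M_\gamma(\EE_{M_\gamma}(\alpha_1)) > 0$, one can apply Proposition~\ref{pr_FalconerHausdorffBound} directly to $\EE_{M_\gamma}(\alpha_1)$ and get the equality $\d_{M_\gamma}(\alpha_1) = \alpha_1$ in one stroke, without the intermediate set $E_0$ or a separate upper-bound argument. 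Also note that your ``option 1'' for the upper bound (invoking $\tau_{M_\gamma}^*(\alpha_1) = \alpha_1$) would be circular at this point in the paper, since Lemma~\ref{lm_RVCarrier} is used on the way to establishing that identity; your ``option 2'' is the right one, and is exactly what Proposition~\ref{pr_FalconerHausdorffBound} already encodes.
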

\begin{proof}
For $x \in \EE_{M_{\gamma}}(\alpha_1)$, it obviously holds that $\dim_{M_{\gamma}}(x) =  \alpha_1$. Furthermore, thanks to Proposition~\ref{pr_RVCarrier}, we know that $M_{\gamma}(\EE_{M_{\gamma}}(\alpha_1)) > 0$ almost surely. Therefore, the first part of the lemma follows from Proposition~\ref{pr_FalconerHausdorffBound}. Concerning the second part of the lemma, let us proceed by contradiction. Consider a subset $E \subset D$ with $\dim_{\mathcal{H}}(E) < \alpha_1$ and such that $M_{\gamma}(E) > 0$. Thanks to Proposition~\ref{pr_RVCarrier}, we know that the subset $\EE_{M_{\gamma}}(\alpha_1)$ is of full $M_{\gamma}$-measure, and so we have that $M_{\gamma}(E \cap \EE_{M_{\gamma}}(\alpha_1)) = M_{\gamma}(E) > 0$. Moreover, since for all $x \in E \cap \EE_{M_{\gamma}}(\alpha_1)$ it holds that $\dim_{M_{\gamma}}(x) = \alpha_1$, then Proposition~\ref{pr_FalconerHausdorffBound} and the monotonicity of Hausdorff dimension imply that 
\begin{equation*}
\dim_{\H}(E ) \geq \dim_{\H}(E \cap \EE_{M_{\gamma}}(\alpha_1)) = \alpha_1 \,,
\end{equation*}
which is clearly a contradiction.
\end{proof}

Now, we introduce the structure function $\phi_{M_{\gamma}}$ associated with $M_{\gamma}$ which is defined by
\begin{equation}
\label{eq_StructureFunction}
\phi_{M_{\gamma}}(q) := d - \xi_{M_{\gamma}}(q) = \frac{1}{2} \gamma^2 q^2 - \left(d+\frac{1}{2} \gamma^2\right)q + d \,, \quad  q \in \R \,,
\end{equation} 
where we recall that $\xi_{M_{\gamma}}$ is the power law spectrum of $M_{\gamma}$ defined in \eqref{eq_powerSpec}. A direct computation shows that the Legendre--Fenchel transform of $\phi_{M_{\gamma}}$ can be written as follows
\begin{equation*}
\phi_{M_{\gamma}}^{*}(\alpha) = \left(d - \frac{1}{2}\left(\frac{d-\alpha}{\gamma}+ \frac{1}{2} \gamma\right)^2\right) \wedge 0  \,, \quad \alpha \geq 0 \,.
\end{equation*}
The following two lemmas provide a lower bound for $\d_{M_{\gamma}}(\alpha_{q})$ in terms of $\phi^{*}_{M_{\gamma}}(q)$, for all $q^2 < 2d/\gamma^2$, and an upper bound for $\tau_{M_{\gamma}}(q)$ in terms of $\phi_{M_{\gamma}}(q)$, for all $q \in \R$, respectively.
\begin{lemma}
\label{lm_dimComp}
Let $\gamma^2 < 2d$ and $q^2 < 2d/\gamma^2$. Then it holds almost surely that $\d_{M_{\gamma}}(\alpha_{q}) \geq \phi_{M_{\gamma}}^{*}(\alpha_{q})$, where $\alpha_{q}$ is as defined in \eqref{eq:alpha_q}.
\end{lemma}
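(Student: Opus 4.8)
The plan is to derive Lemma~\ref{lm_dimComp} as a short consequence of Proposition~\ref{pr_RVCarrier} and the Hausdorff-dimension criterion of Proposition~\ref{pr_FalconerHausdorffBound}. I would first record the elementary computation that at $\alpha = \alpha_{q}$ the Legendre transform collapses: since $\frac{d - \alpha_{q}}{\gamma} + \frac{1}{2}\gamma = q\gamma$, the formula for $\phi_{M_{\gamma}}^{*}$ evaluates to
\[
\phi_{M_{\gamma}}^{*}(\alpha_{q}) = d - \frac{1}{2}\,q^{2}\gamma^{2},
\]
and this quantity is strictly positive because $q^{2} < 2d/\gamma^{2}$. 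Hence it suffices to show that $\d_{M_{\gamma}}(\alpha_{q}) = \dim_{\mathcal{H}}(\EE_{M_{\gamma}}(\alpha_{q})) \geq d - \frac{1}{2}q^{2}\gamma^{2}$ almost surely.

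The idea is to use the measure $M_{q\gamma}$ itself (a genuine subcritical GMC measure, since $(q\gamma)^{2} < 2d$) as a witness measure carried by $\EE_{M_{\gamma}}(\alpha_{q})$. Applying Proposition~\ref{pr_RVCarrier} with the pair $(\gamma, q)$ gives that, almost surely, $M_{q\gamma}$-almost every $x$ satisfies $\dim_{M_{\gamma}}(x) = d + (\frac{1}{2} - q)\gamma^{2} = \alpha_{q}$; after the usual Fubini argument exchanging the two quantifiers, this says that almost surely $M_{q\gamma}(\EE_{M_{\gamma}}(\alpha_{q})) = M_{q\gamma}(D)$. Next, applying Proposition~\ref{pr_RVCarrier} a second time, now with the pair $(q\gamma, 1)$ --- for which both hypotheses reduce to $q^{2} < 2d/\gamma^{2}$ --- gives that $M_{q\gamma}$ has constant local dimension: almost surely, $\dim_{M_{q\gamma}}(x) = d + (\frac{1}{2} - 1)(q\gamma)^{2} = d - \frac{1}{2}q^{2}\gamma^{2}$ for $M_{q\gamma}$-almost every $x$.

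Intersecting the two sets of full $M_{q\gamma}$-measure obtained above, I get, almost surely, a Borel set $E \subseteq \EE_{M_{\gamma}}(\alpha_{q})$ with $M_{q\gamma}(E) = M_{q\gamma}(D) > 0$ (positivity by the non-degeneracy of $M_{q\gamma}$, Proposition~\ref{pr_consGausChaos}) on which $\dim_{M_{q\gamma}}(x) = d - \frac{1}{2}q^{2}\gamma^{2}$ for every $x \in E$. Proposition~\ref{pr_FalconerHausdorffBound}, applied to the measure $M_{q\gamma}$ and the set $E$, then yields $\dim_{\mathcal{H}}(E) = d - \frac{1}{2}q^{2}\gamma^{2}$, and the monotonicity of the Hausdorff dimension (recall $E \subseteq \EE_{M_{\gamma}}(\alpha_{q})$) gives $\d_{M_{\gamma}}(\alpha_{q}) \geq d - \frac{1}{2}q^{2}\gamma^{2} \geq \phi_{M_{\gamma}}^{*}(\alpha_{q})$, as required.

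I do not expect a serious obstacle: the two substantive inputs, Propositions~\ref{pr_RVCarrier} and~\ref{pr_FalconerHausdorffBound}, are already at hand and the argument is essentially bookkeeping. The point that genuinely requires attention is the choice of witness measure and of the correct exponent: one must use the \emph{exact} local-dimension statement of Proposition~\ref{pr_RVCarrier} for $M_{q\gamma}$, not the uniform two-sided H\"older bounds of Proposition~\ref{pr_HolderGMC}, because the latter would only give the strictly weaker lower bound $(\sqrt{d} - |q\gamma|/\sqrt{2})^{2}$, which is $< d - \frac{1}{2}q^{2}\gamma^{2}$ whenever $q\gamma \neq 0$. The remaining delicate item is purely technical: the exchange between the almost-sure statement and the $M_{q\gamma}$-almost-every statement needed to pass from Proposition~\ref{pr_RVCarrier} to the statement that $M_{q\gamma}(\EE_{M_{\gamma}}(\alpha_{q})) = M_{q\gamma}(D)$ almost surely, which relies on the joint measurability of $(\omega, x) \mapsto M_{\gamma}(B(x,r))$ and Fubini's theorem.
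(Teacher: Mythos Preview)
Your proof is correct and follows essentially the same route as the paper: Proposition~\ref{pr_RVCarrier} with $(\gamma,q)$ shows $\EE_{M_{\gamma}}(\alpha_q)$ has full $M_{q\gamma}$-measure, and a second application with $(q\gamma,1)$ combined with Proposition~\ref{pr_FalconerHausdorffBound} shows any set of positive $M_{q\gamma}$-measure has Hausdorff dimension at least $d-\tfrac{1}{2}q^2\gamma^2$. The only cosmetic difference is that the paper packages your second step as the separate Lemma~\ref{lm_RVCarrier} and then cites it, whereas you inline the argument.
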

\begin{proof}
On one hand, thanks to Proposition~\ref{pr_RVCarrier}, we know that the set $\EE_{M_{\gamma}}(\alpha_{q})$ is almost surely of full $M_{q \gamma}$-measure. On the other hand, Lemma~\ref{lm_RVCarrier} implies that $M_{q \gamma}$ cannot give positive measure to a set of Hausdorff dimension strictly less than $d-q^2 \gamma^2/2$. Therefore, it should hold almost surely that
\begin{equation*}
\d_{M_{\gamma}}(\alpha_q) \geq d-\frac{1}{2} q^2 \gamma^2 = \phi_{M_{\gamma}}^{*}(\alpha_q) \,,
\end{equation*}
where the last equality can be checked directly.
\end{proof}

\begin{lemma}
\label{lm_IneqLqSpec}
For $\gamma^2 < 2d$, it holds almost surely that $\tau_{M_{\gamma}}(q) \leq \phi_{M_{\gamma}}(q)$ for all $q \in \R$. 
\end{lemma}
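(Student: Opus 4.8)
The goal is to show $\tau_{M_\gamma}(q) \le \phi_{M_\gamma}(q) = d - \xi_{M_\gamma}(q)$ almost surely for every $q \in \R$, where $\tau_{M_\gamma}$ is the $L^q$-spectrum from \eqref{eq_LqSpectrum}. I would prove this directly from the definition, bounding the relevant sum over disjoint balls. Fix $q \in \R$ and a radius $r \in (0,1]$, and let $(B(x_i,r))_{i \in I}$ be any countable family of disjoint closed balls with centres $x_i \in D$. Since the $x_i$ are $2r$-separated and $D$ is bounded, the cardinality of $I$ is at most $C r^{-d}$ for a constant $C$ depending only on $D$ and $d$. The plan is then to take expectations: by Proposition~\ref{pr_PoweLawSpec}, for $q < 2d/\gamma^2$ we have $\E[M_\gamma(B(x_i,r))^q] \le C' r^{\xi_{M_\gamma}(q)}$ uniformly in $i$, so
\begin{equation*}
\E\!\left[\sum_{i \in I} M_\gamma(B(x_i,r))^q\right] \le C r^{-d} \cdot C' r^{\xi_{M_\gamma}(q)} = C C' r^{\xi_{M_\gamma}(q) - d} = C C' r^{-\phi_{M_\gamma}(q)} \,.
\end{equation*}
Taking a supremum over families and then a $\limsup_{r \searrow 0}$ of $\log(\cdot)/(-\log r)$ would hand us the bound $\tau_{M_\gamma}(q) \le \phi_{M_\gamma}(q)$ \emph{in expectation}; the issue is promoting this to an almost-sure statement uniformly in $q$.

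**Passing from expectation to almost sure, and to all $q$.** To get an almost sure bound I would discretise the radius: restrict $r$ to the dyadic sequence $r_n = 2^{-n}$. For fixed $q$ and any $\eps > 0$, Markov's inequality gives
\begin{equation*}
\P\!\left(\sup\sum_{i \in I} M_\gamma(B(x_i, 2^{-n}))^q > 2^{n(\phi_{M_\gamma}(q) + \eps)}\right) \le C C' \, 2^{-n \eps} \,,
\end{equation*}
which is summable in $n$, so Borel--Cantelli yields that almost surely $\sup\sum_i M_\gamma(B(x_i, 2^{-n}))^q \le 2^{n(\phi_{M_\gamma}(q)+\eps)}$ for all $n$ large. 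A standard comparison argument (any $r \in (0,1]$ lies between two consecutive dyadics, and passing from radius $r_n$ to $r_{n-1}$ changes each ball's mass by a bounded multiplicative factor when $q>0$, while for $q<0$ one uses the reverse inclusion and the lower bound in Proposition~\ref{pr_HolderGMC}) transfers this to continuous $r$, giving $\tau_{M_\gamma}(q) \le \phi_{M_\gamma}(q) + \eps$ almost surely. Intersecting over a countable dense set of $q$'s and a sequence $\eps \searrow 0$, and using that both $\tau_{M_\gamma}$ (by Proposition~\ref{pr_LauNgai}) and $\phi_{M_\gamma}$ are convex hence continuous on $\R$, upgrades this to: almost surely, $\tau_{M_\gamma}(q) \le \phi_{M_\gamma}(q)$ for all $q \in \R$ simultaneously.

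**The range $q \ge 2d/\gamma^2$ and the main obstacle.** Proposition~\ref{pr_PoweLawSpec} only supplies the moment bound for $q < 2d/\gamma^2$, so the above argument as stated covers $q < 2d/\gamma^2$; note that for such $q$ we already have $\phi_{M_\gamma}(q) = d - \xi_{M_\gamma}(q)$, and in the regime $q \in [q_-, q_+]$ this is exactly the claimed value of $\tau_{M_\gamma}$ in \eqref{eq_MainFormLqSpec}. For the remaining large positive $q$ (beyond $q_+ = \sqrt{2d}/|\gamma|$, where moments blow up), the clean approach is to invoke convexity of $\tau_{M_\gamma}$ together with the already-established values on $[q_-, q_+]$: since $\tau_{M_\gamma}$ is convex and decreasing with $\tau_{M_\gamma}(1)=0$, and since the affine function $q \mapsto -\xi'_{M_\gamma}(q_+) q$ is tangent to $q \mapsto d - \xi_{M_\gamma}(q)$ at $q_+$, convexity forces $\tau_{M_\gamma}(q) \le -\xi'_{M_\gamma}(q_+) q = \phi_{M_\gamma}(q)$ for $q \ge q_+$ (here I am reading $\phi_{M_\gamma}$ as the function \eqref{eq_StructureFunction}, and the inequality $\tau_{M_\gamma} \le \phi_{M_\gamma}$ on the full line is what the lemma asserts for the stated $\phi_{M_\gamma}$; if the intended meaning is the quadratic on all of $\R$, the inequality for $q > q_+$ is even weaker since the quadratic dominates its tangent line). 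I expect the genuinely delicate point to be the $q < 0$ case of the dyadic-to-continuous transfer: there one cannot simply enlarge balls, and one must control $\mu(B(x,r))^q$ from above, i.e. $\mu(B(x,r))$ from below, which is precisely where the uniform lower bound of Proposition~\ref{pr_HolderGMC} (valid almost surely, simultaneously for all $x$ and $r$) is indispensable to keep the multiplicative error between scales $r$ and $2r$ under control.
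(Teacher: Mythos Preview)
Your overall strategy---moment bound from Proposition~\ref{pr_PoweLawSpec}, Markov, Borel--Cantelli along dyadic scales, then continuity in $q$---matches the paper's. But two steps need repair.

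\emph{The supremum over families.} You bound $\E\bigl[\sum_i M_\gamma(B(x_i,r))^q\bigr]$ for a \emph{fixed} family and then apply Markov to $\sup_{\text{families}} \sum_i M_\gamma(B(x_i,r))^q$. These are different objects: the family attaining the supremum depends on the sample, so an expectation bound for one deterministic family does not control the expectation of the supremum over uncountably many families. The paper handles this by working with the deterministic lattice $\Sigma_n = 2^{-n-1}\mathbb{Z}^d \cap D$, applying Borel--Cantelli to the single random variable $\sum_{z \in \Sigma_n} M_\gamma(B(z,2^{-n}))^q$, and then showing \emph{deterministically} that any disjoint family at a nearby scale is dominated by the lattice sum (via inclusion plus subadditivity or Jensen for $q>0$, and reverse inclusion for $q\le 0$). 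Relatedly, your claim that doubling the radius ``changes each ball's mass by a bounded multiplicative factor'' is false: GMC is not doubling, and this is exactly why the lattice comparison is needed rather than a direct radius comparison.

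\emph{The range $q \ge 2d/\gamma^2$.} Your convexity argument is backwards: convexity of $\tau_{M_\gamma}$ gives a \emph{lower} bound by any tangent line, not an upper bound, so it cannot yield $\tau_{M_\gamma}(q) \le -\xi'_{M_\gamma}(q_+)q$. The paper's argument is simpler and uses the very ingredient you already mention: $\tau_{M_\gamma}$ is decreasing with $\tau_{M_\gamma}(1)=0$, hence $\tau_{M_\gamma}(q) \le 0$ for all $q \ge 1$; and since $\phi_{M_\gamma}$ is an upward parabola with roots at $1$ and $2d/\gamma^2$, one has $\phi_{M_\gamma}(q) \ge 0$ for $q \ge 2d/\gamma^2$, whence $\tau_{M_\gamma}(q) \le 0 \le \phi_{M_\gamma}(q)$ on that range. (Note also that Proposition~\ref{pr_PoweLawSpec} is valid for all $q < 2d/\gamma^2$, not just $q < q_+ = \sqrt{2d}/|\gamma|$, so your main argument already covers $[q_+, 2d/\gamma^2)$.)
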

\begin{proof}
Thanks to Remark~\ref{rm_LqSpectrum}, the function $\tau_{M_{\gamma}} : \R \to \R$ is convex and hence continuous. Therefore, it is sufficient to prove that for all $q \in \R$, it holds almost surely that $\tau_{M_{\gamma}}(q) \leq \phi_{M_{\gamma}}(q)$. For simplicity, we assume that $D = (-1,1)^d$. We split the proof into two parts. In the first part we focus on $q \geq 2d/\gamma^2$, while in the second part we focus on $q < 2d/\gamma^2$. 

\vspace*{2mm}
{\it Case $q \geq 2d/\gamma^2 $.} Since $2d/\gamma^2>1$, the conclusion in this case is trivial. Indeed, since $\tau_{M_{\gamma}}$ is a decreasing function and $\tau_{M_{\gamma}}(1) = 0$ (cf.\ Remark~\ref{rm_LqSpectrum}), it holds almost surely that $\tau_{M_{\gamma}}(q) \leq 0 \leq \phi_{M_{\gamma}}(q)$, where the last inequality can be checked directly. 

\vspace*{2mm}
{\it Case $q < 2d/\gamma^2$.} For $n \in \mathbb{N}$, we write $\Sigma_{n} := 2^{-n-1}\mathbb{Z}^d \cap (-1,1)^d$ for the lattice $2^{-n-1}\mathbb{Z}^d$ restricted to the open box $(-1,1)^d$. Then for each $n \in \mathbb{N}$ and $\eps > 0$, by Markov's inequality and Proposition~\ref{pr_PoweLawSpec}, we have that
\begin{align*}
\P\left(\sum_{z \in \Sigma_{n}} M_{\gamma}(B(z, 2^{-n}))^{q} \geq 2^{n(\phi_{M_{\gamma}}(q)+\eps)}\right) & \leq 2^{-n(\phi_{M_{\gamma}}(q)+\eps)} \sum_{z \in \Sigma_{n}} \E\left[M_{\gamma}(B(z, 2^{-n}))^{q} \right] \\
&\lesssim 2^{-n(\phi_{M_{\gamma}}(q)+\eps)} 2^{-n \xi_{M_{\gamma}}(q)} 2^{nd} = 2^{-n \eps} \,,
\end{align*}
where the implicit constant does not depend on $n$. Hence, it holds that
\begin{equation*}
\sum_{n \in \mathbb{N}} \P\left(\sum_{z \in \Sigma_{n}} M_{\gamma}(B(z, 2^{-n}))^{q} \geq 2^{n(\phi_{M_{\gamma}}(q)+\eps)}\right) < \infty \,,
\end{equation*}
and so, thanks to the Borel--Cantelli lemma, we have that it almost surely exists a finite constant $C >0$ independent of $n$ such that 
\begin{equation}
\label{eq_mainEqMainLemma}
\sum_{z \in \Sigma_{n}}  M_{\gamma}(B(z, 2^{-n}))^{q} \leq C 2^{n(\phi_{M_{\gamma}}(q)+\eps)} \,, \quad \forall n \in \mathbb{N} \,.
\end{equation}
Let $(B(x_i , r))_{i \in I}$ be a countable family of disjoint closed balls with radius $r \in(0,1]$ centred at $x_i \in (-1,1)^d$, and fix $n \in \mathbb{N}$ such that $2^{-n} < r \leq 2^{-n+1}$. We observe that there exists a finite constant $\sigma_d >0$ depending only on the dimension $d$, such that every ball $B(x_i, r)$ is contained in the union of at most $\sigma_d$ balls of the form $B(z_{ij}, 2^{-n+1})$, for some $z_{ij} \in \Sigma_{n-1}$, $j = 1, \dots, \sigma_d$. Here, we adopt the convention that if for $i \in I$, the ball $B(x_i, r)$ is contained in $\cup_{j=1, \dots, J} B(z_{ij}, 2^{-n+1})$ for some $J < \sigma_d$, then $z_{ij} = z_{iJ} $ for $j = J+1, \dots, \sigma_d$. Furthermore, since the balls $(B(x_i , r))_{i \in I}$ are disjoint, we can choose the balls $(B(z_{ij} , 2^{-n+1}))_{i \in I, j \in {1, \dots, \sigma_d}}$ in such a way that for fixed $i_1 \neq i_2 \in I$, it holds that $z_{i_1 j} \neq {z_{i_2 j}}$ for $j = 1, \dots, \sigma_d$. Now, if $q \in (0,1]$, thanks to \eqref{eq_mainEqMainLemma} and the sub-additivity of the function $x \mapsto x^{q}$, it holds almost surely that 
\begin{equation}
\label{eq_IneqLqSpec1}
\sum_{i \in I} M_{\gamma}(B(x_i, r))^q \leq \sum_{j = 1, .., \sigma_d}  \sum_{z \in \Sigma_{n-1}} M_{\gamma}(B(z, 2^{-n+1}))^q \lesssim 2^{n(\phi_{M_{\gamma}}(q)+\eps)} \lesssim r^{-(\phi_{M_{\gamma}}(q) + \eps)}  \,,
\end{equation}
where the implicit constant does not depend on $n$. Similarly, if $q \in (1, 2d/\gamma^2)$, thanks to \eqref{eq_mainEqMainLemma} and the convexity of the function $x \mapsto x^{q}$ for $q>1$, using Jensen's inequality, it holds almost surely that
\begin{equation}
\label{eq_IneqLqSpec2}
\sum_{i \in I} M_{\gamma}(B(x_i, r))^q \leq \sigma_d^{q-1}\sum_{j = 1, .., \sigma_d}  \sum_{z \in \Sigma_{n-1}} M_{\gamma}(B(z, 2^{-n+1}))^q  \lesssim 2^{n(\phi_{M_{\gamma}}(q)+\eps)} \lesssim r^{-(\phi_{M_{\gamma}}(q) + \eps)}  \,,
\end{equation}
where the implicit constant does not depend on $n$. Moreover, for every ball $B(x_i, r)$, we have that $B(z, 2^{-n-1}) \subset B(x_i, r)$ for some $z \in \Sigma_{n+1}$. Therefore, if $q \leq 0$, then thanks to \eqref{eq_mainEqMainLemma} it holds almost surely that 
\begin{equation}
\label{eq_IneqLqSpec3}
\sum_{i \in I} M_{\gamma}(B(x_i, r))^q \leq \sum_{z \in \Sigma_{n+1}} M_{\gamma}(B(z, 2^{-n-1}))^q \lesssim 2^{n(\phi_{M_{\gamma}}(q)+\eps)} \lesssim  r^{-(\phi_{M_{\gamma}}(q) + \eps)}  \,,
\end{equation}
where the implicit constant does not depend on $n$. Finally, the conclusion follows by taking logs in \eqref{eq_IneqLqSpec1}, \eqref{eq_IneqLqSpec2}, \eqref{eq_IneqLqSpec3}, and thanks to the arbitrariness of $\eps > 0$.
\end{proof}

Thanks to Corollary~\ref{cr_BoundLqSpec}, Lemmas~\ref{lm_dimComp} and \ref{lm_IneqLqSpec}, we obtain that for $q^2 < 2d/\gamma^2$ it holds almost surely that
\begin{equation}
\label{eq_MainInequality}
0 < \phi_{M_{\gamma}}^{*}(\alpha_{q}) \leq \d_{M_{\gamma}}(\alpha_{q}) \leq \tau_{M_{\gamma}}^{*}(\alpha_q) \leq \phi_{M_{\gamma}}^{*}(\alpha_{q}) \,,
\end{equation}
which implies that $\d_{M_{\gamma}}(\alpha_{q}) = \tau_{M_{\gamma}}^{*}(\alpha_q)$ and $\tau_{M_{\gamma}}(q) = \phi_{M_{\gamma}}(q)$.  For $q^2 < 2d/\gamma^2$, we observe that the range of values of $\alpha \geq 0$ of the form $\alpha_q$ is equal to the interval $(\alpha_{-}, \alpha_{+})$, where
\begin{equation*}
\alpha_{-} = \left(\sqrt{d} - \frac{|\gamma|}{\sqrt{2}}\right)^2 \,, \quad \alpha_{+} = \left(\sqrt{d} + \frac{|\gamma|}{\sqrt{2}}\right)^2  \,.
\end{equation*}

Thanks to Corollary~\ref{cr_BoundLqSpec} and Lemma~\ref{lm_IneqLqSpec}, we can easily observe that for $\alpha \not \in (\alpha_{-}, \alpha_{+})$ we have that $\d_{M_{\gamma}}(\alpha) \leq \tau^{*}_{M_{\gamma}}(\alpha) \leq \phi^{*}_{M_{\gamma}}(\alpha) = 0$. Therefore, these results imply that for all $\alpha \geq 0$, it holds almost surely that 
\begin{equation*}
\d_{M_{\gamma}}(\alpha) =\tau_{M_{\gamma}}^{*}(\alpha) = \begin{cases}
d-\frac{1}{2}\left(\frac{d-\alpha}{\gamma} + \frac{\gamma}{2}\right)^2 \,, & \quad \text{ if } \alpha \in \left[\left(\sqrt{d} - \frac{|\gamma|}{\sqrt{2}}\right)^2,  \left(\sqrt{d} + \frac{|\gamma|}{\sqrt{2}}\right)^2 \right] \,, \\
0 \,, & \quad \text{ otherwise} \,,	
\end{cases}
\end{equation*}
which coincides with \eqref{eq_MainMultiForm}.

To conclude the proof of Theorem~\ref{th_MainTheorem} we need to show the validity of \eqref{eq_MainFormLqSpec}. This is exactly the objective of the subsequent lemma whose proof follows similar steps of those in \cite[Section~2.3]{BarralComplex}.
\begin{lemma}
\label{lm_LqSpecFinal}
If $\gamma^2 < 2d$, then for $q \in \R$ it holds almost surely that  
\begin{equation*}
\tau_{M_{\gamma}}(q) = \begin{cases}
-\xi'_{M_{\gamma}}(q_{-})q  \,, & \quad \ \text{ if } q \in(-\infty, q_{-}] \,, \\
d-\xi_{M_{\gamma}}(q) \,, & \quad \ \text{ if } q \in [q_{-},  q_{+}] \,, \\
-\xi'_{M_{\gamma}}(q_{+})q \,, & \quad \ \text{ if } q \in [q_{+}, \infty) \,,
\end{cases} 
\end{equation*}
where $q_{\pm} = \pm \sqrt{2d}/|\gamma|$.
\end{lemma}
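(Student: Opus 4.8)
The plan is to establish the formula for $\tau_{M_{\gamma}}$ by combining the already-proven pointwise identity $\tau_{M_{\gamma}}(q) = \phi_{M_{\gamma}}(q)$ (valid almost surely for each fixed $q$ with $q^2 < 2d/\gamma^2$, i.e.\ $q \in (q_{-}, q_{+})$) with convexity and general structural properties of $L^{q}$-spectra. First I would note that $\phi_{M_{\gamma}}$ is a parabola opening upwards with vertex strictly inside $(q_{-}, q_{+})$, so on the open interval $(q_{-}, q_{+})$ we already know $\tau_{M_{\gamma}} = \phi_{M_{\gamma}}$; by continuity of $\tau_{M_{\gamma}}$ (Remark~\ref{rm_LqSpectrum}) this extends to the closed interval $[q_{-}, q_{+}]$, giving the middle branch. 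A direct computation shows $\phi_{M_{\gamma}}(q_{\pm}) = -\xi'_{M_{\gamma}}(q_{\pm}) q_{\pm}$ and $\phi'_{M_{\gamma}}(q_{\pm}) = -\xi'_{M_{\gamma}}(q_{\pm})$, so the two outer linear pieces are exactly the tangent lines to the parabola $\phi_{M_{\gamma}}$ at the endpoints $q_{\pm}$; in particular the claimed right-hand side of the formula is a convex function of $q$ which is $C^{1}$ and which equals $\phi_{M_{\gamma}}$ on $[q_{-}, q_{+}]$.

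For the outer branches I would argue by sandwiching. The upper bound $\tau_{M_{\gamma}}(q) \leq -\xi'_{M_{\gamma}}(q_{+}) q$ for $q \geq q_{+}$ (and the symmetric statement for $q \leq q_{-}$) follows from convexity of $\tau_{M_{\gamma}}$: a convex function lies below the chord, but more to the point it lies below any supporting line extended; since $\tau_{M_{\gamma}}$ agrees with $\phi_{M_{\gamma}}$ on $[q_{-}, q_{+}]$ and $\phi_{M_{\gamma}}$ is differentiable at $q_{+}$ with slope $-\xi'_{M_{\gamma}}(q_{+})$, convexity forces $\tau_{M_{\gamma}}(q) \geq -\xi'_{M_{\gamma}}(q_{+})(q - q_{+}) + \phi_{M_{\gamma}}(q_{+}) = -\xi'_{M_{\gamma}}(q_{+}) q$ for $q \geq q_{+}$. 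So convexity alone gives the \emph{lower} bound on the outer branches; I need a matching \emph{upper} bound. For this I would use the lower bound in Proposition~\ref{pr_HolderGMC}: almost surely there is $C_1 > 0$ with $M_{\gamma}(B(x,r)) \geq C_1 r^{\alpha_{+} + \eps}$ uniformly in $x \in D$, $r \in (0,1]$, where $\alpha_{+} = (\sqrt{d} + |\gamma|/\sqrt 2)^2 = -\xi'_{M_{\gamma}}(q_{+})$ (one checks this last identity by direct differentiation of \eqref{eq_powerSpec}). Hence for $q \geq 0$ and any disjoint family $(B(x_i,r))_{i \in I}$, using $\mathrm{card}(I) \lesssim r^{-d}$ and $M_{\gamma}(B(x_i,r))^{q} \leq (C_2 r^{\alpha_{-} - \eps})^{q - q_{+}} M_{\gamma}(B(x_i,r))^{q_{+}}$ is not quite the clean route; instead I would bound directly $\sum_i M_{\gamma}(B(x_i,r))^{q}$ for $q > q_{+}$ by pulling out $\min_i M_{\gamma}(B(x_i,r))^{q - q_{+}} \geq (C_1 r^{\alpha_{+}+\eps})^{q-q_{+}}$... wait, that is the wrong direction for $q > q_{+}$. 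The correct move: for $q > q_{+} > 0$ write $M_{\gamma}(B(x_i,r))^{q} = M_{\gamma}(B(x_i,r))^{q_{+}} \cdot M_{\gamma}(B(x_i,r))^{q - q_{+}}$ and bound the second factor above using the \emph{upper} bound $M_{\gamma}(B(x_i,r)) \leq C_2 r^{\alpha_{-} - \eps}$ from Proposition~\ref{pr_HolderGMC}, then $\sum_i M_{\gamma}(B(x_i,r))^{q} \leq (C_2 r^{\alpha_{-}-\eps})^{q-q_{+}} \sum_i M_{\gamma}(B(x_i,r))^{q_{+}} \lesssim r^{(\alpha_{-}-\eps)(q - q_{+})} r^{-\phi_{M_{\gamma}}(q_{+}) - \eps}$, giving $\tau_{M_{\gamma}}(q) \leq \phi_{M_{\gamma}}(q_{+}) + \alpha_{+} - \alpha_{-} \cdot$ (hmm, signs). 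This is the step that needs care, so let me organize the exponent bookkeeping honestly rather than guess.

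Because the exponent arithmetic on the outer branches is exactly the delicate point, in the write-up I would isolate it as follows: for $q > q_{+}$, split off a factor $M_{\gamma}(B(x_i,r))^{q_{+}}$ and estimate the remaining factor $M_{\gamma}(B(x_i,r))^{q-q_{+}}$ from above by the uniform upper bound of Proposition~\ref{pr_HolderGMC} raised to the (positive) power $q - q_{+}$; summing, using $\tau_{M_{\gamma}}(q_{+}) = \phi_{M_{\gamma}}(q_{+})$, and taking $\log/(-\log r)$ yields $\tau_{M_{\gamma}}(q) \leq \phi_{M_{\gamma}}(q_{+}) - (\alpha_{-} - \eps)(q - q_{+})$; since $\alpha_{-}$ can be taken arbitrarily close from below... this still gives only $\phi_{M_{\gamma}}(q_{+}) - \alpha_{-}(q-q_{+})$, which is \emph{not} the tangent line (slope $-\alpha_{-}$, not $-\alpha_{+}$). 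The resolution is to use the negative-moment bound instead: for $q > q_{+}$ one should use that the minimal ball mass is $\gtrsim r^{\alpha_{+}+\eps}$, so $\sum_i M_{\gamma}(B(x_i,r))^{q} = \sum_i M_{\gamma}(B(x_i,r))^{q_{+}} M_{\gamma}(B(x_i,r))^{q-q_{+}}$ and bound $M_{\gamma}(B(x_i,r))^{q-q_{+}}$ — a positive power — from above, which requires an \emph{upper} mass bound, whereas to get slope $\alpha_{+}$ we instead bound the \emph{whole} sum by $(\#I)\max_i M_{\gamma}(B(x_i,r))^{q}$ and use the upper mass bound... Given that I keep landing on $\alpha_{-}$, I suspect the honest argument for the outer branches does not go through Proposition~\ref{pr_HolderGMC} at all but through the explicit sharp asymptotics of $\sum_z M_{\gamma}(B(z,2^{-n}))^{q}$: one shows $\E[(\sum_z M_{\gamma}(B(z,2^{-n}))^{q})] \asymp 2^{n \phi_{M_{\gamma}}(q_{+})} \cdot 2^{-n\xi'_{M_{\gamma}}(q_{+})(q-q_{+})}$ by noting the sum is dominated by the $O(2^{n(d - \gamma^2 q_{+}^2/2)})$ thick points of order $\approx q_{+}\gamma$, each contributing mass $\approx 2^{-n\alpha_{+}}$, i.e.\ the linearization of the parabola past its "freezing" point — precisely the multifractal-to-linear transition familiar from branching random walk / cascades. \textbf{The main obstacle}, therefore, is proving the sharp two-sided estimate on these freezing-regime moment sums: the upper bound requires controlling the contribution of atypically thick points (using Proposition~\ref{pr_PoweLawSpec} together with a first-moment computation à la \cite{Barral99,BarralComplex}), and the lower bound requires exhibiting enough points realizing the extremal thickness $q_{+}\gamma$, which is where a second-moment or Borel--Cantelli argument, combined with the already-established value $\tau_{M_{\gamma}}(q_{+}) = \phi_{M_{\gamma}}(q_{+})$ and convexity of $\tau_{M_{\gamma}}$, pins down the linear branches. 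Once both one-sided bounds hold for a countable dense set of $q$ outside $[q_{-}, q_{+}]$, continuity and convexity of $\tau_{M_{\gamma}}$ upgrade this to the stated identity for all $q \in \R$ simultaneously, almost surely.
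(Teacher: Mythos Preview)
Your argument for the middle branch and for the \emph{lower} bound on the outer branches (via convexity of $\tau_{M_\gamma}$ and the tangent line at $q_\pm$) is correct and matches the paper. The gap is the \emph{upper} bound on the outer branches, which you correctly identify as the sticking point but do not resolve: your attempts via Proposition~\ref{pr_HolderGMC} produce the wrong slope, and the ``freezing'' program you outline at the end is both speculative and far heavier than needed.

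The paper's argument here is a one-line trick you overlooked. For $q \geq q_{+}$ the exponent $q_{+}/q$ lies in $(0,1]$, so by sub-additivity of $x \mapsto x^{q_{+}/q}$,
\[
\Bigl(\sum_{i} M_{\gamma}(B(x_i,r))^{q}\Bigr)^{q_{+}/q} \leq \sum_{i} M_{\gamma}(B(x_i,r))^{q_{+}} \,,
\]
which after taking $\log/(-\log r)$ gives $\tau_{M_\gamma}(q) \leq \dfrac{q}{q_{+}}\,\tau_{M_\gamma}(q_{+}) = \dfrac{q}{q_{+}}\,\phi_{M_\gamma}(q_{+})$. The key algebraic fact (which you can check directly, and which is equivalent to $q_{\pm}$ being the roots of $\phi_{M_\gamma}(q) = q\,\phi'_{M_\gamma}(q)$) is that $\phi_{M_\gamma}(q_{+})/q_{+} = \phi'_{M_\gamma}(q_{+}) = -\xi'_{M_\gamma}(q_{+})$: the tangent to the parabola at $q_{+}$ passes through the origin. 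Hence $\tau_{M_\gamma}(q) \leq -\xi'_{M_\gamma}(q_{+})\,q$, matching the lower bound from convexity. The case $q \leq q_{-}$ is identical. No moment asymptotics, no freezing analysis, and no Borel--Cantelli beyond what has already been used are required.
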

\begin{proof}
From \eqref{eq_MainInequality}, we know that $\tau_{M_{\gamma}}(q) = \phi_{M_{\gamma}}(q)$ for $q \in (q_{-}, q_{+})$. Thanks to the continuity of the convex function $\tau_{M_{\gamma}}$, the above equality can be extended to the close interval $[q_{-}, q_{+}]$. Therefore, it remains to prove that $\tau_{M_{\gamma}}$ is derivable at $q_{+}$ and linear on $[q_{+}, \infty)$, and the same also for $q_{-}$ and $(-\infty, q_{-}]$. Thanks to the equality $\tau_{M_{\gamma}}(q) = \phi_{M_{\gamma}}(q)$ on the interval $[q_{-}, q_{+}]$, we know that
\begin{equation*}
\tau'_{M_{\gamma}}(q_{+}^{-}) =\phi'_{M_{\gamma}}(q_{+}) = \frac{\phi_{M_{\gamma}}(q_{+})}{q_{+}}\,, \quad \tau'_{M_{\gamma}}(q_{-}^{+}) =\phi'_{M_{\gamma}}(q_{-}) = \frac{\phi_{M_{\gamma}}(q_{-})}{q_{-}} \,,
\end{equation*}
where $\tau'_{M_{\gamma}}(q_{+}^{-})$ (resp.\ $\tau'_{M_{\gamma}}(q_{-}^{+})$) denotes the left-hand (resp.\ right-hand) derivative of $\tau_{M_{\gamma}}$ at $q_{+}$ (resp.\ at $q_{-}$). Since the function $\tau_{M_{\gamma}}$ is convex, we have that
\begin{equation*}
\tau_{M_{\gamma}}(q) \geq \tau_{M_{\gamma}}(q_{+}) + \tau'_{M_{\gamma}}(q_{+}^{-})(q-q_{+}) = \phi'_{M_{\gamma}}(q_{+})q \,,
\end{equation*}
and similarly $\tau_{M_{\gamma}}(q) \geq \phi'_{M_{\gamma}}(q_{-})q$. For $q \geq q_{+}$, the reverse inequality can be obtained using the fact that $q_{+}/q  \in (0, 1]$ and the sub-additivity of the function $x \mapsto x^{q_{+}/q }$. Indeed, if we consider a countable family $(B(x_i , r))_{i \in I}$ of disjoint closed balls with radius $r$ centred at $x_i \in D$, then
\begin{equation*}
\left(\sum_{i \in I} M_{\gamma}(B(x_i, r))^{q}\right)^{\frac{q_{+}}{q}} \leq \sum_{i \in I} M_{\gamma}(B(x_i, r))^{q_{+}}\,.
\end{equation*}
Hence, taking logs and by definition of $\tau_{M_{\gamma}}$, we have that
\begin{equation*}
\tau_{M_{\gamma}}(q) \leq \frac{\tau_{M_{\gamma}}(q_{+})}{q_{+}} q  = \frac{\phi_{M_{\gamma}}(q_{+})}{q_{+}} q = \phi'_{M_{\gamma}}(q_{+})q \,.
\end{equation*} 
Therefore, for all $q \geq q_{+}$ it holds that $\tau_{M_{\gamma}}(q) = \phi'_{M_{\gamma}}(q_{+})q$, and the same exact computation yields also that $\tau_{M_{\gamma}}(q) = \phi'_{M_{\gamma}}(q_{-})q$ for all $q \leq q_{-}$. Since $\phi_{M_{\gamma}}(q) = d - \xi_{M_{\gamma}}(q)$, for all $q \in \R$, the result is proved.
\end{proof}

This concludes the proof of Theorem~\ref{th_MainTheorem}.

\section{Multifractal analysis of multifractal random walk}
\label{sec:MRW}
Theorem~\ref{th_MainTheorem} gives a full characterization of the singularity spectrum of GMC measures. A natural question that may arise is whether we can use this result to perform the multifractal analysis of objects that are built by means of these random measures. In particular, in this section, we investigate the multifractal behaviour of the \emph{Multifractal Random Walk} (MRW).

\subsection{Definition of the MRW}
Let $T > 0$ and consider a log-correlated Gaussian field $X$ on $[0, T]$ with covariance kernel of the from \eqref{eq_Covariance}. For $\gamma^2 < 2$, consider the GMC measure $M_{\gamma}$ on $[0, T]$ associated with $X$, as defined in Proposition~\ref{pr_consGausChaos}. In addition, for $d \geq 1$, let $B$ be a Brownian motion on $\R^d$ independent from the field $X$.
\begin{remark}
\label{rem_ConventionProb}
As the Brownian motion $B$ is independent from the log-correlated Gaussian field $X$, it will be useful to decompose the probability measure $\P$ as follows
\begin{equation*}
\P = \P_B \otimes \P_X \,.
\end{equation*}
When we write ``almost surely'', we mean $\P$-almost surely. If we need to specify that a property holds almost surely with respect to the Brownian motion (resp.\ the field $X$), we write $\P_B$-almost surely (resp.\ $\P_X$-almost surely).
\end{remark}
\begin{definition}
Fix $\gamma^2 < 2$ and $d \geq 1$. The $d$-dimensional MRW on $[0, T]$ associated with the field $X$ is the stochastic process $(\Z^{\gamma}_t)_{t \in [0, T]}$ defined by 
\begin{equation}
\label{eq_MRW}
\Z^{\gamma}_t := B_{M_{\gamma}([0, t])} \,, \quad t \in [0, T]\,.
\end{equation}
\end{definition}

The MRW has been originally introduced in \cite{MRW} as a stochastic volatility model. Indeed, assuming that the Brownian motion $B$ is one-dimensional, the MRW is used for modelling the price fluctuations of financial assets since it accounts for most of the stylized facts of financial time series. For example, it reproduces the sudden jumps and the periods of intense activity usually observed in stock prices. We refer to \cite{MRWAsset} for a short overview on this topic. 

\subsection{Study of the lower singularity spectrum}
The main goal of this section is to analyse the local fluctuations of the paths of the MRW $\Z^{\gamma}$. More precisely, we introduce for $\alpha \geq 0$ the sets
\begin{equation*}
\underline{\EE}_{\Z^{\gamma}}(\alpha) := \left\{t \in [0, T] \, : \, \underline{\dim}_{\Z^{\gamma}}(t)= \alpha\right\} \,, 
\end{equation*}
where $\underline{\dim}_{\Z^{\gamma}}(t)$ is the lower local dimension of $\Z^{\gamma}$ at $t \in [0, T]$ defined by
\begin{equation*}
\underline{\dim}_{\Z^{\gamma}}(t) :=\liminf_{r \searrow 0} \frac{\log |\Z^{\gamma}_{t+r} - \Z^{\gamma}_{t-r}| }{\log r} \,.
\end{equation*}
We want to compute the Hausdorff dimension of $\underline{\EE}_{\Z^{\gamma}}(\alpha)$ for each $\alpha \geq 0$, i.e.\ we are interested in finding an explicit expression for the lower singularity spectrum $\underline{\d}_{\Z^{\gamma}} : [0, \infty) \to  [0, \infty)$ defined by 
\begin{equation*}
\underline{\d}_{\Z^{\gamma}}(\alpha) := \dim_{\mathcal{H}}(\underline{\EE}_{\Z^{\gamma}}(\alpha)) \,, \quad \alpha \in [0, \infty) \,.
\end{equation*}
In particular, we want to find the relation between $\underline{\d}_{\Z^{\gamma}}(\alpha)$ and $\d_{M_{\gamma}}(\alpha)$. This is exactly the content of the subsequent theorem.
\begin{theorem}
\label{th_MainTheoremBMMT}
Let $\gamma^2 < 2$ and $d \geq 1$. Then, for $\alpha \geq 0$, the lower singularity spectrum of the $d$-dimensional MRW $\Z^{\gamma}$ is given by
\begin{equation*}
\underline{\d}_{\Z^{\gamma}}(\alpha) = \d_{M_{\gamma}}(2 \alpha) = \begin{cases}
1-\frac{1}{2}\left(\frac{1-2 \alpha}{\gamma} + \frac{\gamma}{2}\right)^2 \,, & \quad \text{ if } \alpha \in \left[\left(\frac{1}{\sqrt{2}} - \frac{|\gamma|}{2}\right)^2,  \left(\frac{1}{\sqrt{2}} + \frac{|\gamma|}{2}\right)^2 \right] \,, \\
0 \,, & \quad \text{ otherwise} \,,
\end{cases} 
\end{equation*}
almost surely.
\end{theorem}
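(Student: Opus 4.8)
The proof rests on the elementary fact that Brownian motion has local H\"older exponent exactly $1/2$ at every point of its range, so that precomposing $B$ with the time change $\phi(t):=M_{\gamma}([0,t])$ should halve the local exponents of $M_{\gamma}$; concretely one expects $\underline{\dim}_{\Z^{\gamma}}(t)=\tfrac12\,\underline{\dim}_{M_{\gamma}}(t)$ for every $t\in[0,T]$, from which $\underline{\d}_{\Z^{\gamma}}(\alpha)=\d_{M_{\gamma}}(2\alpha)$ follows upon passing to Hausdorff dimensions and invoking Theorem~\ref{th_MainTheorem}. Throughout I work under the decomposition $\P=\P_{B}\otimes\P_{X}$ of Remark~\ref{rem_ConventionProb} and use that $\Z^{\gamma}_{t+r}-\Z^{\gamma}_{t-r}=B_{\phi(t+r)}-B_{\phi(t-r)}$ together with $\phi(t+r)-\phi(t-r)=M_{\gamma}(B(t,r))$. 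It is enough to treat $\alpha$ with $2\alpha\in[(1-|\gamma|/\sqrt2)^{2},(1+|\gamma|/\sqrt2)^{2}]$, where $\d_{M_{\gamma}}(2\alpha)>0$; outside this interval $\d_{M_{\gamma}}(2\alpha)=0$ and the upper bound established below forces $\underline{\d}_{\Z^{\gamma}}(\alpha)=0$.

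\emph{Upper bound.} The plan is to prove the inclusions $\underline{\EE}_{\Z^{\gamma}}(\alpha)\subset\underline{\UU}_{M_{\gamma}}(2\alpha)\cap\overline{\BB}_{M_{\gamma}}(2\alpha)$ almost surely, and to conclude by Proposition~\ref{pr_unionSet} and the identity $\tau^{*}_{M_{\gamma}}(2\alpha)=\d_{M_{\gamma}}(2\alpha)$ of Theorem~\ref{th_MainTheorem}. The inclusion into $\underline{\UU}_{M_{\gamma}}(2\alpha)$ is the easy half: by L\'evy's uniform modulus of continuity there is a.s.\ a finite $C$ with $|B_{s}-B_{u}|\le C\sqrt{|s-u|\,\log(2/|s-u|)}$ for $|s-u|\le\tfrac12$, and since Proposition~\ref{pr_HolderGMC} traps $M_{\gamma}(B(t,r))$ between two fixed positive powers of $r$ uniformly in $t$, the logarithmic factor is $r^{o(1)}$ and $|\Z^{\gamma}_{t+r}-\Z^{\gamma}_{t-r}|\le M_{\gamma}(B(t,r))^{1/2}r^{o(1)}$ for all $t$, giving $\underline{\dim}_{\Z^{\gamma}}(t)\ge\tfrac12\underline{\dim}_{M_{\gamma}}(t)$ for every $t$. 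The inclusion into $\overline{\BB}_{M_{\gamma}}(2\alpha)$ is the delicate one: if $\overline{\dim}_{M_{\gamma}}(t)<2\alpha$ then $M_{\gamma}(B(t,r))\ge r^{2\alpha-\delta}$ eventually for some $\delta>0$, and working conditionally on $X$ (so that $\phi(t\pm r)$ are deterministic) Brownian scaling gives $\P_{B}\big(|B_{\phi(t+r)}-B_{\phi(t-r)}|\ge M_{\gamma}(B(t,r))^{1/2}r^{\delta/4}\big)\to1$ as $r\downarrow0$; since $\P(\limsup_{n}A_{n})\ge\limsup_{n}\P(A_{n})$, one gets $|\Z^{\gamma}_{t+r_{n}}-\Z^{\gamma}_{t-r_{n}}|\ge r_{n}^{\alpha-\delta/4}$ along some $r_{n}\downarrow0$, hence $\underline{\dim}_{\Z^{\gamma}}(t)\le\alpha-\delta/4<\alpha$ and $t\notin\underline{\EE}_{\Z^{\gamma}}(\alpha)$. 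This last step is, however, only $\P_{B}$-a.s.\ for each \emph{fixed} $t$, while $\underline{\EE}_{\Z^{\gamma}}(\alpha)$ is an uncountable random set; the exceptional times are removed by discretising $r$ and $t$ over dyadic scales, bounding the analogous probabilities uniformly and applying a Borel--Cantelli argument over the countably many dyadic pairs, controlling the errors against the uniform H\"older bounds of Proposition~\ref{pr_HolderGMC}.

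\emph{Lower bound.} Here Theorem~\ref{th_MainTheorem} and Proposition~\ref{pr_RVCarrier} enter constructively. Put $q:=\tfrac12+(1-2\alpha)/\gamma^{2}$; one checks $q^{2}<2/\gamma^{2}$ for $2\alpha$ in the open interval above and, with $\alpha_{q}$ as in \eqref{eq:alpha_q} for $d=1$, that $\alpha_{q}=2\alpha$. By Proposition~\ref{pr_RVCarrier} the GMC measure $\nu:=M_{q\gamma}$ is carried by $\EE_{M_{\gamma}}(2\alpha)$, and by Proposition~\ref{pr_RVCarrier} applied with $q\gamma$ and $1$ in place of $\gamma$ and $q$ one has $\dim_{\nu}(x)=1-\tfrac12q^{2}\gamma^{2}=\d_{M_{\gamma}}(2\alpha)$ for $\nu$-a.e.\ $x$. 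It therefore suffices to show $\nu\big(\underline{\EE}_{\Z^{\gamma}}(\alpha)\big)>0$ almost surely, since Proposition~\ref{pr_FalconerHausdorffBound} then yields $\dim_{\H}\big(\underline{\EE}_{\Z^{\gamma}}(\alpha)\big)\ge\d_{M_{\gamma}}(2\alpha)$. For $\nu$-a.e.\ $t$ the genuine limit $\dim_{M_{\gamma}}(t)=2\alpha$ exists; the estimate of the previous step gives $\underline{\dim}_{\Z^{\gamma}}(t)\ge\alpha$, while the same Brownian-scaling plus $\P(\limsup A_{n})\ge\limsup\P(A_{n})$ argument, run along a sequence $r_{n}\downarrow0$ realising this limit, gives $\underline{\dim}_{\Z^{\gamma}}(t)\le\alpha$ — a statement holding $\P_{B}$-a.s.\ for each such fixed $t$. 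Conditioning on $X$ and integrating against $\nu$, the bounded random variable $\nu\big(\underline{\EE}_{\Z^{\gamma}}(\alpha)\big)$ has conditional expectation equal to its maximal value $\nu([0,T])>0$ and hence equals $\nu([0,T])$ almost surely; this Fubini device turns the per-$t$ statement into $\nu(\underline{\EE}_{\Z^{\gamma}}(\alpha))>0$ a.s.\ and simultaneously disposes of the exceptional-set problem on this side.

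\emph{Main obstacle.} The genuinely delicate point is the uniform-in-$t$ control of the Brownian increments behind the inclusion $\underline{\EE}_{\Z^{\gamma}}(\alpha)\subset\overline{\BB}_{M_{\gamma}}(2\alpha)$, which is precisely what is needed when $\alpha$ exceeds the maximiser of $\alpha\mapsto\d_{M_{\gamma}}(2\alpha)$: unlike the upper estimate on $|\Z^{\gamma}_{t+r}-\Z^{\gamma}_{t-r}|$, there is no pointwise lower bound on the increments valid for all times, so the dyadic discretisation and Borel--Cantelli bookkeeping described above, together with its interplay with the uniform multifractal bounds for $M_{\gamma}$, is where the argument really hinges. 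Everything else reduces to direct appeals to Theorem~\ref{th_MainTheorem}, Propositions~\ref{pr_RVCarrier}, \ref{pr_unionSet} and \ref{pr_FalconerHausdorffBound}, or to routine Gaussian estimates.
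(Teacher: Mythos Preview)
Your overall architecture matches the paper's: both reduce to the two set inclusions
\[
\EE_{M_{\gamma}}(2\alpha)\ \subset\ \underline{\EE}_{\Z^{\gamma}}(\alpha)\ \subset\ \underline{\UU}_{M_{\gamma}}(2\alpha)\cap\overline{\BB}_{M_{\gamma}}(2\alpha)
\]
and finish via Proposition~\ref{pr_unionSet} and Theorem~\ref{th_MainTheorem}, exactly as in Lemma~\ref{lm_dimBM}. Your use of L\'evy's uniform modulus for the inclusion into $\underline{\UU}_{M_{\gamma}}(2\alpha)$ is the paper's argument \eqref{eq_Levy1}.

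The real difference is in how you handle the Brownian \emph{lower} bound. The paper does not work per-$t$: it invokes one further classical fact, namely that $\P_{B}$-almost surely, for \emph{every} time $s$ and every $\eps>0$, $\limsup_{h\downarrow 0}|B_{s+h}-B_{s}|/h^{1/2+\eps}=\infty$ (Brownian motion is nowhere $(\tfrac12+\eps)$-H\"older; Paley--Wiener--Zygmund, cf.\ \cite{Peres}). This is precisely \eqref{eq_Levy2}, and it delivers both the inclusion $\EE_{M_{\gamma}}(2\alpha)\subset\underline{\EE}_{\Z^{\gamma}}(\alpha)$ and the inclusion into $\overline{\BB}_{M_{\gamma}}(2\alpha)$ for \emph{all} $t$ on a single $\P_{B}$-full event. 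Your ``main obstacle'' therefore dissolves once you cite this result: the uniform-in-$t$ lower bound you flagged as missing is in fact classical, and the dyadic discretisation/Borel--Cantelli programme you outline is essentially an attempt to reprove it. Your Fubini device for the lower bound (integrating the per-$t$ conclusion against $M_{q\gamma}$ and appealing to Propositions~\ref{pr_RVCarrier} and~\ref{pr_FalconerHausdorffBound}) is a legitimate alternative that works without the uniform Brownian estimate, and is a nice observation, though more elaborate than the paper's direct inclusion; but for the upper bound into $\overline{\BB}_{M_{\gamma}}(2\alpha)$ there is no analogous shortcut, and the paper's route via \eqref{eq_Levy2} is both shorter and complete.
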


For $\alpha \geq 0$, we define the set $\EE_{M_{\gamma}}(\alpha)$ as in \eqref{eq_LevelSets}, and the sets $\underline{\UU}_{M_{\gamma}}(\alpha)$, $\overline{\BB}_{M_{\gamma}}(\alpha)$ as in \eqref{eq_LevelSetsExtended}. Then Theorem~\ref{th_MainTheoremBMMT} is an easy consequence of the following lemma.
\begin{lemma}
\label{lm_dimBM}
If $\gamma^2 < 2$, then for $\alpha \geq 0$, it holds $\P_B$-almost surely that
\begin{enumerate}[label=(\roman*)]
\itemsep0em 
\item $\EE_{M_{\gamma}}(2 \alpha) \subset \underline{\EE}_{\Z^{\gamma}}(\alpha)$;
\item $\underline{\EE}_{\Z^{\gamma}}(\alpha) \subset \underline{\UU}_{M_{\gamma}}(2 \alpha) \cap  
\overline{\BB}_{M_{\gamma}}(2 \alpha)$.
\end{enumerate}
\end{lemma}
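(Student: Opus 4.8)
The plan is to exploit the exact law of iterated logarithm (LIL) behaviour of Brownian motion together with its scaling, conditionally on the field $X$ (equivalently, on the measure $M_{\gamma}$). Work $\P_B$-almost surely, treating $M_{\gamma}$ as a fixed non-atomic Radon measure on $[0,T]$ giving full mass to $[0,T]$; write $F(t) := M_{\gamma}([0,t])$, so $F$ is continuous and strictly increasing, and $\Z^{\gamma}_t = B_{F(t)}$. For a point $t$ and radius $r$, the increment $\Z^{\gamma}_{t+r} - \Z^{\gamma}_{t-r}$ is $B_{F(t+r)} - B_{F(t-r)}$, which (conditionally on $X$) is a Gaussian vector of variance $F(t+r) - F(t-r) = M_{\gamma}(B(t,r))$ in each coordinate (up to the boundary issue $t\pm r \notin [0,T]$, which only affects finitely many scales and can be ignored in the liminf). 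The heuristic is therefore $|\Z^{\gamma}_{t+r}-\Z^{\gamma}_{t-r}| \approx \sqrt{M_{\gamma}(B(t,r))}$ in the logarithmic scale, which immediately suggests $\underline{\dim}_{\Z^{\gamma}}(t) = \tfrac12 \underline{\dim}_{M_{\gamma}}(t)$ and hence both inclusions.

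For part~(i): fix $t \in \EE_{M_{\gamma}}(2\alpha)$, so $\log M_{\gamma}(B(t,r))/\log r \to 2\alpha$. I would prove $\underline{\dim}_{\Z^{\gamma}}(t)=\alpha$ by a two-sided bound. For the easy direction, the upper bound $\underline{\dim}_{\Z^{\gamma}}(t)\le\alpha$: along a sequence $r_n\searrow 0$ one has $|\Z^{\gamma}_{t+r_n}-\Z^{\gamma}_{t-r_n}| \le |B_{F(t+r_n)}-B_{F(t-r_n)}|$, and by Brownian modulus of continuity (Lévy) $|B_u - B_v| \lesssim \sqrt{|u-v|\log(1/|u-v|)}$ uniformly for $|u-v|$ small, $\P_B$-a.s.; with $|u-v| = M_{\gamma}(B(t,r_n)) = r_n^{2\alpha+o(1)}$ this gives $\log|\Z^{\gamma}_{t+r_n}-\Z^{\gamma}_{t-r_n}|/\log r_n \ge \alpha - o(1)$, hence $\underline{\dim}_{\Z^{\gamma}}(t) \ge \alpha$. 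Wait — I must be careful with directions: Lévy's modulus gives an \emph{upper} bound on $|B_u-B_v|$, hence a \emph{lower} bound on $\log|\cdot|/\log r$ (since $\log r<0$), hence $\underline{\dim}_{\Z^{\gamma}}(t)\ge\alpha$. For the reverse $\underline{\dim}_{\Z^{\gamma}}(t)\le\alpha$, I need that the increment is not too small along \emph{some} subsequence; this is where a LIL-type lower bound is needed, e.g. using that $\liminf_{u\searrow 0}|B_{s+u}-B_s|\sqrt{\log\log(1/u)}/\sqrt u$ is bounded below, or more simply that $|B_{F(t+r)}-B_{F(t-r)}|$, being Gaussian of standard deviation $\sqrt{M_{\gamma}(B(t,r))}$, satisfies $\P_B(|B_{F(t+r)}-B_{F(t-r)}| \le \delta\sqrt{M_{\gamma}(B(t,r))}\,)\le C\delta^d$; combined with a Borel--Cantelli argument along a geometric sequence of radii (and using independence of the relevant increments, since the $F$-images of disjoint annuli give disjoint time intervals for $B$), one gets $|\Z^{\gamma}_{t+r}-\Z^{\gamma}_{t-r}| \ge M_{\gamma}(B(t,r))^{1/2+o(1)} = r^{\alpha+o(1)}$ for all small $r$, $\P_B$-a.s., uniformly over a suitable deterministic net of candidate points. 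The subtlety here is that the bad event must be controlled \emph{simultaneously} for all $t$ in the (uncountable) set $\EE_{M_{\gamma}}(2\alpha)$; this is handled by discretising in $t$ over a countable dense set, using monotonicity of $F$ to compare $M_{\gamma}(B(t,r))$ at nearby dyadic points, and taking a union over countably many scales, exactly as in the proof of the Hölder regularity of GMC (Proposition~\ref{pr_HolderGMC}).

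For part~(ii): fix $t \in \underline{\EE}_{\Z^{\gamma}}(\alpha)$, i.e. $\liminf_{r\searrow 0}\log|\Z^{\gamma}_{t+r}-\Z^{\gamma}_{t-r}|/\log r = \alpha$. I must show $\underline{\dim}_{M_{\gamma}}(t) \le 2\alpha \le \overline{\dim}_{M_{\gamma}}(t)$. The inequality $\underline{\dim}_{M_{\gamma}}(t)\le 2\alpha$: if instead $\underline{\dim}_{M_{\gamma}}(t) > 2\alpha$, then $M_{\gamma}(B(t,r)) \le r^{\beta}$ for some $\beta > 2\alpha$ and all small $r$, so by Lévy's modulus $|\Z^{\gamma}_{t+r}-\Z^{\gamma}_{t-r}| \le \sqrt{r^{\beta}\log(1/r^\beta)} \le r^{\beta/2-o(1)}$, forcing $\underline{\dim}_{\Z^{\gamma}}(t) \ge \beta/2 > \alpha$, a contradiction. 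The inequality $\overline{\dim}_{M_{\gamma}}(t)\ge 2\alpha$: if $\overline{\dim}_{M_{\gamma}}(t) < 2\alpha$, then $M_{\gamma}(B(t,r)) \ge r^{\beta}$ for some $\beta < 2\alpha$ along a sequence $r_n\searrow 0$ (in fact for all small $r$), and using the Gaussian lower-tail estimate plus Borel--Cantelli as above one gets $|\Z^{\gamma}_{t+r}-\Z^{\gamma}_{t-r}| \ge r^{\beta/2+o(1)}$ for all small $r$, $\P_B$-a.s., giving $\underline{\dim}_{\Z^{\gamma}}(t) \le \beta/2 < \alpha$, again a contradiction; here the uniformity over $t$ is obtained by the same discretisation as in part~(i). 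Finally Theorem~\ref{th_MainTheoremBMMT} follows: (i) gives $\underline{\d}_{\Z^{\gamma}}(\alpha) \ge \dim_{\H}(\EE_{M_{\gamma}}(2\alpha)) = \d_{M_{\gamma}}(2\alpha)$, and (ii) together with Proposition~\ref{pr_unionSet} gives $\underline{\d}_{\Z^{\gamma}}(\alpha) \le \dim_{\H}(\underline{\UU}_{M_{\gamma}}(2\alpha)\cap\overline{\BB}_{M_{\gamma}}(2\alpha)) \le \tau_{M_{\gamma}}^{*}(2\alpha) = \d_{M_{\gamma}}(2\alpha)$ by Theorem~\ref{th_MainTheorem}.

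**Main obstacle.** The delicate point is obtaining the Brownian lower bounds $|\Z^{\gamma}_{t+r}-\Z^{\gamma}_{t-r}| \ge M_{\gamma}(B(t,r))^{1/2+o(1)}$ \emph{uniformly in $t$} over an uncountable set, $\P_B$-a.s.\ and conditionally on $X$. The resolution is a discretisation-plus-Borel--Cantelli scheme: restrict to dyadic scales $r = 2^{-n}$ and dyadic centres, control $\P_B$ of the bad event using the Gaussian small-ball estimate $\P(|\mathcal{N}(0,\sigma^2 I_d)| \le \eta) \le C(\eta/\sigma)^d$ together with the strong Markov / independent-increments structure of $B$ evaluated at the images under $F$ of disjoint dyadic annuli, sum over $n$, and then transfer from the net to arbitrary $(t,r)$ by monotonicity of $F$ and the (already known, Proposition~\ref{pr_HolderGMC}) global Hölder bounds on $M_{\gamma}$. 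Everything else is a routine consequence of Lévy's modulus of continuity and the definitions.
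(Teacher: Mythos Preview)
Your overall strategy --- L\'evy's uniform modulus of continuity for the upper bound on the Brownian increment, and a lower bound of the form $|B_{F(t+r)}-B_{F(t-r)}| \geq M_{\gamma}(B(t,r))^{1/2+\eps}$ for the other direction --- is exactly the one the paper uses. The paper packages this into two facts it calls \eqref{eq_Levy1} and \eqref{eq_Levy2} and then runs precisely the chain of inequalities you describe for both (i) and (ii).

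Where you diverge is in how you justify the lower bound and in what you actually claim. The paper does \emph{not} set up a discretisation-plus-Borel--Cantelli scheme; it simply invokes the classical Paley--Wiener--Zygmund-type fact that $\P_B$-almost surely Brownian motion is nowhere locally $(1/2+\eps)$-H\"older. This is a single almost-sure event on which, for \emph{every} $s$, there is a sequence $h_n\searrow 0$ with $|B_{s+h_n}-B_{s-h_n}| \geq h_n^{1/2+\eps}$. Applied with $s=F(t)$ and transported back through the homeomorphism $F$, this immediately gives the subsequence $r_n$ you need, uniformly over all $t$, with no discretisation of the centre point required. So the ``main obstacle'' you isolate is real, but its resolution is a one-line citation rather than a new argument.

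There is also a genuine overclaim in your write-up: you assert in two places that the Brownian lower bound holds ``for all small $r$'', i.e.\ $|\Z^{\gamma}_{t+r}-\Z^{\gamma}_{t-r}| \geq M_{\gamma}(B(t,r))^{1/2+o(1)}$ for every sufficiently small $r$. This is false even for a single fixed $t$: conditionally on $X$, the process $h\mapsto B_{F(t)+h}-B_{F(t)-h}$ is (up to scaling) a Brownian motion in $h$, hence has zeros accumulating at $0$. All that is available --- and all that is needed, since you are bounding a $\liminf$ from above --- is the inequality along \emph{some} subsequence. Correspondingly, your proposed ``transfer from the net to arbitrary $(t,r)$ by monotonicity of $F$'' would not work for such a lower bound: monotonicity of $F$ controls $M_{\gamma}(B(t,r))$ between nearby points but says nothing about the sign or size of $B_{F(t+r)}-B_{F(t-r)}$. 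Once you replace ``for all small $r$'' by ``along a subsequence'' and invoke the nowhere-H\"older theorem directly, your proof collapses to the paper's.
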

\begin{proof}
Let $(\Z^{\gamma}_t)_{t \in [0, T]}$ be the $d$-dimensional MRW as defined in \eqref{eq_MRW}. Thanks to well-known properties of Brownian motion, the following facts holds $\P_B$-almost surely. Fix $\eps > 0$, then it exists a finite constant $C > 0$ such that, for every $t \in [0, T]$ and $r >0$ small enough, it holds that 
\begin{equation}
\label{eq_Levy1}
|\Z^{\gamma}_{t+r} - \Z^{\gamma}_{t-r}| = |B_{M_{\gamma}([0, t+r])}-B_{M_{\gamma}([0, t-r])}| \leq C M_{\gamma}(B(t, r))^{\frac{1}{2}-\eps} \,.
\end{equation}
Moreover, we also know that for each $\eps > 0$ and $t \in [0, T]$, for $h > 0$ small enough, it holds that   
\begin{equation}
\label{eq_Levy2}
\sup_{0 < r \leq h} |\Z^{\gamma}_{t+r} - \Z^{\gamma}_{t-r}| = \sup_{0 < r \leq h} |B_{M_{\gamma}([0, t+r])}-B_{M_{\gamma}([0, t-r])}| \geq M_{\gamma}(B(t, r))^{\frac{1}{2} + \eps} \,.
\end{equation}

\vspace*{2mm}
{\it Proof of item (i).} For $\alpha \geq 0$, we fix $t \in \EE_{M_{\gamma}}(2\alpha)$. For every $\eps > 0$, we know that for $r > 0$ small enough, it holds that
\begin{equation}
\label{eq_Levy3}
r^{2\alpha+\eps} \leq M_{\gamma}(B(t, r)) \leq r^{2\alpha-\eps} \,.
\end{equation}
Consequently, thanks to \eqref{eq_Levy1} and the right-hand side of \eqref{eq_Levy3}, we get that for $r > 0$ small enough, it holds that
\begin{equation*}
|\Z^{\gamma}_{t+r}-\Z^{\gamma}_{t-r}| \leq C M_{\gamma}(B(t, r))^{\frac{1}{2}-\eps} \leq C r^{(2\alpha-\eps)\left(\frac{1}{2}-\eps\right)} \,.
\end{equation*}
On the other hand, thanks to \eqref{eq_Levy2} and the left-hand side of \eqref{eq_Levy3}, there exists a decreasing sequence $(r_n)_{n \in \mathbb{N}}$ converging to $0$ such that 
\begin{equation*}
|\Z^{\gamma}_{t+r_n}-\Z^{\gamma}_{t-r_n}| \geq M_{\gamma}(B(t, r_n))^{\frac{1}{2}+\eps} \geq r_n^{(2\alpha+\eps)\left(\frac{1}{2}+\eps\right)} \,, \quad \forall n \in \mathbb{N} \,.
\end{equation*}
Therefore, taking logs, by arbitrariness of $\eps >0$, this implies that 
\begin{equation*}
\liminf_{r \searrow 0}  \frac{\log |\Z^{\gamma}_{t+r}-\Z^{\gamma}_{t-r}|}{\log r} =  \alpha \,.
\end{equation*}
which means that $t \in \underline{\EE}_{\Z^{\gamma}}(\alpha)$, $\P_B$-almost surely.

\vspace*{2mm}
{\it Proof of item (ii).} For $\alpha \geq 0$, we fix $t \in \underline{\EE}_{\Z^{\gamma}}(\alpha)$. Similarly as above, for each $\eps > 0$, there exist decreasing sequences $(r_n)_{n \in \mathbb{N}}$, $(r'_n)_{n \in \mathbb{N}}$ both converging to $0$ such that 
\begin{equation*}
r_n^{\alpha+\eps} \leq |\Z^{\gamma}_{t+r_n}-\Z^{\gamma}_{t-r_n}| \leq C M_{\gamma}(B(t, r_n))^{\frac{1}{2}-\eps}\,, \quad \forall n \in \mathbb{N} \,,
\end{equation*}
where the last inequality is given by \eqref{eq_Levy1}, and
\begin{equation*}
{r'_n}^{\alpha-\eps} \geq |\Z^{\gamma}_{t+r'_n} - \Z^{\gamma}_{t-r'_n}| \geq M_{\gamma}(B(t, r'_n))^{\frac{1}{2}+\eps} \,, \quad \forall n \in \mathbb{N} \,,
\end{equation*}
where the last inequality is given by \eqref{eq_Levy2}. By arbitrariness of $\eps > 0$, these facts imply that $t \in \underline{\UU}_{M_{\gamma}}(2\alpha) \cap \overline{\BB}_{M_{\gamma}}(2 \alpha)$, $\P_B$-almost surely.
\end{proof}

Finally, we show how the proof of Theorem~\ref{th_MainTheoremBMMT} follows immediately from  Lemma~\ref{lm_dimBM}.
\begin{proof}[Proof of Theorem~\ref{th_MainTheoremBMMT}]
From item (i) of Lemma~\ref{lm_dimBM}, for all $\alpha \geq 0$, we have the inequality 
\begin{equation*}
\underline{\d}_{\Z^{\gamma}}(\alpha) \geq \d_{M_{\gamma}}(2 \alpha)\,,
\end{equation*}
almost surely. On the other hand, item (ii) of Lemma~\ref{lm_dimBM}, Proposition~\ref{pr_unionSet} and Theorem~\ref{th_MainTheorem} imply that, for all $\alpha \geq 0$, it holds
\begin{equation*}
\d_{\Z^{\gamma}}(\alpha) \leq \dim_{\mathcal{H}} \left(\underline{\UU}_{M_{\gamma}}(2 \alpha) \cap \overline{\BB}_{M_{\gamma}}(2 \alpha) \right) \leq \tau^{*}_{M_{\gamma}} (2 \alpha) = \d_{M_{\gamma}}(2 \alpha) \,,
\end{equation*}
almost surely. Since Theorem~\ref{th_MainTheorem} gives a full characterization of the singularity spectrum $\d_{M_{\gamma}}$, this concludes the proof.
\end{proof}

\section{Multifractal analysis of Liouville Brownian motion}
\label{sec:LBM}
The main goal of this section is to study the multifractal behaviour of the \emph{Liouville Brownian motion} (LBM). Roughly speaking, the LBM can be defined as the canonical diffusion associated with the \emph{Liouville quantum gravity} metric tensor given by
\begin{equation*}
e^{\gamma X} (\d x^2 + \d y^2) \,,
\end{equation*}
where $X$ is a GFF on a domain $D \subset \R^2$ and $\d x^2 +\d y^2$ is the Euclidean metric tensor. The LBM has been introduced simultaneously in \cite{Ber_LBM, RVG_LBM}, and it can be built as a time changed planar Brownian motion with respect to a GMC measure associated with the field $X$. We refer to the above mentioned references for more details.

In what follows, we take a slightly more general approach, and we generalize the construction of the LBM in dimension $d \geq 2$. Once this is done, we consider the problem of studying the lower singularity spectrum of the $d$-dimensional LBM. 

\subsection{Definition of the LBM}
For $d \geq 2$, consider a bounded domain $D \subset \R^d$ containing the ball $B(0, 1)$. Let $B$ be a $d$-dimensional Brownian motion started from the origin and define $T$ to be its first exit time from $D$, i.e.\
\begin{equation}
\label{eq_exitTime}
T := \inf\left\{t \geq 0 \, : \, B_t \not \in D\right\} \,.
\end{equation}
Moreover, consider a log-correlated Gaussian field $X$ on $D$ with covariance kernel of the form \eqref{eq_Covariance} and independent from the Brownian motion $B$. Then the $d$-dimensional LBM on $D$ can be formally defined by
\begin{equation*}
\B^{\gamma}_t := B_{F_{\gamma}^{-1}(t)} \,, \quad t \geq 0 \,,
\end{equation*}
where the clock process $F_{\gamma}$ is given by
\begin{equation*}
F_{\gamma}(t) :=\int_0^{t \wedge T} e^{\gamma X(B_s) - \frac{1}{2} \gamma^2 \E[X(B_s)^2]} \d s \,, \quad t \geq 0 \,.
\end{equation*}
In the section, we adopt the same notation specified in Remark~\ref{rem_ConventionProb}, i.e., we decompose $\P$ as  $\P_B \otimes \P_X$.

To give rigorous sense to the definition of the LBM, it is sufficient to show the existence of the clock process $F_{\gamma}$. Following \cite{Ber_LBM, RVG_LBM}, we proceed with a regularization and limiting procedure. Let $(X_{\eps})_{\eps \in (0, 1]}$ be the convolution approximation of the field $X$ as defined in \eqref{eq_ConvApprox}, then we have the following lemma.
\begin{lemma}
\label{lm_ConstructionLBM}
If $\gamma^2 < 4$, then $\P_B$-almost surely, for all $t \geq 0$, the sequence
\begin{equation*}
F_{\gamma}^{\eps}(t) := \int_0^{t \wedge T} e^{\gamma X_{\eps} (B_s) - \frac{1}{2} \gamma^2 \E[X_{\eps} (B_s)^2]} \d s\,, \quad \eps \in (0, 1] \,,
\end{equation*}
converges in $\P_X$-probability and in $L^1(\P_X)$ towards a non-degenerate limit $F_{\gamma}(t)$ that does not depend on the choice of the mollifier $\psi$ used in \eqref{eq_ConvApprox}. 
\end{lemma}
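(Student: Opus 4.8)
The plan is to condition on the Brownian motion $B$ and to view $F_\gamma^\eps(t)$ as a mollified Gaussian multiplicative chaos integral against the (random, but now $\P_B$-frozen) occupation measure of $B$, and then to run the usual GMC convergence machinery for that reference measure. Concretely, write $\sigma_t$ for the occupation measure of $B$ on $[0, t\wedge T]$, i.e.\ $\sigma_t(A) := \int_0^{t\wedge T} \mathbbm{1}_{\{B_s \in A\}}\,\d s$ for Borel $A\subset \R^d$, so that $F_\gamma^\eps(t) = \int_{\R^d} e^{\gamma X_\eps(x) - \frac12\gamma^2 \E[X_\eps(x)^2]}\,\sigma_t(\d x)$. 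Since $X_\eps(B_s)$ is, for $B$ fixed, a centred Gaussian variable, Tonelli and the normalisation give $\E_X[F_\gamma^\eps(t)] = t\wedge T$ for every $\eps$; as $B$ starts inside $D$ we have $T>0$ $\P_B$-a.s., and this identity will both drive the $L^1$ convergence (via uniform integrability) and yield non-degeneracy of the limit.

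First I would record the $\P_B$-almost sure regularity of $\sigma_t$ that makes it subcritical for $\gamma$ with $\gamma^2 < 4$. Using the standard local behaviour of Brownian occupation measures --- the Green's function estimate $\E_B[\sigma_t(B(x,r))]\lesssim r^2$ for $d\geq 3$ and $\lesssim r^2\log(1/r)$ for $d=2$, combined with a Borel--Cantelli argument over dyadic scales --- one shows that $\P_B$-almost surely, for every $\delta>0$ there is a finite random constant with $\sup_x \sigma_t(B(x,r)) \leq r^{2-\delta}$ for all small $r$. In particular $\sigma_t$ is $\P_B$-a.s.\ a non-atomic finite measure assigning no mass to sets of Hausdorff dimension $<2$; since $\gamma^2/2<2$, this is precisely the subcriticality required of the reference measure.

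Next comes the heart of the argument: the convergence of $(F_\gamma^\eps(t))_{\eps\in(0,1]}$ in $\P_X$-probability and $L^1(\P_X)$. For $\gamma^2<2$ a direct second-moment computation, using property~\ref{pp_P3} together with $\E_B[|B_s-B_u|^{-\gamma^2}]\lesssim |s-u|^{-\gamma^2/2}$, gives a $\P_B$-a.s.\ uniform $L^2(\P_X)$ bound and $L^2$-Cauchyness across mollification scales. For the full range $2\leq\gamma^2<4$ the second moment is infinite, so I would instead invoke the uniform $L^p(\P_X)$-bound on $(F_\gamma^\eps(t))_\eps$ for a suitable $p\in(1,4/\gamma^2)$ --- the analogue of Proposition~\ref{pr_UnifBound}, whose proof for the clock process is deferred to Appendix~\ref{sec:finiteMoments} and rests on a Kahane-type convexity comparison of $\sigma_t$ with an exactly self-similar model, exploiting the ball estimate above. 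This $L^p$ bound gives uniform integrability in $\P_X$. To produce an actual limit rather than mere tightness, one argues as in \cite{Ber_LBM, RVG_LBM, Berestycki_Elementary}: restrict to the $\P_X$-event that $X_\eps$ has no near-critically thick point along the range of $B$ --- on which one is back in the $L^2$ regime and can pass to the limit by the computation above (here Lemma~\ref{lm_supremumGaussian} provides the uniform control of the fluctuations) --- and check that the complementary contribution tends to $0$ in $\P_X$-probability using uniform integrability. This makes $(F_\gamma^\eps(t))_\eps$ Cauchy in $\P_X$-probability, and together with uniform integrability it converges in $\P_X$-probability and in $L^1(\P_X)$ to a limit $F_\gamma(t)$ with $\E_X[F_\gamma(t)] = t\wedge T>0$, hence non-degenerate.

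Finally, to upgrade ``for each fixed $t$'' to ``for all $t\geq 0$'', I would run the argument for all $t\in\mathbb{Q}\cap[0,\infty)$ simultaneously, note that $t\mapsto F_\gamma^\eps(t)$ and the rational-indexed limits are non-decreasing, extend the limit to a non-decreasing $F_\gamma$ on $[0,\infty)$, observe it is continuous since $\E_X[F_\gamma(t)-F_\gamma(s)] = (t\wedge T)-(s\wedge T)\to 0$ as $s\to t$, and sandwich $F_\gamma^\eps(t)$ between its values at nearby rationals to get convergence at every $t$. Independence of $F_\gamma(t)$ from the mollifier $\psi$ follows from Kahane's convexity inequalities exactly as for $M_\gamma$ in Proposition~\ref{pr_consGausChaos} (cf.\ \cite{Kahane, RV_Review}), applied with the reference measure $\sigma_t$. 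The hard part will be the convergence step in the range $\gamma^2\geq 2$, where the second moment blows up and one must extract the limit either through Berestycki's good-event truncation adapted to the \emph{random} measure $\sigma_t$ (requiring uniform-in-$\eps$ control of the thick points of $X_\eps$ restricted to the range of $B$) or through the $L^p$ moment bound of Appendix~\ref{sec:finiteMoments}; keeping the $\P_B$-almost sure statements cleanly separated from the $\P_X$-limits (a Fubini point) is the other place demanding care.
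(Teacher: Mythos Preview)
Your proposal is essentially correct, but it takes a considerably longer route than the paper. The paper's proof is almost a one-liner once the right reference is identified: it observes that $F_\gamma^\eps(t) = \int_{\R^d} e^{\gamma X_\eps(x) - \frac12\gamma^2 \E[X_\eps(x)^2]}\,\nu_t^T(\d x)$, where $\nu_t^T$ is the occupation measure of $B$ on $[0,t\wedge T]$, and then directly invokes \cite[Theorem~1.1]{Berestycki_Elementary}, which already covers GMC against a general reference measure $\sigma$ provided $\sigma$ has finite $\alpha$-energy $\int\int |x-y|^{-\alpha}\,\sigma(\d x)\,\sigma(\d y)$ for all $\alpha$ up to $\gamma^2/2$. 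So the only thing the paper checks is the energy estimate
\[
\E_B\!\left[\int_0^t\!\!\int_0^t |B_r - B_s|^{-\alpha}\,\d r\,\d s\right]
= \E_B[|B_1|^{-\alpha}] \int_0^t\!\!\int_0^t |r-s|^{-\alpha/2}\,\d r\,\d s < \infty
\]
for $\alpha \in (0,2)$, which is a two-line computation; convergence in $\P_X$-probability, $L^1(\P_X)$-convergence, non-degeneracy, and mollifier independence all come for free from Berestycki's theorem. The upgrade to ``for all $t$'' is then done by monotonicity in $t$, exactly as you suggest.

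By contrast, you verify the dimension-two property of $\nu_t^T$ through a uniform ball-mass estimate $\sup_x \nu_t^T(B(x,r)) \le r^{2-\delta}$ (harder to establish than the energy integral, especially in $d=2$), and then essentially redo Berestycki's proof by hand: the $L^2$ argument for $\gamma^2<2$, the good-event truncation plus the $L^p$ bound of Appendix~\ref{sec:finiteMoments} for $\gamma^2 \in [2,4)$, and Kahane's inequality for mollifier independence. All of this is contained in the black box \cite[Theorem~1.1]{Berestycki_Elementary}; invoking it after the energy computation saves you the entire middle section of your argument. Note also that the paper uses the $L^p$ bound of Appendix~\ref{sec:finiteMoments} only later, for the power-law spectrum in Proposition~\ref{pr_PropertiesLBM}, not for the construction itself.
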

\begin{proof}
This fact is a standard result and it is an easy consequence of \cite[Theorem~1.1]{Berestycki_Elementary}. Indeed, for $t \geq 0$, let $\nu^T_{t}$ be the occupation measure between time $0$ and time $t \wedge T$ of the Brownian motion $B$, so that
\begin{equation*}
F_{\gamma}^{\eps}(t) = \int_ {\R^d} e^{\gamma X_{\eps} (x) - \frac{1}{2} \gamma^2 \E[X_{\eps} (x)^2]} \nu^T_t(\d x) \,, \quad \eps \in (0, 1] \,.
\end{equation*}
Then it is sufficient to prove that $\P_B$-almost surely, for all $t \geq 0$, the measure $\nu^T_t$ has dimension two, i.e., we need to show that for $\alpha \in (0,2)$, it holds that
\begin{equation*}
\int_{\R^d \times \R^d} |x-y|^{-\alpha} \nu^T_t(\d x) \nu^T_t(\d y) < \infty \,.
\end{equation*}
Fix $t \geq 0$ and let $\nu_t$ be the occupation measure of the (unstopped) Brownian motion $B$ between time $0$ and time $t$, then it obviously holds that 
\begin{equation*}
\int_{\R^d \times \R^d}|x-y|^{-\alpha} \nu^T_t(\d x) \nu^T_t(\d y)  \leq \int_{\R^d \times \R^d} |x-y|^{-\alpha} \nu_t(\d x) \nu_t(\d y) \,.
\end{equation*}
Therefore, taking expectation, it is enough to prove that for $\alpha \in (0,2)$, it holds that
\begin{equation}
\label{eq_LemmaCons}
\E_B\left[\int_{\R^d \times \R^d} |x-y|^{-\alpha} \nu_t(\d x) \nu_t(\d y)\right] = \E_B\left[\int_0^t \int_0^t |B_r-B_s|^{-\alpha} \d r \d s\right]< \infty \,.
\end{equation}
Using Fubini's theorem, we have that 
\begin{align*}
\E_B\left[\int_{\R^d \times \R^d} |x-y|^{-\alpha} \nu_t(\d x) \nu_t(\d y)\right] & = \int_0^t \int_0^t \E_B\left[|B_r-B_s|^{-\alpha}\right] \d r \d s \\
& = \E_B\left[{|B_1|^{-\alpha}}\right]  \int_0^t\int_0^t |r-s|^{-\frac{\alpha}{2}} \d r \d s \\
& \leq 2 t \E_B\left[{|B_1|^{-\alpha}}\right] \int_0^t r^{-\frac{\alpha}{2}} \d r  \,
\end{align*}
which is clearly finite if $\alpha \in (0, 2)$. Therefore, this implies that for fixed $t \geq 0$, the measure $\nu_t^T$ has dimension two, $\P_B$-almost surely. To prove that the measure $\nu_t^T$ has dimension two for all $t \geq 0$, $\P_B$-almost surely, it is sufficient to take a sequence $(t_n)_{n \in \mathbb{N}}$ converging to infinity and noting that for $0 \leq s \leq t$, it holds that
\begin{equation*}
\int_{\R^d \times \R^d} |x-y|^{-\alpha} \nu^T_s(\d x) \nu^T_s(\d y)  \leq \int_{\R^d \times \R^d} |x-y|^{-\alpha} \nu^T_t(\d x) \nu^T_t(\d y) \,.
\end{equation*} 
The conclusion follows thanks to \cite[Theorem~1.1]{Berestycki_Elementary}.
\end{proof}
\begin{definition}
Fix $\gamma^2 < 4$ and $d \geq 2$. The $d$-dimensional LBM on $D$ associated with the field $X$ is the stochastic process $(\B^{\gamma}_t)_{t \geq 0}$ defined by 
\begin{equation}
\label{eq_LBM}
\B^{\gamma}_t := B_{F_{\gamma}^{-1}(t)} \,, \quad t \geq 0  \,,
\end{equation}
where $F_{\gamma}$ is defined as in Lemma~\ref{lm_ConstructionLBM}. 
\end{definition}
\begin{remark}
Let us emphasize that when $d \geq 3$, Lemma~\ref{lm_ConstructionLBM} yields a construction of the LBM only for $\gamma^2 < 4$, and not for $\gamma^2 < 2d$ as for the case of GMC measures. Therefore, there is a gap, for $4 \leq \gamma^2 < 2d$, where a construction of the LBM does not follow from the general theory of GMC. This is due to the well-known fact that the dimension of the occupation measure of the $d$-dimensional Brownian motion is at most two, for all $d \geq 2$ (see e.g.\ \cite[Chapter~4]{Peres}). To the best of our knowledge, if $d \geq 3$, a construction of the $d$-dimensional LBM for $4 \leq \gamma^2 < 2d$ has not be done yet. 
\end{remark} 

Before proceeding, let us introduce some notation. For $\gamma^2 <4$, we can define on the interval $[0, T]$ the measure $\mu_{\gamma}$ by setting
\begin{equation}
\label{eq_muGamma}
\mu_{\gamma}([s, t]) := F_{\gamma}(t) - F_{\gamma}(s) \,, \quad s \leq t \in [0, T] \,.
\end{equation} 
It may be worth emphasizing that an immediate consequence of Lemma~\ref{lm_ConstructionLBM} is that the measure $\mu_{\gamma}$ can be interpreted as the weak limit of the sequence of approximating measures $(\mu_{\gamma}^{\eps})_{\eps \in (0, 1]}$, where 
\begin{equation}
\label{eq_aproxMeasureMuGamma}
\mu_{\gamma}^{\eps}(\d t) := e^{\gamma X_{\eps}(B_t) - \frac{1}{2} \gamma^2 \E[X_{\eps}(B_t)^2]} \mathbbm{1}_{\{T > t\}}  \d t\,, \quad \eps \in (0, 1]\,.
\end{equation}
Similarly, we can define on the interval $[0, F_{\gamma}(T)]$ the measure $\mu_{\gamma}^{-}$ by letting
\begin{equation}
\label{eq_muGammaMinus}
\mu^{-}_{\gamma}([s, t]) := F^{-1}_{\gamma}(t) - F^{-1}_{\gamma}(s) \,, \quad s \leq t \in [0, F_{\gamma}(T)] \,,
\end{equation}  
so that for all $t \in [0, F_{\gamma}(T)]$, we can write $\B_t = B_{\mu_{\gamma}^{-}([0, t])}$.

\begin{proposition}
\label{pr_ContStrictIncr}
For $\gamma^2 < 4$, the random function $F_{\gamma}: [0, T] \to \R$ is almost surely continuous and strictly increasing on the interval $[0, T]$.
\end{proposition}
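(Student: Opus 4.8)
The plan is to establish the two properties separately: continuity of $F_{\gamma}$ follows readily from its definition as an integral with a locally integrable density, while strict monotonicity is the more delicate point and requires knowing that the measure $\mu_{\gamma}$ charges every nonempty open subinterval of $[0,T]$. First I would record that, by Lemma~\ref{lm_ConstructionLBM}, $\P_B$-almost surely the clock process $F_{\gamma}(t) = \mu_{\gamma}([0,t])$ is well defined for all $t \in [0,T]$, where $\mu_{\gamma}$ is the finite measure on $[0,T]$ introduced in \eqref{eq_muGamma}. Continuity of $F_{\gamma}$ on $[0,T]$ is then equivalent to the statement that $\mu_{\gamma}$ has no atoms. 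This I would deduce from the pushforward structure: $\mu_{\gamma}$ is the image under the map $s \mapsto B_s$ of the restriction to $[0,T]$ of the GMC measure $M_{\gamma}$ associated with $X$ (more precisely, $\mu_{\gamma}$ is absolutely continuous with respect to the occupation measure $\nu_T$ of the stopped Brownian motion, with a density that is $M_{\gamma}$-a.e. finite). Since $M_{\gamma}$ is $\P_X$-almost surely atomless (as recalled in Section~\ref{sec:setup}, this follows from the Hausdorff dimension of the thick-point set being strictly less than $d$) and the occupation measure $\nu_T$ is atomless $\P_B$-a.s., one concludes that $\mu_{\gamma}(\{t\}) = 0$ for every $t$, hence $F_{\gamma}$ is continuous. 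Alternatively, and perhaps more cleanly, one can argue directly from the approximating measures $\mu_{\gamma}^{\eps}$ in \eqref{eq_aproxMeasureMuGamma}, each of which has a continuous density so each $F_{\gamma}^{\eps}$ is continuous, and pass to the limit using that convergence in $L^1(\P_X)$ along a subsequence gives, $\P$-a.s., $F_{\gamma}(t) = \lim F_{\gamma}^{\eps}(t)$ for a countable dense set of $t$, combined with monotonicity to upgrade to continuity everywhere.

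For strict monotonicity, I would show that $\mu_{\gamma}([s,t]) > 0$ for every $s < t$ in $[0,T]$, $\P$-almost surely. The key observations are: (a) the Brownian path $(B_r)_{r \in [s,t]}$ is, $\P_B$-a.s., not constant, so its range is a nondegenerate connected subset of $\bar D$ with positive diameter, hence of positive Lebesgue measure is \emph{not} guaranteed in $d \geq 2$—so instead I would use that the occupation measure $\nu_T$ restricted to $[s,t]$ is, $\P_B$-a.s., a nonzero measure whose closed support is the path segment $\{B_r : r \in [s,t]\}$, a set of positive $M_{\gamma}$-measure because $M_{\gamma}$ charges every nonempty open subset of $D$ ($M_{\gamma}$ has full support, as recalled after Proposition~\ref{pr_consGausChaos}); and (b) $M_{\gamma}$ and $\nu_T$ are mutually singular in general, so one cannot directly say the density is positive. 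To get around (b), the cleanest route is to work with the approximations: $\mu_{\gamma}^{\eps}([s,t]) = \int_s^t e^{\gamma X_{\eps}(B_r) - \frac12 \gamma^2 \E[X_{\eps}(B_r)^2]}\,\d r$ is $\P$-a.s. strictly positive since the integrand is continuous and strictly positive on $\{T > r\}$ (and $T > t$ on the relevant event). One then needs a lower bound on $\mu_{\gamma}([s,t])$ uniform enough to survive the limit—for instance, showing $\E[\mu_{\gamma}([s,t])^{-\eta}] < \infty$ for small $\eta > 0$, which follows from the finiteness of negative moments of GMC-type measures (Appendix~\ref{sec:finiteMoments}), and then a Borel--Cantelli argument over a countable family of rational subintervals to conclude that $\P$-a.s., $\mu_{\gamma}(J) > 0$ simultaneously for all such $J$; since every nonempty $[s,t]$ contains a nonempty rational subinterval, strict monotonicity follows.

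The main obstacle I anticipate is precisely the lower bound ruling out $\mu_{\gamma}([s,t]) = 0$: the measure $\mu_{\gamma}$ lives on the Brownian path, which is a Lebesgue-null (indeed $M_{\gamma}$-thin in a suitable sense) set, so positivity is not automatic from either $M_{\gamma}$ or $\nu_T$ alone and genuinely requires quantitative control—either via negative-moment bounds (leaning on Appendix~\ref{sec:finiteMoments} together with a union bound over dyadic subintervals) or via an a.s. argument exploiting the joint law of $(X, B)$. Once positivity on a countable dense collection of subintervals is secured, the upgrade to "strictly increasing on all of $[0,T]$" is immediate from monotonicity of $F_{\gamma}$, and combined with the continuity established above, the proof is complete; in particular $F_{\gamma}$ is then a homeomorphism from $[0,T]$ onto $[0, F_{\gamma}(T)]$, which is exactly what is needed to make sense of $F_{\gamma}^{-1}$ in the definition \eqref{eq_LBM} of the LBM.
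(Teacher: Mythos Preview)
Your treatment of strict monotonicity is sound and matches the paper's route: the paper simply cites \cite[Theorem~2.7]{RVG_LBM}, whose content is precisely that $\mu_{\gamma}$ has full support, and the standard proof there proceeds exactly as you outline---finite negative moments of $\mu_{\gamma}(J)$ for each fixed interval $J$, then a union bound over a countable (e.g.\ rational-endpoint) family. One correction: Appendix~\ref{sec:finiteMoments} deals with \emph{positive} moments; the negative-moment input you need is the analogue of \cite[Proposition~2.12]{RVG_LBM}, which the paper invokes in the proof of Proposition~\ref{pr_PropertiesLBM}.

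The continuity argument, however, has a genuine gap in both variants. In the first variant, the stated relations do not type-check: $\mu_{\gamma}$ is a measure on the \emph{time} interval $[0,T]$, whereas $M_{\gamma}$ and the occupation measure $\nu_T$ live on the \emph{space} domain $D \subset \R^d$, so $\mu_{\gamma}$ cannot be a pushforward of $M_{\gamma}$ under $s \mapsto B_s$ (that map goes the wrong way), nor can it be absolutely continuous with respect to $\nu_T$. Even granting a charitable reinterpretation, atomlessness of $M_{\gamma}$ on $D$ says nothing about atomlessness of $\mu_{\gamma}$ on $[0,T]$. In the second variant, the step ``monotonicity plus agreement with a continuous limit on a dense set implies continuity'' is false: take $F^{\eps}(t) = \min(t/\eps, 1)$ on $[0,1]$, each continuous and increasing, converging pointwise to the monotone but discontinuous function $\mathbbm{1}_{\{t>0\}}$. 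Weak limits of atomless measures can acquire atoms, and this is exactly what must be ruled out.

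The paper's approach (following \cite[Remark~3.5]{Ber_LBM}) is to use the power-law moment bound of Proposition~\ref{pr_PropertiesLBM}: since $\xi_{\mu_{\gamma}}(q) > 1$ for some $q \in (1, 4/\gamma^2)$, a union bound over a dyadic partition of $[0,T]$ combined with Markov's inequality and Borel--Cantelli yields $\sup_{t} \mu_{\gamma}(B(t,r)) \to 0$ almost surely as $r \to 0$, hence no atoms. This is the missing quantitative ingredient your proposal lacks for continuity.
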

\begin{proof}
The function $F_{\gamma}$ is continuous and strictly increasing as long as the measure $\mu_{\gamma}$ has no atoms and it has full support on the interval $[0, T]$. To verify that $\mu_{\gamma}$ is almost surely atomless on $[0, T]$, one needs to use the power law behaviour of $\mu_{\gamma}$ (cf.\ Proposition~\ref{pr_PropertiesLBM} below). We refer to \cite[Remark~3.5]{Ber_LBM} for details. The proof of the fact that $\mu_{\gamma}$ has almost surely full support on $[0, T]$ can be found in \cite[Theorem~2.7]{RVG_LBM}. To be precise, in the above mentioned references the proofs are given in the planar case. However, they immediately generalize to all dimensions and hence the details are omitted.
\end{proof}

\subsection{Study of the lower singularity spectrum}
In the same spirit as in Section~\ref{sec:MRW}, our goal is to study the local regularity of the paths of the LBM $\B^{\gamma}$ defined in \eqref{eq_LBM}. More precisely, we are interested in the lower singularity spectrum $\underline{\d}_{\B^{\gamma}} : [0, \infty) \to [0, \infty)$ defined by
\begin{equation*}
\underline{\d}_{\B^{\gamma}}(\alpha) := \dim_{\mathcal{H}}(\underline{\EE}_{\B^{\gamma}}(\alpha)) \,, \quad \alpha \in [0, \infty) \,,
\end{equation*}
where $\underline{\EE}_{\B^{\gamma}}(\alpha)$ denotes the set of times at which $\B^{\gamma}$ has lower local dimension equal to $\alpha \geq 0$, i.e.\
\begin{equation*}
\underline{\EE}_{\B^{\gamma}}(\alpha) := \left\{t \in [0, F_{\gamma}(T)] \, : \, \underline{\dim}_{\B^{\gamma}}(t)= \alpha\right\} \,, 
\end{equation*}
where
\begin{equation*}
\underline{\dim}_{\B^{\gamma}}(t) :=\liminf_{r \searrow 0} \frac{\log |\B^{\gamma}_{t+r} - \B^{\gamma}_{t-r}| }{\log r} \,.
\end{equation*}
\begin{theorem}
\label{th_MainTheoremLBM}
Let $\gamma^2 < 4$ and $d \geq 2$. Then, for $\alpha \geq 0$, the lower singularity spectrum of the LBM $\B^{\gamma}$ is given by
\begin{equation*}
\underline{\d}_{\B^{\gamma}}(\alpha) = \begin{cases}
2 \alpha-2\alpha\left(\frac{2\alpha-1}{2 \alpha \gamma} + \frac{1}{4} \gamma \right)^2 \,, & \quad \text{ if } \alpha \in \left[\left(\sqrt{2}+\frac{|\gamma|}{\sqrt{2}}\right)^{-2},  \left(\sqrt{2}-\frac{|\gamma|}{\sqrt{2}}\right)^{-2}\right] \,, \\
0 \,, & \quad \text{ otherwise} \,,
\end{cases}
\end{equation*}
almost surely.
\end{theorem}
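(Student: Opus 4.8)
The proof mirrors the analysis of the MRW in Section~\ref{sec:MRW}: the role played there by the GMC measure $M_{\gamma}$ is now played by the clock measure $\mu_{\gamma}$ of \eqref{eq_muGamma}, and the time change enters through its inverse $\mu_{\gamma}^{-}$ of \eqref{eq_muGammaMinus}. The plan is to proceed in three stages: (i) show that $\mu_{\gamma}$ satisfies the multifractal formalism and compute $\d_{\mu_{\gamma}}$ and $\tau_{\mu_{\gamma}}$; (ii) transfer this information to $\mu_{\gamma}^{-}$ through the increasing homeomorphism $F_{\gamma}:[0,T]\to[0,F_{\gamma}(T)]$ of Proposition~\ref{pr_ContStrictIncr}; (iii) transfer from $\mu_{\gamma}^{-}$ to the paths of $\B^{\gamma}$ by the L\'evy modulus of continuity of $B$, exactly as in Lemma~\ref{lm_dimBM}.

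For stage (i), the starting point is that the $d$-dimensional Brownian occupation measure has dimension two, so that along the path the field $X\circ B$ behaves like a log-correlated Gaussian field on $[0,T]$ with covariance $-\tfrac12\log|s-u|+O(1)$; equivalently, $\mu_{\gamma}$ is morally a one-dimensional GMC measure of parameter $\gamma':=\gamma/\sqrt2$, and the subcriticality condition $\gamma^{2}<4$ is precisely $(\gamma')^{2}<2$. Making this rigorous at the level of moments, one proves, for $0<q<4/\gamma^{2}$, a power-law estimate $\E[\mu_{\gamma}(B(t,r))^{q}]\lesssim r^{\zeta_{\mu_{\gamma}}(q)}$ with $\zeta_{\mu_{\gamma}}$ equal to the power law spectrum \eqref{eq_powerSpec} evaluated at $(d,\gamma)=(1,\gamma')$, together with two-sided H\"older bounds for $\mu_{\gamma}$ analogous to Proposition~\ref{pr_HolderGMC}; the finiteness of the positive moments of $\mu_{\gamma}([0,T])$ needed here is exactly the content of Appendix~\ref{sec:finiteMoments}, while the spatial estimate follows from Kahane's convexity inequality together with the moment bounds for the occupation measure of a sub-path of length $2r$, which lives at spatial scale $\sqrt{r}$. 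With these inputs the carrier statement of Proposition~\ref{pr_RVCarrier} is re-established for $\mu_{\gamma}$ by running the Gaussian computations of Appendix~\ref{sec:ProofProp} along $X\circ B$, and then the argument of Section~\ref{sec:main} applies without change, giving that $\mu_{\gamma}$ satisfies the multifractal formalism with $\d_{\mu_{\gamma}}=\tau^{*}_{\mu_{\gamma}}$ equal to the function in \eqref{eq_MainMultiForm} with $(d,\gamma)$ replaced by $(1,\gamma')$; in particular $\d_{\mu_{\gamma}}(\beta)=1-\tfrac12\bigl(\tfrac{1-\beta}{\gamma'}+\tfrac{\gamma'}{2}\bigr)^{2}$ for $\beta\in[(1-\tfrac{|\gamma|}{2})^{2},(1+\tfrac{|\gamma|}{2})^{2}]$ and $\d_{\mu_{\gamma}}(\beta)=0$ otherwise.

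For stage (ii), recall that $\mu_{\gamma}$ is the Lebesgue--Stieltjes measure of $F_{\gamma}$ and that $\mu_{\gamma}^{-}$, the push-forward of Lebesgue measure on $[0,T]$ by $F_{\gamma}$, is the Lebesgue--Stieltjes measure of $F_{\gamma}^{-1}$. Unwinding the definitions, a point $s$ with $\dim_{\mu_{\gamma}}(s)=\beta$ maps to $t=F_{\gamma}(s)$ with $\dim_{\mu_{\gamma}^{-}}(t)=1/\beta$, so that $F_{\gamma}(\EE_{\mu_{\gamma}}(\beta))\subseteq\EE_{\mu_{\gamma}^{-}}(1/\beta)$ and, more generally, $F_{\gamma}$ carries $\underline{\UU}_{\mu_{\gamma}}(\beta)\cap\overline{\BB}_{\mu_{\gamma}}(\beta)$ onto $\underline{\UU}_{\mu_{\gamma}^{-}}(1/\beta)\cap\overline{\BB}_{\mu_{\gamma}^{-}}(1/\beta)$, the lower and upper local dimensions trading places. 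To turn this into a Hausdorff-dimension statement I would use a $\limsup$ refinement of Proposition~\ref{pr_HolderJackson}, proved by a Besicovitch covering as in Proposition~\ref{pr_unionSet}: if for every $x$ in a set $E$ there is a sequence $r_{n}\searrow0$ with $|f(x+r_{n})-f(x-r_{n})|\le Cr_{n}^{\eta}$, then $\dim_{\H}(f(E))\le\eta^{-1}\dim_{\H}(E)$. Applying this to $F_{\gamma}$ and $F_{\gamma}^{-1}$ on the relevant sets, after the standard decomposition into pieces with a uniform H\"older constant and using countable stability, one gets $\dim_{\H}(F_{\gamma}(E))=\beta^{-1}\dim_{\H}(E)$ for $E\subseteq\EE_{\mu_{\gamma}}(\beta)$ and $\dim_{\H}(F_{\gamma}(E))\le\beta^{-1}\dim_{\H}(E)$ for $E\subseteq\underline{\UU}_{\mu_{\gamma}}(\beta)$. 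Finally, for stage (iii), since $\B^{\gamma}_{t}=B_{\mu_{\gamma}^{-}([0,t])}$, the L\'evy modulus of continuity of $B$ gives, $\P_{B}$-almost surely, $\EE_{\mu_{\gamma}^{-}}(2\alpha)\subseteq\underline{\EE}_{\B^{\gamma}}(\alpha)\subseteq\underline{\UU}_{\mu_{\gamma}^{-}}(2\alpha)\cap\overline{\BB}_{\mu_{\gamma}^{-}}(2\alpha)$, exactly as in Lemma~\ref{lm_dimBM}. Combining these inclusions with the transfer of stage (ii), with Proposition~\ref{pr_unionSet} for $\mu_{\gamma}$ (for the upper bound) and with Proposition~\ref{pr_FalconerHausdorffBound} applied to the push-forward by $F_{\gamma}$ of the carrier measure $\mu_{q\gamma}$, $|q|<2/|\gamma|$ (for the lower bound), yields $\underline{\d}_{\B^{\gamma}}(\alpha)=(2\alpha)\,\d_{\mu_{\gamma}}(1/(2\alpha))$; the elementary identity $\tfrac12\bigl(\tfrac{1-1/(2\alpha)}{\gamma'}+\tfrac{\gamma'}{2}\bigr)^{2}=\bigl(\tfrac{2\alpha-1}{2\alpha\gamma}+\tfrac{1}{4}\gamma\bigr)^{2}$, together with the equivalence $\beta=1/(2\alpha)\in[(1-\tfrac{|\gamma|}{2})^{2},(1+\tfrac{|\gamma|}{2})^{2}]\iff\alpha\in[(\sqrt2+\tfrac{|\gamma|}{\sqrt2})^{-2},(\sqrt2-\tfrac{|\gamma|}{\sqrt2})^{-2}]$, then recovers the formula in the statement.

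The main obstacle is stage (i): although conceptually $\mu_{\gamma}$ is ``a one-dimensional GMC seen through the two-dimensional Brownian occupation measure'', it is a chaos against a random reference measure rather than against Lebesgue, so the power-law spectrum, the uniform positive-moment bounds and the carrier statement all have to be redone from scratch, with the Gaussian estimates of Appendix~\ref{sec:ProofProp} carried out along the Brownian path; after that, Section~\ref{sec:main} is used verbatim. A secondary, purely real-analytic, point requiring care is the bookkeeping in stage (ii): one must propagate Hausdorff-dimension bounds through the homeomorphism $F_{\gamma}$, whose local H\"older exponent varies from point to point, and keep careful track of whether a $\liminf$ or a $\limsup$ is being controlled when passing between $\underline{\EE}_{\B^{\gamma}}$, the level sets of $\mu_{\gamma}^{-}$ and those of $\mu_{\gamma}$ --- in particular when checking that $\mu_{\gamma}^{-}$ inherits local dimension $1/\beta$ at the images of carrier points, which uses the full support and two-sided modulus of continuity of $\mu_{\gamma}$.
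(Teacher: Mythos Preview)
Your three-stage plan and the final identity $\underline{\d}_{\B^{\gamma}}(\alpha)=2\alpha\,\d_{\mu_{\gamma}}\bigl((2\alpha)^{-1}\bigr)$ are exactly the paper's route: Proposition~\ref{pr_MuFromalsim} for stage~(i), and Lemmas~\ref{lm_dimLBM}, \ref{lm_LBM1}, \ref{lm_LBM2}, \ref{lm_LBM3} for stages~(ii)--(iii). You also correctly identify stage~(i) as the substantive part, and your heuristic $\gamma'=\gamma/\sqrt2$ recovers the right spectrum for $\mu_{\gamma}$.

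There is, however, one genuine gap in stage~(ii). The ``$\limsup$ refinement of Proposition~\ref{pr_HolderJackson}'' you state --- that a merely subsequential bound $|f(x+r_n)-f(x-r_n)|\le C r_n^{\eta}$ on $E$ forces $\dim_{\H}(f(E))\le\eta^{-1}\dim_{\H}(E)$ --- is not a general theorem. A Besicovitch cover of $E$ by balls $B(x,r_{x,n_x})$ does map under the increasing $f$ to a cover of $f(E)$ by intervals of length $\le C r_{x,n_x}^{\eta}$, but the sum $\sum_x r_{x,n_x}^{s}$ is a \emph{packing} sum, controlled by packing-type quantities and not by $\H^{s}(E)$; knowing only $\dim_{\H}(E)\le s$ you cannot make it small. (A side slip: the subsequential upper bound on $|F_{\gamma}(t+r)-F_{\gamma}(t-r)|=\mu_{\gamma}(B(t,r))$ comes from $\overline{\dim}_{\mu_{\gamma}}(t)\ge\beta$, i.e.\ from $t\in\overline{\BB}_{\mu_{\gamma}}(\beta)$, not from $\underline{\UU}_{\mu_{\gamma}}(\beta)$.)

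The paper's remedy, Lemma~\ref{lm_LBM3}, is not to factor through $\dim_{\H}\bigl(\underline{\UU}_{\mu_{\gamma}}\cap\overline{\BB}_{\mu_{\gamma}}\bigr)$ at all: one runs the Besicovitch cover and bounds the image covering sums \emph{directly} by $\sum_t \mu_{\gamma}(B(t,r_{t,n_t}))^{q}\,r_{t,n_t}^{\tau_{\mu_{\gamma}}(q)+\eps}$, which is controlled by the $L^{q}$-spectrum of $\mu_{\gamma}$ --- by definition a bound on $\sum_i\mu_{\gamma}(B(x_i,r))^{q}$ over \emph{every} disjoint family. In other words, your ``Proposition~\ref{pr_unionSet} for $\mu_{\gamma}$'' and your ``$\limsup$ H\"older--Jackson'' must be merged into a single step, and that merging is what makes the argument go through. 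By contrast, on the exact level sets $\EE_{\mu_{\gamma}}(\beta)$ the H\"older bound holds for \emph{all} small $r$, so your decomposition-plus-countable-stability argument is fine there (this is the paper's Lemma~\ref{lm_LBM2}); and your alternative lower bound via the push-forward $F_{\gamma\#}\mu_{q\gamma}$ together with Proposition~\ref{pr_FalconerHausdorffBound} is a legitimate variant that yields the same value.
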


Since the LBM $\B^{\gamma}$ is a time changed Brownian motion, the lower singularity spectrum $\underline{\d}_{\B^{\gamma}}$ can be obtained through the singularity spectrum of the measure $\mu_{\gamma}^{-}$ defined in \eqref{eq_muGammaMinus}, as in the case of the MRW. However, in this setting the situation is less trivial since we do not know the singularity spectrum of $\mu_{\gamma}^{-}$ at this point. Instead of working directly with $\mu_{\gamma}^{-}$, we start by proving that the measure $\mu_{\gamma}$, defined in \eqref{eq_muGamma}, satisfies the multifractal formalism, and we find an explicit formula for its singularity spectrum. Once this is done, we can move to the proof of Theorem~\ref{th_MainTheoremLBM} which consists in finding the relations between $\underline{\d}_{\B^{\gamma}}$, $\d_{\mu_{\gamma}^{-}}$ and $\d_{\mu_{\gamma}}$. 

\subsubsection{Multifractal analysis of $\mu_{\gamma}$}
The main goal of this section is to prove that the measure $\mu_{\gamma}$ defined in  \eqref{eq_muGamma} satisfies the multifractal formalism. We will follow a strategy similar to that developed in Section~\ref{sec:main}. As we will see, the presence of the Brownian motion creates some more technical difficulties, but the underlying ideas are the same. 

We start by computing the power law spectrum of the measure $\mu_{\gamma}$. 
\begin{proposition}
\label{pr_PropertiesLBM}
Let $\gamma^2 < 4$ and $q < 4/\gamma^2$. For $r \in (0, 1]$, define the stopping time $\tau_{\sqrt{r}}:=\inf\{t \geq 0\, : \, B_t \not \in B(0, \sqrt{r}) \}$. Then there exists a finite constant $C > 0$ such that for all $r \in (0, 1]$, it holds that
\begin{equation*}
\E[\mu_{\gamma}([0, \tau_{\sqrt{r}}])^q] \leq C r^{\xi_{\mu_{\gamma}}(q)} \,,
\end{equation*}
where
\begin{equation*}
\xi_{\mu_{\gamma}}(q) := \left(1 + \frac{1}{4} \gamma^2\right)q-\frac{1}{4} \gamma^2 q^2\,, \quad q \in \R \,,
\end{equation*}
is the power law spectrum of $\mu_{\gamma}$. 
\end{proposition}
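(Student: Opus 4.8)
The plan is to reduce the bound to a fixed-scale estimate by Brownian scaling, and then to transfer that estimate into the GMC framework via Kahane's convexity inequality, exactly as one does in the proof of Proposition~\ref{pr_PoweLawSpec} but with the Brownian occupation measure in place of Lebesgue measure. First note that, since $B(0,\sqrt r)\subset B(0,1)\subset D$, one has $\tau_{\sqrt r}\le T$, so $\mu_\gamma([0,\tau_{\sqrt r}])=\int_0^{\tau_{\sqrt r}}e^{\gamma X(B_s)-\frac12\gamma^2\E[X(B_s)^2]}\,\d s$ may be read off the approximating measures \eqref{eq_aproxMeasureMuGamma} with the indicator never binding. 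I would then set $\tilde B_u:=r^{-1/2}B_{ru}$, a standard Brownian motion still independent of $X$, and $\tilde\tau:=\inf\{u:\tilde B_u\notin B(0,1)\}$, so that $\tau_{\sqrt r}=r\tilde\tau$; substituting $s=ru$ in the clock (through the convolution regularisation) yields the exact identity
\begin{equation*}
\mu_\gamma([0,\tau_{\sqrt r}]) = r\int_0^{\tilde\tau} e^{\gamma Y(\tilde B_u) - \frac12\gamma^2\E[Y(\tilde B_u)^2]}\,\d u =: r\,\mathcal I_r \,,
\end{equation*}
where $Y:=X(\sqrt r\,\cdot)$ is, on $B(0,1)$, a log-correlated field with covariance $\E[Y(y)Y(y')]=-\tfrac12\log r-\log|y-y'|+g(\sqrt r y,\sqrt r y')$, still independent of $\tilde B$. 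It thus suffices to show $\E[\mathcal I_r^q]\le C\,r^{\xi_{\mu_\gamma}(q)-q}$ uniformly in $r\in(0,1]$ (the range of $r$ bounded away from $0$ being trivial).

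\textbf{Decoupling the scale.} Writing $C_g:=\|g\|_{L^\infty}$, I would introduce a spatially constant centred Gaussian $\Omega_r$ of variance $-\tfrac12\log r+O(1)$, independent of everything, and an auxiliary log-correlated field $\widehat X$ on $B(0,1)$ with the \emph{fixed} covariance $-\log|y-y'|+C_1$ for $C_1$ large enough that this is positive semi-definite — such $C_1$ exists because adding a large constant to $-\log|\cdot-\cdot|$ on a bounded set produces an admissible covariance, and $\widehat X$ then falls within the convolution-approximation framework of Section~\ref{sec:setup}. Adjusting the $O(1)$ term in $\E[\Omega_r^2]$ and the sign of the comparison appropriately, the covariance of $\Omega_r+\widehat X$ either dominates or is dominated (off the diagonal) by that of $Y$ on $B(0,1)$. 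Conditioning on $\tilde B$ (so that the occupation measure of $\tilde B$ on $[0,\tilde\tau]$ is frozen and independent of the two fields) and applying Kahane's convexity inequality to $x\mapsto x^q$ — convex for $q\in(-\infty,0]\cup[1,\infty)$, concave for $q\in(0,1)$, which is what dictates which direction of the comparison to use — one obtains, after pulling the spatially constant factor $e^{\gamma\Omega_r-\frac12\gamma^2\E[\Omega_r^2]}$ out of the integral and using independence,
\begin{equation*}
\E[\mathcal I_r^q] \le \E\!\left[e^{q\gamma\Omega_r - \frac12 q\gamma^2\E[\Omega_r^2]}\right]\cdot\E\!\left[\Big(\int_0^{\tilde\tau} e^{\gamma\widehat X(\tilde B_u) - \frac12\gamma^2\E[\widehat X(\tilde B_u)^2]}\,\d u\Big)^{q}\right] .
\end{equation*}

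\textbf{The two factors.} The first factor is a Gaussian Laplace transform, $\E[e^{q\gamma\Omega_r-\frac12 q\gamma^2\E[\Omega_r^2]}]=e^{\frac12\gamma^2 q(q-1)\E[\Omega_r^2]}\lesssim r^{-\frac14\gamma^2 q(q-1)}$, and a direct computation gives $-\tfrac14\gamma^2 q(q-1)=\xi_{\mu_\gamma}(q)-q$, the $O(1)$ term in $\E[\Omega_r^2]$ contributing only a $q$-dependent multiplicative constant. The second factor is independent of $r$; it is the $q$-th moment of the clock of a fixed-scale Liouville-type diffusion and is finite because $q<4/\gamma^2$: for $q\in[1,4/\gamma^2)$ this is the finiteness of positive moments proved in Appendix~\ref{sec:finiteMoments} (the critical exponent being $4=2\cdot 2$, reflecting that the Brownian occupation measure has dimension two), for $q\in[0,1)$ it follows from Jensen's inequality together with $\E[\int_0^{\tilde\tau}\cdots\,\d u]=\E[\tilde\tau]<\infty$, and for $q\le 0$ all negative moments are finite by the standard negative-moment estimates for GMC integrated against a non-degenerate measure. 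Multiplying the two factors and reinstating the prefactor $r$ from the scaling step gives $\E[\mu_\gamma([0,\tau_{\sqrt r}])^q]=r^q\,\E[\mathcal I_r^q]\le C\,r^{\xi_{\mu_\gamma}(q)}$, as claimed.

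\textbf{Where the difficulty lies.} The Brownian scaling and the Laplace-transform bookkeeping are routine. The genuinely delicate point is the decoupling step: since $X$ need not be exactly scale invariant, one must carry out the separation of the ``macroscopic'' Gaussian $\Omega_r$ from the fixed-scale field through Kahane's inequality while tracking which direction of covariance domination is available in each regime of $q$, and one relies on the finiteness up to the critical exponent $4/\gamma^2$ of the fixed-scale moment — i.e.\ on Appendix~\ref{sec:finiteMoments}, which is precisely where the dimension-two nature of the Brownian occupation measure enters and accounts for the ``$4$'' in the range $q<4/\gamma^2$.
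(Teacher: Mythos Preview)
Your proposal is correct and follows essentially the same route as the paper: Brownian scaling to reduce to a unit ball, Kahane's convexity inequality to decouple a macroscopic Gaussian $\Omega_r$ of variance $-\tfrac12\log r+O(1)$ from a fixed-scale log-correlated field, the Laplace-transform computation yielding the exponent $\xi_{\mu_\gamma}(q)$, and Appendix~\ref{sec:finiteMoments} (plus the negative-moment bound) for the finiteness of the residual $r$-independent moment. The only cosmetic difference is that the paper applies Kahane \emph{first} to replace $X$ by the exactly scale invariant field of \cite{RVScale} and then uses its exact scaling relation \eqref{eq_ScaleInvarianceProp}, whereas you carry the general $g$ through the Brownian scaling and invoke Kahane at the end against $\Omega_r+\widehat X$; both packagings lead to the same estimate.
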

\begin{proof}
Thanks to Kahane's convexity inequality (cf.\ Lemma~\ref{lm_Kahane}), it is sufficient to prove the result only for the $d$-dimensional exactly scale invariant field $(X(x))_{x \in \R^d}$ defined in \cite{RVScale}. More precisely, for $\eps \in (0, 1]$, we let $(X_{\eps}(x))_{x \in \R^d}$ be the approximation of $X$ as defined in \cite[Lemma~3.19]{BerPow}. Then, for $\lambda \in (0, 1)$, we can assume that
\begin{equation}
\label{eq_ScaleInvarianceProp}
\left(X_{\lambda \eps}(\lambda x)\right)_{x \in B(0, 1)} \overset{d}{=} \left(X_{\eps}(x) + \Omega_{\lambda}\right)_{x \in B(0, 1)} \,, 
\end{equation}
where $\Omega_{\lambda}$ is an independent centred Gaussian random variable with variance $- \log \lambda$ (cf.\ \cite[Corollay~3.20]{BerPow}). Let $0 \leq \eps < \sqrt{r}$, then scaling time by a factor $r$ and using \eqref{eq_ScaleInvarianceProp}, we get that
\begin{align*}
\mu^{\eps}_{\gamma}([0, \tau_{\sqrt{r}}]) & = \int_{0}^{\tau_{\sqrt{r}}} e^{\gamma X_{\eps}(B_s) - \frac{1}{2} \gamma^2 \E[X_{\eps}(B_s)^2]} \d s \\
& = r \int_{0}^{\tau_{\sqrt{r}}/r} e^{\gamma X_{\eps}(B_{r s}) - \frac{1}{2} \gamma^2 \E[X_{\eps}(B_{r s})^2]} \d s \\
& \overset{d}{=} r \int_{0}^{\tilde{\tau}} e^{\gamma X_{\eps}(\sqrt{r}\tilde{B}_{s}) - \frac{1}{2} \gamma^2 \E[X_{ \eps}(\sqrt{r}\tilde{B}_{s})^2]} \d s \\
& \overset{d}{=} r e^{\gamma \Omega_{\sqrt{r}} - \frac{1}{2} \gamma^2 \E[\Omega_{\sqrt{r}}^2]} \int_{0}^{\tilde{\tau}}  e^{\gamma X_{\eps/\sqrt{r}}(\tilde{B}_{s}) - \frac{1}{2} \gamma^2 \E[X_{\eps/\sqrt{r}}(\tilde{B}_{s})^2]} \d s \,,
\end{align*}
where $\tilde{B}$ is an independent Brownian motion and $\tilde{\tau}$ is the first exit time of $\tilde{B}$ from $B(0,1)$. Now, raising to the power $q < 4/\gamma^2$ and taking expectations, one can easily verify that
\begin{equation*}
\E[\mu^{\eps}_{\gamma}([0, \tau_{\sqrt{r}}])^q] = r^{\xi(q)} \E\left[\left(\int_{0}^{\tilde{\tau}}  e^{\gamma X_{\eps/\sqrt{r}}(\tilde{B}_{s}) - \frac{1}{2} \gamma^2 \E[X_{\eps/\sqrt{r}}(\tilde{B}_{s})^2]} \d s\right)^q\right]\
\end{equation*}
Removing the approximation, i.e.\ taking the limit when $\eps \searrow 0$, gives us the desired result provided that the quantities $\mu^{\eps}_{\gamma}([0, \tau_{\sqrt{r}}])$ and $\mu_{\gamma}^{\eps/\sqrt{r}}([0, \tilde{\tau}])^q$ are uniformly integrable in $\eps \in (0, 1]$, for $q < 4/\gamma^2$. For $q \leq 0$ this fact is proved in the planar case in \cite[Proposition~2.12]{RVG_LBM}), but the proof generalizes almost verbatim in higher dimensions. For $q \in (0,4/\gamma^2)$ a proof of this fact can be found in Appendix~\ref{sec:finiteMoments}.
\end{proof}

We now state the main result that allows to prove that $\mu_{\gamma}$ satisfies the multifractal formalism. 
Let us emphasize that such a result is the analogue of Proposition~\ref{pr_RVCarrier}, but for the measure $\mu_{\gamma}$.
\begin{proposition}
\label{pr_mainLBM}
Let $\gamma^2 < 4$ and $q^2 < 4/\gamma^2$. For $\mu_{q \gamma}$-almost every $t \in [0, T]$, the measure $\mu_{\gamma}$ satisfies
\begin{equation*}
\lim_{r \searrow 0} \frac{\log \mu_{\gamma}(B(t,r))}{\log r } = 1 + \left(\frac{1}{2}-q\right) \frac{\gamma^2}{2} \,,
\end{equation*}
almost surely. 
\end{proposition}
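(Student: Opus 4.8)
The plan is to mirror the strategy of Proposition~\ref{pr_RVCarrier}, i.e.\ to show that $\mu_{q\gamma}$ is carried by a set on which $\mu_{\gamma}$ has the announced local exponent, where the target value $1 + (1/2-q)\gamma^2/2$ is read off from the power law spectrum $\xi_{\mu_{\gamma}}$ in Proposition~\ref{pr_PropertiesLBM} exactly as $\alpha_q$ was read off from $\xi_{M_{\gamma}}$. The natural geometric scale here is not the ball $B(t,r)$ on the time line but rather the stopping time $\tau_{\sqrt{r}}$ (the exit time of $B$ from $B(0,\sqrt r)$ around the relevant spatial point), since Proposition~\ref{pr_PropertiesLBM} is stated in these terms and since a time window of length $r$ for the clock process corresponds to the Brownian path wandering on a spatial ball of radius $\asymp\sqrt r$. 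So the first step is to translate the statement: using the modulus of continuity of Brownian motion (as in \eqref{eq_Levy1}--\eqref{eq_Levy2}) and its L\'evy lower bound, one shows $\P_B$-almost surely that for $t$ fixed and $r$ small, $\mu_{\gamma}(B(t,r))$ is comparable, up to $r^{\pm\eps}$ factors, to the mass that $\mu_\gamma$ assigns to the time interval during which $B$ stays within a spatial ball of radius between $r^{1/2+\eps}$ and $r^{1/2-\eps}$ centred at $B_t$. This reduces the problem to controlling, for a generic spatial point $x$ in the range of $B$, the quantity $\mu_\gamma$ restricted to excursions of $B$ in $B(x,\rho)$ as $\rho\searrow 0$, which is precisely what Proposition~\ref{pr_PropertiesLBM} is built to estimate.

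Next I would run the first-moment / second-moment scheme in the spatial variable. Change measure from $\mu_{q\gamma}$ to $\mu_\gamma$ using the Girsanov-type (Cameron--Martin) shift for the log-correlated field: under the tilt by $e^{q\gamma X_\eps(x) - \frac12 q^2\gamma^2\E[X_\eps(x)^2]}$ evaluated near a point $x$, the field $X$ acquires a deterministic drift $q\gamma K(x,\cdot)$, so that $X_\eps$ localised near $x$ behaves like a field of ``thickness'' $q\gamma$ at scale $\eps$; this is the LBM analogue of the fact that $M_{q\gamma}$ is carried by the $q\gamma$-thick points $\mathcal T_{q\gamma}$. Combining this shift with the exact scale invariance \eqref{eq_ScaleInvarianceProp} and the uniform moment bounds from Appendix~\ref{sec:finiteMoments} and from Lemma~\ref{lm_supremumGaussian}, one obtains, for each fixed $q$ with $q^2<4/\gamma^2$ and each $\eps'>0$, upper and lower bounds of the form $r^{\,\beta_q \pm \eps'}$ for the conditional mass $\mu_\gamma(B(t,r))$ valid along a deterministic sequence of dyadic scales $r=2^{-n}$, where $\beta_q = 1 + (1/2-q)\gamma^2/2$. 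Concretely: the upper bound comes from a union bound over a $2^{-n}$-net of the box together with Proposition~\ref{pr_PropertiesLBM} applied with an exponent $\tilde q$ chosen so that the Legendre transform is realised (this is the analogue of Lemma~\ref{lm_IneqLqSpec}'s argument), and the lower bound comes from a second-moment estimate on the tilted measure forcing $\mu_\gamma$ not to be too small on a set of full $\mu_{q\gamma}$-mass (analogue of Lemma~\ref{lm_RVCarrier}). A Borel--Cantelli argument along $n$ then upgrades the along-a-sequence statement to a genuine limit, and the arbitrariness of $\eps'$ pins the exponent to $\beta_q$.

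The main obstacle, and the point where real work beyond Proposition~\ref{pr_RVCarrier} is needed, is the interaction between the Brownian clock and the field: the set $\{t : \underline{\dim}_{\mu_\gamma}(t)=\beta_q\}$ lives on the time axis, but the relevant concentration phenomenon (thick points of $X$) lives in space, and the map $t\mapsto B_t$ is only H\"older-$1/2^-$, so one pays an $r^{\pm\eps}$ distortion at every step and must check these distortions do not accumulate — this is why the statement is about the \emph{lower} local dimension and why one must be careful to take $\eps\to 0$ only after the Borel--Cantelli step. A second technical hurdle is that Proposition~\ref{pr_PropertiesLBM} only gives the moment bound for the exit time from a ball \emph{centred at the origin}, so one needs a stationarity/restart argument: decompose the Brownian path at the last visit to $B(x,\rho)$ before hitting $\partial B(x,2\rho)$, use the strong Markov property, and appeal to the translation covariance of the law of $X$ modulo the bounded continuous part $g$ (controlled uniformly by property~\ref{pp_P3} and Lemma~\ref{lm_supremumGaussian}) to reduce to the centred case. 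Once these two points are handled, the remaining computations — verifying that the Legendre exponent equals $\beta_q$ and that $d-\tfrac12 q^2\gamma^2$-type bookkeeping matches — are routine, exactly as in the proofs of Lemmas~\ref{lm_dimComp}--\ref{lm_LqSpecFinal}.
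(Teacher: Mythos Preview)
Your skeleton is right and matches the paper: mirror Proposition~\ref{pr_RVCarrier}, apply a Girsanov tilt by $e^{q\gamma X_{\eps'}(B_t)}$ conditionally on the Brownian path, use exact scale invariance~\eqref{eq_ScaleInvarianceProp}, replace the GFF domain Markov property by Lemma~\ref{lm_supremumGaussian}, feed everything into a Borel--Cantelli argument. The paper indeed proceeds exactly this way (by pointing to Jackson's Lemmas~3.9 and~3.11 and indicating that Lemma~\ref{lm_supremumGaussian} is the one substantive modification), and you correctly flag the restart issue and its resolution via the strong Markov property.

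Where your proposal goes wrong is the \emph{mechanism} you describe for the two tail bounds. You write that the upper bound is ``a union bound over a $2^{-n}$-net \ldots\ analogue of Lemma~\ref{lm_IneqLqSpec}'' and the lower bound is ``a second-moment estimate on the tilted measure \ldots\ analogue of Lemma~\ref{lm_RVCarrier}''. Neither lemma is relevant here, and no second-moment computation is performed. Both tails come from the \emph{same} mechanism, namely the LBM analogue of Proposition~\ref{pr_RVCarrier1}: after the Girsanov shift one rewrites $\E[\mu_{q\gamma}(\{t:\mu_\gamma(B(t,r))\gtrless r^{\alpha\pm\beta}\})]$ as a probability, changes variables, peels off the large-scale field $X_{\sqrt r}$ via Lemma~\ref{lm_supremumGaussian}, and is left with a rescaled integral carrying a singular weight $(|\cdot|+\eps)^{-q\gamma^2}$. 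The upper tail is then controlled by a \emph{positive} fractional moment of this integral and the lower tail by a \emph{negative} moment (the LBM versions of Lemmas~\ref{lm_ProofCarrierRV} and~\ref{lm_ProofCarrierRVNegative}), each combined with Markov's inequality. A genuine $L^2$ argument would fail in any case once $q$ is large enough that the relevant second moment diverges, whereas the fractional-moment route works across the whole range $q^2<4/\gamma^2$.

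Two smaller points. Your preliminary ``translation'' step via the Brownian modulus of continuity is an unnecessary detour: working directly with the exit time of $B$ from $B(B_t,\sqrt r)$ and the strong Markov property (as you yourself suggest later) is both cleaner and is what Jackson actually does. And the proposition asserts a \emph{full} limit, not only a liminf, so the argument must bound both tails symmetrically; your remark that ``this is why the statement is about the lower local dimension'' is not accurate.
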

\begin{proof}
The case in which the underlying field is a zero-boundary GFF on a bounded domain $D \subset \R^2$ has already been treated in \cite{Jackson}. The proof follows similar lines as the proof of Proposition~\ref{pr_RVCarrier} in Appendix~\ref{sec:ProofProp}. Therefore, we only highlight here the main changes that one needs to do in order for the proof to work in the general setting of a log-correlated Gaussian field $X$ on a bounded domain $D\subset \R^d$, $d\geq 2$.

The main steps of the proof in \cite{Jackson} are \cite[Lemmas~3.9 and 3.11]{Jackson}. Let us focus on \cite[Lemma~3.9]{Jackson}. The only main change that one needs to do is in \cite[Equation~3.19]{Jackson}. Indeed, in that step the author uses the domain Markov property of the GFF to bound the exponential moment of the fluctuations of the circle average approximation. However, in the case of a general log-correlated Gaussian field, we can bypass the use of such a property by using instead Lemma~\ref{lm_supremumGaussian} in order to control the fluctuations of the convolution approximation. To be more precise, let us briefly introduce some notation. Let $B$ and $\tilde{B}$ be two independent $d$-dimensional Brownian motions started from the origin, let $\F_t := \sigma(B_s \, : \, s \leq t)$ be the natural filtration associated with $B$ up to a certain fixed time $t \geq 0$, and define the stopping time
\begin{equation*}
\tilde{\tau} := \inf\{s \geq 0 \, : \, \sqrt{r} \tilde{B}_s + B_t \not \in B(0, 1/2) \} \wedge \inf\{s \geq 0 \, : \, \tilde{B}_s \not \in B(0, 1/2)\} \,,
\end{equation*}
where $r \in (0, 1]$. Then, letting $\Omega_{\sqrt{r}}^{B_t} := \inf_{x \in B(B_t, \sqrt{r})}X_{\sqrt{r}}(x) - X_{\sqrt{r}}(B_t)$ the following lower bound trivially holds on the interval $[0, \tilde{\tau}]$,
\begin{equation*}
X_{\eps \sqrt{r}}(\sqrt{r} \tilde{B}_s + B_t) \geq (X_{\eps \sqrt{r}}(\sqrt{r} \tilde{B}_s + B_t) - X_{\sqrt{r}}(\sqrt{r} \tilde{B}_s + B_t)) + \Omega_{\sqrt{r}}^{B_t} + X_{\sqrt{r}}(B_t) \,.
\end{equation*} 
Therefore, plugging this estimate into the second line of \cite[Equation~3.19]{Jackson}, using the fact that in \cite[Equation~3.20]{Jackson} the probability is conditioned on $\F_t$, and proceeding  in the same exact way as in the proof of Proposition~\ref{pr_RVCarrier1} below, we can reach the same conclusion of \cite[Lemma~3.9]{Jackson} also in our more general setting. 

Finally, the changes that one needs to do in the proof of \cite[Lemma~3.11]{Jackson} are similar and therefore not discussed.
\end{proof}

We can now state the following proposition concerning the singularity spectrum of $\mu_{\gamma}$.
\begin{proposition}
\label{pr_MuFromalsim}
If $\gamma^2 < 4$, then the measure $\mu_{\gamma}$ satisfies the multifractal formalism. More precisely, for $\alpha \geq 0$ it holds that
\begin{equation*}
\d_{\mu_{\gamma}}(\alpha) = \tau^{*}_{\mu_{\gamma}}(\alpha) = \begin{cases}
1-\left(\frac{1-\alpha}{\gamma} + \frac{1}{4} \gamma\right)^2 \,, & \quad \text{ if } \alpha \in \left[\left(1 - \frac{|\gamma|}{2}\right)^2,  \left(1 + \frac{|\gamma|}{2}\right)^2 \right] \,, \\
0 \,, & \quad \text{ otherwise} \,,
\end{cases}
\end{equation*}
almost surely.
\end{proposition}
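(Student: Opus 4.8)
The plan is to follow verbatim the strategy used to establish Theorem~\ref{th_MainTheorem}, replacing the role of $M_\gamma$ by $\mu_\gamma$, the role of Proposition~\ref{pr_RVCarrier} by Proposition~\ref{pr_mainLBM}, and the role of Proposition~\ref{pr_PoweLawSpec} by Proposition~\ref{pr_PropertiesLBM}. The only genuinely new point to check at the outset is that the general machinery of Subsection on multifractal analysis of measures applies: since $\mu_\gamma$ is $\P$-almost surely a non-negative finite Radon measure supported on the bounded interval $[0,T]$, Corollary~\ref{cr_BoundLqSpec} gives $\d_{\mu_\gamma}(\alpha) \le \tau^*_{\mu_\gamma}(\alpha)$ for all $\alpha \ge 0$ almost surely, so as in Section~\ref{sec:main} it suffices to prove the matching lower bound $\d_{\mu_\gamma}(\alpha) \ge \tau^*_{\mu_\gamma}(\alpha)$ together with the identification of $\tau_{\mu_\gamma}$. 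One also needs the analogue of Remark~\ref{rm_LqSpectrum}: that $\tau_{\mu_\gamma}$ is convex, decreasing, vanishes at $1$, and has $\dom(\tau_{\mu_\gamma}) = \R$. The first three follow from Proposition~\ref{pr_LauNgai}; the last requires a uniform lower bound on $\mu_\gamma(B(t,r))$ analogous to the lower bound in Proposition~\ref{pr_HolderGMC}, which can be derived from the negative-moment estimate in Proposition~\ref{pr_PropertiesLBM} via a union bound, Markov's inequality, and Borel--Cantelli, exactly as in the proof of Proposition~\ref{pr_HolderGMC}, using the relation between $\mu_\gamma(B(t,r))$ and $\mu_\gamma([0,\tau_{\sqrt r}])$ coming from the strong Markov property of $B$.

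Next I would introduce the structure function $\phi_{\mu_\gamma}(q) := 1 - \xi_{\mu_\gamma}(q) = \frac14\gamma^2 q^2 - (1+\frac14\gamma^2)q + 1$ and note that a direct computation gives $\phi_{\mu_\gamma}^*(\alpha) = \bigl(1 - (\frac{1-\alpha}{\gamma} + \frac14\gamma)^2\bigr) \wedge 0$, with the critical exponents $q_\pm = \pm 2/|\gamma|$ and $\alpha_q := 1 + (\frac12 - q)\frac{\gamma^2}{2}$, so that as $q$ ranges over $q^2 < 4/\gamma^2$ the values $\alpha_q$ sweep out the open interval $\bigl((1-\frac{|\gamma|}{2})^2, (1+\frac{|\gamma|}{2})^2\bigr)$. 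The lower bound $\d_{\mu_\gamma}(\alpha_q) \ge \phi^*_{\mu_\gamma}(\alpha_q)$ is obtained by copying Lemmas~\ref{lm_RVCarrier} and \ref{lm_dimComp}: Proposition~\ref{pr_mainLBM} says that $\EE_{\mu_\gamma}(\alpha_q)$ has full $\mu_{q\gamma}$-measure, while the case $q=1$ of Proposition~\ref{pr_mainLBM} together with Proposition~\ref{pr_FalconerHausdorffBound} shows that $\mu_{q\gamma}$ charges no set of Hausdorff dimension strictly less than $1 - \frac12 q^2\gamma^2$; combining these via Proposition~\ref{pr_FalconerHausdorffBound} and monotonicity of $\dim_\H$ yields $\d_{\mu_\gamma}(\alpha_q) \ge 1 - \frac12 q^2\gamma^2 = \phi^*_{\mu_\gamma}(\alpha_q)$. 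The upper bound $\tau_{\mu_\gamma}(q) \le \phi_{\mu_\gamma}(q)$ for all $q \in \R$ is the analogue of Lemma~\ref{lm_IneqLqSpec}: for $q \ge 4/\gamma^2 > 1$ it is trivial since $\tau_{\mu_\gamma}$ is decreasing with $\tau_{\mu_\gamma}(1)=0$; for $q < 4/\gamma^2$ one runs the dyadic argument, using Proposition~\ref{pr_PropertiesLBM} in place of Proposition~\ref{pr_PoweLawSpec} to control $\E[\mu_\gamma(B(z,2^{-n}))^q]$ after a union bound and Borel--Cantelli, then passing from a lattice sum to an arbitrary packing by the sub-additivity (for $q \in (0,1]$), Jensen (for $q \in (1,4/\gamma^2)$), and interior-ball (for $q \le 0$) comparisons exactly as before. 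Chaining Corollary~\ref{cr_BoundLqSpec} with these two bounds gives $0 < \phi^*_{\mu_\gamma}(\alpha_q) \le \d_{\mu_\gamma}(\alpha_q) \le \tau^*_{\mu_\gamma}(\alpha_q) \le \phi^*_{\mu_\gamma}(\alpha_q)$, forcing equality and $\tau_{\mu_\gamma}(q) = \phi_{\mu_\gamma}(q)$ on $(q_-,q_+)$; for $\alpha$ outside the interval one has $\d_{\mu_\gamma}(\alpha) \le \tau^*_{\mu_\gamma}(\alpha) \le \phi^*_{\mu_\gamma}(\alpha) = 0$, which gives the stated formula.

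The one subtlety that does not appear in Section~\ref{sec:main} is a parametrization mismatch caused by the fact that Proposition~\ref{pr_PropertiesLBM} controls $\mu_\gamma([0,\tau_{\sqrt r}])$ — the mass accumulated over an exit time from a ball of radius $\sqrt r$ — rather than $\mu_\gamma([t-r,t+r])$ directly. To bridge this one uses that $\P_B$-almost surely the exit time $\tau_{\sqrt r}$ of Brownian motion from a ball of radius $\sqrt r$ is, up to sub-polynomial corrections, comparable to $r$, so that $\mu_\gamma(B(t,r))$ and $\mu_\gamma([0,\tau_{\sqrt r}])$ (shifted to start at time $t$, via the strong Markov property and independence of $B$ and $X$) differ only by factors that are negligible on the logarithmic scale defining $\tau_{\mu_\gamma}$ and the local dimensions; this is precisely the mechanism already used in the proof of Proposition~\ref{pr_mainLBM} and in \cite{Jackson}, so the needed comparison can be quoted from there. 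I expect this measure-vs-clock-time translation to be the main technical obstacle — everything else is a line-by-line transcription of Section~\ref{sec:main} with $(d, \xi_{M_\gamma}, \gamma^2/2d)$ replaced by $(1, \xi_{\mu_\gamma}, \gamma^2/4)$ — but since Proposition~\ref{pr_mainLBM} and Proposition~\ref{pr_PropertiesLBM} have already absorbed that difficulty, the proof of the present proposition reduces to assembling these ingredients.
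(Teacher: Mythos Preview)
Your proposal is correct and is precisely the approach the paper takes: the paper's own proof is a one-sentence reference back to Section~\ref{sec:main} with the substitutions $M_\gamma \to \mu_\gamma$, Proposition~\ref{pr_RVCarrier} $\to$ Proposition~\ref{pr_mainLBM}, Proposition~\ref{pr_PoweLawSpec} $\to$ Proposition~\ref{pr_PropertiesLBM}, and $\phi_{\mu_\gamma}(q) := 1 - \xi_{\mu_\gamma}(q)$, so your write-up is in fact more detailed than the paper's. One small arithmetic slip: the carrier dimension of $\mu_{q\gamma}$ is $1 - \tfrac14 q^2\gamma^2$ (not $1 - \tfrac12 q^2\gamma^2$), which is indeed what equals $\phi_{\mu_\gamma}^*(\alpha_q)$.
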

\begin{proof}
Given Propositions \ref{pr_PropertiesLBM} and \ref{pr_mainLBM}, and defining the structure function $\phi_{\mu_{\gamma}}(q) := 1  - \xi_{\mu_{\gamma}}(q)$, for all $q \in \R$, one can proceed similarly to Section~\ref{sec:main} to prove this result. Therefore, the details are omitted.
\end{proof}

\subsubsection{Conclusion of the proof}
Now that we have found an explicit formula for the singularity spectrum of $\mu_{\gamma}$, we can study the relations between the various (lower) singularity spectra. For $\alpha \geq 0$, we define the set $\EE_{\mu_{\gamma}}(\alpha)$ as in \eqref{eq_LevelSets}, the sets $\underline{\UU}_{\mu_{\gamma}}(\alpha)$, $\overline{\BB}_{\mu_{\gamma}}(\alpha)$ as in \eqref{eq_LevelSetsExtended}, and similarly also for $\mu^{-}_{\gamma}$. Let us start with the following lemma.
\begin{lemma}
\label{lm_dimLBM}
If $\gamma^2 < 4$, then for $\alpha \geq 0$, it holds $\P_B$-almost surely that
\begin{enumerate}[label=(\roman*)]
\itemsep0em 
\item $\EE_{\mu^{-}_{\gamma}}(2 \alpha) \subset \underline{\EE}_{\B^{\gamma}}(\alpha)$;
\item $\underline{\EE}_{\B^{\gamma}}(\alpha) \subset \underline{\UU}_{\mu^{-}_{\gamma}}(2 \alpha) \cap  
\overline{\BB}_{\mu^{-}_{\gamma}}(2 \alpha)$.
\end{enumerate}
\end{lemma}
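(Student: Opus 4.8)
The plan is to mimic the proof of Lemma~\ref{lm_dimBM} in Section~\ref{sec:MRW}, with the Brownian motion $B$ evaluated along the time change $F_\gamma^{-1}$ rather than along $M_\gamma([0,\cdot])$. Recall that $\B^\gamma_t = B_{\mu_\gamma^-([0,t])}$, so the increments of $\B^\gamma$ over a symmetric window $[t-r,t+r]$ are increments of the Brownian motion $B$ over the time interval of length $\mu_\gamma^-(B(t,r)) = F_\gamma^{-1}(t+r) - F_\gamma^{-1}(t-r)$. The first step is therefore to record the two standard consequences of L\'evy's modulus of continuity for Brownian motion (exactly analogous to \eqref{eq_Levy1} and \eqref{eq_Levy2}): $\P_B$-almost surely, for every $\eps>0$ there is a finite random constant $C>0$ so that for all $t$ and all sufficiently small $r>0$,
\begin{equation*}
|\B^\gamma_{t+r} - \B^\gamma_{t-r}| \leq C\, \mu_\gamma^-(B(t,r))^{\frac12 - \eps},
\end{equation*}
and likewise, for each fixed $t$ and $\eps>0$, for $h>0$ small enough
\begin{equation*}
\sup_{0 < r \leq h} |\B^\gamma_{t+r} - \B^\gamma_{t-r}| \geq \mu_\gamma^-(B(t,r))^{\frac12 + \eps}.
\end{equation*}
Here one should be slightly careful: L\'evy's modulus of continuity is a statement about increments over intervals, and $\mu_\gamma^-$ is (by Proposition~\ref{pr_ContStrictIncr}, applied to $F_\gamma^{-1}$, which is continuous and strictly increasing) a non-atomic measure with full support on $[0,F_\gamma(T)]$, so the window-size function $r \mapsto \mu_\gamma^-(B(t,r))$ is continuous, strictly positive for $r>0$, and tends to $0$ as $r \searrow 0$; this is what makes the two displays genuine estimates on $\underline{\dim}_{\B^\gamma}(t)$.

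For item (i), fix $\alpha \geq 0$ and $t \in \EE_{\mu_\gamma^-}(2\alpha)$, so that for every $\eps>0$ and all small $r>0$ we have $r^{2\alpha+\eps} \leq \mu_\gamma^-(B(t,r)) \leq r^{2\alpha-\eps}$. Feeding the upper bound into the first display gives $|\B^\gamma_{t+r}-\B^\gamma_{t-r}| \leq C r^{(2\alpha-\eps)(\frac12-\eps)}$ for all small $r$, while feeding the lower bound into the second display produces a sequence $r_n \searrow 0$ along which $|\B^\gamma_{t+r_n}-\B^\gamma_{t-r_n}| \geq r_n^{(2\alpha+\eps)(\frac12+\eps)}$. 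Taking logarithms and letting $\eps\searrow0$ yields $\underline{\dim}_{\B^\gamma}(t) = \alpha$, i.e.\ $t \in \underline{\EE}_{\B^\gamma}(\alpha)$, $\P_B$-a.s. For item (ii), fix $t \in \underline{\EE}_{\B^\gamma}(\alpha)$: by definition of $\liminf$ there is a sequence $r_n \searrow 0$ with $|\B^\gamma_{t+r_n}-\B^\gamma_{t-r_n}| \geq r_n^{\alpha+\eps}$, which combined with the first display gives $r_n^{\alpha+\eps} \leq C\mu_\gamma^-(B(t,r_n))^{\frac12-\eps}$, hence $\mu_\gamma^-(B(t,r_n)) \geq r_n^{(2\alpha+2\eps)/(1-2\eps)}$ along a sequence, showing $\overline{\dim}_{\mu_\gamma^-}(t) \geq 2\alpha$ after letting $\eps \searrow 0$; and there is a sequence $r_n' \searrow 0$ with $|\B^\gamma_{t+r_n'}-\B^\gamma_{t-r_n'}| \leq (r_n')^{\alpha-\eps}$, which via the second display (applied with $h = r_n'$) forces $\mu_\gamma^-(B(t,r_n'))^{\frac12+\eps} \leq (r_n')^{\alpha-\eps}$ and hence $\underline{\dim}_{\mu_\gamma^-}(t) \leq 2\alpha$. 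Together these give $t \in \underline{\UU}_{\mu_\gamma^-}(2\alpha) \cap \overline{\BB}_{\mu_\gamma^-}(2\alpha)$, $\P_B$-a.s.

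The main obstacle, compared with the MRW case, is of a bookkeeping nature: the clock process here is $F_\gamma^{-1}$, defined on the random interval $[0,F_\gamma(T)]$, and one must make sure that the relevant good events for Brownian motion hold simultaneously for all $t$ in a full-measure set, and that the "for $h$ small enough" in the lower bound can be applied with $h$ equal to the sequence values $r_n'$ extracted from the $\liminf$ — the resolution is the same as in Lemma~\ref{lm_dimBM}, namely the lower bound in the second display is used only along fixed-$t$ sequences and the corresponding $\P_B$-null set is absorbed via a standard argument over a countable dense set of $t$'s together with the continuity of the objects involved (monotonicity of $\mu_\gamma^-(B(t,\cdot))$ in the radius lets one pass from a dense set to all $t$). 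I would also note explicitly that $\eps>0$ being arbitrary is what upgrades the one-sided inequalities into the exact value $\alpha$ in (i) and into the membership statement in (ii), and that no property of $X$ beyond those already recorded is needed here — all the work specific to GMC and the log-correlated field has been pushed into Propositions~\ref{pr_mainLBM} and \ref{pr_MuFromalsim}, which will enter only at the next stage when $\underline{\d}_{\B^\gamma}$ is related to $\d_{\mu_\gamma}$.
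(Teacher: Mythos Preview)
Your approach is exactly the one the paper takes: the paper's proof of Lemma~\ref{lm_dimLBM} simply says ``use the same argument as in Lemma~\ref{lm_dimBM},'' and you have spelled that out with $\mu_\gamma^-$ in place of $M_\gamma$. The overall structure and the final conclusion are correct.

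One slip worth fixing in item~(ii): you have the two conclusions swapped. From $\mu_\gamma^-(B(t,r_n)) \geq r_n^{(2\alpha+2\eps)/(1-2\eps)}$ along a sequence you get $\underline{\dim}_{\mu_\gamma^-}(t) \leq 2\alpha$ (membership in $\underline{\UU}_{\mu_\gamma^-}(2\alpha)$), not $\overline{\dim}\geq 2\alpha$; and from $\mu_\gamma^-(B(t,r_n')) \leq (r_n')^{(\alpha-\eps)/(1/2+\eps)}$ you get $\overline{\dim}_{\mu_\gamma^-}(t) \geq 2\alpha$ (membership in $\overline{\BB}_{\mu_\gamma^-}(2\alpha)$). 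Since both inclusions are needed and both are established, the final statement is unaffected. Also, note that the inequality $|\B^\gamma_{t+r}-\B^\gamma_{t-r}| \leq r^{\alpha-\eps}$ actually holds for \emph{all} small $r$ (because $\liminf = \alpha$), not just along a sequence; the sequence $(r_n')$ should instead be the one furnished by the LIL-type lower bound, exactly as in the paper's proof of Lemma~\ref{lm_dimBM}.
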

\begin{proof}
Since for all $t \in [0, T]$, it holds that $\B^{\gamma}_{t} = B_{\mu_{\gamma}^{-}((0, t])}$, one can use the same argument used in the proof of Lemma~\ref{lm_dimBM} to prove this result.
\end{proof}

Thanks to Lemma~\ref{lm_dimLBM}, the problem has been reduced to computing the Hausdorff dimensions of $\EE_{\mu^{-}_{\gamma}}(\alpha)$ and of $\underline{\UU}_{\mu^{-}_{\gamma}}(\alpha) \cap  
\overline{\BB}_{\mu^{-}_{\gamma}}(2 \alpha)$, for $\alpha \geq 0$. We start with the following trivial lemma.
\begin{lemma}
\label{lm_LBM1}
If $\gamma^2 < 4$, then for $\alpha > 0$, it holds almost surely that
\begin{enumerate}[label=(\roman*)]
\itemsep0em 
\item $\EE_{\mu^{-}_{\gamma}}(\alpha) = F_{\gamma}(\EE_{\mu_{\gamma}}(\alpha^{-1}))$;
\item $\underline{\UU}_{\mu^{-}_{\gamma}}(\alpha) \cap \overline{\BB}_{\mu^{-}_{\gamma}}(\alpha) = F_{\gamma}(\underline{\UU}_{\mu_{\gamma}}(\alpha^{-1}) \cap  
\overline{\BB}_{\mu_{\gamma}}(\alpha^{-1}))$.
\end{enumerate}
\end{lemma}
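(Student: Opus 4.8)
The plan is to exploit the fact that $F_{\gamma}$ is a continuous, strictly increasing bijection from $[0,T]$ onto $[0,F_{\gamma}(T)]$ (Proposition~\ref{pr_ContStrictIncr}), so that the measures $\mu_{\gamma}$ and $\mu_{\gamma}^{-}$ are pushed forward to each other by $F_{\gamma}$ and $F_{\gamma}^{-1}$. The key identity to establish is that for every $t \in [0,T]$ the local dimension of $\mu_{\gamma}^{-}$ at $F_{\gamma}(t)$ equals the reciprocal of the local dimension of $\mu_{\gamma}$ at $t$; from this, both (i) and (ii) follow immediately by the definitions of the level sets $\EE$, $\underline{\UU}$, $\overline{\BB}$.

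First I would record the elementary relation between the two measures. By definition $\mu_{\gamma}^{-}([s,u]) = F_{\gamma}^{-1}(u) - F_{\gamma}^{-1}(s)$, so if $a < b$ are points in $[0,T]$ then $\mu_{\gamma}^{-}([F_{\gamma}(a),F_{\gamma}(b)]) = b-a$, and more usefully $\mu_{\gamma}([a,b]) = F_{\gamma}(b) - F_{\gamma}(a)$. Hence for a fixed $t \in (0,T)$ and small $\rho>0$, writing $s := F_{\gamma}(t)$, the ball $B(s,\rho) = [s-\rho,s+\rho]$ pulls back under $F_{\gamma}^{-1}$ to the interval $[F_{\gamma}^{-1}(s-\rho), F_{\gamma}^{-1}(s+\rho)]$, which contains $t$ in its interior and whose length is $\mu_{\gamma}^{-}(B(s,\rho))$. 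Conversely, a ball $B(t,r)$ is mapped by $F_{\gamma}$ onto the interval $[F_{\gamma}(t-r),F_{\gamma}(t+r)]$ of length $\mu_{\gamma}(B(t,r))$, which contains $s$ in its interior. I would then set $r^{-}(\rho) := F_{\gamma}^{-1}(s+\rho) - t$ and $r^{+}(\rho) := t - F_{\gamma}^{-1}(s-\rho)$, note that both tend to $0$ as $\rho \searrow 0$ by continuity of $F_{\gamma}^{-1}$, and that $B(t, r^{-}(\rho) \wedge r^{+}(\rho)) \subset F_{\gamma}^{-1}(B(s,\rho)) \subset B(t, r^{-}(\rho) \vee r^{+}(\rho))$, with an analogous sandwich in the other direction. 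Combined with $\mu_{\gamma}^{-}(B(s,\rho)) = r^{-}(\rho) + r^{+}(\rho)$ and the monotonicity of $\mu_{\gamma}$, comparing $\log \mu_{\gamma}^{-}(B(s,\rho)) / \log \rho$ with $\log \mu_{\gamma}(B(t,r))/\log r$ along these comparable radii gives
\begin{equation*}
\underline{\dim}_{\mu^{-}_{\gamma}}(F_{\gamma}(t)) = \frac{1}{\overline{\dim}_{\mu_{\gamma}}(t)}\,, \qquad \overline{\dim}_{\mu^{-}_{\gamma}}(F_{\gamma}(t)) = \frac{1}{\underline{\dim}_{\mu_{\gamma}}(t)}\,,
\end{equation*}
and in particular $\dim_{\mu^{-}_{\gamma}}(F_{\gamma}(t)) = 1/\dim_{\mu_{\gamma}}(t)$ whenever either side exists (here one uses that, by Proposition~\ref{pr_mainLBM} together with Proposition~\ref{pr_MuFromalsim}, the relevant local dimensions lie in a bounded interval away from $0$, so reciprocals behave well and the endpoints $t=0$, $t=T$ are negligible).

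Given this reciprocal identity, part (i) is immediate: $t \in \EE_{\mu_{\gamma}}(\alpha^{-1})$ means $\dim_{\mu_{\gamma}}(t) = \alpha^{-1}$, hence $\dim_{\mu^{-}_{\gamma}}(F_{\gamma}(t)) = \alpha$, i.e.\ $F_{\gamma}(t) \in \EE_{\mu^{-}_{\gamma}}(\alpha)$; since $F_{\gamma}$ is a bijection the inclusion is in fact an equality. Part (ii) follows the same way, using that $x \mapsto x^{-1}$ reverses inequalities: $\underline{\dim}_{\mu_{\gamma}}(t) \leq \alpha^{-1}$ is equivalent to $\overline{\dim}_{\mu^{-}_{\gamma}}(F_{\gamma}(t)) \geq \alpha$, and $\overline{\dim}_{\mu_{\gamma}}(t) \geq \alpha^{-1}$ is equivalent to $\underline{\dim}_{\mu^{-}_{\gamma}}(F_{\gamma}(t)) \leq \alpha$, so $t \in \underline{\UU}_{\mu_{\gamma}}(\alpha^{-1}) \cap \overline{\BB}_{\mu_{\gamma}}(\alpha^{-1})$ if and only if $F_{\gamma}(t) \in \underline{\UU}_{\mu^{-}_{\gamma}}(\alpha) \cap \overline{\BB}_{\mu^{-}_{\gamma}}(\alpha)$. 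The main obstacle, such as it is, is purely the bookkeeping in the first paragraph: one must be careful that the asymmetric radii $r^{\pm}(\rho)$ are comparable (equivalently, that $F_{\gamma}^{-1}$ does not distort scales too wildly), and to handle this I would simply observe that since both $r^{-}(\rho)$ and $r^{+}(\rho)$ are between $0$ and the length of $F_{\gamma}^{-1}(B(s,\rho))$, and their sum equals that length, each is comparable to the sum, so $\log r^{\pm}(\rho) = \log \mu_{\gamma}^{-}(B(s,\rho)) + O(1)$, which is all that is needed after dividing by $\log\rho \to -\infty$. Everything else is a direct unwinding of definitions.
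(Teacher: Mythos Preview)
Your approach mirrors the paper's: the paper's entire proof is a one-sentence appeal to Proposition~\ref{pr_ContStrictIncr} (that $F_{\gamma}$ is continuous and strictly increasing), and you try to unpack this into reciprocal identities for local dimensions. However, your resolution of what you call the ``main obstacle'' contains a genuine error. The claim that $r^{-}(\rho)$ and $r^{+}(\rho)$ are each comparable to their sum is false: knowing $a+b=L$ with $a,b\in(0,L)$ does not give $a,b\gtrsim L$. Concretely, take $F(x)=x$ for $x\ge 0$ and $F(x)=-x^{2}$ for $x<0$; at $t=0$ one has $r^{-}(\rho)=\rho$ and $r^{+}(\rho)=\sqrt{\rho}$, so $\log r^{-}(\rho)-\log(r^{-}+r^{+})\to-\infty$ and your ``$O(1)$'' fails. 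Worse, in this example $\dim_{\mu}(0)=1$ while $\dim_{\mu^{-}}(0)=1/2$, so the reciprocal identities $\underline{\dim}_{\mu^{-}}(F(t))=1/\overline{\dim}_{\mu}(t)$ and $\overline{\dim}_{\mu^{-}}(F(t))=1/\underline{\dim}_{\mu}(t)$ that you assert are simply not consequences of $F$ being a continuous strictly increasing bijection.

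What is missing --- and what the paper's one-line proof also does not supply --- is control on the one-sided behaviour of $\mu_{\gamma}$: one needs that the left and right local scalings of $\mu_{\gamma}$ coincide at the relevant points. A viable repair is to prove a one-sided analogue of Proposition~\ref{pr_mainLBM}, namely that $\lim_{r\searrow 0}\log\mu_{\gamma}([t,t+r])/\log r$ and $\lim_{r\searrow 0}\log\mu_{\gamma}([t-r,t])/\log r$ both exist and equal the two-sided limit for the points $t$ under consideration; the same Borel--Cantelli argument that proves Proposition~\ref{pr_mainLBM} goes through verbatim with half-balls in place of balls, and once the one-sided limits agree your asymmetric radii $r^{\pm}(\rho)$ really are logarithmically comparable. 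Alternatively, one can check that only certain inclusions (not the full equalities) are needed downstream in Lemmas~\ref{lm_LBM2} and~\ref{lm_LBM3}, and those inclusions do follow deterministically from monotonicity.
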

\begin{proof}
The proof follows thanks to the fact that random function $F_{\gamma}$ is almost surely continuos and strictly increasing on the interval $[0, T]$ (cf.\ Proposition~\ref{pr_ContStrictIncr}).
\end{proof}

The following lemma is concerned with the relation between $\d_{\mu_{\gamma}}$ and $\d_{\mu^{-}_{\gamma}}$.
\begin{lemma}
\label{lm_LBM2}
If $\gamma^2 < 4$, then for $\alpha > 0$, it holds almost surely that $\d_{\mu^{-}_{\gamma}}(\alpha) = \alpha \d_{\mu_{\gamma}}(\alpha^{-1})$.
\end{lemma}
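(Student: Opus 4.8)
\textbf{Proof proposal for Lemma~\ref{lm_LBM2}.}

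The plan is to deduce the identity $\d_{\mu^{-}_{\gamma}}(\alpha) = \alpha \, \d_{\mu_{\gamma}}(\alpha^{-1})$ from Lemma~\ref{lm_LBM1}(i) together with a bi-H\"older-type control on the change of variables $F_{\gamma}$, localized to the level set $\EE_{\mu_{\gamma}}(\alpha^{-1})$. The point is that on this set the measure $\mu_{\gamma}$ has, by definition of the local dimension, mass $\mu_{\gamma}(B(t,r)) = r^{\alpha^{-1} + o(1)}$ as $r \searrow 0$; but $\mu_{\gamma}(B(t,r))$ is exactly $F_{\gamma}(t+r) - F_{\gamma}(t-r)$, so $F_{\gamma}$ is locally $(\alpha^{-1})$-H\"older with a matching lower bound at every point of $\EE_{\mu_{\gamma}}(\alpha^{-1})$. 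Heuristically this means $F_{\gamma}$ stretches lengths near such a point $t$ by the power $\alpha^{-1}$, so a set of Hausdorff dimension $s$ inside $\EE_{\mu_{\gamma}}(\alpha^{-1})$ is mapped to a set of dimension $\alpha s$; applying this with $s = \d_{\mu_{\gamma}}(\alpha^{-1}) = \dim_{\H}(\EE_{\mu_{\gamma}}(\alpha^{-1}))$ and using Lemma~\ref{lm_LBM1}(i) gives the claim.

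Concretely, I would proceed as follows. First, fix $\alpha > 0$ and work on the almost sure event of Proposition~\ref{pr_ContStrictIncr}, so that $F_{\gamma}$ is continuous and strictly increasing on $[0,T]$, and of Lemma~\ref{lm_LBM1}, so that $\EE_{\mu^{-}_{\gamma}}(\alpha) = F_{\gamma}(\EE_{\mu_{\gamma}}(\alpha^{-1}))$. For the upper bound $\d_{\mu^{-}_{\gamma}}(\alpha) \le \alpha \, \d_{\mu_{\gamma}}(\alpha^{-1})$, decompose $\EE_{\mu_{\gamma}}(\alpha^{-1}) = \bigcup_{k \in \mathbb{N}} E_k$ where $E_k$ is the set of $t$ for which $\mu_{\gamma}(B(t,r)) \ge r^{\alpha^{-1} + \eta}$ for all $r \in (0, 1/k)$, for a fixed small $\eta > 0$; since $F_{\gamma}(t+r) - F_{\gamma}(t-r) = \mu_{\gamma}(B(t,r))$, this says precisely that on $E_k$ the inverse function $F_{\gamma}^{-1}$ is H\"older of exponent $(\alpha^{-1} + \eta)^{-1}$ near every point of $F_{\gamma}(E_k)$, in the sense required by Proposition~\ref{pr_HolderJackson}. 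Applying Proposition~\ref{pr_HolderJackson} to $F_{\gamma}^{-1}$ on $F_{\gamma}(E_k)$ yields
\begin{equation*}
\dim_{\H}(E_k) = \dim_{\H}\bigl(F_{\gamma}^{-1}(F_{\gamma}(E_k))\bigr) \le (\alpha^{-1} + \eta) \, \dim_{\H}(F_{\gamma}(E_k)) \,,
\end{equation*}
hence $\dim_{\H}(F_{\gamma}(E_k)) \ge (\alpha^{-1}+\eta)^{-1} \dim_{\H}(E_k)$; I actually want the reverse direction, so instead I apply Proposition~\ref{pr_HolderJackson} to $F_{\gamma}$ itself on $E_k$ (where $F_{\gamma}$ is H\"older of exponent $\alpha^{-1}+\eta$ because of the upper bound $\mu_{\gamma}(B(t,r)) \le r^{\alpha^{-1}-\eta}$ defining a companion exhaustion $E_k'$), getting $\dim_{\H}(F_{\gamma}(E_k')) \le (\alpha^{-1}-\eta)^{-1} \dim_{\H}(E_k')$. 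Taking the supremum over $k$ via countable stability of Hausdorff dimension and then letting $\eta \searrow 0$ gives $\d_{\mu^{-}_{\gamma}}(\alpha) \le \alpha \, \d_{\mu_{\gamma}}(\alpha^{-1})$. For the matching lower bound, run the symmetric argument with $F_{\gamma}^{-1}$ in place of $F_{\gamma}$: on the set where $\mu_{\gamma}(B(t,r)) \ge r^{\alpha^{-1}+\eta}$, the map $F_{\gamma}^{-1}$ is locally H\"older of exponent $(\alpha^{-1}+\eta)^{-1}$ near points of the image, and Proposition~\ref{pr_HolderJackson} applied to $F_{\gamma}^{-1}$ gives $\dim_{\H}(E_k) \le (\alpha^{-1}+\eta) \dim_{\H}(F_{\gamma}(E_k))$, i.e.\ $\dim_{\H}(F_{\gamma}(E_k)) \ge (\alpha^{-1}+\eta)^{-1}\dim_{\H}(E_k)$; again use countable stability and $\eta \searrow 0$.

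The main obstacle is that Proposition~\ref{pr_HolderJackson} requires a \emph{uniform} H\"older constant over the set in question, whereas the definition of $\EE_{\mu_{\gamma}}(\alpha^{-1})$ only gives pointwise (and non-uniform, in the sense that the threshold radius depends on $t$) control. This is exactly what the exhaustion by the sets $E_k$ (respectively $E_k'$) is designed to fix: on each $E_k$ the relevant bound holds for all radii below the fixed threshold $1/k$, so the hypothesis of Proposition~\ref{pr_HolderJackson} is genuinely met, and countable stability of $\dim_{\H}$ lets us pass back to the union. A second, minor point is that one must check the elementary bookkeeping relating $\mu_{\gamma}(B(t,r))$ and the increment $|F_{\gamma}(t+r) - F_{\gamma}(t-r)|$ and that $F_{\gamma}^{-1}$ inherits the reciprocal H\"older exponent from a two-sided bound on $F_{\gamma}$; both are immediate from continuity and strict monotonicity of $F_{\gamma}$ (Proposition~\ref{pr_ContStrictIncr}). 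Finally, one should note the degenerate cases $\d_{\mu_{\gamma}}(\alpha^{-1}) = 0$ (where the identity is trivial since then $\dim_{\H}(F_{\gamma}(\EE_{\mu_{\gamma}}(\alpha^{-1}))) = 0$ as well, the image being a continuous strictly monotone image of a dimension-zero set, which has dimension zero by Proposition~\ref{pr_HolderJackson} applied with any $\alpha' > 0$ on the exhausting sets) and $\EE_{\mu_{\gamma}}(\alpha^{-1}) = \emptyset$, both handled directly.
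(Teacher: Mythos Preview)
Your approach is essentially the paper's: use Lemma~\ref{lm_LBM1}(i) to write $\EE_{\mu^{-}_{\gamma}}(\alpha)=F_{\gamma}(\EE_{\mu_{\gamma}}(\alpha^{-1}))$, exhaust the level set by subsets on which a uniform H\"older bound holds, apply Proposition~\ref{pr_HolderJackson}, use countable stability of $\dim_{\H}$, and let the slack $\eta\searrow 0$. The upper bound $\d_{\mu^{-}_{\gamma}}(\alpha)\le \alpha\,\d_{\mu_{\gamma}}(\alpha^{-1})$ is carried out correctly and is exactly the paper's argument.

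There is, however, a small gap in your lower bound. You claim that on the set $E_k=\{t:\mu_{\gamma}(B(t,r))\ge r^{\alpha^{-1}+\eta}\text{ for all }r<1/k\}$, the inverse $F_{\gamma}^{-1}$ is H\"older of exponent $(\alpha^{-1}+\eta)^{-1}$ at points of $F_{\gamma}(E_k)$. This does not follow from the \emph{symmetric} lower bound $F_{\gamma}(t+r)-F_{\gamma}(t-r)\ge r^{\alpha^{-1}+\eta}$ alone: the mass could sit almost entirely on one side of $t$, so that $F_{\gamma}^{-1}(s+\rho)-F_{\gamma}^{-1}(s-\rho)$ is governed by a different exponent (think of an increasing $g$ with $g(r)-g(0)\sim r^{\beta_1}$ and $g(0)-g(-r)\sim r^{\beta_2}$ with $\beta_1\neq\beta_2$). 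Continuity and strict monotonicity of $F_{\gamma}$ do not rule this out.

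The fix is precisely the ``symmetric argument'' you announce but do not quite execute: instead of exhausting $\EE_{\mu_{\gamma}}(\alpha^{-1})$, exhaust $\EE_{\mu^{-}_{\gamma}}(\alpha)$ by the sets $\{s:\mu^{-}_{\gamma}(B(s,\rho))\le \rho^{\alpha-\eta}\text{ for all }\rho<1/k\}$, which is legitimate because $s\in \EE_{\mu^{-}_{\gamma}}(\alpha)$ means $\dim_{\mu^{-}_{\gamma}}(s)=\alpha$ exactly. On each such set, $|F_{\gamma}^{-1}(s+\rho)-F_{\gamma}^{-1}(s-\rho)|=\mu^{-}_{\gamma}(B(s,\rho))\le \rho^{\alpha-\eta}$ uniformly, so Proposition~\ref{pr_HolderJackson} applied to $F_{\gamma}^{-1}$ gives $\d_{\mu_{\gamma}}(\alpha^{-1})\le (\alpha-\eta)^{-1}\d_{\mu^{-}_{\gamma}}(\alpha)$, and letting $\eta\searrow 0$ yields the lower bound. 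This is exactly what the paper does.
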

\begin{proof}
Thanks to item (i) of Lemma~\ref{lm_LBM1}, we know that for $\alpha > 0$, it holds almost surely that $\EE_{\mu^{-}_{\gamma}}(\alpha) = F_{\gamma}(\EE_{\mu_{\gamma}}(\alpha^{-1}))$. Fix $\eps> 0$, and let $t \in \EE_{\mu_{\gamma}}(\alpha^{-1})$, then we define $\delta_{t} > 0$ as follows 
\begin{equation*}
\delta_{t} := \sup \left\{\delta > 0 \, : \, |F_{\gamma}(t+r) - F_{\gamma}(t-r)| \leq r^{\alpha^{-1} - \eps}, \, \forall r \in (0, \delta)\right\} \wedge 1 \,.
\end{equation*}
We define the collection of sets $(E^n_{\mu_{\gamma}}(\alpha^{-1}))_{n \in \mathbb{N}}$ by letting
\begin{equation*}
\EE^n_{\mu_{\gamma}}(\alpha^{-1}) := \left\{t \in \EE_{\mu_{\gamma}}(\alpha^{-1}) \, : \,  2^{-n} < \delta_{t} \leq  2^{-n+1} \right\} \,, \quad n \in \mathbb{N} \,.
\end{equation*}
Then $\EE_{\mu_{\gamma}}(\alpha^{-1}) = \cup_{n \in \mathbb{N}} \EE^n_{\mu_{\gamma}}(\alpha^{-1})$, and for all $t \in \EE^n_{\mu_{\gamma}}(\alpha^{-1})$, it holds that $|F_{\gamma}(t+r) - F_{\gamma}(t-r)| \leq r^{\alpha^{-1}  - \eps}$ for all $r \in (0, 2^{-n})$. Thanks to Proposition~\ref{pr_ContStrictIncr}, we know that the random function $F_{\gamma}$ is almost surely continuous and strictly increasing. Hence, using the countable stability of the Hausdorff dimension and Proposition~\ref{pr_HolderJackson}, it holds almost surely that
\begin{align*}
\d_{\mu^{-}_{\gamma}}(\alpha) = \sup_{n \in \mathbb{N}} \dim_{\H}(F_{\gamma}(\EE^n_{\mu_{\gamma}}(\alpha^{-1})))  \leq \frac{\alpha}{1 - \alpha \eps}  \sup_{n \in \mathbb{N}} \dim_{\H}(\EE^n_{\mu_{\gamma}}(\alpha^{-1})) = \frac{\alpha}{1 - \alpha \eps}  \d_{\mu_{\gamma}}(\alpha^{-1}) \,.
\end{align*}
The reverse inequality can be obtained similarly. Indeed, if we fix $\eps > 0$ and we let $t \in \EE_{\mu^{-}_{\gamma}}(\alpha)$, then we can define $\delta_{t} > 0$ as follows
\begin{equation*}
\delta_t := \sup \left\{\delta > 0 \, : \,|F^{-1}_{\gamma}(t+r) - F^{-1}_{\gamma}(t-r)| \leq r^{\alpha - \eps}, \, \forall r \in (0, \delta) \right\} \wedge 1\,.
\end{equation*}
Defining the collection of sets $(\EE^n_{\mu^{-}_{\gamma}}(\alpha))_{n \in \mathbb{N}}$ similarly as above, and proceeding in the same exact way, we obtain that it holds almost surely that
\begin{equation*}
\d_{\mu_{\gamma}}(\alpha^{-1}) \leq \frac{1}{\alpha - \eps}  \d_{\mu^{-}_{\gamma}}(\alpha) \,.
\end{equation*}
The conclusion follows by arbitrariness of $\eps > 0$.
\end{proof}

We also need the following lemma which allows to conclude the proof.
\begin{lemma}
\label{lm_LBM3}
If $\gamma^2 < 4$, then for $\alpha > 0$ it holds almost surely that $\dim_{\H}(\underline{\UU}_{\mu^{-}_{\gamma}}(\alpha) \cap \overline{\BB}_{\mu^{-}_{\gamma}}(\alpha)) \leq \alpha \tau^{*}_{\mu_{\gamma}}(\alpha^{-1})$.
\end{lemma}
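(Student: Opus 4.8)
The strategy is to transport Proposition~\ref{pr_unionSet}, applied to $\mu_{\gamma}$, through the change of variables $F_{\gamma}$. Set $E:=\underline{\UU}_{\mu_{\gamma}}(\alpha^{-1})\cap\overline{\BB}_{\mu_{\gamma}}(\alpha^{-1})$. By item~(ii) of Lemma~\ref{lm_LBM1} it holds almost surely that $\underline{\UU}_{\mu^{-}_{\gamma}}(\alpha)\cap\overline{\BB}_{\mu^{-}_{\gamma}}(\alpha)=F_{\gamma}(E)$, and by Proposition~\ref{pr_ContStrictIncr} the map $F_{\gamma}$ is a continuous, strictly increasing bijection of $[0,T]$ onto $[0,F_{\gamma}(T)]$; in particular $\mu_{\gamma}$ is atomless, so the $F_{\gamma}$-image of a closed ball $B(x,r)\subset[0,T]$ is a closed interval of diameter $\mu_{\gamma}(B(x,r))$, and these diameters tend to $0$ as $r\searrow0$. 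Consequently, pushing any ball-cover of $E$ forward by $F_{\gamma}$ produces a cover of $F_{\gamma}(E)$ in which the radius $r_{x}$ is replaced by $\mu_{\gamma}(B(x,r_{x}))$. By the definition of $\tau^{*}_{\mu_{\gamma}}$ it then suffices to prove that, almost surely, $\dim_{\H}(F_{\gamma}(E))\le q+\alpha\tau_{\mu_{\gamma}}(q)$ for every $q\in\R$ (exactly as in the reduction at the start of the proof of Proposition~\ref{pr_unionSet}).

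Write $\beta:=\alpha^{-1}$ and fix $q\in\R$, the case $q=1$ being trivial since $F_{\gamma}(E)\subset\R$. Fix $\eps>0$. I would now repeat the proof of Proposition~\ref{pr_unionSet} almost verbatim, but cover $F_{\gamma}(E)$ rather than $E$, and therefore track $\mu_{\gamma}(B(x,r_{x}))^{s'}$ in place of $r_{x}^{s'}$, where $s':=q+\alpha\tau_{\mu_{\gamma}}(q)+O(\eps)$ is the target exponent. The splitting is now governed by the sign of $\tau_{\mu_{\gamma}}(q)$, equivalently by whether $q<1$ or $q>1$: if $\tau_{\mu_{\gamma}}(q)>0$ I would select, for each $x\in E\subset\underline{\UU}_{\mu_{\gamma}}(\beta)$, a decreasing sequence $r_{x,n}\searrow0$ with $\mu_{\gamma}(B(x,r_{x,n}))\le r_{x,n}^{\beta-\eps}$, while if $\tau_{\mu_{\gamma}}(q)<0$ I would select, for each $x\in E\subset\overline{\BB}_{\mu_{\gamma}}(\beta)$, a decreasing sequence with $\mu_{\gamma}(B(x,r_{x,n}))\ge r_{x,n}^{\beta+\eps}$. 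After choosing the index for each $x$ so that the $F_{\gamma}$-image of the selected ball has diameter below the prescribed mesh (possible since $\mu_{\gamma}$ is atomless), grouping into dyadic scales $2^{-m}<r_{x}\le2^{-m+1}$, and applying Besicovitch's covering theorem (Lemma~\ref{lem:Besicovitch}) exactly as there to obtain disjoint subfamilies $F_{m,j}$, the key point is that with $s'$ so chosen the quantity $s'-q$ has the same sign as $\tau_{\mu_{\gamma}}(q)$, whence the selected one-sided bound gives $\mu_{\gamma}(B(x,r_{x}))^{s'-q}\le r_{x}^{\,\tau_{\mu_{\gamma}}(q)+O(\eps)}$; replacing $\mu_{\gamma}(B(x,r_{x}))^{q}$ by $\mu_{\gamma}(B(x,2^{-m+1}))^{q}$ when $q\ge0$ (after splitting the enlarged balls into $O(1)$ disjoint subfamilies) or by $\mu_{\gamma}(B(x,2^{-m}))^{q}$ when $q<0$, and invoking the definition of $\tau_{\mu_{\gamma}}(q)$, yields $\sum_{x\in F_{m,j}}\mu_{\gamma}(B(x,r_{x}))^{s'}\lesssim 2^{-m\eps/2}$ for $m$ large. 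Summing over $m$ and $j$ bounds $\sup_{\delta}\mathcal{H}^{s'}_{\delta}(F_{\gamma}(E))$, so $\dim_{\H}(F_{\gamma}(E))\le s'$; letting $\eps\searrow0$ and taking the infimum over $q\in\R$ (using $\dim_{\H}\ge0$ to dispose of the $q$ for which $q+\alpha\tau_{\mu_{\gamma}}(q)<0$, as in Proposition~\ref{pr_unionSet}) gives $\dim_{\H}(F_{\gamma}(E))\le\alpha\tau^{*}_{\mu_{\gamma}}(\alpha^{-1})$.

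The genuinely new ingredient beyond Proposition~\ref{pr_unionSet} is the distortion by $F_{\gamma}$: since the diameter of a pushed-forward ball equals its $\mu_{\gamma}$-mass, the exponent in the covering sum and the exponent fed to the $L^{q}$-spectrum of $\mu_{\gamma}$ no longer agree, and one must use the lower bound coming from $\overline{\BB}_{\mu_{\gamma}}(\beta)$ precisely when $\tau_{\mu_{\gamma}}(q)\le0$ and the upper bound coming from $\underline{\UU}_{\mu_{\gamma}}(\beta)$ precisely when $\tau_{\mu_{\gamma}}(q)\ge0$. I expect this case split (together with the elementary but necessary check that $\mu_{\gamma}$ being atomless makes the pushed-forward family a legitimate $\delta$-cover of $F_{\gamma}(E)$) to be the main thing to get right; the rest is a routine transcription of the argument already carried out for Proposition~\ref{pr_unionSet}.
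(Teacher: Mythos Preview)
Your approach is essentially the paper's: reduce via Lemma~\ref{lm_LBM1}(ii) to bounding $\dim_{\H}(F_\gamma(E))$ by $q+\alpha\tau_{\mu_\gamma}(q)$ for each $q$, then rerun the Besicovitch argument of Proposition~\ref{pr_unionSet} with the pushed-forward diameters $\mu_\gamma(B(x,r_x))$ in place of $r_x$. The paper writes out only the case $q\ge0$, using the upper bound $\mu_\gamma(B(t,r_{t,n}))\le r_{t,n}^{\alpha^{-1}-\eps}$ along a subsequence; your explicit split on the sign of $\tau_{\mu_\gamma}(q)$ (together with the secondary split on the sign of $q$ for the ball-enlarging step) is exactly what is needed to make the exponent bookkeeping go through for all $q$, and is if anything more careful than the paper's sketch.

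There is, however, a labeling slip: you have the roles of $\underline{\UU}_{\mu_\gamma}(\beta)$ and $\overline{\BB}_{\mu_\gamma}(\beta)$ swapped. By definition $x\in\underline{\UU}_{\mu_\gamma}(\beta)$ means $\liminf_{r\searrow0}\log\mu_\gamma(B(x,r))/\log r\le\beta$, which (since $\log r<0$) yields a subsequence with $\mu_\gamma(B(x,r_{x,n}))\ge r_{x,n}^{\beta+\eps}$; this is the \emph{lower} bound. The \emph{upper} bound $\mu_\gamma(B(x,r_{x,n}))\le r_{x,n}^{\beta-\eps}$ along a subsequence comes from $x\in\overline{\BB}_{\mu_\gamma}(\beta)$. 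Since $E$ lies in both sets the argument is unaffected, but the attributions in your two displayed alternatives and in your closing paragraph should be interchanged.
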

\begin{proof}
Thanks to item (ii) of Lemma~\ref{lm_LBM1} and to the definition of $\tau_{\mu_{\gamma}}^{*}$, it is sufficient to prove that for $\alpha > 0$ it holds that
\begin{equation}
\label{eq_similarAppendixA}
\dim_{\mathcal{H}}(F_{\gamma}(\underline{\UU}_{\mu_{\gamma}}(\alpha^{-1}) \cap  \overline{\BB}_{\mu_{\gamma}}(\alpha^{-1}))) \leq q + \alpha \tau_{\mu_{\gamma}}(q) \,, \quad \forall q \in R \,,
\end{equation}
almost surely. We will be brief here since this result can be proved using the same procedure used in the proof of Proposition~\ref{pr_unionSet}. We will only focus in the case $q \geq 0$. Fix $\eps > 0$ and let $\overline{s}_{q, \eps} := q + (\tau_{\mu_{\gamma}}(q) + \eps)/(\alpha^{-1} - \eps)$. By definition, for every $t \in \underline{\UU}_{\mu_{\gamma}}(\alpha^{-1}) \cap \overline{\BB}_{\mu_{\gamma}}(\alpha^{-1})$, it exists a decreasing sequence $(r_{t,n})_{n \in \mathbb{N}}$ converging to $0$ such that
\begin{equation*}
|F_{\gamma}(t + r_{t,n}) - F_{\gamma}(t - r_{t,n})| = \mu_{\gamma}(B(t, r_{t, n})) \leq r_{t, n}^{\alpha^{-1} - \eps} \,, \quad \forall n \in \mathbb{N} \,.
\end{equation*} 
Fix $\delta \in (0, 1]$, then for each $t \in \underline{\UU}_{\mu_{\gamma}}(\alpha^{-1}) \cap \overline{\BB}_{\mu_{\gamma}}(\alpha^{-1})$, we choose $n_t$ such that $r_{t, n_t} \in  (0,\delta)$. For each $m \in \mathbb{N}$, we let 
\begin{equation*}
F_m := \left\{t \in\underline{\UU}_{\mu_{\gamma}}(\alpha^{-1}) \cap \overline{\BB}_{\mu_{\gamma}}(\alpha^{-1}) \, : \, 2^{-m} < r_{t, n_t} \leq 2^{-m+1}\right\} \,.
\end{equation*}
Proceeding as in the proof of Proposition~\ref{pr_unionSet}, it exists a positive integer $\sigma_1$ such that for every $m \in \mathbb{N}$, we can find $\sigma_1$ disjoint subsets $F_{m,1}, \dots, F_{m, \sigma_1}$ of $F_m$ such that each $F_{m, j}$ is at most countable, the balls $B(t, r_{t, n_t})$ with centres at $t \in F_{m, j}$ are pairwise disjoint, and the collection of sets 
\begin{equation*}
\left(\left(\left(F_{\gamma}(B(t, r_{t, n_t}))\right)_{t \in F_{m, j}}\right)_{j \in \{1, \dots, \sigma_1\}}\right)_{m \in \mathbb{N}}
\end{equation*}
is a $\delta$-cover of $F_{\gamma}(\underline{\UU}_{\mu_{\gamma}}(\alpha^{-1}) \cap \overline{\BB}_{\mu_{\gamma}}(\alpha^{-1}))$. Then, thanks to the definition of $\mu_{\gamma}$ and to the fact that $F_{\gamma}$ is almost surely strictly increasing, we have that
\begin{align*}
\mathcal{H}_{\delta}^{\overline{s}_{q, \eps}}(F_{\gamma}(\underline{\UU}_{\mu_{\gamma}}(\alpha^{-1}) \cap  \overline{\BB}_{\mu_{\gamma}}(\alpha^{-1}))) & \leq \sum_{m \in \mathbb{N}} \sum_{j = 1}^{\sigma_1} \sum_{t \in F_{m , j}} |F_{\gamma}(t + r_{t,n_t}) - F_{\gamma}(t - r_{t,n_t})|^{\overline{s}_{q, \eps}} \\
& \leq \sum_{m \in \mathbb{N}} \sum_{j = 1}^{\sigma_1} \sum_{t \in F_{m , j}} \mu_{\gamma}(B(t, r_{t, n_t}))^q r_{t, n_t}^{(\tau_{\mu}(q) + \eps)} \\
& \lesssim \sum_{m \in \mathbb{N}} \sum_{j = 1}^{\sigma_1} \sum_{t \in F_{m , j}} \mu_{\gamma}(B(t, 2^{-m+1}))^q 2^{-m(\tau_{\mu}(q) + \eps)}  \,,
\end{align*}
where the implicit constant does not depend on $m$. Hence, the conclusion follows as in the proof of Proposition~\ref{pr_unionSet}. Similarly, we can prove inequality \eqref{eq_similarAppendixA} for $q < 0$.
\end{proof}

Finally, let us see how we can put everything together in order to prove Theorem~\ref{th_MainTheoremLBM}.
\begin{proof}[Proof of Theorem~\ref{th_MainTheoremLBM}]
Let us start by observing that almost surely it holds that $\underline{\d}_{\B^{\gamma}}(0) = 0$ since $\B^{\gamma}$ is almost surely continuous. Thanks to item (i) of Lemma~\ref{lm_dimLBM} and Lemma~\ref{lm_LBM2}, for $\alpha > 0$, we have that
\begin{equation*}
\underline{\d}_{\B^{\gamma}}(\alpha) \geq \d_{\mu^{-}_{\gamma}}(2 \alpha) = 2\alpha \d_{\mu_{\gamma}}((2\alpha)^{-1})\,,
\end{equation*}
almost surely. On the other hand, item (ii) of Lemma~\ref{lm_dimLBM}, Lemma~\ref{lm_LBM3} and Proposition~\ref{pr_MuFromalsim} imply that, for all $\alpha > 0$, it holds that
\begin{equation*}
\underline{\d}_{\B^{\gamma}}(\alpha) \leq \dim_{\mathcal{H}} \left(\underline{\UU}_{\mu^{-}_{\gamma}}(2 \alpha) \cap \overline{\BB}_{\mu^{-}_{\gamma}}(2 \alpha)\right) \leq 2 \alpha \tau^{*}_{\mu_{\gamma}} ((2 \alpha)^{-1}) = 2\alpha \d_{\mu_{\gamma}}((2\alpha)^{-1}) \,,
\end{equation*}
almost surely. Since the singularity spectrum $\d_{\mu_{\gamma}}$ is known, this concludes the proof.
\end{proof}

\appendix
\addtocontents{toc}{\protect\setcounter{tocdepth}{1}}
\section{Proof of Proposition~\ref{pr_RVCarrier}}
\label{sec:ProofProp}
The proof of Proposition~\ref{pr_RVCarrier} follows similar steps of the proof in \cite[Theorem~4.1]{RV_Review}. However, our proof accommodates general log-correlated Gaussian fields. Indeed, it is not enough to prove the result for an exactly stochastically scale invariant field, and then deduce the result for all log-correlated fields, by means of Kahane's convexity inequality (cf.\ Lemma~\ref{lm_Kahane}). This is due to the fact that in the proof we need to consider moments of certain integrals of the field that involve indicator functions depending on the field itself.

Since the conclusion of Proposition~\ref{pr_RVCarrier} trivially holds for $\gamma = 0$, we may assume $\gamma \neq 0$. The proof is essentially based on the following proposition and on a standard Borel--Cantelli argument. 

\begin{proposition}
\label{pr_RVCarrier1}
Let $\gamma^2 < 2d$, $q^2 < 2d/ \gamma^2$ and define $\alpha := d+(1/2-q)\gamma^2$. For $\beta > 0$ and $E > 0$, there exist constants $C$, $b$, $C'$, $b' > 0$ such that for $r \in (0, 1]$ small enough it holds that
\begin{equation}
\label{eq_RVCarrier11}
\E\left[M_{q \gamma}\left(\left\{x \in D_r  \, : \, M_{\gamma}(B(x,r)) > E r^{\alpha-\beta}\right\}\right)\right] \leq C r^{b} \,, 
\end{equation}
and 
\begin{equation}
\E\left[M_{q \gamma}\left(\left\{x \in D_r \, : \, M_{\gamma}(B(x,r)) <  E r^{\alpha+\beta}\right\}\right)\right] \leq C' r^{b'} \label{eq_RVCarrier12} \,,
\end{equation}
where $D_r := \{x \in D \, : \, \dist(x, \partial D) < r\}$. 
\end{proposition}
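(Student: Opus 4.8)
The plan is to reduce both inequalities, via Fubini and a Cameron--Martin change of measure, to tail estimates for the $M_\gamma$-mass of a single ball, and then to run a dyadic decomposition together with the moment bounds of Proposition~\ref{pr_PoweLawSpec}. Since the claim is trivial for $\gamma=0$ we assume $\gamma\neq 0$; note that $\alpha>0$ whenever $\gamma^2<2d$ and $q^2<2d/\gamma^2$. As the authors observe, one cannot prove the bounds for an exactly scale-invariant field and transfer them via Kahane's convexity inequality, since the events in \eqref{eq_RVCarrier11} and \eqref{eq_RVCarrier12} depend on the field in a non-convex way; the argument below works directly for a general log-correlated field, the input that replaces scale-invariance being Lemma~\ref{lm_supremumGaussian}.

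\emph{Step 1 (change of measure).} Write $M_{q\gamma}$ as the $L^1(\P)$-limit of $M_{q\gamma}^\eps(\d x)=e^{q\gamma X_\eps(x)-\frac{1}{2}q^2\gamma^2\E[X_\eps(x)^2]}\d x$. By Fubini, for any random measurable set $A$, $\E[M_{q\gamma}^\eps(A)]=\int_D\E[\mathbbm 1_{\{x\in A\}}e^{q\gamma X_\eps(x)-\frac{1}{2}q^2\gamma^2\E[X_\eps(x)^2]}]\,\d x$. For fixed $x,\eps$ the weight is the density of a tilted law under which $X$ is shifted by the deterministic function $q\gamma K_\eps(x,\cdot)$, so with $A=\{x:M_\gamma(B(x,r))>E r^{\alpha-\beta}\}$ the inner expectation becomes the probability that $\int_{B(x,r)}e^{q\gamma^2 K_\eps(x,y)}M_\gamma(\d y)$ exceeds $E r^{\alpha-\beta}$. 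Letting $\eps\to 0$ (using properties~\ref{pp_P1}--\ref{pp_P3} for the kernels and $M_\gamma^{\eps'}\to M_\gamma$ in $L^1$), and using boundedness of $g$ on $\bar D\times\bar D$, this reduces \eqref{eq_RVCarrier11} and \eqref{eq_RVCarrier12} --- modulo a technical limiting argument discussed below --- to proving, uniformly over $x\in D_r$ and writing $\widehat M_{x,r}:=\int_{B(x,r)}|x-y|^{-q\gamma^2}M_\gamma(\d y)$,
\[
\P\bigl(\widehat M_{x,r}>E r^{\alpha-\beta}\bigr)\lesssim r^{b}\qquad\text{and}\qquad \P\bigl(\widehat M_{x,r}<E r^{\alpha+\beta}\bigr)\lesssim r^{b'};
\]
integrating over $D_r$ (bounded Lebesgue measure, uniform constants) then gives the result.

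\emph{Step 2 (upper tail).} Decompose $B(x,r)=\bigsqcup_{k\ge0}A_k$, $A_k:=B(x,2^{-k}r)\setminus B(x,2^{-k-1}r)$, so $\widehat M_{x,r}\asymp\sum_{k\ge0}(2^{-k}r)^{-q\gamma^2}M_\gamma(A_k)$. Fix $p\in(0,1)$ with $p<\min\{2\alpha/\gamma^2,\,2\beta/\gamma^2\}$, a non-empty range since $\alpha,\beta>0$ (and $p<1<2d/\gamma^2$, so Proposition~\ref{pr_PoweLawSpec} applies with exponent $p$). By Markov's inequality, subadditivity of $t\mapsto t^p$, $M_\gamma(A_k)\le M_\gamma(B(x,2^{-k}r))$, and Proposition~\ref{pr_PoweLawSpec} (constant uniform in the centre),
\[
\P\bigl(\widehat M_{x,r}>E r^{\alpha-\beta}\bigr)\le (E r^{\alpha-\beta})^{-p}\E[\widehat M_{x,r}^p]\lesssim E^{-p}r^{-p(\alpha-\beta)}\sum_{k\ge0}(2^{-k}r)^{\xi_{M_\gamma}(p)-pq\gamma^2}\lesssim E^{-p}r^{p\beta-\frac{1}{2}\gamma^2 p^2},
\]
the geometric series converging because $\xi_{M_\gamma}(p)-pq\gamma^2=p\alpha-\frac{1}{2}\gamma^2 p^2>0$ for $p<2\alpha/\gamma^2$, and $b:=p\beta-\frac{1}{2}\gamma^2 p^2>0$ since $p<2\beta/\gamma^2$.

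\emph{Step 3 (lower tail).} Here it suffices to keep only the outer annulus: since $x\in D_r$, $A_0$ contains a ball $B$ of radius comparable to $r$ with $B\subset D$, and $\widehat M_{x,r}\gtrsim r^{-q\gamma^2}M_\gamma(B)$. Hence, for $p>0$, $\P(\widehat M_{x,r}<E r^{\alpha+\beta})\le\P(M_\gamma(B)<cE r^{\alpha+\beta+q\gamma^2})\le (cE)^p r^{p(\alpha+\beta+q\gamma^2)}\E[M_\gamma(B)^{-p}]$, and $\E[M_\gamma(B)^{-p}]\lesssim r^{\xi_{M_\gamma}(-p)}$ by Proposition~\ref{pr_PoweLawSpec} (exponent $-p<2d/\gamma^2$, uniform in the centre). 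Since $\xi_{M_\gamma}(-p)+pq\gamma^2=-p\alpha-\frac{1}{2}\gamma^2 p^2$, this gives $\P(\widehat M_{x,r}<E r^{\alpha+\beta})\lesssim E^p r^{p\beta-\frac{1}{2}\gamma^2 p^2}$, so $b':=p\beta-\frac{1}{2}\gamma^2 p^2>0$ for $p<2\beta/\gamma^2$.

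\emph{Main obstacle.} The non-routine point is Step~1: making the change of measure rigorous at the level of the regularized fields and passing to the limit. One must control the joint covariance asymptotics of the two regularization scales (property~\ref{pp_P3}), handle the discontinuity of the indicator (e.g.\ by approximating the event from inside and outside, or perturbing the threshold, which is harmless after the integration in $E$), and --- crucially --- control the coarse fluctuation $\sup_{u\in B(x,r)}|X_r(u)-X_r(x)|$ uniformly in $x\in D_r$ and $r\in(0,1]$: this is exactly Lemma~\ref{lm_supremumGaussian} (together with its analogue for $-X$), and it is what guarantees that the tilted mass $\widehat M_{x,r}$ concentrates at the scale $r^\alpha$ rather than being distorted by the macroscopic behaviour of the field on $B(x,r)$. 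Everything else is the elementary identity $\xi_{M_\gamma}(\pm p)\mp pq\gamma^2=\pm p\alpha-\frac{1}{2}\gamma^2 p^2$ together with the GMC moment estimates recalled in Section~\ref{sec:setup}.
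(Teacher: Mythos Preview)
Your Steps~2 and~3 are correct and give a genuinely different, more economical route than the paper's. The paper stays at the regularised level throughout: after Girsanov it uses Lemma~\ref{lm_supremumGaussian} to factor out the coarse-scale oscillation $X_r(\cdot)-X_r(x)$ and thereby rescale the integral over $B(x,r)$ to an integral over a unit ball (this is \eqref{eq_likeScaleInv1}--\eqref{eq_likeScaleInv2}); the remaining scale-$1$ integral is then controlled by the ad~hoc moment bounds of Lemmas~\ref{lm_ProofCarrierRV} and~\ref{lm_ProofCarrierRVNegative}, whose proofs are themselves dyadic decompositions plus Kahane. You instead pass to the limiting tilted mass $\widehat M_{x,r}=\int_{B(x,r)}|x-y|^{-q\gamma^2}M_\gamma(\d y)$ and apply the ready-made scaling of Proposition~\ref{pr_PoweLawSpec} annulus by annulus; this collapses the paper's Lemma~\ref{lm_supremumGaussian}~+~Lemmas~\ref{lm_ProofCarrierRV}/\ref{lm_ProofCarrierRVNegative} into a few lines. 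The trade-off is that your Step~1 (the limiting argument) now carries all the technical weight: one must regularise both $M_{q\gamma}$ and the event, take limits in the right order, and justify Portmanteau/Fatou for the indicator---exactly what the paper does at the end of its proof.

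One point of confusion: your ``Main obstacle'' paragraph attributes the concentration of $\widehat M_{x,r}$ at scale $r^\alpha$ to Lemma~\ref{lm_supremumGaussian}. In your own argument this is false---that concentration comes entirely from Proposition~\ref{pr_PoweLawSpec} via your dyadic computation, and Lemma~\ref{lm_supremumGaussian} plays no role. (In the paper's argument Lemma~\ref{lm_supremumGaussian} is indeed what replaces exact scale invariance, but you have bypassed that step.) The genuine content of your Step~1 is property~\ref{pp_P3} and the weak-convergence bookkeeping, not field oscillations. Also, there is no ``integration in $E$''; $E$ is fixed. Finally, a small care point: for $q<0$ the pointwise comparison $e^{q\gamma^2 K_\eps(x,y)}\lesssim |x-y|^{-q\gamma^2}$ fails near $y=x$, so the cleanest way to run Step~1 uniformly in $\eps$ is to bound the $\eps$-tilted integral directly by $Cr^{-q\gamma^2}M_\gamma(B(x,r))$ on $B(x,r)$ (trivial since the weight is bounded there) and then apply your Markov/Proposition~\ref{pr_PoweLawSpec} argument to $M_\gamma(B(x,r))$---the exponents match exactly as in your computation.
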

\begin{proof}
The proof can be found in Subsection~\ref{subsec_Preliminary}.
\end{proof}

We are now ready to prove Proposition~\ref{pr_RVCarrier}.
\begin{proof}[Proof of Proposition~\ref{pr_RVCarrier}]
Fix $\beta > 0$ and let $\alpha = d+(1/2-q)\gamma^2$. For $n \in \mathbb{N}$, we define the following sets
\begin{equation*}
U_{\gamma}^n := \left\{x \in D_{2^{-n}} \, : \, M_{\gamma}(B(x,r)) \leq r^{\alpha-\beta} , \quad \forall r \in [0, 2^{-n})\right\} \,,
\end{equation*} 
and
\begin{equation*}
L_{\gamma}^n := \left\{x \in D_{2^{-n}} \, : \, M_{\gamma}(B(x,r)) \geq r^{\alpha+\beta} \quad \forall r \in [0, 2^{-n})\right\} \,,
\end{equation*}
where we recall that $D_r := \{x \in D \, : \, \dist(x, \partial D) < r\}$. We claim that for every $\eps \in (0, 1]$, it exists almost surely a finite $N \in \mathbb{N}$ such that 
\begin{equation*}
M_{q \gamma}\left(\left(U_{\gamma}^N\right)^c\right) < \eps \,, \quad M_{q \gamma}\left(\left(L_{\gamma}^N\right)^c\right) < \eps\,.
\end{equation*}
Thanks to Proposition~\ref{pr_RVCarrier1} with $E = 2^{-(\alpha-\beta)}$ and to Markov's inequality, we know that there exist constants $C > 0$ and $b > 0$ such that for $r \in (0, 1]$ small enough, it holds that
\begin{align*}
& \P\left(M_{q\gamma}\left(\left\{x \in D_r \, : \, M_{\gamma}(B(x, r)) > 2^{-(\alpha - \beta)} r^{\alpha-\beta}\right\}\right) \geq r^{\frac{b}{2}}\right) \\
& \qquad\qquad \leq r^{-\frac{b}{2}} \E\left[M_{q \gamma}\left(\left\{x \in D_r \, : \, M_{\gamma}(B(x,r)) > 2^{-(\alpha - \beta)} r^{\alpha-\beta}\right\}\right)\right] \\
& \qquad\qquad \leq C r^{\frac{b}{2}} \,.
\end{align*}
For $n \in \mathbb{N}$, taking $r = 2^{-n}$, thanks to the Borel--Cantelli lemma, we get that the events
\begin{equation*}
\left(\left\{M_{q\gamma}\left(\left\{x \in D_{2^{-n}} \, : \, M_{\gamma}(B(x, 2^{-n})) > (2^{-n-1})^{\alpha - \beta}\right\}\right) \geq (2^{-n})^{\frac{b}{2}}\right\}\right)_{n \in \mathbb{N}}
\end{equation*}
occur only finitely often almost surely. Therefore, for all $\eps \in (0, 1]$, it exists almost surely a finite $N \in \mathbb{N}$ such that 
\begin{equation*}
M_{q\gamma}\left(\bigcup_{n \geq N} \left\{x \in D_{2^{-n}} \, : \, M_{\gamma}(B(x, 2^{-n})) > (2^{-n-1})^{\alpha - \beta}\right\}\right) \leq \sum_{n \geq N} (2^{-n})^{\frac{b}{2}} < \eps \,.
\end{equation*}
Let $x \in (U_{\gamma}^N)^c$, then it exists $r \in [0, 2^{-N})$ such that $M_{\gamma}(B(x, r)) > r^{\alpha - \beta}$. Since it exists $n \geq N$ such that $2^{-n-1} \leq r < 2^{-n}$, we get
\begin{equation*}
M_{\gamma}(B(x, 2^{-n})) \geq M_{\gamma}(B(x, r)) > r^{\alpha - \beta} \geq (2^{-n-1})^{\alpha-\beta} \,,
\end{equation*}
which implies that
\begin{equation*}
M_{q \gamma}\left(\left(U_{\gamma}^N\right)^c\right) \leq M_{q\gamma}\left(\bigcup_{n \geq N} \left\{x \in D_{2^{-n}} \, : \, M_{\gamma}(B(x, 2^{-n})) > (2^{-n-1})^{\alpha - \beta}\right\}\right) < \eps \,.
\end{equation*}
A similar argument can be used to prove that for every $\eps \in (0, 1]$, it exists almost surely a finite $N \in \mathbb{N}$ such that $M_{q \gamma}((L_{\gamma}^N)^c) < \eps$. Therefore, introducing the sets
\begin{align*}
& U_{\gamma} := \bigcup_{n \in \mathbb{N}} U_{\gamma}^n = \left\{x \in D \, : \, \liminf_{r \searrow 0} \frac{\log M_{\gamma}(B(x,r))}{\log r} \geq \alpha-\beta \right\} \,, \\
& L_{\gamma} := \bigcup_{n \in \mathbb{N}} L_{\gamma}^n = \left\{x \in D \, : \, \limsup_{r \searrow 0} \frac{\log M_{\gamma}(B(x,r))}{\log r} \leq \alpha+\beta \right\} \,,
\end{align*}
since $\eps > 0$ is arbitrary and $U_{\gamma}^N$, $L_{\gamma}^N$ are increasing sets, we obtain that
\begin{equation*}
M_{q \gamma}\left(\left(L_\gamma \cap U_{\gamma}\right)^{c}\right) = 0 \,.
\end{equation*}
The conclusion then follows from the arbitrariness of $\beta > 0$.
\end{proof}

\subsection{Proof of Proposition~\ref{pr_RVCarrier1}}
\label{subsec_Preliminary}%
The objective of this subsection is to prove Proposition~\ref{pr_RVCarrier1}. For simplicity, but without loss of generality, we assume that $D = (-1,1)^d$. The proof is based on Lemma~\ref{lm_supremumGaussian} and on the following two lemmas.

\begin{lemma}
\label{lm_ProofCarrierRV}
For $\gamma^2 < 2d$ and $q^2 < 2d/\gamma^2$, there exist $\bar{a} \in (0, 1]$ and $C > 0$ such that 
\begin{equation*}
\sup_{\eps \in (0, 1]} \E \left[\left(\int_{(-1,1)^d}\frac{e^{\gamma X_{\eps}(x)-\frac{1}{2} \gamma^2 \E[X_{\eps}(x)^2]}}{(|x|+\eps)^{q \gamma^2}} \d x\right)^{\bar{a}}\right] \leq C
\end{equation*}
\end{lemma}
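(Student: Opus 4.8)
Write $\sigma_\eps(\d x):=(|x|+\eps)^{-q\gamma^2}\,\mathbbm{1}_{(-1,1)^d}(x)\,\d x$, which is a finite measure for each fixed $\eps\in(0,1]$, and for a Gaussian field $Y$ and a finite measure $\nu$ set $M_{Y_\eps}(\nu):=\int e^{\gamma Y_\eps(x)-\frac12\gamma^2\E[Y_\eps(x)^2]}\,\nu(\d x)$ (and $M_{Y_\eps}(A):=M_{Y_\eps}(\mathbbm{1}_A\,\d x)$); the quantity to be bounded uniformly in $\eps$ is $\E[M_{X_\eps}(\sigma_\eps)^{\bar a}]$ for a suitable $\bar a\in(0,1]$. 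First I would reduce to the $d$-dimensional exactly scale-invariant field of \cite{RVScale} by Kahane's convexity inequality, exactly as in the proof of Proposition~\ref{pr_PropertiesLBM}: if $\tilde X_\eps$ denotes the approximation of that field used in \cite{BerPow}, then by property~\ref{pp_P3} the covariances $\E[X_\eps(x)X_\eps(y)]$ and $\E[\tilde X_\eps(x)\tilde X_\eps(y)]$ differ by at most a fixed constant $v$ for all $x,y\in(-1,1)^d$ and all $\eps$; augmenting $X_\eps$ by an independent centred Gaussian of variance $v$ and applying Lemma~\ref{lm_Kahane} to the concave map $t\mapsto t^{\bar a}$ gives $\E[M_{X_\eps}(\nu)^{\bar a}]\le e^{\frac{\bar a\gamma^2 v}{2}(1-\bar a)}\,\E[M_{\tilde X_\eps}(\nu)^{\bar a}]$ for every finite measure $\nu$. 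It therefore suffices to bound $\E[M_{\tilde X_\eps}(\sigma_\eps)^{\bar a}]$ uniformly in $\eps$.

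\textbf{Dyadic decomposition.} Set $K_\eps:=\lfloor\log_2(1/\eps)\rfloor$, so that $\eps\le 2^{-K_\eps}<2\eps$, and write $(-1,1)^d=A_0\cup\bigcup_{k=1}^{K_\eps}A_k$ with $A_0:=\{x\in(-1,1)^d:|x|>1/2\}$, $A_k:=\{2^{-k-1}<|x|\le 2^{-k}\}$ for $1\le k<K_\eps$, and $A_{K_\eps}:=\{|x|\le 2^{-K_\eps}\}$. Because the decomposition is truncated precisely at the mollification scale, $|x|+\eps$ is comparable to $1$ on $A_0$ and to $2^{-k}$ on $A_k$, with constants independent of $\eps$; combining this with $A_k\subseteq B(0,2^{-k})$ yields
\[
M_{\tilde X_\eps}(\sigma_\eps)\ \le\ C\,M_{\tilde X_\eps}(A_0)\ +\ C\sum_{k=1}^{K_\eps}2^{kq\gamma^2}\,M_{\tilde X_\eps}\big(B(0,2^{-k})\big).
\]
Fixing $\bar a\in(0,1]$, using sub-additivity of $t\mapsto t^{\bar a}$ and taking expectations, the proof reduces to two estimates: (i) $\sup_{\eps}\E[M_{\tilde X_\eps}(A_0)^{\bar a}]<\infty$, which is immediate from Proposition~\ref{pr_UnifBound} since $\overline{A_0}$ is a compact subset of the domain of $\tilde X$ and $\bar a\le 1<2d/\gamma^2$; and (ii) the scale-sensitive bound $\E[M_{\tilde X_\eps}(B(0,2^{-k}))^{\bar a}]\le C\,2^{-k\,\xi_{M_\gamma}(\bar a)}$ for all $k\le K_\eps$, uniformly in $\eps$, with $\xi_{M_\gamma}$ the power-law spectrum \eqref{eq_powerSpec}. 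Granting (i) and (ii),
\[
\E\big[M_{X_\eps}(\sigma_\eps)^{\bar a}\big]\ \lesssim\ \E\big[M_{\tilde X_\eps}(\sigma_\eps)^{\bar a}\big]\ \lesssim\ 1+\sum_{k\ge 1}2^{k(q\gamma^2\bar a-\xi_{M_\gamma}(\bar a))}.
\]

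\textbf{The scaling estimate, choice of $\bar a$, and the hard part.} For (ii) I would use the exact scaling relation \eqref{eq_ScaleInvarianceProp} with $\lambda=2^{-k}$, which is legitimate because $k\le K_\eps$ forces $2^k\eps\le 1$: after the substitution $x=2^{-k}y$ one gets, jointly in law,
\[
M_{\tilde X_\eps}\big(B(0,2^{-k})\big)\ \overset{d}{=}\ 2^{-kd}\,e^{\gamma\Omega_{2^{-k}}-\frac{\gamma^2}{2}k\log 2}\,M_{\tilde X_{2^k\eps}}\big(B(0,1)\big),
\]
with $\Omega_{2^{-k}}$ a centred Gaussian of variance $k\log 2$ independent of the last factor. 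Taking $\bar a$-th moments and computing $\E[e^{\bar a\gamma\Omega_{2^{-k}}}]$, the scalar prefactor contributes exactly $2^{-k(d\bar a+\frac{\gamma^2}{2}\bar a(1-\bar a))}=2^{-k\,\xi_{M_\gamma}(\bar a)}$, while $\sup_{\eps'\in(0,1]}\E[M_{\tilde X_{\eps'}}(B(0,1))^{\bar a}]<\infty$ by Proposition~\ref{pr_UnifBound}; this is (ii) (equivalently, it is the uniform-in-$\eps$ form of Proposition~\ref{pr_PoweLawSpec}). The geometric series above then converges precisely when $q\gamma^2\bar a<\xi_{M_\gamma}(\bar a)=\bar a\big(d+\tfrac{\gamma^2}{2}-\tfrac{\gamma^2}{2}\bar a\big)$, i.e.\ when $\tfrac{\gamma^2}{2}\bar a<d+(\tfrac12-q)\gamma^2=\alpha$; since $q^2<2d/\gamma^2$ forces $q<\sqrt{2d}/|\gamma|$ and hence $\alpha>(\sqrt d-|\gamma|/\sqrt 2)^2\ge 0$, we may choose $\bar a:=\min\{1,\alpha/\gamma^2\}\in(0,1]$, which gives $\sup_{\eps}\E[M_{X_\eps}(\sigma_\eps)^{\bar a}]\le C<\infty$. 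The delicate point is estimate (ii): the excerpt only provides the power-law moment bound for the \emph{limit} measure $M_\gamma$ (Proposition~\ref{pr_PoweLawSpec}) together with a uniform-in-$\eps$ bound that is blind to the radius (Proposition~\ref{pr_UnifBound}), whereas here one genuinely needs a radius-sensitive moment bound uniform in the regularization; this is what forces the passage through Kahane's inequality and the exact scaling relation, and it is why the dyadic decomposition must be cut off exactly at the scale $\eps$ — only then does \eqref{eq_ScaleInvarianceProp} apply on each annulus and the weight $(|x|+\eps)^{-q\gamma^2}$ stay comparable to $2^{kq\gamma^2}$ there.
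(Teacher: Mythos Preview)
Your proof is correct and follows essentially the same approach as the paper's: a dyadic decomposition of $(-1,1)^d$ cut off at the mollification scale, subadditivity of $t\mapsto t^{\bar a}$ for $\bar a\le 1$, and Kahane's inequality to extract the power-law exponent $\xi_{M_\gamma}(\bar a)$ at each scale, leading to the identical convergence condition $\tfrac{\gamma^2}{2}\bar a<\alpha$ (equivalently the paper's $s_{\bar a}>0$). The only differences are organizational: you reduce globally to the exactly scale-invariant field first and then invoke the exact scaling relation \eqref{eq_ScaleInvarianceProp}, whereas the paper applies Kahane locally at each scale after a change of variables, treats the trivial case $q\le 0$ separately, and uses dyadic cubes rather than annuli.
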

\begin{proof}
The proof can be found in Subsection~\ref{subsec_Preliminary1}.
\end{proof}
\begin{lemma}
\label{lm_ProofCarrierRVNegative}
For $\gamma^2 < 2d$ and $q^2 < 2d/ \gamma^2$, there exists $C > 0$ such that 
\begin{equation*}
\sup_{\eps \in (0, 1]} \E \left[\left(\int_{[-1/2,1/2]^d}\frac{e^{\gamma X_{\eps}(x)-\frac{1}{2} \gamma^2 \E[X_{\eps}(x)^2]}}{(|x|+\eps)^{q \gamma^2}} \d x\right)^{-1}\right] \leq C \,.
\end{equation*}
\end{lemma}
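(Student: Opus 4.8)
The plan is to reduce the claim to the uniform (in $\eps$) finiteness of a negative moment of the GMC measure $M_{\gamma}^{\eps}$ on a \emph{fixed} compact subset of $D$, which is exactly the content of Proposition~\ref{pr_UnifBound}. Write $I_{\eps} := \int_{[-1/2,1/2]^d} (|x|+\eps)^{-q\gamma^2}\, M_{\gamma}^{\eps}(\d x)$, where $M_{\gamma}^{\eps}(\d x) = e^{\gamma X_{\eps}(x)-\frac12\gamma^2\E[X_{\eps}(x)^2]}\,\d x$. Since the density defining $M_{\gamma}^{\eps}$ is nonnegative, in order to obtain a uniform upper bound on $\E[I_{\eps}^{-1}]$ it is enough to bound $I_{\eps}$ from below by a constant multiple of $M_{\gamma}^{\eps}(A)$ for a single compact set $A \subset D$ of positive Lebesgue measure, the key point being that on such an $A$ the weight $(|x|+\eps)^{-q\gamma^2}$ is bounded below uniformly in $\eps \in (0,1]$.

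Concretely, I would take $A := [1/4,1/2]\times[-1/2,1/2]^{d-1}$, a compact subset of $D = (-1,1)^d$ with nonempty interior. For $x \in A$ and $\eps \in (0,1]$ one has $1/4 \le |x|+\eps \le \sqrt{d}+1$, so $(|x|+\eps)^{-q\gamma^2}$ stays between the values of $t \mapsto t^{-q\gamma^2}$ at the two endpoints; since this map is monotone on $(0,\infty)$ (increasing when $q\le 0$, decreasing when $q\ge 0$), we get $(|x|+\eps)^{-q\gamma^2} \ge c$ on $A$, where $c := \min\{(1/4)^{-q\gamma^2},\,(\sqrt{d}+1)^{-q\gamma^2}\} > 0$ depends only on $d,\gamma,q$ and not on $\eps$. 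Hence $I_{\eps} \ge c\, M_{\gamma}^{\eps}(A)$ for all $\eps \in (0,1]$.

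It then remains to invoke Proposition~\ref{pr_UnifBound} with moment exponent $-1$ (admissible since $-1 < 2d/\gamma^2$), which gives $\sup_{\eps \in (0,1]} \E[M_{\gamma}^{\eps}(A)^{-1}] < \infty$, and therefore $\sup_{\eps\in(0,1]}\E[I_{\eps}^{-1}] \le c^{-1}\sup_{\eps\in(0,1]}\E[M_{\gamma}^{\eps}(A)^{-1}] < \infty$, which is the assertion. In contrast with Lemma~\ref{lm_ProofCarrierRV}, where the singular weight $(|x|+\eps)^{-q\gamma^2}$ near the origin (for $q>0$) forces one to work with a small power $\bar a$ of the integral, here the weight only ever \emph{helps}, so there is no genuine obstacle: the sole real input is the uniform bound on negative moments of GMC supplied by Proposition~\ref{pr_UnifBound}. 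The only mild point to keep track of is that the constant $c$, and hence the final constant $C$, may depend on $q$, $\gamma$ and $d$ but must be chosen independently of $\eps$, which the estimate above delivers.
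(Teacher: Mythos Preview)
Your proof is correct and follows essentially the same approach as the paper: bound the weight $(|x|+\eps)^{-q\gamma^2}$ from below on a fixed compact subset of $D$, then appeal to Proposition~\ref{pr_UnifBound} with exponent $-1$. The only cosmetic difference is that the paper splits into two cases according to the sign of $q$ (using the full cube $[-1/2,1/2]^d$ when $q\ge 0$ and the ball $B(1/4,1/8)$ when $q<0$), whereas you handle both signs at once by restricting to a single set $A$ bounded away from the origin and taking $c=\min\{(1/4)^{-q\gamma^2},(\sqrt{d}+1)^{-q\gamma^2}\}$; this is arguably cleaner but not materially different.
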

\begin{proof}
The proof can be found in Subsection~\ref{subsec_Preliminary1}.
\end{proof}

\begin{proof}[Proof of Proposition~\ref{pr_RVCarrier1}]
We split the proof into two parts. We start by proving \eqref{eq_RVCarrier11} and then we move to the proof of \eqref{eq_RVCarrier12}. For simplicity, but without loss of generality, we consider the case $E = 1$. 

\vspace*{2mm}
{\it Proof of \eqref{eq_RVCarrier11}.} Let $r \in (0, 1]$ and $x \in D_r$, where we recall that we are assuming that $D = (-1,1)^d$. Fix $\eps$, $\eps' > 0$ so that $\eps' < r \eps$. Then by Girsanov's theorem (cf.\ Lemma~\ref{lm_Girsanov}), we have that
\begin{align}
\label{eq_startLemma1}
& \E\left[\mathbbm{1}_{\left\{\int_{B(x,r)} e^{\gamma X_{r\eps}(u) - \frac{1}{2} \gamma^2 \E[X_{r \eps}(u)^2]} \d u > r^{\alpha-\beta}\right\}} e^{q \gamma X_{\eps'}(x)-\frac{1}{2} q^2 \gamma^2 \E[X_{\eps'}(x)^2]}\right] \nonumber \\
& \qquad \qquad = \P\left(\int_{B(x,r)} e^{\gamma X_{r\eps}(u) +q \gamma^2 \E[X_{r \eps}(u)X_{\eps'}(x)] - \frac{1}{2} \gamma^2 \E[X_{r \eps}(u)^2]} \d u > r^{\alpha-\beta}\right) \nonumber \\
& \qquad\qquad \leq \P\left(e^{ \gamma^2 K |q|} \int_{B(x,r)} \frac{e^{\gamma X_{r\eps}(u) - \frac{1}{2} \gamma^2 \E[X_{r \eps}(u)^2]}}{(|u-x|+r\eps)^{q \gamma^2}} \d u > r^{\alpha-\beta}\right) \,,
\end{align}
where the last inequality is justified by the fact that, thanks to property \ref{pp_P3}, for $x$, $y \in (-1,1)^d$, it exists $K>0$ such that 
\begin{equation*}
-\log\left(|x-y| + r \eps\right) - K \leq \E[X_{r \eps}(x) X_{\eps'}(y)] \leq -\log\left(|x-y| + r \eps\right) + K \,.
\end{equation*}
Now, let $\Omega_r^x := \sup_{u \in B(x,r)} X_r(u)-X_{r}(x)$, then by changing variables, we see that the integral in \eqref{eq_startLemma1} can be upper bounded as follows
\begin{align}
\label{eq_likeScaleInv1}
& \int_{B(x,r)} \frac{e^{\gamma X_{r\eps}(u) - \frac{1}{2} \gamma^2 \E[X_{r \eps}(u)^2]}}{(|u-x|+r\eps)^{q \gamma^2}} \d u \nonumber \\
& \qquad\qquad \leq e^{|\gamma| (\Omega_r^x + X_r(x))}  \int_{B(x,r)} \frac{e^{\gamma (X_{r \eps}(u) - X_{r}(u)) - \frac{1}{2} \gamma^2 \E[X_{r \eps}(u)^2]}}{(|u-x|+r\eps)^{q \gamma^2}} \d u \nonumber \\
& \qquad\qquad =  r^{d-q \gamma^2} e^{|\gamma| (\Omega_r^x + X_r(x))} \int_{B(0,1)} \frac{e^{\gamma (X_{r \eps}(r u + x) - X_{r}(r u + x)) - \frac{1}{2} \gamma^2 \E[X_{r \eps}(r u +x)^2]}}{(|u|+\eps)^{q \gamma^2}} \d u \nonumber \\
& \qquad\qquad \leq r^{d+(1/2-q)\gamma^2} e^{\frac{1}{2}\gamma^2 K + |\gamma| (\Omega_r^x + X_r(x))} \int_{B(0,1)} \frac{e^{\gamma (X_{r \eps}(r u + x) - X_{r}(r u + x)) + \frac{1}{2} \gamma^2 \log \eps }}{(|u|+\eps)^{q \gamma^2}} \d u \nonumber \\
& \qquad\qquad \leq r^{\alpha} e^{\gamma^2 K c + |\gamma| (\Omega_r^x + X_r(x))} \int_{B(0,1)} \frac{e^{\gamma X'_{\eps}(u) - \frac{1}{2} \gamma^2 \E[X'_{\eps}(u)^2]}}{(|u|+\eps)^{q \gamma^2}} \d u \,,
\end{align}
for some log-correlated field $X'$ in $B(0,1)$, and some finite constant $c > 0$ depending only on $K$. Substituting the last expression of \eqref{eq_likeScaleInv1} back into \eqref{eq_startLemma1}, we obtain that
\begin{align}
\label{eq_IntegratingAux}
& \E\left[\mathbbm{1}_{\left\{\int_{B(x,r)} e^{\gamma X_{r\eps}(u) - \frac{1}{2} \gamma^2 \E[X_{r \eps}(u)^2]} \d u > r^{\alpha-\beta}\right\}} e^{q \gamma X_{\eps'}(x)-\frac{1}{2} q^2 \gamma^2 \E[X_{\eps'}(x)^2]}\right] \\
& \quad \leq \P\left(e^{\gamma^2 K (c + |q|) + |\gamma| (\Omega_r^x + X_r(x))} \int_{B(0,1)} \frac{e^{\gamma X'_{\eps}(u) - \frac{1}{2} \gamma^2 \E[X'_{\eps}(u)^2]}}{(|u|+\eps)^{q \gamma^2}} \d u > r^{-\beta}\right) \nonumber \\
& \quad \leq \P\left(e^{ \gamma^2 K (c + |q|) + |\gamma| \Omega_r^x}> r^{-\frac{\beta}{3}}\right) + \P\left(e^{|\gamma| X_r(x)}> r^{-\frac{\beta}{3}}\right) + \P\left(\int_{B(0,1)} \frac{e^{\gamma X'_{\eps}(u) - \frac{1}{2} \gamma^2 \E[X'_{\eps}(u)^2]}}{(|u|+\eps)^{q \gamma^2}} \d u > r^{-\frac{\beta}{3}}\right) \nonumber \,.
\end{align}
Let us notice that all the three terms decay polynomially in $r$, for $r \in (0, 1]$ small enough, uniformly in $x \in D_r$ and $\eps$, $\eps' \in (0, 1]$. Indeed, for the first term this follows from Lemma~\ref{lm_supremumGaussian} and Markov's inequality. For the second term, this follows from the fact that $X_r(x)$ is a centred Gaussian random variable with variance $\E[X_{r}(x)^2] = -\log r + \mathcal{O}(1)$ (cf.\ \cite[Lemma~12.9]{Peres}). Finally, for the third term this follows from Lemma~\ref{lm_ProofCarrierRV} and Markov's inequality again. Therefore, for $r \in (0,1]$ small enough, integrating over $x \in D_r$ the expression in \eqref{eq_IntegratingAux} and putting everything together, we can see that there exist finite constants $C$, $b > 0$ such that for all $\eps$, $\eps' \in (0, 1]$, it holds that
\begin{equation*}
\E\left[\int_{D_r} \mathbbm{1}_{\left\{\int_{B(x,r)} e^{\gamma Y_{r\eps}(u) - \frac{1}{2} \gamma^2 \E[Y_{r \eps}(u)^2]} \d u > r^{\alpha-\beta}\right\}} e^{q \gamma Y_{\eps'}(x)-\frac{1}{2} q^2 \gamma^2 \E[Y_{\eps'}(x)^2]} \d x \right] \leq C r^{b} \,.
\end{equation*}
Now, thanks to Proposition~\ref{pr_consGausChaos}, we know that the sequence of measures $(M_{q\gamma}^{\eps'})_{\eps' \in (0,1]}$ converges to $M_{q \gamma}$ in $L^1(\P)$ in the topology of weak convergence of measures. Therefore, thanks to Portmanteau lemma and Fatou's lemma, we get that
\begin{align*}
& \E\left[\int_{D_r} \mathbbm{1}_{\left\{\int_{B(x,r)} e^{\gamma Y_{r\eps}(u) - \frac{1}{2} \gamma^2 \E[Y_{r \eps}(u)^2]} \d u > r^{\alpha-\beta}\right\}} M_{q \gamma}(\d x)\right] \\
& \qquad\qquad \leq \liminf_{\eps' \searrow 0}\E\left[\int_{D_r} \mathbbm{1}_{\left\{\int_{B(x,r)} e^{\gamma Y_{r\eps}(u) - \frac{1}{2} \gamma^2 \E[Y_{r \eps}(u)^2]} \d u > r^{\alpha-\beta}\right\}} e^{q \gamma Y_{\eps'}(x)-\frac{1}{2} q^2 \gamma^2 \E[Y_{\eps'}(x)^2]} \d x \right] \,.
\end{align*}
Therefore, since $C$ and $b$ are independent from $\eps' \in(0, 1]$, we have that
\begin{equation}
\label{eq_lastPropo1RV}
\E\left[\int_{D_r} \mathbbm{1}_{\left\{\int_{B(x,r)} e^{\gamma Y_{r\eps}(u) - \frac{1}{2} \gamma^2 \E[Y_{r \eps}(u)^2]} \d u > r^{\alpha-\beta}\right\}} M_{q \gamma}(\d x)\right] \leq C r^{b} \,.
\end{equation}
Finally, since $C$ and $b$ are independent also from $\eps \in(0, 1]$, using \eqref{eq_lastPropo1RV} and Fatou's lemma, we see that
\begin{align*}
& \E\left[\int_{D_r} \mathbbm{1}_{\left\{M_{\gamma}(B(x, r)) > r^{\alpha-\beta}\right\}} M_{q \gamma}(\d x)\right] \\
& \qquad\qquad \leq \liminf_{\eps \searrow 0} \E\left[\int_{D_r} \mathbbm{1}_{\left\{\int_{B(x,r)} e^{\gamma Y_{r\eps}(u) - \frac{1}{2} \gamma^2 \E[Y_{r \eps}(u)^2]} \d u > r^{\alpha-\beta}\right\}} M_{q \gamma}(\d x)\right] \\
& \qquad\qquad \leq C r^{b} \,.
\end{align*}

\vspace*{2mm}
{\it Proof of \eqref{eq_RVCarrier12}.} We shall be brief here since the argument is similar to that used in the proof of \eqref{eq_RVCarrier11}. Let $r \in (0, 1]$ and $x \in D_r$, where we recall that we are assuming that $D = (-1,1)^d$. Fix $\eps$, $\eps' \in (0, 1]$ so that $\eps' < r \eps$. Then by Girsanov's theorem (cf.\ Lemma~\ref{lm_Girsanov}) and property \ref{pp_P3}, we have
\begin{align}
\label{eq_startLemma2}
& \E\left[\mathbbm{1}_{\left\{\int_{B(x,r)} e^{\gamma X_{r\eps}(u) - \frac{1}{2} \gamma^2 \E[X_{r \eps}(u)^2]} \d u < r^{\alpha+\beta}\right\}} e^{q \gamma X_{\eps'}(x)-\frac{1}{2} q^2 \gamma^2 \E[X_{\eps'}(x)^2]}\right] \nonumber \\
& \qquad\qquad \leq \P\left(e^{-\gamma^2 K |q|} \int_{B(x,r)} \frac{e^{\gamma X_{r\eps}(u) - \frac{1}{2} \gamma^2 \E[X_{r \eps}(u)^2]}}{(|u-x|+r\eps)^{q \gamma^2}} \d u < r^{\alpha+\beta}\right) \,.
\end{align}
Now, let $\Omega_r^x := \inf_{u \in B(x,r)} X_r(u)-X_{r}(x)$, then by proceeding in the same way as in the proof of \eqref{eq_RVCarrier11}, we can see that the right-hand side of \eqref{eq_startLemma2} can be lower bounded as follows
\begin{align}
\label{eq_likeScaleInv2}
& \int_{B(x,r)} \frac{e^{\gamma X_{r\eps}(u) - \frac{1}{2} \gamma^2 \E[X_{r \eps}(u)^2]}}{(|u-x|+r\eps)^{q \gamma^2}} \d u  \nonumber \\
& \qquad \qquad \geq r^{\alpha} e^{-\gamma^2 K c + |\gamma|(\Omega_r^x + X_r(x))} \int_{B(0,1)} \frac{e^{\gamma X'_{\eps}(u) - \frac{1}{2} \gamma^2 \E[X'_{\eps}(u)^2]}}{(|u|+\eps)^{q \gamma^2}} \d u \,,
\end{align}
for some log-correlated field $X'$ in $B(0,1)$, and some finite constant $c > 0$ depending only on $K$. Substituting the last expression of \eqref{eq_likeScaleInv2} back into \eqref{eq_startLemma2}, we obtain that
\begin{align*}
& \E\left[\mathbbm{1}_{\left\{\int_{B(x,r)} e^{\gamma X_{r\eps}(u) - \frac{1}{2} \gamma^2 \E[X_{r \eps}(u)^2]} \d u < r^{\alpha+\beta}\right\}} e^{q \gamma X_{\eps'}(x)-\frac{1}{2} q^2 \gamma^2 \E[X_{\eps'}(x)^2]}\right] \\
& \quad \leq \P\left(e^{-\gamma^2 K (c+|q|) + |\gamma| \Omega_r^x} < r^{\frac{\beta}{3}}\right) + \P\left(e^{|\gamma| X_r(x)} < r^{\frac{\beta}{3}}\right) + \P\left(\int_{B(0,1)} \frac{e^{\gamma X'_{\eps}(u) - \frac{1}{2} \gamma^2 \E[X'_{\eps}(u)^2]}}{(|u|+\eps)^{q \gamma^2}} \d u < r^{\frac{\beta}{3}}\right) \,.
\end{align*}
Thanks to Lemmas~\ref{lm_supremumGaussian} and \ref{lm_ProofCarrierRVNegative}, the conclusion then follows as in the proof of \eqref{eq_RVCarrier11}.
\end{proof}

\subsection{Proofs of Lemmas~\ref{lm_ProofCarrierRV} and \ref{lm_ProofCarrierRVNegative}}
\label{subsec_Preliminary1}
Let us finish by proving Lemmas~\ref{lm_ProofCarrierRV} and \ref{lm_ProofCarrierRVNegative}. We recall once again that, for simplicity, we are assuming that $D = (-1,1)^d$. 

\begin{proof}[Proof of Lemma~\ref{lm_ProofCarrierRV}]
For $\eps \in (0, 1]$ and $a \in (0, 1]$, we define the quantity
\begin{equation*}
U^{a}_{\eps} := \E \left[\left(\int_{(-1,1)^d}\frac{e^{\gamma X_{\eps}(x)-\frac{1}{2} \gamma^2 \E[X_{\eps}(x)^2]}}{(|x|+\eps)^{q \gamma^2}} \d x\right)^{a}\right] \,,
\end{equation*}
and we need to prove that it exists $\bar{a} \in (0, 1]$ such that $U^{\bar{a}}_{\eps}$ can be uniformly bounded in $\eps \in (0, 1]$. We split the proof into two parts. In the first part we focus on $q \in (-\sqrt{2d}/|\gamma|, 0]$, while in the second part we focus on $q \in (0, \sqrt{2d}/|\gamma|)$. 

\vspace*{2mm}
{\it Case $q \in (-\sqrt{2d}/|\gamma|, 0]$.} Since $|x|+\eps \leq \sqrt{d}+1$ for all $x \in (-1,1)^d$ and $\eps \in (0, 1]$, we can write 
\begin{equation*}
U^{a}_{\eps} \leq (\sqrt{d}+1)^{-a q \gamma^2} \E \left[\left(\int_{(-1,1)^d}e^{\gamma X_{\eps}(x)-\frac{1}{2} \gamma^2 \E[X_{\eps}(x)^2]} \d x\right)^{a}\right]  \leq C  (\sqrt{d}+1)^{-a q \gamma^2}  \,,
\end{equation*}
where the existence of the constant $C >0$, independent of $\eps \in (0, 1]$, follows from Jensen's inequality. 

\vspace*{2mm}
{\it Case $q \in (0, \sqrt{2d}/|\gamma|)$.} If $\eps \in (1/2,1]$, then we have that 
\begin{equation*}
U_{\eps}^{a} \leq 2^{a q \gamma^2} \E \left[\left(\int_{(-1,1)^d}e^{\gamma X_{\eps}(x)-\frac{1}{2} \gamma^2 \E[X_{\eps}(x)^2]} \d x\right)^{a}\right] \leq C 2^{a q \gamma^2} \,.
\end{equation*}   
where the existence of the constant $C >0$, independent of $\eps \in (1/2, 1]$, follows from Jensen's inequality. Let us now assume that $\eps \in (0, 1/2]$. Then it exists a unique $n \in \mathbb{N}$ such that $2^{-n-1} < \eps \leq 2^{-n}$. For all $k \in \{0, 1, \dots, n\}$, we let $Q_k := (-2^{-k}, 2^{-k})^d$. Then by sub-additivity of the function $x \mapsto x^{a}$ for $a \in (0, 1]$, we have that
\begin{align}
\label{eq_boundUeps}
U^{a}_{\eps} & \leq \E \left[\left(\int_{Q_n}\frac{e^{\gamma X_{\eps}(x)-\frac{1}{2} \gamma^2 \E[X_{\eps}(x)^2]}}{(|x|+\eps)^{q \gamma^2}} \d x\right)^{a}\right] + \sum_{k = 0}^{n-1}\E \left[\left(\int_{Q_{k} \setminus Q_{k+1}}\frac{e^{\gamma X_{\eps}(x)-\frac{1}{2} \gamma^2 \E[X_{\eps}(x)^2]}}{(|x|+\eps)^{q \gamma^2}} \d x\right)^{a}\right] \nonumber \\
& \leq 2^{n a(q \gamma^2 - d)}\E \left[\left(\int_{(-1,1)^d}\frac{e^{\gamma X_{\eps}(2^{-n}x)-\frac{1}{2} \gamma^2 \E[X_{\eps}(2^{-n}x)^2]}}{(|x|+2^{n}\eps)^{q \gamma^2}} \d x\right)^{a}\right] \nonumber \\
& \qquad\qquad + \sum_{k =0}^{n-1} 2^{(k+1) a q \gamma^2}\E \left[\left(\int_{Q_{k} \setminus Q_{k+1}} e^{\gamma X_{\eps}(x)-\frac{1}{2} \gamma^2 \E[X_{\eps}(x)^2]} \d x\right)^{a}\right] \nonumber \\
& \leq 2^{n a(q \gamma^2 - d)}\E \left[\left(\int_{(-1,1)^d} e^{\gamma X_{\eps}(2^{-n} x)-\frac{1}{2} \gamma^2 \E[X_{\eps}(2^{-n} x)^2]} \d x\right)^{a}\right] \nonumber \\
& \qquad\qquad + 2^{a d} \sum_{k = 0}^{n-1} 2^{(k+1) a (q \gamma^2-d)}\E \left[\left(\int_{(-1,1)^d} e^{\gamma X_{\eps}(2^{-k}x)-\frac{1}{2} \gamma^2 \E[X_{\eps}(2^{-k} x)^2]} \d x\right)^{a}\right] \,.
\end{align} 
Now let $k \in \{0, 1, \dots, n\}$, then we want to estimate the following quantity
\begin{equation*}
\E \left[\left(\int_{(-1,1)^d} e^{\gamma X_{\eps}(2^{-k} x)-\frac{1}{2} \gamma^2 \E[X_{\eps}(2^{-k} x)^2]} \d x\right)^{a}\right] \,.
\end{equation*}
Thanks to property \ref{pp_P3}, for $x$, $y \in (-1,1)^d$, it exists $K>0$ such that
\begin{equation*}
\E[X_{\eps}(2^{-k} x) X_{\eps}(2^{-k} y)] \geq  -\log\left(|x -y|+2^{k}\eps\right) + \log\left(2^{k}\right) - K \,,
\end{equation*}
and therefore, if we let $Z$ and $\Omega_{2^{-k}}$ be two real normal random variables independent from everything else with variance $2K$ and $\log 2^k$, respectively, then
\begin{equation*}
\E[(X_{\eps}(2^{-k} x)+Z ) (X_{\eps}(2^{-k} y)+Z)] \geq \E[\left(X_{2^k \eps}(x)+\Omega_{2^{-k}}\right)\left(X_{2^k \eps}(y)+\Omega_{2^{-k}}\right)] \,.
\end{equation*}
Since the function $x \mapsto x^{a}$ for $a \in (0, 1]$ is concave, thanks to Kahane's convexity inequality (cf.\ Lemma~\ref{lm_Kahane}), we have that
\begin{align*}
& \E \left[\left(\int_{(-1,1)^d} e^{\gamma X_{\eps}(2^{-k} x)-\frac{1}{2} \gamma^2 \E[X_{\eps}(2^{-k} x)^2]} \d x\right)^{a}\right] \\
& \qquad\qquad  = e^{\gamma^2 a K (1-a)} \E \left[\left(\int_{(-1,1)^d} e^{\gamma \left( X_{\eps}(2^{-k} x)+Z \right)-\frac{1}{2} \gamma^2 \E\left[\left(X_{\eps}(2^{-k} x) + Z \right)^2\right]} \d x\right)^{a}\right] \\
& \qquad\qquad  \leq e^{\gamma^2 a K (1-a)} \E \left[\left(\int_{(-1,1)^d} e^{\gamma \left(X_{2^{k} \eps}(x) + \Omega_{2^{-k}}\right) -\frac{1}{2} \gamma^2 \E\left[\left(X_{2^{k} \eps}(x) +\Omega_{2^{-k}}\right)^2\right]} \d x\right)^{a}\right] \\
& \qquad\qquad  = e^{\gamma^2 a K (1-a)} \E \left[e^{\gamma a \Omega_{2^{-k}} - \frac{1}{2} a \gamma^2 \E\left[\Omega_{2^{-k}}^2\right]}\right] \E \left[\left(\int_{(-1,1)^d} e^{\gamma X_{2^{k} \eps}(x) - \frac{1}{2} \gamma^2 \E\left[X_{2^{k} \eps}(x)^2\right]}\d x\right)^{a}\right] \\
& \qquad\qquad  \leq C e^{\gamma^2 a K (1-a)} 2^{\frac{1}{2} \gamma^2 k a (a-1)} \,,
\end{align*}
where the existence of the constant $C >0$, independent of $\eps \in (0, 1/2]$, follows from Jensen's inequality. Hence, going back to \eqref{eq_boundUeps}, letting 
\begin{equation}
s_{a} :=\left(d + \frac{1}{2}\gamma^2 - q\gamma^2\right)a - \frac{1}{2}\gamma^2 a^2 \,,
\end{equation}
we obtain that
\begin{equation*}
U^{a}_{\eps} \lesssim \sum_{k =0}^n 2^{-k s_{a}} \,,
\end{equation*}
where the implicit constant does not depend on $n$. It can be easily verified that for each $\gamma^2 < 2d$ and $q \in (0, \sqrt{2d}/|\gamma|)$ it exists $\bar{a} \in (0, 1]$ such that $s_{\bar{a}} > 0$. In particular, this implies that $U^{\bar{a}}_{\eps}$ is uniformly bounded in $\eps \in [0, 1/2)$, which proves the result. 
\end{proof}

\begin{proof}[Proof of Lemma~\ref{lm_ProofCarrierRVNegative}]
For every $\eps \in (0, 1]$, we define 
\begin{equation*}
U^{-1}_{\eps} := \E \left[\left(\int_{[-1/2,1/2]^d}\frac{e^{\gamma X_{\eps}(x)-\frac{1}{2} \gamma^2 \E[X_{\eps}(x)^2]}}{(|x|+\eps)^{q \gamma^2}} \d x\right)^{-1}\right] \,.
\end{equation*}
We need to prove that $U^{-1}_{\eps}$ can be uniformly bounded in $\eps \in (0, 1]$. We split the proof into two cases. 

\vspace*{2mm}
{\it Case $q \in [0, \sqrt{2d}/|\gamma|)$.} In this case, since $|x|+\eps \leq \sqrt{d/2}+1$ for all $x \in [-1/2,1/2]^d$ and $\eps \in (0, 1]$, we can write 
\begin{equation*}
\sup_{\eps \in (0,1]} U_{\eps}^{-1} \leq (\sqrt{d/2}+1)^{q \gamma^2} \sup_{\eps \in (0,1]}\E \left[\left(\int_{[-1/2,1/2]^d}e^{\gamma X_{\eps}(x)-\frac{1}{2} \gamma^2 \E[X_{\eps}(x)^2]} \d x\right)^{-1}\right] \leq C (\sqrt{d/2}+1)^{q \gamma^2}\,,
\end{equation*}
where the existence of the constant $C >0$, independent of $\eps \in (0, 1]$, follows from Proposition~\ref{pr_UnifBound}.

\vspace*{2mm}
{\it Case $q \in (-\sqrt{2d}/|\gamma|,0)$.} In this case, we can simply proceed as follows
\begin{equation*}
U_{\eps}^{-1} \leq \E \left[\left(\int_{B(1/4, 1/8)}\frac{e^{\gamma X_{\eps}(x)-\frac{1}{2} \gamma^2 \E[X_{\eps}(x)^2]}}{(|x|+\eps)^{q \gamma^2}} \d x\right)^{-1}\right] \,,
\end{equation*}
and since $|x|+\eps \geq 1/8$ for all $x \in B(1/4, 1/8)$ and $\eps \in (0, 1]$, we can write 
\begin{align*}
\sup_{\eps \in (0, 1]} U_{\eps}^{-1} \leq (1/8)^{-q \gamma^2} \sup_{\eps \in (0, 1]} \E \left[\left(\int_{B(1/4, 1/8)} e^{\gamma X_{\eps}(x)-\frac{1}{2} \gamma^2 \E[X_{\eps}(x)^2]} \d x\right)^{-1}\right] \leq C (1/8)^{-q \gamma^2} \,,
\end{align*}
and also in this case the existence of the constant $C >0 $, independent of $\eps \in (0, 1]$, follows from Proposition~\ref{pr_UnifBound}.
\end{proof}

\section{Finiteness of positive moments of $\mu_{\gamma}$}
\label{sec:finiteMoments}
In this section, we adopt the same notation used in Section~\ref{sec:LBM}. The main goal is to prove that the moments of order $q \in (0, 4/\gamma^2)$ of the measure $\mu_{\gamma}$, defined in \eqref{eq_muGamma}, are uniformly bounded. The techniques used in the proof of this result are inspired from \cite[Section~3]{BerPow}. Moreover, let us mention that this result has been proved in the planar case in \cite[Subsection~3.2]{Ber_LBM}, but the proof does not trivially generalize to all dimensions. Before stating and proving the result, it may be worth recalling that the measure $\mu_{\gamma}$ can be interpreted as the weak limit of the sequence of approximated measures $(\mu_{\gamma}^{\eps})_{\eps \in (0, 1]}$ defined in \eqref{eq_aproxMeasureMuGamma}.
\begin{proposition}
Let $\gamma^2 < 4$ and $q \in (0, 4/\gamma^2)$. Then it holds that 
\begin{equation*}
\sup_{\eps \in (0, 1]}\E\left[\left(\int_0^{T} e^{\gamma X_{\eps}(B_s) - \frac{1}{2} \E[X_{\eps}(B_s)^2]}] \d s\right)^{q} \right]< \infty \,,
\end{equation*}
where we recall that $T$ denotes the first exit time of $B$ from the bounded domain $D$.
\end{proposition}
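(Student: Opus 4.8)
The plan is to control the positive moments of $\mu_\gamma^\eps([0,T])$ by decomposing the domain $D$ into dyadic shells around a generic point and exploiting a scaling/translation argument together with Kahane's convexity inequality, exactly in the spirit of the proof of Lemma~\ref{lm_ProofCarrierRV} and of \cite[Section~3]{BerPow}. First I would reduce to an exactly scale-invariant log-correlated field: by Kahane's convexity inequality (cf.\ Lemma~\ref{lm_Kahane}) and property~\ref{pp_P3}, up to an independent Gaussian shift with bounded variance the convolution approximation $X_\eps$ is dominated (in the sense needed for the convex/concave comparison, here with the \emph{convex} function $x\mapsto x^q$ since $q$ may exceed $1$) by the approximation $(X_\eps(x))_{x\in\R^d}$ of the exactly scale invariant field used in Proposition~\ref{pr_PropertiesLBM}, satisfying the scaling relation \eqref{eq_ScaleInvarianceProp}. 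Since $T$ is the exit time of $B$ from the bounded domain $D$ and $D$ is contained in a large ball, it suffices to bound $\E[\mu_\gamma^\eps([0,\tau_R])^q]$ where $\tau_R$ is the exit time of $B$ from $B(0,R)$ for a fixed $R$, and this is finite uniformly in $\eps$ as soon as it is finite for $R=\sqrt r$ small, by a straightforward strong Markov chaining over a bounded number of scales.

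The heart of the argument is then a renewal-type recursion on scales. Writing $\tau_{\sqrt r}$ for the exit time of $B$ from $B(0,\sqrt r)$ and using the scaling relation \eqref{eq_ScaleInvarianceProp} together with Brownian scaling (time by $r$, space by $\sqrt r$), one obtains, as in the proof of Proposition~\ref{pr_PropertiesLBM},
\begin{equation*}
\mu^{\eps}_{\gamma}([0,\tau_{\sqrt r}]) \overset{d}{=} r\, e^{\gamma \Omega_{\sqrt r} - \frac12\gamma^2\E[\Omega_{\sqrt r}^2]} \int_0^{\tilde\tau} e^{\gamma X_{\eps/\sqrt r}(\tilde B_s) - \frac12\gamma^2\E[X_{\eps/\sqrt r}(\tilde B_s)^2]}\,\d s\,,
\end{equation*}
with $\Omega_{\sqrt r}$ an independent centred Gaussian of variance $-\log\sqrt r$ and $\tilde B,\tilde\tau$ an independent Brownian motion and its exit time from $B(0,1)$. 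Raising to the power $q$ and taking expectations gives the exact identity $\E[\mu^{\eps}_{\gamma}([0,\tau_{\sqrt r}])^q] = r^{\xi_{\mu_\gamma}(q)}\,\E[\mu^{\eps/\sqrt r}_\gamma([0,\tilde\tau])^q]$. To close this into a bound one iterates: decompose the occupation of $B(0,1)$ before $\tilde\tau$ into the excursions through a finite collection of disjoint balls of radius $\lambda$ (for a fixed small $\lambda$) plus the piece spent in the central ball $B(0,\lambda)$; each ball contributes an independent copy of $\mu^{\cdot}_\gamma$ over a smaller scale via the same scaling relation, and the number of ball visits before exiting has exponential tails. Using subadditivity of $x\mapsto x^q$ when $q\le 1$ (respectively a Minkowski/$L^q$-triangle inequality argument when $q>1$, splitting into a geometric number of independent terms), one arrives at an inequality of the form $\E[\mu^{\eps}_\gamma([0,\tilde\tau])^q] \le A + B\,\E[\mu^{\eps'}_\gamma([0,\tilde\tau])^q]$ with $B<1$ precisely when $q<4/\gamma^2$ — this is where the constant $4/\gamma^2$, i.e.\ the dimension-two occupation measure threshold $\xi_{\mu_\gamma}'$ changing sign, enters — so the recursion can be unrolled to a finite geometric sum, yielding the uniform bound.

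The main obstacle I expect is handling $q>1$ cleanly: subadditivity of $x\mapsto x^q$ is no longer available, so the decomposition of the occupation measure into contributions of the small balls must be combined with a triangle inequality in $L^q$, and one must control the correlations between the field restricted to distinct balls. The standard device is again Kahane's inequality: replace the genuine field on the union of the small balls by a field that is exactly independent across balls up to a common Gaussian shift (using that $\E[X(x)X(y)]$ is bounded above for $x,y$ in distinct balls separated by a fixed distance, by \eqref{eq_Covariance} and property~\ref{pp_P3}), at the cost of a multiplicative constant depending only on $\gamma$ and $\lambda$. A secondary technical point is that $\tilde\tau$ is unbounded; but $B$ exits $B(0,1)$ in finite expected time and the time spent near the boundary contributes a bounded GMC mass, so after at most a geometric number of returns the residual is negligible. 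Once the uniform bound on $\E[\mu^\eps_\gamma([0,\tau_{\sqrt r}])^q]$ is established for small $r$, the bound for the full $[0,T]$ follows by covering $D$ with finitely many balls $B(0,\sqrt r)$ and applying the strong Markov property, completing the proof.
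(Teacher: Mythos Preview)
Your approach differs substantially from the paper's and, as written, has a genuine gap in the crucial case $q>1$.

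The paper does \emph{not} set up a renewal recursion on the $q$-th moment. Instead it uses the Girsanov trick to pull out one factor: writing $a=q-1\in(0,1)$ (for $q\in(1,2\wedge 4/\gamma^2)$, say), one has
\[
\E\bigl[\mu_\gamma^\eps([0,T])^q\bigr]
=\E_B\!\left[\int_0^T \E_X\!\left[e^{\gamma X_\eps(B_s)-\frac12\gamma^2\E[X_\eps(B_s)^2]}\Bigl(\int_0^T e^{\gamma X_\eps(B_r)-\dots}\,\d r\Bigr)^{\!a}\right]\!\d s\right],
\]
and after Girsanov the inner quantity becomes the $a$-th moment of an integral with an explicit singular weight $(|B_r-B_s|+\eps)^{-\gamma^2}$. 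Because $a<1$, one can now use \emph{subadditivity} of $x\mapsto x^a$ on a dyadic spatial decomposition around $B_s$, together with scale invariance and Jensen, and close the estimate using the dimension-two bound on Brownian occupation. For larger $q$ one iterates Girsanov $\lfloor q\rfloor$ times. This is exactly the mechanism that produces the threshold $4/\gamma^2$.

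Your recursion, by contrast, tries to bound $\E[\mu_\gamma^\eps([0,\tilde\tau])^q]$ directly. The difficulty is that for $q>1$ neither subadditivity nor the Minkowski/$L^q$ triangle inequality is sharp enough. If one covers $B(0,1)$ by $N\sim\lambda^{-2}$ balls of radius $\lambda$ (this is the right count, since the occupation measure has dimension $2$), decouples via Kahane, and applies Minkowski, the per-ball contribution picks up the factor $\lambda^{2}\,\lVert e^{\gamma\Omega_\lambda-\frac12\gamma^2\E[\Omega_\lambda^2]}\rVert_q=\lambda^{2-\gamma^2(q-1)/2}$, giving a total multiplier $N\cdot\lambda^{2-\gamma^2(q-1)/2}=\lambda^{-\gamma^2(q-1)/2}>1$ for any $q>1$ and small $\lambda$. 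So the claimed constant $B<1$ does \emph{not} follow from a triangle-inequality argument; iterating actually blows up like $\eps^{-\gamma^2(q-1)/2}$. One could try to rescue this with a Rosenthal-type inequality for (conditionally) independent non-negative summands, which does give the correct exponent, but then one must carefully handle (i) the multiple excursions of $B$ through the same ball, all of which see the \emph{same} realisation of the field there and are therefore not independent, and (ii) the fact that the number and identity of the visited balls are themselves random and $B$-measurable. These are real obstacles that your sketch does not address.

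In short: the ``$A+B\cdot(\,\cdot\,)$ with $B<1$ precisely when $q<4/\gamma^2$'' step is not justified by the tools you invoke. The paper avoids the whole issue by spending one Girsanov step to drop the exponent below $1$, after which subadditivity is legitimately available and the dyadic decomposition closes.
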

\begin{proof} 
As in the proof of Proposition~\ref{pr_PropertiesLBM}, thanks to Kahane's convexity inequality (cf.\ Lemma~\ref{lm_Kahane}), we can assume $X$ to be the $d$-dimensional exactly scale invariant field. Moreover, without loss of generality, we can consider $q>1$.  In order to keep the notation not too cumbersome, we will give the proof only for $q \in (1, 2 \wedge 4/\gamma^2)$. Notice that this is not restrictive if $\gamma^2 \in [2, 4)$. The case in which $\gamma^2 < 2$ and $q \in [2, 4/\gamma^2)$ is discussed at the end.

For simplicity, we consider the case $D = (0,1)^d$. Let us fix $\eps \in (0,1]$, then letting $a = q-1 \in (0,1)$, using Fubini's theorem, Girsanov's theorem (cf.\ Lemma~\ref{lm_Girsanov}), and the analogue of property~\ref{pp_P3} for the exactly scale invariant field $X_{\eps}$, we obtain that
\begin{align}
\label{eq_FinMom0}
& \E\left[\left(\int_0^{T}e^{\gamma X_{\eps}(B_s)-\frac{1}{2}\gamma^2\E[X_{\eps}(B_s)^2]}  \d s \right)^{q}\right] \nonumber \\
& \qquad\qquad = \E_B\left[\int_0^{T} \E_X\left[e^{\gamma X_{\eps}(B_s)-\frac{1}{2}\gamma^2\E[X_{\eps}(B_s)^2]} \left(\int_0^{T}e^{\gamma X_{\eps}(B_r)-\frac{1}{2}\gamma^2\E[X_{\eps}(B_r)^2]}  \d r\right)^{a}\right] \d s \right]  \nonumber \\
& \qquad\qquad = \E_B\left[\int_0^{T} \E_X\left[\left(\int_0^{T}e^{\gamma X_{\eps}(B_r) + \gamma^2 \E_X[X_{\eps}(B_s)X_{\eps}(B_r)] -\frac{1}{2}\gamma^2\E[X_{\eps}(B_r)^2]}  \d r\right)^{a}\right] \d s \right] \nonumber \\
& \qquad\qquad \lesssim \E_B \left[\int_0^{T} \E_X\left[\left(\int_0^{T} \frac{e^{\gamma X_{\eps}(B_r) - \frac{1}{2}\gamma^2\E[X_{\eps}(B_r)^2]} }{(|B_r-B_s| + \eps)^{\gamma^2}} \d r\right)^{a}\right]\d s\right] \,.
\end{align}
If $\eps \in (1/2,1]$, then thanks to the concavity of the function $x \mapsto x^{a}$ for $a \in (0, 1)$, using Jensen's inequality, we have that 
\begin{equation*}
\E_X\left[\left(\int_0^{T} \frac{e^{\gamma X_{\eps}(B_r) - \frac{1}{2}\gamma^2\E[X_{\eps}(B_r)^2]} }{(|B_r-B_s| + \eps)^{\gamma^2}} \d r\right)^{a}\right] \leq 2^{a \gamma^2}\E_X\left[\int_0^{T} e^{\gamma X_{\eps}(B_r) - \frac{1}{2}\gamma^2\E[X_{\eps}(B_r)^2]}\d r\right]^{a} = 2^{a\gamma^2} T^{a} \,.
\end{equation*}
Therefore, substituting this estimate into \eqref{eq_FinMom0}, we get that 
\begin{equation*}
\E\left[\left(\int_0^{T}e^{\gamma X_{\eps}(B_s)-\frac{1}{2}\gamma^2\E[X_{\eps}(B_s)^2]}  \d s \right)^{q}\right] \leq 2^{a\gamma^2} \E_B[T^{1+a}] < \infty \,,
\end{equation*}
where the last inequality follows from the fact that $T$ has exponentially decaying tail. Let us now assume that $\eps \in (0, 1/2]$. Then it exists a unique $n \in \mathbb{N}$ such that $2^{-n-1} < \eps \leq 2^{-n}$. For $x \in \R^d$ and $k \in \{0, 1, \dots, n\}$, we let $Q_k(x)$ be the open box of side length $2^{-k+1}$ centred at $x$, i.e.\ $Q_k(x) = (-2^{-k}, 2^{-k}) + x$. Then by sub-additivity of the function $x \mapsto x^{a}$ for $a \in (0, 1)$, we have that
\begin{align}
\label{eq_FinMom10}
&\E_X\left[\left(\int_0^{T} \frac{e^{\gamma X_{\eps}(B_r) - \frac{1}{2}\gamma^2\E[X_{\eps}(B_r)^2]} }{(|B_r-B_s| + \eps)^{\gamma^2}} \d r\right)^{a}\right] \nonumber \\
& \qquad\qquad\leq \E_X\left[\left(\int_0^{T} \frac{e^{\gamma X_{\eps}(B_r) - \frac{1}{2}\gamma^2\E[X_{\eps}(B_r)^2]} }{(|B_r-B_s| + \eps)^{\gamma^2}} \mathbbm{1}_{\{B_r \in Q_n(B_s)\}} \d r\right)^{a}\right] \nonumber \\
& \qquad\qquad\qquad\qquad + \sum_{k = 1}^{n}  \E_X\left[\left(\int_0^{T} \frac{e^{\gamma X_{\eps}(B_r) - \frac{1}{2}\gamma^2\E[X_{\eps}(B_r)^2]} }{(|B_r-B_s| + \eps)^{\gamma^2}} \mathbbm{1}_{\{B_r \in Q_{k-1}(B_s) \setminus Q_{k}(B_s)\}} \d r\right)^{a}\right] \nonumber \\
& \qquad\qquad \leq 2^{n a \gamma^2 } \E_X\left[\left(\int_0^{T} \frac{e^{\gamma X_{\eps}(B_r) - \frac{1}{2}\gamma^2\E[X_{\eps}(B_r^2]} }{(2^n|B_r-B_s| + 2^n\eps)^{\gamma^2}} \mathbbm{1}_{\{B_r \in Q_n(B_s)\}} \d r\right)^{a}\right] \nonumber \\
& \qquad\qquad\qquad\qquad + \sum_{k = 1}^{n} 2^{k a \gamma^2 }  \E_X\left[\left(\int_0^{T} e^{\gamma X_{\eps}(B_r) - \frac{1}{2}\gamma^2\E[X_{\eps}(B_r)^2]} \mathbbm{1}_{\{B_r \in Q_{k-1}(B_s)\}} \d r\right)^{a}\right] \nonumber \\
& \qquad\qquad \leq 2^{n a \gamma^2 } \E_X\left[\left(\int_0^{T} e^{\gamma X_{\eps}(B_r) - \frac{1}{2}\gamma^2\E[X_{\eps}(B_r)^2]} \mathbbm{1}_{\{B_r \in Q_{n}(B_s)\}} \d r\right)^{a}\right] \nonumber \\
& \qquad\qquad\qquad\qquad + \sum_{k = 1}^{n} 2^{k a \gamma^2 }  \E_X\left[\left(\int_0^{T} e^{\gamma X_{\eps}(B_r) - \frac{1}{2}\gamma^2\E[X_{\eps}(B_r)^2]} \mathbbm{1}_{\{B_r \in Q_{k-1}(B_s)\}} \d r\right)^{a}\right] \,.
\end{align}
Now let $k \in \{0, 1, \dots, n\}$, then we want to estimate the following quantity
\begin{equation*}
\E_X\left[\left(\int_0^{T} e^{\gamma X_{\eps}(B_r) - \frac{1}{2}\gamma^2\E[X_{\eps}(B_r)^2]} \mathbbm{1}_{\{B_r \in Q_{k}(B_s)\}} \d r\right)^{a}\right] \,.
\end{equation*}
Using the translation and scaling invariance of the field $X_{\eps}$, Jensen's inequality, and Fubini's theorem, we can proceed as follows
\begin{align}
\label{eq_FinMom2}
& \E_X\left[\left(\int_0^{T} e^{\gamma X_{\eps}(B_r) - \frac{1}{2}\gamma^2\E[X_{\eps}(B_r)^2]} \mathbbm{1}_{\{B_r \in Q_{k}(B_s)\}} \d r\right)^{a}\right]  \nonumber \\
& \qquad = \E_X\left[\left(\int_0^{T} e^{\gamma X_{\eps}(2^{-k}(2^k(B_r-B_s))+B_s) - \frac{1}{2}\gamma^2\E[X_{\eps}(2^{-k}(2^k(B_r-B_s))+B_s)^2]} \mathbbm{1}_{\{2^k(B_r-B_s) \in Q_{0}(0)\}}  \d r\right)^{a}\right]  \nonumber \\
& \qquad = \E_X\left[e^{\gamma a\Omega_{2^{-k}} - \frac{1}{2} \gamma^2 a \E[\Omega_{2^{-k}}^2]}\left(\int_0^{T} e^{\gamma X_{2^k\eps}(2^k(B_r-B_s)) - \frac{1}{2}\gamma^2\E[X_{2^k\eps}(2^k(B_r-B_s))^2]} \mathbbm{1}_{\{2^k(B_r-B_s) \in Q_0(0)\}} \d r\right)^{a}\right]  \nonumber \\
& \qquad \leq 2^{k(\frac{1}{2} \gamma^2 a^2-\frac{1}{2} \gamma^2 a)} \E_X\left[\int_0^{T} e^{\gamma X_{2^k\eps}(2^k(B_r-B_s)) - \frac{1}{2}\gamma^2\E[X_{2^k \eps}(2^k(B_r-B_s))^2]}  \mathbbm{1}_{\{2^k(B_r-B_s) \in Q_0(0)\}}  \d r\right]^{a}  \nonumber \\
& \qquad = 2^{k(\frac{1}{2} \gamma^2 a^2-\frac{1}{2} \gamma^2 a)} \left(\int_0^T \mathbbm{1}_{\{2^k(B_r-B_s) \in Q_0(0)\}} \d r\right)^a\,,
\end{align}
where we recall that $\Omega_{2^{-k}}$ denotes an independent centred Gaussian random variable with variance $\log(2^k)$. Therefore, plugging estimate \eqref{eq_FinMom2} into \eqref{eq_FinMom10}, we obtain that
\begin{equation*}
\E_X\left[\left(\int_0^{T} \frac{e^{\gamma X_{\eps}(B_r) - \frac{1}{2}\gamma^2\E[X_{\eps}(B_r)^2]} }{(|B_r-B_s| + \eps)^{\gamma^2}} \d r\right)^{a}\right] \lesssim \sum_{k = 0}^n 2^{k(\frac{1}{2} \gamma^2 a^2+\frac{1}{2} \gamma^2 a)} \left(\int_0^T \mathbbm{1}_{\{2^k(B_r-B_s) \in Q_0(0)\}} \d r\right)^a\,.
\end{equation*}
Hence, integrating the previous expression over $s \in [0, T]$ and then taking expectation with respect to $B$, recalling inequality \eqref{eq_FinMom0}, we have that 
\begin{align}
\label{eq_FinMomFin1}
& \E\left[\left(\int_0^{T}e^{\gamma X_{\eps}(B_s)-\frac{1}{2}\gamma^2\E[X_{\eps}(B_s)^2]}  \d s \right)^{q}\right] \nonumber \\
& \qquad\qquad \lesssim \sum_{k = 0}^n 2^{k(\frac{1}{2} \gamma^2 a^2+\frac{1}{2} \gamma^2 a)} \E_B\left[\int_0^T \left(\int_0^T \mathbbm{1}_{\{2^k(B_r-B_s) \in Q_0(0)\}} \d r\right)^a \d s \right] \,.
\end{align}
Let us now estimate the expectation with respect to $B$ on the right-hand side of the above expression. For all $k\in\{0, 1, \dots, n\}$, applying Jensen's inequality two times, we obtain that 
\begin{align*}
\E_B\left[\int_0^T \left(\int_0^T \mathbbm{1}_{\{2^k(B_r-B_s) \in Q_0(0)\}} \d r\right)^a \d s \right] & \leq \E_B\left[T^{\frac{{1-a}}{a}}  \int_0^T\int_0^T \mathbbm{1}_{\{2^k(B_r-B_s) \in Q_0(0)\}} \d r\d s\right]^{a} \,.
\end{align*}
In particular, letting $\alpha \in (0, 2)$, using Markov's inequality, Cauchy--Schwarz's inequality, and proceeding similarly to the proof of Lemma~\ref{lm_ConstructionLBM}, we have that
\begin{align*}
& \E_B\left[T^{\frac{{1-a}}{a}}  \int_0^T\int_0^T \mathbbm{1}_{\{2^k(B_r-B_s) \in Q_0(0)\}} \d r\d s\right] \\
& \qquad\qquad = \sum_{n \in \mathbb{N}} \E_B\left[ \mathbbm{1}_{\{n-1 \leq T < n\}} T^{\frac{{1-a}}{a}}  \int_0^T\int_0^T \mathbbm{1}_{\{2^k(B_r-B_s) \in Q_0(0)\}} \d r\d s \right]\\
& \qquad\qquad \leq \sum_{n \in \mathbb{N}} n^{\frac{{1-a}}{a}}  \E_B\left[\mathbbm{1}_{\{n-1 \leq T < n\}} \int_0^n\int_0^n \mathbbm{1}_{\{2^k(B_r-B_s) \in Q_0(0)\}} \d r\d s \right] \\
& \qquad\qquad \leq \sum_{n \in \mathbb{N}} n^{\frac{{1-a}}{a}}  \P_B(T \geq n-1)^{\frac{1}{2}} \int_0^n\int_0^n \P_B(|B_r-B_s| < 2^{-k+1})^{\frac{1}{2}}\d r\d s \\
& \qquad\qquad \leq 2^{-\alpha (k-1)} \E_B[|B_1|^{-2\alpha}]^{\frac{1}{2}} \sum_{n \in \mathbb{N}} 2 n^{\frac{{1-a}}{a}+1}  \P_B(T \geq n-1)^{\frac{1}{2}} \int_0^n r^{-\frac{\alpha}{2}} \d r \,,
\end{align*}
and the latter series is obviously finite since we chose $\alpha \in (0,2)$ and $T$ has exponentially decaying tail. Therefore, going back to \eqref{eq_FinMomFin1}, we have that 
\begin{equation*}
\E\left[\left(\int_0^{T}e^{\gamma X_{\eps}(B_s)-\frac{1}{2}\gamma^2\E[X_{\eps}(B_s)^2]}  \d s \right)^{q}\right] \lesssim \sum_{k = 0}^n 2^{ka(\frac{1}{2} \gamma^2 a+\frac{1}{2} \gamma^2-\alpha)} \,,
\end{equation*}
where the implicit constant does not depend on $n$. To conclude it is sufficient to notice that for $a < 4/\gamma^2 -1$, there exists $\alpha \in (0,2)$ such that $\gamma^2 a/2 + \gamma^2/2-\alpha < 0$. 

As a final remark, we emphasize that, if $\gamma^2 < 2$, we only proved the existence
of moments of order $q \in (0,2)$. However, the strategy of the proof could be adapted to
treat the general case $q \in (0, 4/\gamma^2)$. In this case, one has to choose $n \in \mathbb{N}$ such that $n\leq q <n+1$. Then applying Girsanov's theorem $n$ times in \eqref{eq_FinMom0}, one obtain an expression similar to the last line of \eqref{eq_FinMom0} except that we get an integral with $n$ singularities. Then, we can reproduce the argument up to modifications that are obvious but notationally heavy.
\end{proof}

\section{Gaussian toolbox}
\label{sec:GaussianTool}
We collect here some well-known results on Gaussian fields. In all the subsequent lemmas, we assume $D$ to be a bounded domain of $\R^d$, $d \geq 1$. 
\begin{lemma}[{\cite[Theorem~2.1]{LN_GMC}}]
\label{lm_Girsanov}
Consider an almost surely continuous centred Gaussian field $(X(x))_{x \in D}$ and a Gaussian random variable $Z$ which belongs to the $L^2$ closure of the vector space spanned by $(X(x))_{x \in D}$. Let $F: C(D) \to \R$ be a bounded continuos functional. Then the following equality holds  
\begin{equation*}
\E\left[e^{Z-\frac{\E[Z^2]}{2}} F\left(X(\cdot)\right)\right] = \E\left[F\left(X(\cdot)+\E[X(\cdot)Z]\right)\right] \,.
\end{equation*}
\end{lemma}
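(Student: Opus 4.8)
The plan is to prove the identity first for cylindrical functionals and then remove this restriction by a density argument at the level of measures on path space. Write $\sigma^2 := \E[Z^2]$ and $h(x) := \E[X(x)Z]$ for $x \in D$. Since $Z$ lies in the $L^2$-closure of $\mathrm{span}\{X(x) : x \in D\}$ and the covariance of an almost surely continuous Gaussian field is continuous, $h$ is a uniform limit of finite linear combinations of $x \mapsto \E[X(x)X(x_i)]$ and hence $h \in C(D)$; in particular $X(\cdot) + h$ is again a $C(D)$-valued Gaussian field, so the right-hand side makes sense. If $\sigma^2 = 0$ then $Z = 0$ almost surely and the claim is trivial, so from now on assume $\sigma^2 > 0$.

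\textbf{Step 1 (finite-dimensional case).} First I would record the scalar Cameron--Martin shift: for a centred Gaussian $Z$ with variance $\sigma^2 > 0$ and any bounded measurable $g : \R \to \R$,
\begin{equation*}
\E\!\left[e^{Z - \sigma^2/2} g(Z)\right] = \E\!\left[g(Z + \sigma^2)\right],
\end{equation*}
which follows by completing the square in the Gaussian density via $z - \tfrac{\sigma^2}{2} - \tfrac{z^2}{2\sigma^2} = -\tfrac{1}{2\sigma^2}(z - \sigma^2)^2$. Next, given $x_1, \dots, x_n \in D$, set $Y := (X(x_1), \dots, X(x_n))$ and $a := \sigma^{-2}(h(x_1), \dots, h(x_n))$, and decompose $Y = aZ + W$ with $W := Y - aZ$. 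The pair $(W, Z)$ is jointly Gaussian and $\E[W_i Z] = h(x_i) - a_i\sigma^2 = 0$, so $W$ and $Z$ are independent. Hence for bounded continuous $\Phi : \R^n \to \R$, conditioning on $W$ and applying the scalar identity to $z \mapsto \Phi(az + W)$ yields
\begin{equation*}
\E\!\left[e^{Z - \sigma^2/2}\, \Phi(Y)\right] = \E_W\!\left[\E_Z\!\left[\Phi(a Z + a\sigma^2 + W)\right]\right] = \E\!\left[\Phi\big(Y + (h(x_1), \dots, h(x_n))\big)\right],
\end{equation*}
which is precisely the asserted identity for the cylindrical functional $F(f) = \Phi(f(x_1), \dots, f(x_n))$.

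\textbf{Step 2 (removing cylindricality).} Let $\P_1$ denote the pushforward of the finite measure $e^{Z - \sigma^2/2}\,\d\P$ under $X$, and let $\P_2$ denote the law of $X(\cdot) + h$; both are Borel probability measures on $C(D)$ (recall $\E[e^{Z-\sigma^2/2}] = 1$). Step~1 shows $\int F\,\d\P_1 = \int F\,\d\P_2$ for all bounded continuous cylindrical $F$, i.e.\ $\P_1$ and $\P_2$ have the same finite-dimensional marginals. Since $D \subset \R^d$ is separable, the Borel $\sigma$-algebra of $(C(D), \|\cdot\|_\infty)$ is generated by the evaluation maps $f \mapsto f(x)$, and a finite measure on it is determined by its finite-dimensional marginals; therefore $\P_1 = \P_2$, and integrating an arbitrary bounded continuous $F : C(D) \to \R$ against both measures gives the lemma.

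\textbf{Main obstacle.} The delicate part is Step~2: one must justify that the two sides are genuinely integrals of $F$ against probability measures on $C(D)$ — which is why the preliminary continuity of $h$ matters — and invoke the (standard but slightly technical) fact that finite-dimensional distributions determine a Borel measure on $C(D)$. If one prefers a hands-on argument avoiding measure theory on path space, the same conclusion follows by approximation: given $\delta > 0$, choose a compact $K \subset C(D)$ with $\P(X \in K) > 1 - \delta$ and $\P(X + h \in K) > 1 - \delta$ (tightness of a single $C(D)$-valued random element together with integrability of the density), approximate $F$ uniformly on $K$ by cylindrical functionals via Stone--Weierstrass, and control the remainder using $\|F\|_\infty$ and the uniform integrability of $e^{Z - \sigma^2/2}$; this is routine but notationally heavier.
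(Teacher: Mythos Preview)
Your proof is correct. The paper itself does not prove this lemma at all: it is quoted verbatim from \cite[Theorem~2.1]{LN_GMC} as a toolbox result, so there is no ``paper's own proof'' to compare against. Your argument follows the standard route one would expect for a Cameron--Martin shift: reduce to a one-dimensional Gaussian computation by decomposing $Y = aZ + W$ with $W \perp Z$, and then pass from cylindrical to general bounded continuous $F$ by identifying both sides as integrals against Borel probability measures on $C(D)$ with the same finite-dimensional marginals.

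Two minor remarks, neither of which affects correctness. First, in your continuity argument for $h$, the convergence $h_n \to h$ is a priori only \emph{locally} uniform (the bound $|h_n(x)-h(x)| \le \sqrt{\E[X(x)^2]}\,\|Z_n-Z\|_{L^2}$ depends on the variance, which need not be bounded on an open domain); this is still enough for $h \in C(D)$. Second, when $D$ is an open bounded domain rather than compact, $(C(D),\|\cdot\|_\infty)$ is not the natural ambient space since $X$ need not be bounded; the clean fix is to work in $C(D)$ with the topology of local uniform convergence, which is Polish and whose Borel $\sigma$-algebra is still generated by point evaluations, so your Step~2 goes through unchanged.
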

Let us observe that Lemma~\ref{lm_Girsanov} is equivalent to the fact that under the probability measure $e^{Z-\E[Z^2]/2} \d \P$ the field $(X(x))_{x \in D}$ has the same law of the shifted field $(X(x)+\E[X(x)Z])_{x \in D}$ under $\P$, which is the usual Girsanov's theorem.

We now present a fundamental tool in the study of GMC measures, which is Kahane's convexity inequality. Essentially, this is an inequality that allows to compare GMC measures associated with two slightly different fields.
\begin{lemma}[{\cite[Theorem~2.2]{LN_GMC}}]
\label{lm_Kahane}
Consider two almost surely continuous centred Gaussian fields $(X(x))_{x \in D}$ and $(Y(x))_{x \in D}$ such that
\begin{equation*}
\E[X(x) X(y)] \leq \E[Y(x)Y(y)] \,, \quad \forall x, y \in D \,.
\end{equation*} 
Let $f : (0, \infty) \to \mathbb{R}$ a convex function with at most polynomial growth at $0$ and $\infty$, and let $\nu$ be a Radon measure on $D$ as in \cite[Equation~1.3]{Berestycki_Elementary}. Then it holds that
\begin{equation*}
\E\left[f\left(\int_D e^{X(x)-\frac{1}{2}\E[X(x)^2]} \nu(\d x) \right)\right] \leq \E\left[f\left(\int_D e^{Y(x)-\frac{1}{2}\E[Y(x)^2]} \nu(\d x) \right)\right] \,.
\end{equation*}
\end{lemma}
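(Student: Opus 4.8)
The plan is to establish the inequality by the classical Gaussian interpolation argument, after a reduction to a finite-dimensional statement.

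\textbf{Reduction to finitely many points.} Since $X$ and $Y$ are almost surely continuous on the bounded domain $D$, for each $n$ I would choose a partition of $D$ into Borel cells $(A^n_i)_i$ of vanishing diameter, pick representatives $x^n_i\in A^n_i$, and set $p^n_i:=\nu(A^n_i)$. The Riemann sums $\sum_i p^n_i\,e^{X(x^n_i)-\frac12\E[X(x^n_i)^2]}$ then converge almost surely to $\int_D e^{X(x)-\frac12\E[X(x)^2]}\,\nu(\d x)$, and likewise for $Y$. Because $D$ is compact, Borell--TIS gives Gaussian tails for $\sup_D X$ and $-\inf_D X$, so the limiting integrals have all positive and negative moments; combined with the polynomial growth of $f$ at $0$ and $\infty$ this yields uniform integrability and lets one exchange limit and expectation. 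Hence it suffices to prove: for Gaussian vectors $(X_i)_{i=1}^N$, $(Y_i)_{i=1}^N$ with $\E[X_iX_j]\le\E[Y_iY_j]$ for all $i,j$ and weights $p_i\ge0$,
\[
\E\Big[f\Big(\textstyle\sum_i p_i\,e^{X_i-\frac12\E[X_i^2]}\Big)\Big]\le\E\Big[f\Big(\textstyle\sum_i p_i\,e^{Y_i-\frac12\E[Y_i^2]}\Big)\Big].
\]
Two further normalizations: realize $X$ and $Y$ on a common probability space as \emph{independent} vectors (this changes neither marginal law, hence neither side), and by mollification reduce to $f\in C^2$ with $f',f''$ of polynomial growth, postponing the general polynomial-growth case to a final approximation step.

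\textbf{Interpolation.} For $t\in[0,1]$ set $Z_i(t):=\sqrt{1-t}\,X_i+\sqrt{t}\,Y_i$, so $Z(0)=X$, $Z(1)=Y$, $\E[Z_i(t)^2]=(1-t)\E[X_i^2]+t\E[Y_i^2]$, $\E[Z_i(t)Z_j(t)]=(1-t)\E[X_iX_j]+t\E[Y_iY_j]$, and in particular $\E[Z_i'(t)Z_j(t)]=\tfrac12\big(\E[Y_iY_j]-\E[X_iX_j]\big)=:\tfrac12 D_{ij}$ with $D_{ij}\ge0$. Writing $U_i(t):=p_i\,e^{Z_i(t)-\frac12\E[Z_i(t)^2]}$, $S(t):=\sum_i U_i(t)$ and $\varphi(t):=\E[f(S(t))]$, one has $\tfrac{\d}{\d t}U_i(t)=U_i(t)\big(Z_i'(t)-\E[Z_i'(t)Z_i(t)]\big)$, and differentiating under the expectation gives
\[
\varphi'(t)=\sum_i\E\big[f'(S(t))\,U_i(t)\,\big(Z_i'(t)-\E[Z_i'(t)Z_i(t)]\big)\big].
\]

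\textbf{Gaussian integration by parts and conclusion.} The key step is to apply Stein's lemma, for each $i$, to the jointly Gaussian family $\big(Z_1(t),\dots,Z_N(t),Z_i'(t)\big)$ and the function $z\mapsto f'\big(\sum_k p_k e^{z_k-\frac12 v_k(t)}\big)\,p_i e^{z_i-\frac12 v_i(t)}$, where $v_k(t):=\E[Z_k(t)^2]$. Differentiating the argument of $f'$ produces the term $\tfrac12 D_{ij}\,\E[f''(S(t))U_i(t)U_j(t)]$ (summed over $j$), while differentiating the factor $e^{z_i}$ produces exactly $\tfrac12 D_{ii}\,\E[f'(S(t))U_i(t)]$, which cancels the $-\E[Z_i'(t)Z_i(t)]$ contribution in $\varphi'$. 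After cancellation,
\[
\varphi'(t)=\frac12\sum_{i,j}D_{ij}\,\E\big[f''(S(t))\,U_i(t)U_j(t)\big]\ge0,
\]
since $D_{ij}\ge0$ by hypothesis, $U_i,U_j\ge0$, and $f''\ge0$ by convexity. Thus $\varphi(0)\le\varphi(1)$, which is the finite-dimensional inequality; removing the mollification and the discretization finishes the proof. I expect the main obstacle to be the reduction from the continuous to the discrete statement — verifying that the integrals are well defined, that the Riemann sums converge (using almost sure continuity of the fields), and that the polynomial growth of $f$ provides enough uniform integrability to pass the limit inside the expectation — together with the care needed near $t=0$ and $t=1$, where $Z_i'(t)$ blows up but $\varphi$ stays continuous on $[0,1]$, so only $\varphi'\ge0$ on the open interval is required.
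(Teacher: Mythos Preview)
The paper does not prove this lemma; it is stated in the appendix as a known tool and simply cited from \cite[Theorem~2.2]{LN_GMC}. Your argument is the classical Kahane interpolation proof and is correct: the computation of $\varphi'(t)$ via Gaussian integration by parts, yielding $\varphi'(t)=\tfrac12\sum_{i,j}D_{ij}\,\E[f''(S(t))U_i(t)U_j(t)]\ge0$, is exactly the standard mechanism, and your handling of the reduction to finite dimensions and of the endpoint singularities of $Z_i'(t)$ is the right one.
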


We have the following standard concentration inequality for Gaussian fields which is known as Borell-TIS inequality.
\begin{lemma}[{\cite[Theorem~2.1.1]{Adler}}]
\label{lm_Borell}
Consider an almost surely continuous centred Gaussian field $(X(x))_{x \in D}$. Then it holds that
\begin{equation*}
\P\left(\left|\sup_{x \in D} X(x) - \E\left[\sup_{x \in D} X(x)\right]\right| > t \right) \leq 2 e^{-\frac{t^2}{2 \sigma_D^2}} \,,
\end{equation*}
for all $t \geq 0$, where $\sigma_D^2 := \sup_{x \in D} \E[X(x)^2]$.
\end{lemma}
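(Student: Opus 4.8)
The plan is to deduce the statement from the Gaussian concentration-of-measure inequality (a consequence of the Gaussian isoperimetric inequality), which I take as the single nontrivial external input: if $F\colon\R^n\to\R$ is Lipschitz with constant $L$ and $Z$ is a standard Gaussian vector in $\R^n$, then $\P(|F(Z)-\E[F(Z)]|>t)\le 2e^{-t^2/(2L^2)}$ for all $t\ge 0$. Everything else is a soft approximation argument reducing the supremum of the field to a maximum over finitely many points.

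First I would reduce to a finite-dimensional statement. Since $D$ is a bounded domain, hence a separable metric space, and $X$ admits an almost surely continuous version, I fix a countable dense set $\{x_k\}_{k\in\mathbb{N}}\subset D$ and observe that $\sup_{x\in D}X(x)=\sup_{k\in\mathbb{N}}X(x_k)$ almost surely. Setting $M_n:=\max_{1\le k\le n}X(x_k)$, we have $M_n\nearrow\sup_{x\in D}X(x)$ almost surely; almost sure continuity on the bounded set $D$ forces this supremum to be almost surely finite, and Fernique's theorem gives $\E[\sup_{x\in D}X(x)]<\infty$, so that $\E[M_n]\nearrow\E[\sup_{x\in D}X(x)]$ by monotone convergence. (Here one implicitly uses $\sigma_D^2<\infty$, otherwise the inequality is vacuous.)

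Next, for each fixed $n$, I write the Gaussian vector $(X(x_1),\dots,X(x_n))=AZ$ with $Z$ a standard Gaussian in $\R^n$ and $AA^{T}$ the covariance matrix of $(X(x_1),\dots,X(x_n))$. Then $M_n=F_n(Z)$ with $F_n(z):=\max_{1\le k\le n}(Az)_k$, and since $\max$ is $1$-Lipschitz for the sup-norm, writing $A_k$ for the $k$-th row of $A$ and using Cauchy--Schwarz,
\[
|F_n(z)-F_n(z')|\le \max_{1\le k\le n}|A_k(z-z')|\le\Big(\max_{1\le k\le n}\|A_k\|\Big)\|z-z'\|\le\sigma_D\|z-z'\|,
\]
because $\|A_k\|^2=\E[X(x_k)^2]\le\sigma_D^2$. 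Applying the Gaussian concentration inequality to $F_n$ with $L=\sigma_D$ yields $\P(|M_n-\E[M_n]|>t)\le 2e^{-t^2/(2\sigma_D^2)}$ for every $n$ and every $t\ge 0$.

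Finally I pass to the limit $n\to\infty$. Fix $t>0$ and $0<t'<t$. Since $M_n\to\sup_x X(x)$ and $\E[M_n]\to\E[\sup_x X(x)]$, the event $\{|\sup_x X(x)-\E[\sup_x X(x)]|>t\}$ is contained, up to a null set, in $\liminf_n\{|M_n-\E[M_n]|>t'\}$, so by Fatou's lemma its probability is at most $\liminf_n\P(|M_n-\E[M_n]|>t')\le 2e^{-(t')^2/(2\sigma_D^2)}$; letting $t'\uparrow t$ gives the claim. The main obstacle is the Gaussian concentration inequality itself, i.e.\ Gaussian isoperimetry (or its functional-inequality surrogate), which carries all the analytic content; the reduction and the limiting step above are routine.
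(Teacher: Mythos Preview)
The paper does not prove this lemma; it merely states it with a citation to \cite[Theorem~2.1.1]{Adler}. Your argument is a correct and standard derivation of the Borell--TIS inequality: reduce to finitely many points by separability and continuity, realise the finite-dimensional maximum as a $\sigma_D$-Lipschitz function of a standard Gaussian vector, apply Gaussian concentration, and pass to the limit. This is essentially the route taken in the cited reference, so there is nothing to compare beyond noting that you have supplied the details the paper omits.
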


We finish this appendix with Dudley's entropy bound. 
\begin{lemma}[{\cite[Theorem~1.3.3]{Adler}}]
\label{lm_Dudley}
Consider an almost surely continuous centred Gaussian field $(X(x))_{x \in D}$. Consider the pseudo-metric on $D \times D$ defined as follows
\begin{equation*}
d_X(x, y) := \sqrt{\E[|X(x)-X(y)|^2]}, \quad  (x, y) \in D \times D,
\end{equation*}
and let $\diam_X(D) := \sup_{(x, y) \in D \times D} d_X(x, y)$. Let $N(\eps, D, d_X)$ be the number of balls of radius $\eps$ with respect to $d_X$ needed to cover $D$. Then there exists a universal constant $C$ such that 
\begin{equation*}
\E\left[\sup_{x \in D} X(x)\right] < C \int_{0}^{\diam_X(D)/2} \sqrt{\log(N(\eps, D, d_X))} \d \eps \,.
\end{equation*}
\end{lemma}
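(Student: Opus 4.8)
The plan is to establish this via Dudley's classical chaining argument. First I would reduce to the case of a finite index set. Since $(X(x))_{x \in D}$ is almost surely continuous and $D$ is separable, there is a fixed countable dense subset $D_0 \subset D$ with $\sup_{x \in D} X(x) = \sup_{x \in D_0} X(x)$ almost surely; by monotone convergence it then suffices to bound $\E[\sup_{x \in F} X(x)]$ uniformly over finite $F \subset D_0$. Fixing a base point $x_0 \in F$ and writing $\sup_{x\in F} X(x) = \sup_{x\in F}(X(x)-X(x_0)) + X(x_0)$, we have $\E[\sup_{x\in F}X(x)] = \E[\sup_{x\in F}(X(x)-X(x_0))]$ because $X$ is centred. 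We may assume $\Delta := \diam_X(D) < \infty$, since otherwise the right-hand side is infinite.

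Next I would set up the chain. Put $\delta_n := 2^{-n}\Delta$ for $n \ge 0$. For each $n$ choose a $\delta_n$-net $S_n$ of $F$ with respect to $d_X$, with $S_0 = \{x_0\}$, with $|S_n| \le N(\delta_n, D, d_X)$ up to a fixed numerical factor coming from moving net centres into $F$, and with $S_n = F$ for all $n$ large enough (possible since $F$ is finite, after identifying points at $d_X$-distance zero, which is harmless as such points have almost surely equal $X$-values). Let $\pi_n : F \to S_n$ satisfy $d_X(x, \pi_n(x)) \le \delta_n$, chosen so that $\pi_n$ factors through $\pi_{n+1}$. Since $\pi_n(x) = x$ eventually, the telescoping identity
\[
X(x) - X(x_0) = \sum_{n \ge 0}\bigl(X(\pi_{n+1}(x)) - X(\pi_n(x))\bigr)
\]
holds with a finite sum, whence $\sup_{x\in F}(X(x)-X(x_0)) \le \sum_{n\ge0}\sup_{x\in F}\bigl(X(\pi_{n+1}(x))-X(\pi_n(x))\bigr)$.

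Then I would bound each term by the finite-dimensional Gaussian maximal inequality. For fixed $n$, as $x$ ranges over $F$ the increment $X(\pi_{n+1}(x)) - X(\pi_n(x))$ is a centred Gaussian of variance at most $(\delta_n + \delta_{n+1})^2 \le (\tfrac32\delta_n)^2$ by the triangle inequality for $d_X$, and it takes at most $|S_n|\,|S_{n+1}| \le N(\delta_{n+1})^2$ distinct values. Using $\E[\max_{1\le i\le m} Y_i] \le \sigma\sqrt{2\log m}$ for centred Gaussians of variance $\le \sigma^2$ (proved by the exponential Chebyshev bound optimised over its parameter), we get
\[
\E\Bigl[\sup_{x\in F}\bigl(X(\pi_{n+1}(x))-X(\pi_n(x))\bigr)\Bigr] \le \tfrac32\delta_n\sqrt{2\log N(\delta_{n+1})^2} \le 3\,\delta_n\sqrt{\log N(\delta_{n+1})}.
\]
Summing over $n$ and comparing with an integral — since $\eps \mapsto \sqrt{\log N(\eps, D, d_X)}$ is non-increasing and $\delta_n = 2(\delta_{n+1}-\delta_{n+2})$, so each summand is dominated by $6\int_{\delta_{n+2}}^{\delta_{n+1}}\sqrt{\log N(\eps, D, d_X)}\,\d\eps$ — yields $\E[\sup_{x\in F}(X(x)-X(x_0))] \le C\int_0^{\Delta/2}\sqrt{\log N(\eps, D, d_X)}\,\d\eps$ with a universal constant $C$, and taking the supremum over $F$ proves the lemma.

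I expect the main obstacle to be the combinatorial bookkeeping at each scale: one must choose the nets $S_n$ and the projections $\pi_n$ coherently so that the number of realised increment-pairs stays of order $N(\delta_{n+1})$ (its square, as above, is already enough here and only affects the constant), and then convert the resulting geometric-type series into the single entropy integral without loss. The remaining ingredients — the reduction to finite sets via almost sure continuity and separability, and the elementary Gaussian maximal inequality — are standard and routine.
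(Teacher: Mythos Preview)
The paper does not prove this lemma at all: it is stated in Appendix~\ref{sec:GaussianTool} as a quoted result from \cite[Theorem~1.3.3]{Adler}, with no argument given. Your proposal supplies what the paper omits, namely the classical Dudley chaining proof, and it is essentially correct.

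A couple of minor remarks. First, the arithmetic step $\delta_n = 2(\delta_{n+1}-\delta_{n+2})$ is off by a factor of two: since $\delta_{n+1}-\delta_{n+2}=\delta_{n+2}$ and $\delta_n=4\delta_{n+2}$, one has $\delta_n = 4(\delta_{n+1}-\delta_{n+2})$. This only changes the value of the universal constant $C$, not the structure of the argument. Second, the requirement that $\pi_n$ factor through $\pi_{n+1}$ is never actually used in your telescoping (you only need $\pi_0(x)=x_0$ and $\pi_N(x)=x$ for $N$ large), so you may drop it. Neither point affects the validity of the proof.
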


\small
\def\cprime{$'$}


\end{document}